\documentclass[a4paper,11pt]{amsart}

\usepackage[left=2.7cm,right=2.7cm,top=3.5cm,bottom=3cm]{geometry}
\usepackage{graphicx}
\newcommand{\Bmu}{\mbox{$\raisebox{-0.59ex}
  {$l$}\hspace{-0.18em}\mu\hspace{-0.88em}\raisebox{-0.98ex}{\scalebox{2}
  {$\color{white}.$}}\hspace{-0.416em}\raisebox{+0.88ex}
  {$\color{white}.$}\hspace{0.46em}$}{}}

\numberwithin{equation}{section}

\usepackage{amsthm,amssymb,amsmath,amsfonts,mathrsfs,amscd,amsbsy,dsfont,verbatim,relsize}
\usepackage{stmaryrd,dsfont}
\usepackage[utf8]{inputenc}
\usepackage[T1]{fontenc}
\usepackage[all,cmtip]{xy}  
\usepackage[new]{old-arrows}
\usepackage{latexsym}
\usepackage{longtable}
\usepackage{mathtools}
\usepackage{marginnote}

\usepackage{graphicx} 
\usepackage{stmaryrd}
\usepackage{amsmath}
\usepackage{amsthm}
\usepackage{amssymb}
\usepackage{mathrsfs}
\usepackage{tikz-cd}
\usepackage{verbatim}

\usepackage[pagebackref]{hyperref}

\mathtoolsset{showonlyrefs}

\theoremstyle{plain}
\newtheorem{theorem}{Theorem}[section]
\newtheorem{proposition}[theorem]{Proposition}
\newtheorem{lemma}[theorem]{Lemma}

\newtheorem{conjecture}[theorem]{Conjecture}

\newcounter{lettera}

\newtheorem{itheorem}[lettera]{Theorem}

\theoremstyle{definition}
\newtheorem{definition}[theorem]{Definition}
\newtheorem{assumption}[theorem]{Assumption}

\theoremstyle{remark}
\newtheorem{remark}[theorem]{Remark}

\newtheorem{remark/notation}[theorem]{Remark/Notation}
\newtheorem{notation/convention}[theorem]{Notation/Convention}

\newfont{\cyr}{wncyr10 scaled 1100}
\newcommand{\Sha}{\mbox{\cyr{X}}}

\newcommand{\cyc}{\mathrm{cyc}}
\newcommand{\unr}{\mathrm{unr}}

\DeclareMathOperator{\Fitt}{Fitt}
\DeclareMathOperator{\Gal}{Gal}
\DeclareMathOperator{\Sel}{Sel}
\DeclareMathOperator{\ind}{ind}
\DeclareMathOperator{\ord}{ord}

\DeclareMathOperator{\loc}{loc}
\DeclareMathOperator{\length}{length}
\DeclareMathOperator{\Frob}{Frob}

\DeclareMathOperator{\Frac}{Frac}
\DeclareMathOperator{\divv}{div}
\DeclareMathOperator{\Hom}{Hom}

\DeclareMathOperator{\im}{im}
\DeclareMathOperator{\M}{M}
\DeclareMathOperator{\charr}{char}

\DeclareMathOperator{\tors}{tors}

\DeclareMathOperator{\Gr}{Gr}
\DeclareMathOperator{\GL}{GL}

\DeclareMathOperator{\ks}{\mathbf{KS}}
\DeclareMathOperator{\uks}{\overline{\mathbf{KS}}}
\DeclareMathOperator{\tr}{tr}
\DeclareMathOperator{\rel}{rel}
\DeclareMathOperator{\arith}{arith}
\DeclareMathOperator{\cont}{cont}
\DeclareMathOperator{\Aut}{Aut}
\DeclareMathOperator{\Stub}{Stub}
\DeclareMathOperator{\s}{s}
\DeclareMathOperator{\str}{str}
\DeclareMathOperator{\fs}{fs}
\DeclareMathOperator{\EX}{EX}
\DeclareMathOperator{\f}{f}

\newcommand{\defeq}{\vcentcolon=}

\newcommand{\Q}{\mathds Q}
\newcommand{\N}{\mathds N}
\newcommand{\Z}{\mathds Z}

\newcommand{\C}{\mathds C}

\newcommand{\F}{\mathds F}

\newcommand{\cO}{\mathcal O}
\newcommand{\p}{\mathfrak p}

\newcommand{\ms}{\mathfrak{s}}
\newcommand{\mt}{\mathfrak{t}}

\newcommand{\francesco}[1]{{\color{blue} $\clubsuit \clubsuit \clubsuit$ Francesco: #1}}
\newcommand{\matteo}[1]{{\color{magenta} $\clubsuit \clubsuit \clubsuit$ Matteo: #1}}

\begin{document}

\title{Variation of Shafarevich--Tate groups in Hida families}
\author{Enrico Da Ronche, Matteo Longo, Stefano Vigni and Francesco Zerman}

\thanks{The authors are partially supported by the GNSAGA group of INdAM. The research by the first and third authors is partially supported by the MUR Excellence Department Project awarded to Dipartimento di Matematica, Universit\`a di Genova, CUP D33C23001110001. The last named author was supported by the \emph{ERC Consolidator Grant ShimBSD: Shimura varieties and the BSD conjecture}.}


\address{Dipartimento di Matematica, Universit\`a di Genova, Via Dodecaneso 35, 16146 Genova, Italy}
\email{enrico.daronche@edu.unige.it}
\address{Dipartimento di Matematica, Universit\`a di Padova, Via Trieste 63, 35121 Padova, Italy}
\email{matteo.longo@unipd.it}
\address{Dipartimento di Matematica, Universit\`a di Genova, Via Dodecaneso 35, 16146 Genova, Italy}
\email{stefano.vigni@unige.it}
\address{FernUni Schweiz, Schinerstrasse 18, 3900 Brig, Switzerland}
\email{francesco.zerman@unidistance.ch}

\subjclass[2020]{11F11, 11F33}

\keywords{Fitting ideals, Shafarevich--Tate groups, Kolyvagin systems, modular forms, Hida families.}

\begin{abstract}
Let $\mathbf{f}$ be a Hida family of cusp forms over a standard (power series) Iwasawa algebra. We determine the pseudo-isomorphism class of the big Shafarevich--Tate group attached to $\mathbf{f}$ over a suitable imaginary quadratic field by describing all its higher Fitting ideals; this description is given in terms of the universal Kolyvagin system of Howard's big Heegner points. As a second contribution, we study the variation of the structure of the Shafarevich--Tate groups of the classical specializations of $\mathbf{f}$.
\end{abstract}

\maketitle

\section{Introduction}
\subsection{Motivation} 

The goal of the present paper is to study the variation of the structure of Shafarevich--Tate groups in a Hida family of modular forms. To explain the context and give some motivation for our results, recall that if $E$ is an elliptic curve over $\Q$ and $K$ is an imaginary quadratic field, then the group structure of the Shafarevich--Tate group $\Sha(E/K)$ of $E$ over $K$ was first described, under suitable arithmetic assumptions, by Kolyvagin (\cite{KolStructure}; \emph{cf.}~also \cite{McCallum}); more precisely, one assumes that all the prime factors of the conductor of $E$ split in the imaginary quadratic field $K$ and that the basic $K$-rational Heegner point $y_K$ is non-torsion (here $y_K\in E(K)$ is defined by taking the Galois trace of the Heegner point of conductor $1$ on $E$, which is rational over the Hilbert class field of $K$). Under this assumption on $y_K$, Kolyvagin proves that $\Sha(E/K)$ is finite; his description of the isomorphism class of $\Sha(E/K)$ makes use of the collection of ``derived'' Heegner classes built out of Heegner points rational over ring class fields of $K$ and is obtained by studying divisibility properties of such a family. The case of higher (even) weight cusp forms was then considered by Masoero (\cite{masoero}), who extends Kolyvagin's techniques replacing Heegner points with Heegner cycles.  

Now let $p\geq5$ be a prime number and consider a $p$-adic family of cusp forms in the sense of Hida or Coleman. To each member of this family one can attach a Shafarevich--Tate group and the general question addressed in the present paper is to understand how these groups vary. We establish our main results in the case of Hida families and for Shafarevich--Tate groups over imaginary quadratic fields, further restricting (for technical reasons) to the case where the relevant Hida--Hecke algebra is a power series ring $\Lambda$ over a finite extension of $\Z_p$. Under suitable arithmetic conditions and assuming non-vanishing results for Howard's big Heegner points (\cite{Howard-Variation}), which are $\Lambda$-adic analogues of Heegner points, our result is a uniform description of the Fitting ideals of the Shafarevich--Tate group (viewed as $\Lambda$-module) of the $\Lambda$-adic Galois representation attached to our Hida family. In particular, the knowledge of Fitting ideals uniquely determines the pseudo-isomorphism class of this Shafarevich--Tate group. Specializing our main result to an arithmetic prime (\emph{i.e.}, a prime ideal of $\Lambda$ corresponding to a classical cusp form in the Hida family) recovers the group structure of the Shafarevich--Tate group of the corresponding eigenform, up to finitely many exceptions. 

\subsection{Main results} 

To explain our main results, fix as before a prime number $p\geq 5$ and an integer $N\geq 1$ such that $p\nmid N$; let 
\[ 
\mathbf{f}=\sum_{n\geq 1}a_n(\mathbf{f})q^n\in \Lambda[[q]]
\]
be a $p$-adic Hida family of tame level $N$, where $\Lambda\defeq\cO[[T]]$ and $\cO$ is the valuation ring of a finite extension of $\Q_p$. Set $\mathcal{X}\defeq\Hom_{\cont}(\Lambda,\overline{\Q}_p)$, where $\Hom_{\cont}$ denotes $\cO$-linear continuous homomorphisms, and let $\mathcal{X}^{\arith}$ be the subset of $\mathcal{X}$ consisting of arithmetic morphisms (these notions are reviewed in Section \ref{sec4}); for each $\eta\in\mathcal{X}^{\arith}$, the power series $\sum_{n\geq1}\eta\bigl(a_n(\mathbf{f})\bigr)q^n$ (called \emph{specialization of $\mathbf{f}$ at $\eta$}) is the $q$-expansion of a cusp form $\mathbf{f}_\eta$. 
Let us fix an imaginary quadratic field $K$ of discriminant $D_K$ such that
\begin{itemize}
\item $(D_K,Np)=1$;
\item all the prime factor of $N$ split in $K$.    
\end{itemize}
In other words, the field $K$ satisfies the \emph{Heegner hypothesis} relative to $N$ (\emph{cf.} Remark \ref{N^-} for comments on this assumption and possible extensions of our results to more general settings).
As a consequence, the root number (\emph{i.e.}, the sign of the functional equation of the $L$-function) of the base change of $\mathbf{f}_\eta$ to $K$ is generically $-1$, so the $L$-function of $\mathbf{f}_{\eta}$ over $K$ vanishes at the central critical point $k_\eta/2$, where $k_\eta$ is the weight of $\mathbf{f}_{\eta}$, for all but finitely many $\eta\in\mathcal{X}^{\arith}$. 

Let $\rho:G_\Q\rightarrow\GL_2(\Lambda)$ be the $\Lambda$-adic Galois representation attached to $\mathbf{f}$; then for arithmetic primes $\eta$, composing $\rho$ with localization at the kernel of $\eta$ gives a Galois representation isomorphic to the $p$-adic Galois representation attached to $\mathbf{f}_\eta$ by Deligne (\cite{Del-Bourbaki}). Consider the self-dual twist $\mathbf{T}^\dagger$ of $\mathbf{T}$, which should be understood as a $p$-adic family of self-dual Galois representations, and assume that its associated residual representation is absolutely irreducible. Denote by $\mathbf{A}^\dagger\defeq\Hom_{\cont}\bigl(\mathbf{T}^\dagger,\Bmu_{p^\infty}\bigr)$ the Kummer dual of $\mathbf{T}$, where $\Bmu_{p^\infty}$ is the (discrete) group of $p$-power roots of unity. Then we consider the (Greenberg) Selmer $\Lambda$-module $\Sel_{\Gr}(K,\mathbf{A}^\dagger)$ of $\mathbf{A}^\dagger$ over $K$, whose quotient by the maximal $\Lambda$-divisible subgroup is, by definition, the \emph{Shafarevich--Tate group $\Sha_{\Gr}\bigl(K,\mathbf{A}^\dagger\bigr)$} attached to $\mathbf{f}$. This is a $\Lambda$-module whose Pontryagin dual 
\[
X\defeq\Sha_{\Gr}\bigl(K,\mathbf{A}^\dagger\bigr)^\vee
\]
is finitely generated and torsion over $\Lambda$. Our main result describes, under Assumption \ref{Hidaass}, all the Fitting ideals of $X$ in terms of certain universal Kolyvagin systems.

Universal Kolyvagin systems have been introduced by Mazur and Rubin (\cite[Definition 3.1.6]{MR}); they are constructed by compiling Kolyvagin systems for \emph{artinian} quotients of $\mathbf{T}^\dagger$: see \S\ref{secUKS}, and especially Definition \ref{defUKS}, for details on this notion. In particular, a universal Kolyvagin system $\kappa$ admits a basic class $\kappa_1\in\Sel_{\Gr}\bigl(K,\mathbf{T}^\dagger\bigr)$ that one should understand in this abstract context as the $\Lambda$-adic analogue of the Heegner point $y_K$ considered before.  
More concretely, the universal Kolyvagin system that is relevant for this paper has been explicitly constructed by B\"{u}y\"{u}kboduk (\cite{Kazim}) by means of the big Heegner points introduced by Howard (\cite{Howard-Variation}).
Here we are going to work with the system $\kappa$ obtained by specializing B\"{u}y\"{u}kboduk's system with respect to the augmentation ideal. The basic class $\kappa_1$ for this universal Kolyagin system is just the big Heegner point $\mathfrak{z}_K\in H^1(K,\mathbf{T}^\dagger)$ defined in \cite{Howard-Variation}. As mentioned before, universal Kolyvagin systems are built from classes over artinian quotients of $\mathbf{T}^\dagger$. To explain this, we need some more notation. First, for every integer $n\geq1$ we denote by $K[n]$ the ring class field of $K$of conductor $n$ and set $G_n\defeq\Gal(K[n]/K[1])$. Let $\mathfrak{m}_\Lambda=(p,T)$ be the maximal ideal of $\Lambda$ and, for every integer $s\geq1$, 
set $T_s^\dagger\defeq\mathbf{T}^\dagger/\mathfrak{m}_\Lambda^s\mathbf{T}^\dagger$. 
Denote by $\mathcal{P}_s$ the set of Kolyvagin primes for $T_s^\dagger$; this is the
set of primes $\ell$ that do not divide $Np$, are inert in $K$ and satisfy $(\ell+1)T_s^\dagger=\{0\}$ (\emph{cf.}~Assumption \ref{ass:kolyvagin-primes}). 
Then $\kappa$ is determined by (compatible) classes $\kappa_n^{(s,t)}\in H^1\bigl(K,T_s^\dagger\bigr)\otimes G_n$ for integers $t\geq s\geq1$ and integers $n\geq1$ that are square-free products of primes in $\mathcal{P}_t$. Using these classes, one may introduce, for all $r,s\geq1$, the ideal $\mathfrak{C}_r(s)$ of $\Lambda/\mathfrak{m}^s$, which is defined as the intersection of ideals generated by the localizations $\loc_\ell(\kappa^{(s,t)}_{n})$
when $n$ is a product of at most $r$ (distinct) primes in $\mathcal{P}_{2s}$ and $\ell\in\mathcal{P}_{2s}$ is a prime that does not divide $n$, with $\kappa^{(s,t)}$ varying among some representatives of $\kappa$ modulo $\mathfrak{m}_\Lambda^s$. Then we can define the inverse limit 
\[
\mathfrak{C}_r\defeq\varprojlim_{s}\mathfrak{C}_r(s)\subset\Lambda
\]
(\emph{cf.}~Lemma \ref{defC_rlemma}). Let $I(\Lambda)$ denote the set of ideals of $\Lambda$ and put on $I(\Lambda)$ the following equivalence relation: $I\sim J$ if and only if $I \Lambda_\p= J\Lambda_\p$ for all height-$1$ prime ideals $\p$ of $\Lambda$. We write $[I]$ for the class of $I\in I(\Lambda)$ in $I(\Lambda)/\sim$. For an even $i\in\N$, consider the elements of $I(\Lambda)/\sim$ given by 
\[
[\mathfrak{D}_i^\epsilon] \defeq \prod_{r \geq i/2} \frac{[\mathfrak{C}_{2r+1}]}{[\mathfrak{C}_{2r+2}]} , \quad [\mathfrak{D}_i^{-\epsilon}] \defeq \prod_{r\geq i/2} \frac{[\mathfrak{C}_{2r}]}{[\mathfrak{C}_{2r+1}]}
\]
(by Proposition \ref{monoid}, these definitions are well posed). Here $\epsilon \in \{\pm 1\}$ is the negative of the generic sign of the functional equation of the complex $L$-functions of the specializations of $\mathbf{f}$.
Our main result (Theorem \ref{HidaTheorem}) describes the Fitting ideals $\Fitt_i(X^\pm)$ of the eigenspaces $X^\pm$ for the complex conjugation in terms of the ideals $\mathfrak{D}_i^\pm$; by Theorem \ref{evenoddFitt}, this uniquely determines the pseudo-isomorphism class of the $\Lambda$-modules $X^\pm$.  

\begin{itheorem} \label{thmA}
Assume that $\kappa_1$ is not torsion. For each $i\in\N$, we have 
\[
\Fitt_i(X^\pm)\sim \begin{cases}(\mathfrak{D}_i^\pm)^2 & \text{if $i$ is even},\\[2mm]
\mathfrak{D}^\pm_{i-1}\cdot\mathfrak{D}^\pm_{i+1} & \text{if $i$ is odd}.
\end{cases}
\]
Furthermore, the set $\bigl\{\Fitt_i(X^\pm)\bigr\}_{i\in\N}$ uniquely determines the pseudo-isomorphism class of the $\Lambda$-module $X^\pm$. 
\end{itheorem}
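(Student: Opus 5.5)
The plan is to reduce to the structure theory over the discrete valuation rings $\Lambda_\p$ for height-one primes $\p$, since $\sim$ only sees localizations at such primes. First, by Theorem \ref{evenoddFitt}, the last assertion is immediate once the Fitting ideals are known up to $\sim$. For a torsion module $M$ with $M_\p\simeq\bigoplus_j\Lambda_\p/\p^{a_j}$, $a_1\ge a_2\ge\cdots$, we have $\Fitt_i(M)\Lambda_\p=\p^{\sum_{j>i}a_j}\Lambda_\p$. So the Fitting ideals up to $\sim$ are equivalent to the collection of elementary divisors at every height-one $\p$ (only finitely many $\p$ contribute). It therefore suffices to fix a height-one prime $\p$ and compute $\mathrm{length}$ data for $X^\pm_\p$.

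The key input is the Kolyvagin-type structure theorem over artinian quotients. For each $s$, one runs the Mazur--Rubin/Kolyvagin argument for the self-dual residually irreducible representation $T_s^\dagger$ over $\Lambda/\mathfrak{m}^s$. Here one uses Kolyvagin primes in $\mathcal{P}_{2s}$, Chebotarev, and the Cassels--Tate-type alternating pairing coming from self-duality. The conclusion is that the Selmer group of $T_s^\dagger$ is, up to the rank-one free part generated by $\kappa_1$, of the shape $(M\oplus M)^{\epsilon}\oplus(M'\oplus M')^{-\epsilon}$. Moreover, the ideals $\mathfrak{C}_r(s)$ compute the successive "invariants": $\mathfrak{C}_r(s)/\mathfrak{C}_{r+1}(s)$ measures the $(r+1)$-st elementary divisor, and the parity of $r$ decides the $\pm$-eigenspace. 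This parity rule is the sign rule of the form $\epsilon(-1)^{r}$ familiar from Kolyvagin and McCallum. The strategy here is Kolyvagin's: the class $\kappa_n$ for $n$ with $r$ primes lives in the eigenspace $\epsilon(-1)^r$, and choosing $\ell$ via Chebotarev to kill successive generators shows that the minimal divisibility index of $\loc_\ell\kappa_n$ drops exactly by the next invariant.

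Passing to the limit in $s$ via Lemma \ref{defC_rlemma} and control theorems for $\Sel_{\Gr}(K,\mathbf{A}^\dagger)[\mathfrak{m}^s]$ versus the Selmer group of $\mathbf{A}^\dagger[\mathfrak{m}^s]$ gives a description up to pseudo-isomorphism. This uses the fact that the kernels and cokernels are bounded by finite-length modules, which is where residual irreducibility enters. It yields $[\mathfrak{C}_{r}]/[\mathfrak{C}_{r+1}]=[\p^{b_{r+1}}]$ locally at $\p$, where the $b$'s are the elementary divisors of $X$ arranged in pairs. The pairing of elementary divisors is forced by the $\Lambda$-adic Cassels--Tate pairing on $\Sha$, perfect up to pseudo-null modules. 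Then $\mathfrak{D}_i^\pm$ at $\p$ is $\p$ raised to $\sum_{j>i/2}$ of the divisors of one copy in the $\pm$-part. Since $X^\pm_\p\simeq N\oplus N$, the $i$-th Fitting ideal is $\mathfrak{D}_i^2$ for $i$ even, and $\mathfrak{D}_{i-1}\mathfrak{D}_{i+1}$ for $i$ odd, because the odd case removes one extra largest summand from one copy. This is pure bookkeeping once the local structure is known. Proposition \ref{monoid} ensures that the quotients make sense.

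The main obstacle is the passage from artinian levels to $\Lambda$. The ideals $\mathfrak{C}_r(s)$ are defined with representatives varying and with primes in $\mathcal{P}_{2s}$, and one must show that their inverse limit localizes correctly at each height-one $\p$. Concretely, one has to verify that the errors from the control maps and from non-freeness of $\Lambda/\mathfrak{m}^s$ (not a principal ideal ring) are supported in codimension $\ge2$. I would first try to handle this by comparing with the specialization at $\p$ itself, via $\Lambda/\p$ a DVR-like one-dimensional ring (after normalization), where Kolyvagin's theorem applies directly. Non-torsionness of $\kappa_1$ makes the rank-one part controlled at all but finitely many $\p$, and these finitely many can be absorbed into the $\sim$ relation only if they are handled separately by the same argument.
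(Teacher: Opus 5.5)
Your reduction to height-one localizations and the bookkeeping in your third paragraph are fine. Suppose one knows that $X^\pm_\p$ is a doubled module and that $[\mathfrak{C}_r]/[\mathfrak{C}_{r+1}]$ has $\p$-exponent equal to the next invariant in the eigenspace dictated by the parity of $r$. Then the even and odd formulas follow (the uniqueness assertion is really Theorem \ref{fitt-pseudo}; Theorem \ref{evenoddFitt} is what handles odd $i$).

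The gap is the step that is supposed to produce this local information. The artinian structure theory needs a \emph{principal} artinian ring: Theorem \ref{artSel}, the Flach--Howard pairing, the cyclic decompositions and Lemma \ref{expSub} all rely on it. So it cannot be run over $\Lambda/\mathfrak{m}_\Lambda^s$, and there the ideals $\mathfrak{C}_r(s)$ have no direct meaning as elementary divisors. Nor does anything in the inverse limit over $s$ see a height-one localization.

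Your fallback, specializing at $\p$ itself, fails for two reasons. First, it only sees $X/\p X$, which records how many $\p$-primary summands $X$ has but not their exponents. Second, and decisively, the relevant primes are exactly those where you cannot specialize. For $\p\notin\Sigma_{\EX}$, Proposition \ref{CTSha1} and the finiteness of $X(T_\p)$ force $X/\p X$ to be finite. Hence every height-one prime in the support of $X^\pm$ lies in $\Sigma_{\EX}$, where control or the non-vanishing of $\kappa_{\p,1}$ may fail and Theorem \ref{thmDVR} is unavailable. These finitely many primes carry the whole pseudo-isomorphism class of $X$. They cannot be ``absorbed into $\sim$'', which only forgets codimension-two phenomena.

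What is missing is the approximation device used in the proof of Theorem \ref{mainThmR}. For $\p=(f)$ take $\p_j=(f+\pi^j)$, or $(\pi+T^j)$ if $\p=(\pi)$; for $j\geq N(\p)$ these avoid $\Sigma_{\EX}$ and satisfy $\Lambda/\p_j\simeq\Lambda/\p$. By Proposition \ref{fittingspec}, the $\p$-exponent $m$ of $\Fitt_i(M)$ is recovered as a slope: the $\pi_j$-exponent of $\Fitt_i(M\otimes_\Lambda\cO_j)$ is $\sim m e_\p j$.

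At each $\p_j$ one then argues as follows:
\begin{itemize}
\item apply Theorem \ref{thmDVR} to $\kappa_j$;
\item transfer the result to $X^\pm\otimes_\Lambda\cO_j$ via the uniformly bounded control of Lemma \ref{extScalars};
\item prove $\length(\cO_j/s_j(\mathfrak{C}_r)\cO_j)=\delta^{(r)}(\kappa_j)$, using Lemma \ref{kj0}, the compatibility of $s_j$ with the classes $\kappa_n^{(s,t)}$, Theorem \ref{Rigidity} and Lemma \ref{InfPrimes}.
\end{itemize}
Comparing slopes gives $\Fitt_i(X^\pm)\Lambda_\p=(\mathfrak{D}_i^\pm)^2\Lambda_\p$.

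The same device also proves $X^\pm\sim M^\pm\oplus M^\pm$ (Proposition \ref{propX=M+M}). You invoke instead a $\tau_c$-compatible $\Lambda$-adic Cassels--Tate pairing, perfect up to pseudo-null modules; that pairing is not established here and would itself require proof.
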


It is conjectured that the class $\kappa_1$ is indeed not torsion: this is the \emph{horizontal non-vanishing conjecture} of Howard, see \cite[Conjecture 3.4.1]{Howard-Variation}. As should be clear from the definition of the ideals $\mathfrak{C}_i$, the proof of Theorem \ref{thmA} builds on analogous results over artinian local rings, following a reconstructive strategy that was exploited previously by Mazur--Rubin (\cite{MR}) and Howard (\cite{HoHeeg}), and previously used in \cite{DRLV}. Under appropriate assumptions on the relevant Kolyvagin systems, and working in a more abstract setting which goes beyond the case of Hida families, one compiles structure theorems for Shafarevich--Tate groups of representations over artinian local rings (Theorem \ref{artSel}) to obtain structure theorems for Shafarevich--Tate groups of representations over complete discrete valuation rings (Theorem \ref{thmDVR}); one then passes, by an approximation process, to a structure theorem for $\Lambda$-adic representations (Theorem \ref{mainThmR}), of which Theorem \ref{thmA} is the incarnation in the case of Hida families. 

By specialization to arithmetic primes, Theorem \ref{thmA} recovers in a uniform way structure theorems for Shafarevich--Tate groups of modular forms varying in a Hida family, thus giving a precise meaning and an answer to the motivating question introduced before. To explain this result, let $\eta$ be an arithmetic prime and define 
\[
T_\eta\defeq\bigl(\mathbf{T}^\dagger/\p_\eta\mathbf{T}^\dagger\bigr)\otimes_{\Lambda/\p_\eta}S_\eta,
\] 
where $\p_\eta$ is the kernel of $\eta$ and $S_\eta$ is the integral closure of 
$\Lambda/\p_\eta$. For each $\eta$, the universal Kolyvagin system $\kappa$ gives rise 
to a universal Kolyvagin system $\kappa_\eta$ for $T_\eta$ that is made of classes $\kappa_{\eta,n}^{(s,t)}$ in $H^1(K,T_{\eta}/\pi_\eta^sT_\eta)\otimes G_n$ where $n,s,t$ are as before and $\pi_\eta$ is a uniformizer of $S_\eta$. This Kolyvagin system is also equipped with a basic class $\kappa_{\eta,1}$ in the (Greenberg) Selmer group $\Sel_{\Gr}(K,T_\eta)$. 
Let $F_\eta$ be the fraction field of $S_\eta$, define $A_\eta\defeq T_\eta\otimes_{S_\eta}(F_\eta/S_\eta)$ and let $\Sha_{\Gr}(K,A_\eta)$ be the Shafarevich--Tate group of the Galois representation $A_\eta$, which is defined to be the quotient of the Greenberg Selmer group of $A_\eta$ by its maximal divisible subgroup; as before, let $X_\eta$ be the Pontryagin dual of $\Sha_{\Gr}(K,A_\eta)$. For every $i\in\N$, in \S\ref{sectionDVR} we introduce certain generalized divisibility indices $\delta^{(i)}(\kappa_\eta)$ attached to $\kappa_\eta$; the definition of $\delta^{(i)}(\kappa_\eta)$, which is ultimately inspired by analogous definitions appearing in \cite{KolStructure}, is rather technical and makes crucial use of a \emph{rigidity theorem} (Theorem \ref{Rigidity}) that is an analogue in this setting of a result by Howard (\cite[Theorem 2.5.1]{howard2012bipartite}) and may be of independent interest. Then Theorem \ref{thmDVR} offers the following description of $X_\eta^\pm$. 

\begin{itheorem} 
Assume that $\kappa_{\eta,1}\in \Sel_\mathrm{Gr}(K,T_\eta)$ is not torsion. There are isomorphisms
\begin{equation}
X^{\pm \epsilon}_\eta\simeq\bigoplus_{i\geq0}\bigl(S_\eta/\pi_\eta^{d^{\pm \epsilon}_i}S_\eta\bigr)^2,
\end{equation}
where $d^\epsilon_i\defeq\delta^{(2i+1)}(\kappa_\eta)-\delta^{(2i+2)}(\kappa_\eta)$ and $d^{-\epsilon}_i\defeq\delta^{(2i)}(\kappa_\eta)-\delta^{(2i+1)}(\kappa_\eta)$ . 
\end{itheorem}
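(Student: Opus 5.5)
The plan is to prove this DVR statement (Theorem \ref{thmDVR}) by compiling the artinian structure theorem (Theorem \ref{artSel}) over the quotients $T_\eta/\pi_\eta^sT_\eta$ and then passing to the limit in $s$. First, I would reduce to a statement about the $S_\eta$-torsion part of the Selmer group. Since $\kappa_{\eta,1}$ is non-torsion, the standard Kolyvagin system argument (Mazur--Rubin, Howard) gives that $\Sel_{\Gr}(K,A_\eta)$ has $S_\eta$-corank one, in the $\epsilon$-eigenspace. Hence
\[
\Sel_{\Gr}(K,A_\eta)\simeq (F_\eta/S_\eta)\oplus \Sha_{\Gr}(K,A_\eta).
\]
The Cassels--Tate type pairing, coming from the self-duality of $T_\eta$, shows that $X_\eta^\pm$ has the square shape $M\oplus M$. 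Next, for $s$ large, I would compare $\Sel(K,T_\eta/\pi_\eta^s)$ with $\Sel_{\Gr}(K,A_\eta)[\pi_\eta^s]$. They agree up to a uniformly bounded error controlled by $H^0$, which vanishes by absolute irreducibility of the residual representation. It follows that $\Sel(K,T_\eta/\pi_\eta^s)^\pm$ is isomorphic to
\[
S_\eta/\pi_\eta^s\oplus\bigoplus_i \bigl(S_\eta/\pi_\eta^{\min(s,d_i)}\bigr)^2
\]
on the $\epsilon$-side, and to the analogous expression without the free summand on the $-\epsilon$-side.

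Second, I would apply Theorem \ref{artSel} to $R_s=S_\eta/\pi_\eta^s$ with the Kolyvagin system $\kappa_\eta^{(s,t)}$, $t\ge 2s$. This theorem expresses the elementary divisors of $\Sel(K,T_\eta/\pi_\eta^s)$ in terms of the divisibility indices of the classes $\kappa^{(s,t)}_{\eta,n}$, taken as $\nu(n)$, the number of prime factors of $n$, varies. The key input is the rigidity theorem (Theorem \ref{Rigidity}). It says that the minimal divisibility of $\loc_\ell\kappa_{\eta,n}$ over $n$ with $\nu(n)=i$ does not depend on the auxiliary choices: the representatives of $\kappa$ mod $\pi^s$, the level $t$, and the choice of $n$ among Kolyvagin primes giving a core-type Selmer structure. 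It also stabilizes as $s\to\infty$. This stabilization is what makes $\delta^{(i)}(\kappa_\eta)$ a well-defined invariant of $\kappa_\eta$, and it equals the $s$-level index for all $s\gg0$.

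Third, I would identify the exponents. The artinian theorem yields that the differences of consecutive indices equal the exponents, with alternating parity matching the sign. The parity of $\nu(n)$ determines in which eigenspace the rank-one core vanishes, so consecutive differences $\delta^{(2i+1)}-\delta^{(2i+2)}$ give the $\epsilon$-part and $\delta^{(2i)}-\delta^{(2i+1)}$ give the $-\epsilon$-part. Summing over $i$ then produces the displayed decomposition of $X_\eta^{\pm\epsilon}$, with each exponent appearing twice by the pairing. Finally, since for $s$ exceeding all $d_i$ (finitely many are nonzero, because $X_\eta$ is finite by the first step) nothing is truncated, the isomorphism for $\Sha$ follows.

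The main obstacle is the second step: showing that the artinian indices are independent of $s$ and $t$ and of the representatives once $s$ is large. In particular, the truncation at $\min(s,\cdot)$ must not corrupt the differences. I would first try to handle this via the rigidity theorem combined with the control comparison, using induction on $i$ and choosing $n$ so that $\Sel_{\mathcal F(n)}$ is free of the expected rank.
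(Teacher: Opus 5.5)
Your outline has the same architecture as the paper's proof of Theorem~\ref{thmDVR}. The paper uses the exact comparison $\Sel^{\pm\epsilon}_{\mathcal F}(K,T_s)\simeq\Sel^{\pm\epsilon}_{\mathcal F}(K,A)[\mathfrak m^s]$, applies Theorem~\ref{artSel} to a free representative $\kappa^{(s,t)}$ (Lemma~\ref{freeKS}), takes $s$ large, and reads off $d^{\pm\epsilon}_i$ as consecutive differences of the $\partial^{(j)}$. Importing corank one and the square shape from Howard's theorem and a Cassels--Tate argument is harmless but unnecessary. The paper starts from $\Sel^{\pm\epsilon}_{\mathcal F}(K,A)\simeq(F/R)^{r^{\pm\epsilon}}\oplus\bigoplus_i R/\mathfrak m^{c^{\pm\epsilon}_i}$ with $r^{\pm\epsilon}$ unknown, and extracts $r^\epsilon=1$, $r^{-\epsilon}=0$ and the squares from the artinian decomposition.

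The gap is at the step you yourself flag, and the tool you assign to it does not do the job. Theorem~\ref{Rigidity} only says that $\kappa^{(s,t)}_n$ generates $\mathfrak m^{\partial(\kappa^{(s,t)})}\Stub_n$. It says nothing about how $\partial(\kappa^{(s,t)})$ depends on the representative or on $t$; the paper only records monotonicity in $t$. For this reason the paper defines $\delta^{(j)}(\kappa^{(s)})$ as a maximum over $\mathfrak K_s$. It then uses Lemma~\ref{existenceKol} to realize all these maxima by one free representative, and Lemma~\ref{boundIncrSeq} for constancy in $s>\ind(\kappa_1)$. Your suggested induction on $i$ with free Selmer groups does not address this.

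The missing idea is to normalize at $j=0$. On one side, $\partial^{(0)}(\kappa^{(s,t)})=\ind(\kappa^{(s,t)}_1)=\ind(\kappa_1)$ is independent of all choices (Lemma~\ref{lemma3.34}). On the other, Theorem~\ref{artSel} gives $\partial^{(0)}(\kappa^{(s,t)})=\min\{s,\partial(\kappa^{(s,t)})+\lambda_1\}$, where $\lambda_1=\sum_i(d^\epsilon_i+d^{-\epsilon}_i)$ is intrinsic to $\Sel_{\mathcal F}(K,T_s)$. Hence for $s>\ind(\kappa_1)$ every free representative has $\partial(\kappa^{(s,t)})=\ind(\kappa_1)-\lambda_1$. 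So every $\partial^{(j)}(\kappa^{(s,t)})$ is determined by the Selmer group, which gives the independence and stabilization you want.

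The same bound $\partial^{(j)}\le\partial^{(0)}=\ind(\kappa_1)<s$ removes the truncation $\min\{s,\cdot\}$ in the formula of Theorem~\ref{artSel}. It also forces all $d^{\pm\epsilon}_i<s$, i.e.\ $r^\epsilon=1$ and $r^{-\epsilon}=0$. Your criterion ``$s$ exceeds all $d_i$'' only controls the truncation $\min(s,c_i)$ on the Selmer side. For example, if $\Sha$ is trivial but $\ind(\kappa_1)>0$, then $s=1$ satisfies your criterion while $\kappa^{(1,t)}_1=0$, and Theorem~\ref{artSel} does not apply. You need $s>\ind(\kappa_1)$.
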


It is worth observing that, under the assumption that $\kappa_1$ is non-torsion, $\kappa_{\eta,1}$ is non-torsion as well for all but finitely many arithmetic primes $\eta$: it would be interesting to understand what happens at these finitely many exceptions. 

These results are open to several further generalizations. As explained in \cite[Section 4]{NP}, a Hida family has coefficients in $\Lambda$ if, \emph{e.g.}, there exists $\eta\in\mathcal X^{\arith}$ such that $\mathbf{f}_\eta$ is a $p$-isolated newform in $S_k\bigl(\Gamma_0(Np))$ for some even $k\geq2$. The general setting of a Hida family with coefficients in a finite extension of $\Lambda$ and the counterparts of the results in this paper in the anticyclotomic Iwasawa theory for Hida families, as well as the case where $N$ is divisible also by primes that are inert in $K$, will be investigated in a future project. 

\subsection{Structure of the paper}

In Section \ref{sec2}, we review the basics of the theory of Fitting ideals and see how they can be used to retrieve information on the structure of finitely generated torsion modules over discrete valuation rings and over the Iwasawa algebra $\Lambda$. In Section \ref{sec3}, we recall the definition of a universal Kolyvagin system and prove our main results on their use in the computation of the structure of Shafarevich--Tate groups for representations over principal artinian local rings, discrete valuation rings and $\Lambda$. Finally, Section \ref{sec4} is devoted to the application of our results to the case of big Galois representations attached to Hida families of modular forms.

\subsection{Notation and terminology} \label{sec:notation-and-terminology}

For every perfect field $F$, denote by $G_F$ the absolute Galois group of $F$. Fix once and for all algebraic closures $\overline\Q$ and $\overline\Q_\ell$ for every prime number $\ell$; moreover, choose embeddings $\overline{\Q}\hookrightarrow\C$ and $\overline{\Q}\hookrightarrow\overline{\Q}_\ell$. Attached to these choices, there are a uniquely determined complex conjugation $\tau_c\in G_\Q$ and an arithmetic Frobenius element $\Frob_\ell\in G_\Q$, which is defined up to elements of the absolute inertia subgroup at $\ell$.

\subsection*{Acknowledgements} 

We would like to thank Francesc Castella and Paul-Antoine Seitz for several helpful discussions about the topics of the paper.

\section{Background on Fitting ideals} \label{sec2}

In this section we review the definition and the main properties of Fitting ideals and we give results regarding their role in structure theorems for finitely generated modules over some rings. In doing it, we follow closely the exposition of \cite[\S2]{DRLV}.

\subsection{Fitting ideals of modules over a noetherian ring}

Let $R$ be a noetherian ring and let $M$ be a finitely generated $R$-module. Let $n\in\N$ such that $M$ can be generated over $R$ by $n$ elements. Choose a presentation
\[
\bigoplus_{j=1}^k R \overset\varphi\longrightarrow \bigoplus_{i=1}^n R \longrightarrow M \longrightarrow 0
\]
of $M$, for some $k\ge 1$. Denote by $A\in\M_{n\times k}(R)$ the matrix associated with $\varphi$ with respect to the canonical basis; finally, given an integer $r\geq0$, write $I_r(A)$ for the ideal of $R$ generated by the minors of order $r$ of $A$, with the convention that $I_r(A)\defeq(0)$ if $r>\min\{n,k\}$ (recall that a minor of order $0$ is defined to be $1$, so that $I_0(A)=R$).

In the definition that follows, the notation above is in force.

\begin{definition} \label{fitting-def}
Let $i\in\N$. The \emph{$i$-th Fitting ideal} of $M$ over $R$ is 
\[
\Fitt_i(M)\defeq\begin{cases} I_{n-i}(A) & \text{for $i\leq n$},\\[2mm]R& \text{for $i>n$}.\end{cases}
\]
\end{definition}

The ideal $\Fitt_0(M)$ is sometimes called \emph{the (initial) Fitting ideal} of $M$, in which case the ideals $\Fitt_i(M)$ for $i\geq1$ are referred to as the \emph{higher Fitting ideals} of $M$. A Fitting ideal $\Fitt_i(M)$ with $i$ even (respectively, odd) is called \emph{even} (respectively, \emph{odd}). When we need to specify the ring $R$, we write $\Fitt_{R,i}(M)$ in place of $\Fitt_i(M)$.

The next result asserts that Definition \ref{fitting-def} does indeed make sense.

\begin{proposition}
The Fitting ideals of $M$ are well defined, i.e., their definition is independent of the choice of a presentation of $M$ as an $R$-module.
\end{proposition}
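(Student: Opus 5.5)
The plan is the classical argument: show that the ideals $I_{n-i}(A)$ do not change under a small set of elementary moves, and that any two presentations of $M$ can be connected by such moves.

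\emph{Step 1: invariance under change of relations, with the generators fixed.} Fix the surjection $\pi\colon R^n\to M$ and let $K=\ker\pi$. The columns of $A$ generate $K$. I claim that $I_r(A)$ equals the ideal $I_r(K)$ generated by all $r\times r$ minors of all $n\times r$ matrices whose columns lie in $K$.
\begin{itemize}
\item One inclusion is trivial, since every minor of $A$ is a minor of an $n\times r$ matrix with columns in $K$.
\item Conversely, each vector in $K$ is an $R$-linear combination of the columns of $A$.
\item By multilinearity of determinants in the columns, any $r\times r$ minor built from such vectors is an $R$-combination of $r\times r$ minors of $A$.
\end{itemize}
Hence $I_r(A)$ depends only on $\pi$. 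This also removes any dependence on $k$.

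\emph{Step 2: invariance under adding a generator.} Compare $\pi$ with the surjection $\pi'\colon R^{n+1}\to M$ that sends the extra basis vector $e_{n+1}$ to some $m=\pi(v)$. By Step 1 we may choose the relation matrix for $\pi'$ to be
\[
A'=\begin{pmatrix} A & -v\\ 0 & 1\end{pmatrix}.
\]
This is valid because the columns of $A'$ generate $\ker\pi'$: any relation $(w,c)$ in $\ker\pi'$ can be rewritten as $(w+cv,0)+c(-v,1)$, and $(w+cv,0)$ lies in $K\oplus 0$.

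Column operations do not change the ideals of minors, since they are invertible and multilinear. Using them, clear $-v$ against the last row, so that $A'$ becomes $\begin{pmatrix} A&0\\0&1\end{pmatrix}$. Now expand an $(r+1)$-minor of this block matrix. If it uses the corner entry $1$, it reduces to an $r$-minor of $A$. If it does not, it equals an $(r+1)$-minor of $A$, and $I_{r+1}(A)\subseteq I_r(A)$ by Laplace expansion. Therefore $I_{(n+1)-i}(A')=I_{n-i}(A)$.

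\emph{Step 3: comparing arbitrary presentations.} Given two generating systems, of sizes $n$ and $n'$, form the concatenated generating system of size $n+n'$. It is reached from either system by adding generators one at a time, so Step 2 applies repeatedly. Permuting generators permutes rows, which changes minors only by sign. Hence all three systems give the same ideals $I_{n-i}$.

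\emph{Step 4: the range $i>n$.} Here the paper sets $\Fitt_i(M)=R$. This is consistent with the conventions: $I_0(A)=R$, and $I_{n-i}$ with $n-i<0$ is naturally read as $R$. So the values stabilise as Step 2 shifts $n$ while keeping $n-i$ fixed. The case $i=n$ gives $I_0=R$ regardless of $n$, consistent with the definition.

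I expect the main obstacle to be the bookkeeping in Step 2, namely checking that $\ker\pi'$ is generated exactly by the columns of $A'$. Everything else is determinant multilinearity.
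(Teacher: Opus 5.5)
Your proof is correct and is the classical argument (the relation module alone determines the ideals of minors, adding one generator shifts the index by one, and two generating systems are compared through their concatenation); this is the argument in the references the paper cites in place of a proof, Eisenbud's Corollary--Definition 20.4 and Northcott's Ch.~3, Theorem~1. The one slip is in Step 2: clearing $-v$ against the last row is a \emph{row} operation, namely left multiplication by $\bigl(\begin{smallmatrix} I & v\\ 0 & 1\end{smallmatrix}\bigr)$ (equivalently, replacing the new generator $m$ by $m-\sum_i v_i\pi(e_i)=0$), not a column operation---column operations preserve the column span $\ker\pi'$, which contains $(0,1)$ only when $m=0$---but row operations also preserve ideals of minors (Cauchy--Binet), and in any case Laplace-expanding the minors of $A'$ directly along its last row and last column already gives $I_{r+1}(A')=I_r(A)$.
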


\begin{proof} See, \emph{e.g.}, \cite[Corollary--Definition 20.4]{Eis} or \cite[Ch. 3, Theorem 1]{northcott}. \end{proof}

\subsection{Fitting ideals of modules over a DVR}

Let $R$ be a discrete valuation ring with uniformizer $\pi$. We recall the following well-known structure theorem.

\begin{theorem} \label{DVR-isom-thm}
Let $M$ be a finitely generated torsion $R$-module. If $M$ is non-trivial, then there is an isomorphism of $R$-modules
\[
M \simeq \bigoplus_{j=1}^n R\big/\bigl(\pi^{k_j}\bigr),
\]
where $k_j \geq 0$ are integers uniquely determined up to order.
\end{theorem}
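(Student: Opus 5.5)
The plan is to deduce the statement from the structure theorem for finitely generated modules over a principal ideal domain, specialized to the local case. First note that $R$, being a discrete valuation ring, is a principal ideal domain whose nonzero ideals are exactly the powers $(\pi^k)$, $k\geq 0$: every nonzero $a\in R$ can be written as $a=u\pi^{v(a)}$ with $u\in R^\times$. Since $M$ is finitely generated, choose a surjection $R^n\to M$. Its kernel $N$ is free of rank $n$, because $R$ is noetherian and a PID and $M$ is torsion. This gives a presentation $R^n\overset{\varphi}{\to}R^n\to M\to 0$ with $A\in\M_{n\times n}(R)$ and $\det A\neq 0$.

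The key step is Smith normal form over $R$, which is easy here because $R$ is local with principal ideals. Pick an entry of $A$ of minimal valuation and move it to position $(1,1)$ by row and column swaps. It then divides every other entry, so elementary row and column operations over $R$ clear the first row and column. Induction on $n$ yields invertible matrices $P,Q$ with $PAQ=\mathrm{diag}(u_1\pi^{k_1},\dots,u_n\pi^{k_n})$. Absorbing the units gives
\[
M\simeq\bigoplus_{j=1}^n R/(\pi^{k_j}).
\]
The hypothesis that $M$ is non-trivial only ensures that at least one $k_j$ is positive. Summands with $k_j=0$ are zero and may be dropped or kept; the uniqueness is meant up to this convention for a fixed $n$.

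For uniqueness, use an invariant that is intrinsic to $M$. The cleanest option is to count, for each $m\geq1$, the dimension over the residue field $\kappa=R/\pi$ of $\pi^{m-1}M/\pi^mM$. For $R/(\pi^k)$ this quantity is $1$ if $k\geq m$ and $0$ otherwise. Hence $\dim_\kappa \pi^{m-1}M/\pi^m M=\#\{j: k_j\geq m\}$, and these numbers determine the multiset of positive $k_j$. Alternatively, one can use Fitting ideals: Definition \ref{fitting-def} applied to the diagonal presentation gives $\Fitt_i(M)=(\pi^{s_i})$, where $s_i$ is the sum of the $n-i$ smallest $k_j$. By the preceding proposition these ideals are independent of the presentation, so the differences $s_{i}-s_{i+1}$ recover the $k_j$ in order.

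The only step needing any care is the diagonalization, and I would simply cite the standard proof, e.g.\ \cite{Eis}. Since the statement is labeled well known, the proof should consist of a reference plus the remark above identifying the exponents through Fitting ideals. That remark is what the subsequent sections of the paper actually use.
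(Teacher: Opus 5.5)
Your proposal is correct and takes the same route as the paper, which simply cites the structure theorem for finitely generated modules over a principal ideal domain \cite[Ch.~III, Theorem~7.5]{lang}; your Smith normal form sketch and the uniqueness argument via $\dim_{R/\pi}\pi^{m-1}M/\pi^mM$ (or via Fitting ideals, matching Theorem~\ref{FittingDVR}) just unpack that citation in the local case. Your remark that the exponents are unique only after discarding the zero ones (or fixing $n$) is a fair correction to the literal wording of the statement.
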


\begin{proof} See, \emph{e.g.}, \cite[Ch. III, Theorem 7.5]{lang}, of which this result is a special case. \end{proof}

As stated in the following theorem, the structure of $M$ is determined (up to isomorphism) by the sequence of ideals $\bigl(\Fitt_i(M)\bigr)_{i\geq0}$.

\begin{theorem}\label{FittingDVR}
Let $M$ be a non-trivial finitely generated torsion $R$-module and suppose that
\begin{equation} \label{M-isom}
M \simeq \bigoplus_{j=1}^n R\big/\bigl(\pi^{k_j}\bigr)
\end{equation}
for integers $n\geq1$ and $1\leq k_1\leq\dots\leq k_n$. Then for each $i\in\{1,\dots,n\}$ there is an equality 
\[
\Fitt_i(M)=(\pi^{c_i})
\]
of ideals of $R$, where $c_i\defeq\sum_{j=1}^{n-i}k_j$.
\end{theorem}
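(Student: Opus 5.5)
The plan is to compute the Fitting ideals directly from the obvious presentation given by the isomorphism \eqref{M-isom}; by the well-definedness of Fitting ideals proved above, any presentation will do. Concretely, consider
\[
\bigoplus_{j=1}^n R \overset{\varphi}\longrightarrow \bigoplus_{j=1}^n R \longrightarrow M \longrightarrow 0,
\]
where $\varphi$ is given by the diagonal matrix $A=\mathrm{diag}(\pi^{k_1},\dots,\pi^{k_n})\in\M_{n\times n}(R)$. This is exact because the cokernel of $A$ is precisely $\bigoplus_j R/(\pi^{k_j})$. By Definition \ref{fitting-def}, for $i\in\{1,\dots,n\}$ we then have $\Fitt_i(M)=I_{n-i}(A)$, the ideal generated by the $(n-i)\times(n-i)$ minors of $A$.

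Next we compute $I_r(A)$ for $r=n-i$. A minor of order $r$ of a diagonal matrix is obtained by choosing row indices $S$ and column indices $S'$, both of size $r$. If $S\neq S'$, some chosen row contains no diagonal entry among the chosen columns, so that row of the submatrix is zero and the minor vanishes. If $S=S'$, the minor equals $\prod_{j\in S}\pi^{k_j}=\pi^{\sum_{j\in S}k_j}$. Hence
\[
I_r(A)=\Bigl(\pi^{\sum_{j\in S}k_j}\;:\;S\subseteq\{1,\dots,n\},\ |S|=r\Bigr).
\]

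Since $R$ is a discrete valuation ring, an ideal generated by powers of $\pi$ is generated by the power with smallest exponent. Because $k_1\leq\dots\leq k_n$, the minimum of $\sum_{j\in S}k_j$ over subsets of size $r$ is attained at $S=\{1,\dots,r\}$. So $I_{n-i}(A)=(\pi^{c_i})$ with $c_i=\sum_{j=1}^{n-i}k_j$, as claimed. For $i=n$ the sum is empty, $c_n=0$, and this agrees with the convention $I_0(A)=R$.

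There is no genuine obstacle here. The only points requiring care are checking that the diagonal matrix really presents $M$ (with $k=n$ columns) and noting that the off-diagonal minors vanish.
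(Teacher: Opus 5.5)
Your proof is correct and complete. The diagonal presentation, the vanishing of every minor whose row and column index sets differ, and the fact that in a DVR the ideal generated by the monomials $\pi^{\sum_{j\in S}k_j}$ is generated by the one of minimal exponent (attained at $S=\{1,\dots,n-i\}$) give exactly $\Fitt_i(M)=(\pi^{c_i})$. The paper gives no argument of its own and simply cites \cite[Theorem 2.5]{DRLV}; your direct computation from the diagonal relation matrix is the standard proof of that result.
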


\begin{proof}
    See \cite[Theorem 2.5]{DRLV}.
\end{proof}

\subsection{Fitting ideals of modules over $\Lambda$}\label{sec:fitting-over-Lambda}

Let $\mathcal{O}$ be a complete discrete valuation ring with finite residue field. The power series ring $\Lambda\defeq \mathcal{O} \llbracket T \rrbracket$ in one variable over $\mathcal{O}$ is a $2$-dimensional complete noetherian regular local ring. A finitely generated $\Lambda$-module $M$ is called $\emph{pseudo-null}$ if $M_\p=0$ for all prime ideals of $\Lambda$ of height at most one. A \emph{pseudo-isomorphism} between two finitely generated torsion $\Lambda$-modules $M$ and $N$ is a $\Lambda$-homomorphism $f: M \rightarrow N$ with pseudo-null kernel and cokernel, in this case $M$ and $N$ are said to be \emph{pseudo-isomorphic} and we write $M \sim N$ (it is possible to see that it is an equivalence relation). We recall the following well-known result about the structure (up to pseudo-isomorphism) of finitely generated torsion $\Lambda$-modules.

\begin{theorem} \label{str-pseudo}
 Let $M$ be a finitely generated torsion $\Lambda$-module. Then
 \[
 M \sim \bigoplus_{j=1}^n \bigoplus_{t=1}^\ell \Lambda/\p_j^{k_{j,t}},
 \]
where $\p_j$ are distinct height-$1$ prime ideals of $\Lambda$ and $k_{j,t} \geq 0$ are integers. All these ideals and integers are uniquely determined up to order.
\end{theorem}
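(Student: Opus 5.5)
This is the classical structure theorem for finitely generated torsion modules over $\Lambda=\mathcal{O}\llbracket T\rrbracket$. I plan to prove it by localizing at height-one primes, where $\Lambda$ becomes a discrete valuation ring, and then to glue the local pieces with the standard "lifting up to finite cokernel" argument.

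\emph{Step 1 (finitely many relevant primes).} Since $M$ is finitely generated and torsion, its annihilator is a nonzero ideal. Because $\Lambda$ is a noetherian UFD, $\mathrm{ann}(M)$ is contained in only finitely many height-one primes $\p_1,\dots,\p_n$. For every other height-one prime $\mathfrak q$ we then have $M_{\mathfrak q}=0$.

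\emph{Step 2 (local structure).} $\Lambda$ is regular of dimension $2$, hence normal, so each localization $\Lambda_{\p_j}$ is a DVR. Each $M_{\p_j}$ is a finitely generated torsion $\Lambda_{\p_j}$-module, and Theorem \ref{DVR-isom-thm} gives
\[
M_{\p_j}\simeq\bigoplus_t \Lambda_{\p_j}/\p_j^{k_{j,t}}\Lambda_{\p_j}.
\]
The exponents $k_{j,t}$ are uniquely determined. Set
\[
E=\bigoplus_{j,t}\Lambda/\p_j^{k_{j,t}}.
\]
Each $\p_j=(f_j)$ is principal, and since $\p_i$ is not contained in $\p_j$ for $i\ne j$, we get $E_{\p_j}\simeq M_{\p_j}$ and $E_{\mathfrak q}=0=M_{\mathfrak q}$ for all other height-one primes $\mathfrak q$.

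\emph{Step 3 (globalizing; the main obstacle).} We need an actual $\Lambda$-map $M\to E$ inducing the isomorphisms of Step 2, and this is the delicate step. My approach is to first reduce to the case where $M$ has no nonzero pseudo-null submodule, by quotienting by the maximal pseudo-null submodule; this does not change the pseudo-isomorphism class. Then:

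1. For each $j$, choose a local isomorphism $M_{\p_j}\to E_{\p_j}$ and clear denominators. Because $\mathrm{Hom}_\Lambda(M,E)\otimes\Lambda_{\p_j}=\mathrm{Hom}_{\Lambda_{\p_j}}(M_{\p_j},E_{\p_j})$ for finitely presented $M$, we obtain a global map $\phi_j\colon M\to E_{(j)}$, where $E_{(j)}$ is the $\p_j$-part of $E$. This $\phi_j$ is $s_j$ times the local isomorphism, for some $s_j\notin\p_j$.

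2. Multiplication by $s_j$ is an automorphism of $E_{(j)}$ after localizing at $\p_j$. At any other height-one prime, $E_{(j)}$ localizes to zero.

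3. Hence $\phi=\bigoplus_j\phi_j\colon M\to E$ becomes an isomorphism after localizing at every height-one prime. Therefore $\ker\phi$ and $\mathrm{coker}\,\phi$ are supported only at the maximal ideal, i.e. they are pseudo-null, and $\phi$ is a pseudo-isomorphism.

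Alternatively, one could run the classical Weierstrass-preparation argument with elementary row and column operations on a presentation matrix, as in Washington or Neukirch--Schmidt--Wingberg. I would cite that if the localization argument becomes messy.

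\emph{Step 4 (uniqueness).} If $M\sim\bigoplus\Lambda/\p_j^{k_{j,t}}$, localize at a height-one prime $\p$. Pseudo-null modules vanish there, so the localization is an isomorphism
\[
M_\p\simeq\bigoplus_{\p_j=\p}\Lambda_\p/\p^{k_{j,t}}.
\]
Uniqueness in Theorem \ref{DVR-isom-thm} then determines the primes and the multisets of exponents. The terms with $k=0$ are trivial and are discarded.
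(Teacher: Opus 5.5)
Your proposal is correct and is essentially the standard localization proof of this result. The paper gives no argument of its own and simply cites \cite[(5.1.10)]{NSW}; your approach (finitely many height-one primes in the support, DVR structure at each, lifting local isomorphisms by clearing denominators in $\mathrm{Hom}$, uniqueness by localizing) is a valid self-contained version of that proof. One small remark: the preliminary reduction to modules without nonzero pseudo-null submodules is never used in your Step 3, and the map you write as $\bigoplus_j\phi_j$ is really $m\mapsto(\phi_j(m))_j$, but neither point affects the argument.
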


\begin{proof}
   See, \emph{e.g.}, \cite[(5.1.10)]{NSW}. 
\end{proof}

Denote by $I(\Lambda)$ the set of ideals of $\Lambda$. Given $I,J \in I(\Lambda)$, we write $I \prec J$ if $I \Lambda_\p \subset J \Lambda_\p$ for every height-$1$ prime ideal $\p$ of $\Lambda$. We also write $I \sim J$ if $I \prec J$ and $J \prec I$ (it is an abuse of notation but the meaning of $\sim$ will be clear from the context) and we notice that it is an equivalence relation. We denote by $[I]$ the equivalence class of $I$ modulo $\sim$.

We define an operation $\cdot$ on the set $I(\Lambda)/\sim$ of ideals of $\Lambda$ modulo $\sim$ by setting
$$[I]\cdot[J] \defeq [I \cdot J]$$
for all $I,J \in I(\Lambda)$. This operation is well-defined since $I_1 \sim J_1$ and $I_2 \sim J_2$ imply $I_i \Lambda_\p=J_i\Lambda_\p$ for all height-one primes $\p$ and $i \in \{1,2\}$, then $I_1I_2 \Lambda_\p=J_1J_2\Lambda_\p$ and finally $I_1I_2 \sim J_1J_2$.

\begin{proposition} \label{monoid}
    The set $(I(\Lambda)/\sim,\cdot)$ is a cancellative, reduced and commutative monoid with identity $[\Lambda]$ which satisfies the following properties for all $I,J \in I(\Lambda)$:
    \begin{itemize}
        \item $[I]^n=[J]^n$ implies $[I]=[J]$ for all integers $n \geq 1$,
        \item $[I]$ divides $[J]$ if and only if $J \prec I$.
    \end{itemize}
\end{proposition}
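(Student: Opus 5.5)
The plan is to reduce everything to the localizations $\Lambda_\p$ at height-$1$ primes $\p$. Since $\Lambda$ is a two-dimensional regular local ring, each such $\Lambda_\p$ is a discrete valuation ring. Consider the map
\[
\Phi\colon I(\Lambda)/\!\sim\;\longrightarrow\;\prod_{\p}\bigl(\Z_{\ge0}\cup\{\infty\}\bigr),\qquad [I]\longmapsto\bigl(v_\p(I\Lambda_\p)\bigr)_\p ,
\]
where $\p$ runs over height-$1$ primes and $v_\p(0)=\infty$. By the definition of $\sim$ this map is injective. It is also multiplicative, because $(IJ)\Lambda_\p=(I\Lambda_\p)(J\Lambda_\p)$ and valuations add in a DVR. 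Commutativity and associativity of the operation, together with $[\Lambda]$ being the identity, are immediate; well-definedness was already checked just before the statement.

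All the listed properties will then be read off from the image monoid. Powers are easy: $[I]^n=[J]^n$ gives $n\,v_\p(I)=n\,v_\p(J)$ for every $\p$, hence $v_\p(I)=v_\p(J)$ in $\Z_{\ge0}\cup\{\infty\}$, hence $[I]=[J]$. The divisibility criterion needs a little more work.

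\begin{itemize}
\item If $[J]=[I][H]$, then $J\Lambda_\p=I\Lambda_\p\,H\Lambda_\p\subset I\Lambda_\p$ for every $\p$, so $J\prec I$.
\item Conversely, suppose $J\prec I$, so that $v_\p(J)\ge v_\p(I)$ for all $\p$. We need an ideal $H$ with $v_\p(H)=v_\p(J)-v_\p(I)$, reading $\infty-\infty$ as $0$ and $\infty-a$ as $\infty$.
\item If $I\neq0$, only finitely many $\p$ have $v_\p(I)>0$, namely the primes containing $I$ in height one. Since $\Lambda$ is a UFD, every finite pattern of valuations $(a_\p)$ with $a_\p\in\Z_{\ge0}$ is realized by the principal ideal $\bigl(\prod\varpi_\p^{a_\p}\bigr)$, where $\varpi_\p$ generates $\p$. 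The primes with $v_\p(J)=\infty$ all have the same value $\infty$, realized by $J=0$ itself in the case $J=0$.
\item To set this up cleanly, first note that any nonzero ideal $I$ satisfies $I\sim(\mathrm{gcd}(I))$, a principal ideal; this holds because $I/\,\mathrm{gcd}(I)\cdot\Lambda$ has height at least $2$. So every class is either $[0]$ or $[(f)]$ for some $f\in\Lambda\setminus\{0\}$.
\item With this, $J\prec I$ means $f\mid g$ in the principal case, and we take $H=(g/f)$. If $I=0$, then $J\prec I$ forces $J\sim0$, and $H=\Lambda$ works. If $J=0$ and $I\neq 0$, then $H=0$ works.
\end{itemize}

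The same description makes the remaining properties transparent. The monoid is isomorphic to $(\Lambda\setminus\{0\})/\Lambda^\times$ under multiplication, which is a free commutative monoid on the height-$1$ primes, with an absorbing element $[0]$ adjoined. Reduced means that the only unit is $[\Lambda]$: if $[I][J]=[\Lambda]$, then all valuations vanish, so $I\sim\Lambda$.

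Cancellativity has to be understood away from the absorbing element. For nonzero classes, $[I][H]=[I][H']$ implies $[H]=[H']$ by subtracting the finite valuations. I expect the main obstacle to be exactly this point: one must state precisely that cancellation is meant for $[I]\neq[0]$, or equivalently restrict to nonzero ideals, where the monoid is genuinely cancellative.

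The key technical step throughout is the lemma that every nonzero ideal of $\Lambda$ is $\sim$ to a principal ideal. I would prove it by writing $I\subset(d)$ with $d=\mathrm{gcd}$ of a finite generating set. Then $d^{-1}I$ is contained in no height-$1$ prime, so $(d^{-1}I)\Lambda_\p=\Lambda_\p$ for all $\p$, which gives $I\Lambda_\p=d\Lambda_\p$.
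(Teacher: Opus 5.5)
Your proof is correct, and it is more careful than the paper's in one respect.

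\textbf{Comparison of routes.} For the reduced, cancellative and $n$-th power properties, the paper argues prime by prime in the DVRs $\Lambda_\p$, which is exactly what your valuation map $\Phi$ encodes. For the converse of the divisibility criterion, the paper only points to the fact that a nonzero ideal lies in finitely many height-$1$ primes. The implicit construction there is $I'=\prod_\p \p^{\,v_\p(J)-v_\p(I)}$, which works in any noetherian normal domain. You instead prove that every nonzero ideal is $\sim$ to the principal ideal generated by the gcd of its generators. This uses that $\Lambda$ is a UFD, but it identifies the whole monoid with $(\Lambda\smallsetminus\{0\})/\Lambda^\times$, free on the height-$1$ primes, together with an absorbing class $[0]$. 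Divisibility then becomes $f\mid g$ with $H=(g/f)$, and every listed property is read off directly.

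\textbf{The zero ideal.} Your handling of $(0)$ exposes a genuine imprecision in the statement. Since $I(\Lambda)$ contains $(0)$, one has $[0]\cdot[\Lambda]=[0]\cdot[0]$ while $[\Lambda]\neq[0]$. So cancellativity holds only for $[I]\neq[0]$. The paper's proof silently cancels $I\Lambda_\p$, which is legitimate only when $I\neq 0$. Similarly, your explicit treatment of the cases $I=0$ and $J=0$ in the divisibility criterion fills in what the paper's sketch omits.
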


\begin{proof}
The fact that the given set is a commutative monoid is trivial. To see that it is reduced, assume that $[I] \cdot[J]=[\Lambda]$. It implies that $IJ\Lambda_\p=\Lambda_\p$ for all the height-$1$ prime ideals $\p$ of $\Lambda$, so $I\Lambda_\p=J\Lambda_\p=\Lambda_\p$ and then $[I]=[J]=[\Lambda]$. To see that it is cancellative, consider ideals $I$, $J_1$ and $J_2$ of $R$ such that $I \cdot J_1 \sim I \cdot J_2$. Then
$$I\Lambda_\p J_1 \Lambda_\p = I\Lambda_\p J_2 \Lambda_\p$$
for all height-$1$ prime ideals $\p$ of $\Lambda$. Then $J_1\Lambda_\p=J_2 \Lambda_\p$ for all $\p$ and, so, $J_1 \sim J_2$.

The first property follows from the equalities
\[
(I\Lambda_\p)^n=I^n \Lambda_\p=J^n \Lambda_\p=(J \Lambda_\p)^n
\]
that hold for every height-$1$ prime ideal $\p$ of $\Lambda$, where the second equality follows from $I^n \sim J^n$. Since $\Lambda_\p$ is a DVR, it follows that $I \Lambda_\p=J \Lambda_\p$ for every height-$1$ prime $\p$ of $\Lambda$. Then $I \sim J$.

Moving to the second property, if there exists $I' \in I(\Lambda)$ such that $J\Lambda_\p=I'\Lambda_\p I\Lambda_\p$ for all height-$1$ prime ideals $\p$ of $\Lambda$, then $J \prec I$. The other direction follows from the fact that $I\Lambda_\p=\Lambda_\p$ for almost all height-$1$ prime ideals $\p$ and any non-trivial $I \in I(\Lambda)$.    
\end{proof}

Thanks to the previous result, we can give the following definition.

\begin{definition} \label{pseudo-def}
An ideal $J$ of $\Lambda$ \emph{admits a pseudo-square root} if there is an ideal $I$ of $\Lambda$ such that $J\sim I^2$. In this case, the \emph{pseudo-square root} of $J$ is the (uniquely determined) equivalence class $\sqrt{[J]}\defeq[I]$.
\end{definition}

We have the following results.

\begin{theorem} \label{fitt-pseudo}
 Let $M$ be a finitely generated torsion $\Lambda$-module such that
\[
M \sim \bigoplus_{j=1}^n \bigoplus_{t=1}^\ell \Lambda/\p_j^{k_{j,t}},
\]
where the sequences $(k_{j,t})_{t \geq 0}$ are non-decreasing for every $j$. Then
\[
\Fitt_i(M) \sim \prod_{j=1}^n \prod_{t=1}^{\ell-i} \p_j^{k_{j,t}}
\]
for all integers $0 \leq i \leq \ell-1$. In particular, the sequence $\{[\Fitt_i(M)] \}_{i \geq 0}$ determines uniquely the pseudo-isomorphism class of $M$.
\end{theorem}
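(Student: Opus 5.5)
The plan is to reduce to discrete valuation rings by localizing at height-$1$ primes and then apply Theorem \ref{FittingDVR}. Fix a height-$1$ prime $\mathfrak q$ of $\Lambda$. Two standard facts are used: Fitting ideals commute with base change, so $\Fitt_{\Lambda,i}(M)\Lambda_{\mathfrak q}=\Fitt_{\Lambda_{\mathfrak q},i}(M_{\mathfrak q})$, since a presentation of $M$ localizes to a presentation of $M_{\mathfrak q}$ and the minors are the same; and $\Lambda_{\mathfrak q}$ is a DVR because $\Lambda$ is regular, hence normal, and of dimension $2$.

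Next, let $f\colon M\to E\defeq\bigoplus_{j,t}\Lambda/\p_j^{k_{j,t}}$ be a pseudo-isomorphism. Its kernel and cokernel are pseudo-null, so they vanish after localizing at $\mathfrak q$, and $f_{\mathfrak q}\colon M_{\mathfrak q}\to E_{\mathfrak q}$ is an isomorphism. Since the $\p_j$ are distinct height-$1$ primes, $(\Lambda/\p_j^{k})_{\mathfrak q}=0$ unless $\p_j=\mathfrak q$. When $\mathfrak q=\p_j$, write $\pi$ for a uniformizer of $\Lambda_{\mathfrak q}$, so that $\p_j\Lambda_{\mathfrak q}=(\pi)$. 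Then
\[
M_{\mathfrak q}\simeq\bigoplus_{t=1}^{\ell}\Lambda_{\mathfrak q}/(\pi^{k_{j,t}}).
\]
Summands with $k_{j,t}=0$ are zero. Theorem \ref{FittingDVR}, applied with the non-decreasing exponents, gives $\Fitt_i(M_{\mathfrak q})=(\pi^{\sum_{t=1}^{\ell-i}k_{j,t}})$. Zero exponents contribute nothing to this sum, so the formula holds uniformly whether or not some $k_{j,t}$ vanish.

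The only bookkeeping point is the index shift when zero exponents are dropped. If there are $z$ zeros, then $n'=\ell-z$ and $\sum_{t=1}^{n'-i}k'_t=\sum_{t=z+1}^{\ell-i}k_{j,t}=\sum_{t=1}^{\ell-i}k_{j,t}$. For $i\ge n'$ both sides equal the unit ideal. If $\mathfrak q$ is none of the $\p_j$, then $M_{\mathfrak q}=0$ and every Fitting ideal is $\Lambda_{\mathfrak q}$.

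Now compare with the right-hand side $J\defeq\prod_{j}\prod_{t\le\ell-i}\p_j^{k_{j,t}}$. At $\mathfrak q=\p_{j_0}$, the factors with $j\ne j_0$ become units, so $J\Lambda_{\mathfrak q}=(\pi^{\sum_{t\le\ell-i}k_{j_0,t}})$. At any other $\mathfrak q$, $J\Lambda_{\mathfrak q}=\Lambda_{\mathfrak q}$. These agree with $\Fitt_i(M)\Lambda_{\mathfrak q}$ at every height-$1$ prime, which is exactly $\Fitt_i(M)\sim J$.

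For the uniqueness claim, the local computation shows that for each height-$1$ prime $\mathfrak q$ the sequence $\bigl(\Fitt_i(M)\Lambda_{\mathfrak q}\bigr)_i$ determines the partial sums $\sum_{t\le\ell-i}k_{j,t}$. Differences of consecutive partial sums recover each $k_{j,t}$, and the primes $\p_j$ are exactly the $\mathfrak q$ where $\Fitt_0$ is non-trivial. Together with the uniqueness statement in Theorem \ref{str-pseudo}, this determines the pseudo-isomorphism class. By Proposition \ref{monoid}, $[\Fitt_i(M)]$ encodes exactly this family of local ideals.

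The main obstacle is only the care needed in the DVR step: compatibility of Fitting ideals with localization, and handling the zero exponents and the shift between $n$ in Theorem \ref{FittingDVR} and $\ell$ here. Everything else is formal.
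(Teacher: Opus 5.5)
Your proof is correct and follows the natural route behind the paper's proof, which simply cites \cite[Theorem 2.8]{DRLV}. You localize at each height-one prime $\mathfrak q$, where Fitting ideals commute with base change and the pseudo-isomorphism becomes an isomorphism onto a direct sum of cyclic modules over the DVR $\Lambda_{\mathfrak q}$, and then read off the answer from Theorem \ref{FittingDVR}. The one small point is that the $i=0$ case you need is not literally covered by Theorem \ref{FittingDVR} as printed, which lists only $i\in\{1,\dots,n\}$; it is, however, immediate from the diagonal presentation matrix.
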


\begin{proof}See \cite[Theorem 2.8]{DRLV}.
\end{proof}

\begin{theorem} \label{evenoddFitt}
Let $N$ and $M$ be finitely generated torsion $\Lambda$-modules and assume that $M \sim N \oplus N$. There is an equality
\[
[\Fitt_{2i+1}(M)]= \sqrt{[\Fitt_{2i}(M) \cdot \Fitt_{2i+2}(M)]}
\]
for all integers $i \geq 0$. In particular, the pseudo-isomorphism class of $M$ is uniquely determined by the sequence $\{[\Fitt_{2i}(M)] \}_{i \geq 0}$.
\end{theorem}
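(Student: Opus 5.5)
Since $M \sim N\oplus N$, the plan is to work with the pseudo-isomorphism structure given by Theorem \ref{str-pseudo}. Write
\[
N\sim\bigoplus_{j=1}^n\bigoplus_{t=1}^{\ell}\Lambda/\p_j^{k_{j,t}},
\]
with each sequence $(k_{j,t})_t$ non-decreasing, padding with zero exponents so that all $j$ share the same $\ell$. Then
\[
M\sim\bigoplus_{j=1}^n\bigoplus_{u=1}^{2\ell}\Lambda/\p_j^{m_{j,u}},
\qquad m_{j,2t-1}=m_{j,2t}=k_{j,t}.
\]
This sequence is again non-decreasing in $u$, so it is in the normal form required by Theorem \ref{fitt-pseudo}. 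That theorem gives
\[
[\Fitt_i(M)]=\Bigl[\prod_j\prod_{u=1}^{2\ell-i}\p_j^{m_{j,u}}\Bigr]
\]
for $0\le i\le 2\ell-1$. For $i\ge 2\ell$ the empty product gives $[\Lambda]$; this is consistent with the paper's conventions after reinterpreting the statement with zero exponents. Equivalently, we can localize at each height-one prime, where $\Lambda_\p$ is a DVR and Theorem \ref{FittingDVR} applies directly. Passing from $\Lambda$ to its localizations is harmless, since $\sim$ is defined by localizations at height-one primes and Fitting ideals commute with localization.

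Fix a height-one prime $\p=\p_j$ and use exponents (valuations in $\Lambda_\p$). Set $e_i=\sum_{u=1}^{2\ell-i}m_{j,u}$. Since the $m_{j,u}$ come in equal pairs, we have:
\begin{itemize}
\item $e_{2i}=2\sum_{t=1}^{\ell-i}k_{j,t}$;
\item $e_{2i+2}=2\sum_{t=1}^{\ell-i-1}k_{j,t}$;
\item $e_{2i+1}=2\sum_{t=1}^{\ell-i-1}k_{j,t}+k_{j,\ell-i}$.
\end{itemize}
Hence $2e_{2i+1}=e_{2i}+e_{2i+2}$. This says $[\Fitt_{2i+1}(M)]^2=[\Fitt_{2i}(M)\Fitt_{2i+2}(M)]$ locally at every height-one prime, hence in $I(\Lambda)/\sim$. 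By Definition \ref{pseudo-def} and the uniqueness in Proposition \ref{monoid}, it follows that $\Fitt_{2i}(M)\Fitt_{2i+2}(M)$ admits a pseudo-square root equal to $[\Fitt_{2i+1}(M)]$. At primes $\p$ not among the $\p_j$, all Fitting ideals localize to $\Lambda_\p$, so there is nothing to check.

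For the final claim, the even Fitting classes determine the odd ones by the displayed identity, because pseudo-square roots are unique by the first bullet of Proposition \ref{monoid}. Then the whole sequence $\{[\Fitt_i(M)]\}_i$ determines the pseudo-isomorphism class of $M$ by Theorem \ref{fitt-pseudo}.

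The only delicate points are bookkeeping. First, we must check that Fitting ideals are pseudo-isomorphism invariants up to $\sim$, which is already contained in Theorem \ref{fitt-pseudo}. Second, we must handle the boundary indices $i$ near $\ell$ and beyond, where the Fitting ideal classes become trivial and the identity is automatic.
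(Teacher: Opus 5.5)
Your proposal is correct and matches what the paper relies on: the paper's proof only cites \cite[Proposition 2.14]{DRLV}, and your route of writing $M\sim\bigoplus_{j,t}(\Lambda/\p_j^{k_{j,t}})^2$, reading off the Fitting ideals via Theorem \ref{fitt-pseudo} (equivalently over each DVR $\Lambda_\p$), and checking $2e_{2i+1}=e_{2i}+e_{2i+2}$ from the doubled exponents is exactly that computation. One point worth stating explicitly is that the final uniqueness claim holds only among modules pseudo-isomorphic to a square: for instance, $(\Lambda/\p)^2$ and $\Lambda/\p^2$ have the same even Fitting ideals, and your argument already uses this restriction when it applies the identity to recover the odd Fitting classes.
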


\begin{proof} See \cite[Proposition 2.14]{DRLV}.\end{proof}

\section{Universal Kolyvagin systems} \label{sec3}

In this section, we give results on the structure of Selmer groups of certain representations in terms of associated universal Kolyvagin systems. Let us fix 
\begin{itemize}
\item an imaginary quadratic field $K$ of discriminant $D_K$ and class number $h_K$;
\item a prime number $p\nmid2D_Kh_K$.
\end{itemize}
For all integers $n\geq1$, denote by $K[n]$ the ring class field of $K$ of conductor $n$. The generator of $\Gal(K/\Q)$ lifts to the complex conjugation $\tau_c$ fixed in \S \ref{sec:notation-and-terminology} and, for a $\Z[\frac{1}{2}][\tau_c]$-module $M$, we write $M^\pm$ for the submodules of $M$ on which $\tau_c$ acts as $\pm1$.

\subsection{Universal Kolyvagin systems}

We review the notion of universal Kolyvagin systems; see, \emph{e.g.}, \cite[\S3]{BuDef}, \cite[\S2.7]{mastella2025anticyclotomic} and \cite[IV.2]{RubinES} for details. 

\subsubsection{Selmer structures} \label{selmer-structures-subsubsec}

Let $R$ be a complete noetherian local ring with maximal ideal $\mathfrak{m}$ and finite residue field of characteristic $p$; let $T$ be a free $R$-module of rank $2$ endowed with a continuous $R$-linear $G_\Q$-action. Assume that $T$ is unramified outside $Np$ for a certain integer $N\geq1$ such that $p\nmid N$. For a finite place $v$ of $K$, denote by $K_v$ the completion of $K$ at $v$, let $K_v^{\unr}$ be the maximal unramified extension of $K_v$ and set 
\[
H^1_\unr(K_v,T)\defeq H^1\bigl(\Gal(K_v^\unr/K_v),T^{I_{v}}\bigr)
\]
for the unramified local cohomology of $T$ at $v$, where $I_{v}$ is the inertia subgroup of $G_{K_v}$. We view $H^1_\unr(K_v,T)$ as an $R$-submodule of $H^1(K_v,T)$ by means of the inflation map.

\begin{definition}\label{def-SelmerStructure}
A \emph{Selmer structure} $\mathcal{F}$ on $T$ over $K$ is a family of $R$-submodules
\[
H^1_\mathcal{F}(K_v,T) \subset H^1(K_v,T)
\]
for any prime $v$ of $K$ such that $H^1_\mathcal{F}(K_v,T)=H^1_{\unr}(K_v,T)$ for any $v \nmid Np$. The Selmer group attached to $\mathcal{F}$ is
\[
\Sel_\mathcal{F}(K,T)\defeq\ker \Biggl(H^1(K,T) \rightarrow \prod_v \frac{H^1(K_v,T)}{H^1_\mathcal{F}(K_v,T)} \Biggr).
\]
\end{definition}
Let us fix a Selmer structure $\mathcal{F}$ on $T$ over $K$, which induces Selmer structures on all the subquotients of $T$ by propagation (see, \emph{e.g.}, \cite[Example 1.1.2]{MR}); these Selmer structures will be denoted with the same letter. 

Let $\mathcal{P}$ be a set of prime numbers $\ell$ such that $\ell\nmid Np$ and $\ell$ is inert in $K$; denote by $\mathcal{N}$ the set of square-free products of primes of $\mathcal{P}$ (with $1 \in \mathcal{N}$). For each $\ell\in\mathcal{P}$, write $\lambda$ for the unique prime of $K$ lying over $\ell$ and by $\F_\lambda$ the residue field of $K$ at $\lambda$, which has $\ell^2$ elements. As explained in \S \ref{sec:notation-and-terminology}, write $\Frob_\ell$ for the Frobenius element at $\ell$, and $\Frob_\lambda=\Frob_\ell^2$ for the Frobenius element at $\lambda$. These elements depend on our choice of the embedding $\overline{\Q}\hookrightarrow \overline{\Q}_\ell$ and are defined up to elements in the absolute inertia at $\ell$, so their image in $\Aut_R(T)$ is well defined as $T$ is unramified at $\ell$.
Finally, set $G_n\defeq\Gal(K[n]/K[1])$ for every $n \in \mathcal{N}$. In particular, $G_1$ is trivial, $G_\ell$ is cyclic of order $\ell+1$ and $G_n \simeq \prod_{\ell\mid n}G_\ell$, where the product varies among all primes $\ell$ dividing $n$.

\subsubsection{Local conditions} \label{local-subsubsec}

Let $a,b,c$ be pairwise coprime integers in $\mathcal{N}$. Starting from $\mathcal{F}$, define a modified Selmer structure on $T$ over $K$ as follows:
\[
H^1_{\mathcal{F}^a_b(c)}(K_v,T)\defeq\begin{cases}H^1_\mathcal{F}(K_v,T) & \text{if $v\nmid abc$};\\[2mm]H^1_{\tr}(K_v,T) & \text{if $v\,|\,c$};\\[2mm]
H^1_{\rel}(K_v,T) & \text{if $v\,|\,a$};\\[2mm]
H^1_{\str}(K_v,T) & \text{if $v\,|\,b$}.\end{cases}
\]
Here $H^1_{\rel}(K_v,T)\defeq H^1(K_v,T)$ is the \emph{relaxed} local condition,
$H^1_{\str}(K_v,T)\defeq\{0\}$ is the \emph{strict} local condition and $H^1_{\tr}(K_v,T)$ is the \emph{transverse} local condition, \emph{i.e.}, the kernel of restriction $H^1(K_v,T)\rightarrow H^1(K^{\tr}_v,T)$, where $K^{\tr}_v$ is the maximal totally tamely ramified abelian $p$-extension of $K_v$ contained in the completion of $K[\ell]$ at the prime above $v=(\ell)$ that is determined by the fixed embedding $\bar{\Q}\hookrightarrow\bar{\Q}_\ell$. Recall that $v$ totally splits in $K[1]$ and each prime of $K[1]$ above it is totally ramified in $K[\ell]$. Finally, if some of $a,b,c$ is $1$, then we omit the symbol from the notation 
(\emph{e.g.}, we simply write $H^1_{\mathcal{F}_b(c)}$ for $H^1_{\mathcal{F}^1_b(c)}$). 

\subsubsection{Kolyvagin systems}

In the following lines, let $R$ be an artinian local ring and denote by $K(T)$ the extension of $K$ fixed by the kernel of the $G_K$-representation attached to $T$.

\begin{assumption} \label{ass:kolyvagin-primes}
Every $\ell\in\mathcal{P}$ satisfies the following conditions:
\begin{itemize} 
\item $\Frob_\ell$ is conjugate to the complex conjugation $\tau_c$ in $\Gal\bigl(K(T)/\Q\bigr)$;
\item $(\ell+1)\cdot T=\{0\}$.
\end{itemize}
\end{assumption}

\begin{remark}
Let $p^s$ be the smallest power of $p$ such that $p^sT=\{0\}$. If the determinants of $\Frob_\ell$ and $\tau_c$ acting on $T$ are $\ell$ and $-1$, respectively, then the first condition in Assumption \ref{ass:kolyvagin-primes} implies $p^s\,|\,\ell+1$. Therefore, in this case, the second condition in Assumption \ref{ass:kolyvagin-primes} is automatically verified. This happens, \emph{e.g.}, when $T=E[p^s]$ is the $\Z/p^s\Z$-module of $p^s$-torsion points of an elliptic curve $E$ defined over $\Q$.
\end{remark}

Let $(n,\ell)\in\mathcal{N}\times\mathcal{P}$ with $\ell \nmid n$ and let $\lambda$ be the unique prime of $K$ above $\ell$. Thanks to the first condition in Assumption \ref{ass:kolyvagin-primes}, we know that $\Frob_\lambda$ acts trivially on $T$. Therefore, by \cite[Lemma 2.8]{mastella2025anticyclotomic}, there are canonical isomorphisms
\begin{equation} \label{eq:finite-isomorphism}
\alpha_\ell:H^1_{\f}(K_\lambda,T) \xlongrightarrow{\simeq} T,
\end{equation}
where $H^1_{\f}(K_\lambda,T)=H^1_{\mathrm{unr}}(K_\lambda,T)$ denotes unramified cohomology at unramified primes, and  
\[
\beta_\ell:H^1_{\s}(K_\lambda,T) \otimes G_\ell \xlongrightarrow{\simeq} T,
\]
where 
\[
H^1_{\s}(K_\lambda,T)\defeq H^1(K_\lambda,T)\big/H^1_{\f}(K_\lambda,T)\simeq H^1(I_\lambda,T)^{\Gal(K_\lambda^\mathrm{unr}/K_\lambda)}
\]
and $I_\lambda$ is the inertia subgroup of $G_{K_\lambda}$. 
Following \cite[\S2.7]{mastella2025anticyclotomic}, we also fix an $R$-automorphism $\chi_{n,\ell}:T \overset\simeq\longrightarrow T$ and use it to define modified finite-singular isomorphisms at $\ell$ as
\[
\phi_\ell^{\fs}(\chi_{n,\ell}):H^1_{\f}(K_\lambda,T_s)\overset{\alpha_\ell}\longrightarrow T \xrightarrow{\chi_{n,\ell}} T \xrightarrow{\beta_\ell^{-1}} H^1_{\s}(K_\lambda,T) \otimes G_\ell.
\]
We also introduce maps
\begin{itemize}
\item $\psi_{n \ell}^{\ell}:\Sel_{\mathcal{F}(n\ell)}(K,T) \otimes G_{n \ell} \xrightarrow{\loc_\ell} H^1(K_\lambda,T) \otimes G_{n \ell} \longrightarrow H^1_{\s}(K_\lambda,T) \otimes G_{n\ell}$;
\item $\psi_n^\ell(\chi_{n,\ell}):\Sel_{\mathcal{F}(n)}(K,T) \otimes G_n \xrightarrow{\loc_\ell} H^1_{\f}(K_\lambda,T) \otimes G_n \xrightarrow{\phi_\ell^{\fs}(\chi_{n, \ell})} H^1_{\s}(K_\lambda,T) \otimes G_{n\ell}$.
\end{itemize}

\begin{definition}\label{defKL}
A \textit{$\{\chi_{n,\ell} \}$-Kolyvagin system} for $(T,\mathcal{F},\mathcal{P})$ is a collection of cohomology classes ${\{\kappa_{n}\}}_{n\in\mathcal{N}}$ such that
\begin{itemize}
\item $\kappa_{n}\in\Sel_{\mathcal{F}(n)}(K,T)\otimes G_n$;
\item $\bigl(\psi_n^\ell(\chi_{n,\ell})\bigr)(\kappa_{n})=\psi_{n \ell}^\ell (\kappa_{n \ell})$
\end{itemize}
for all pairs $(n,\ell)\in\mathcal{N}\times\mathcal{P}$ with $\ell\nmid n$.
\end{definition}

Denote by $\ks\bigl(T,\mathcal{F},\mathcal{P},\{\chi_{n,\ell}\}\bigr)$ the $R$-module of $\{\chi_{n,\ell}\}$-Kolyvagin systems for $(T,\mathcal{F},\mathcal{P})$.

\subsubsection{Universal Kolyvagin systems} \label{secUKS}

Let $R$ be a complete noetherian local ring with maximal ideal $\mathfrak{m}$ and finite residue field of characteristic $p$. For any integer $s\geq1$, set $R_{s}\defeq R/\mathfrak{m}^s$ and $T_s\defeq T/\mathfrak{m}^sT$. Denote by $\mathcal{P}_s$ the maximal subset of $\mathcal{P}$ such that Assumption \ref{ass:kolyvagin-primes} is satisfied by the pair $(T_s,\mathcal{P}_s)$. Let also $\mathcal{N}_s$ be the subset of $\mathcal{N}$ consisting of 1 and of the square-free integers divisible only by primes contained in $\mathcal{P}_s$.

Now let $n \in \mathcal{N}_s$ and $\ell \in \mathcal{P}_s$ with $\ell\nmid n$. By \cite[Lemma 1.2.1]{MR}, isomorphisms $\alpha_\ell$ and $\beta_\ell$ for the representations $T_{s'}$ with $1\leq s'\leq s$ are functorial with respect to the canonical projection maps $T_s \twoheadrightarrow T_{s'}$. For each pair $(n,\ell)\in\mathcal{N}\times\mathcal{P}$ with $\ell\nmid n$, fix an $R$-linear automorphism $\chi_{n,\ell}:T \xrightarrow\simeq T$ and notice that it induces automorphisms $\chi_{n,\ell}:T_s \xrightarrow\simeq T_s$ for all integers $s\geq1$.

\begin{definition} \label{defUKS}
A \emph{universal $\{\chi_{n,\ell}\}$-Kolyvagin system} $\kappa$ for the triple $(T,\mathcal{F},\mathcal{P})$ is an element of the $R$-module
\[
\uks\bigl(T,\mathcal{F},\mathcal{P},\{ \chi_{n, \ell}\}\bigr) \defeq \varprojlim_s\Biggl(\varinjlim_{t\geq s}\ks\bigl(T_s,\mathcal{F},\mathcal{P}_t,\{ \chi_{n,\ell}\}\bigr)\!\Biggr).
\]
\end{definition}

\begin{remark} \label{remnot}
Let us specify the maps involved in Definition \ref{defUKS}. To begin with, observe that for $t'\geq t\geq s$ there is an inclusion $\mathcal{P}_{t'}\subset\mathcal{P}_{t}$. Given $\kappa\in\ks\bigl(T_{s},\mathcal{F},\mathcal{P}_{t},\{\chi_{n,\ell}\}\bigr)$, we may define an element $\kappa'\in\ks\bigl(T_{s},\mathcal{F},\mathcal{P}_{t'},\{\chi_{n,\ell}\}\bigr)$ by $\kappa'_n=\kappa_n$ for all $n\in\mathcal{N}_{t'}$: the association $\kappa\mapsto\kappa'$ then defines a map 
\[
F_{t,t'}^{(s)}:\ks\bigl(T_s,\mathcal{F},\mathcal{P}_{t},\{\chi_{n,\ell}\}\bigr)\longrightarrow \ks\bigl(T_s,\mathcal{F},\mathcal{P}_{t'},\{\chi_{n,\ell}\}\bigr),
\]
and the data 
\[
\Bigl(\Bigl\{\ks\bigl(T_s,\mathcal{F},\mathcal{P}_{t},\{\chi_{n,\ell}\}\bigr)\Bigr\}_{t\geq s},\Bigl\{F_{t,t'}^{(s)}\Bigr\}_{t'\geq t\geq s}\Bigr)
\]
give the direct system in $t\geq s$ in Definition \ref{defUKS}. On the other hand, if $s'\geq s$, then the canonical projection $T_{s'}\twoheadrightarrow T_s$ defines a map 
\[
P_{s',s}^{(t)}:\ks\bigl(T_{s'},\mathcal{F},\mathcal{P}_t,\{\chi_{n,\ell}\}\bigr)\longrightarrow\ks\bigl(T_{s},\mathcal{F},\mathcal{P}_t,\{\chi_{n,\ell}\}\bigr)
\]
for every $t\geq s'$, and then also a map
\[
P_{s',s}:\varinjlim_{t\geq s'}\ks\bigl(T_{s'},\mathcal{F},\mathcal{P}_t,\{\chi_{n,\ell}\}\bigr)\longrightarrow\varinjlim_{t\geq s}\ks\bigl(T_{s},\mathcal{F},\mathcal{P}_t,\{\chi_{n,\ell}\}\bigr);
\]
the data
\[
\biggl(\biggl\{\varinjlim_{t\geq s}\ks\bigl(T_s,\mathcal{F},\mathcal{P}_{t},\{\chi_{n,\ell}\}\bigr)\biggr\}_{s\geq1},\bigl\{P_{s',s}\bigr\}_{s'\geq s}\biggr)
\]
yield the inverse system in $s$ in Definition \ref{defUKS}. 
\end{remark}

Given $\kappa\in\uks\bigl(T,\mathcal{F},\mathcal{P},\{\chi_{n,\ell}\}\bigr)$, denote by 
$\kappa^{(s)}$ its natural projection to
\[
\varinjlim_{t\geq s}\ks\bigl(T_s,\mathcal{F},\mathcal{P}_t,\{\chi_{n,\ell}\}\bigr)
\]
and by $\mathfrak{K}_s$ its set of representatives in
$\bigsqcup_{t\geq s}\ks\bigl(T_s,\mathcal{F},\mathcal{P}_t,\{\chi_{n,\ell}\}\bigr)$. We choose $\kappa^{(s,t)}\in \mathfrak{K}_{s}$ for all integers $s \geq 1$ 
and we set
\[
\kappa_1\defeq\varprojlim_s\kappa^{(s,t)}_{1}\in\varprojlim_s H^1(K,T_s)\simeq H^1(K,T), 
\]
where $\kappa_1$ is well-defined since $\kappa^{(s,t)}_1$ does not depend on the choice of a representative in $\mathfrak{K}_s$. 
When $\chi_{n,\ell}$, $\mathcal{F}$ or $\mathcal{P}$ are understood, we omit the symbols from the notation; in particular, we 
write $\uks(T)$ or $\uks(T,\mathcal{P})$ 
for $\uks\bigl(T,\mathcal{F},\mathcal{P},\{\chi_{n,\ell}\}\bigr)$, 
and similarly we write $\ks(T_s)$ for $\varinjlim_{t\geq s}\ks\bigl(T_s,\mathcal{F},\mathcal{P}_t,\{\chi_{n,\ell}\}\bigr)$ and $\ks(T_s,\mathcal{P}_t)$ for $\ks\bigl(T_s,\mathcal{F},\mathcal{P}_t,\{\chi_{n,\ell}\}\bigr)$.

\subsection{Assumptions}

Let $v$ be a place of $K$ and see $T$ as an $R[[G_{K_v}]]$-module. The \textit{quotient category} $\mathrm{Quot}(T)$ of $T$ is the category whose objects are quotients $T/IT$ of $T$ by ideals $I$ of $R$ and the morphisms from $T/IT$ to $T/JT$ are the maps induced by scalar multiplications $r\in R$ with $rI\subset J$. As in \cite[Definition 1.1.2]{HoHeeg}, a local condition $\mathcal{G}$ on $T$ at $v$ is \emph{cartesian} if for any injective morphism $\alpha : T/IT\to T/JT$ in $\mathrm{Quot}(T)$ the local condition $\mathcal{G}$ on $T/IT$ is the same as the local condition obtained by propagating $\mathcal{G}$ from $T/JT$ to $T/IT$.

As before, denote by $\mathcal{F}$ a Selmer structure on $T$ over $K$ and by $\mathcal{P}$ a set of prime numbers satisfying Assumption \ref{ass:kolyvagin-primes}. When $R$ is a principal artinian local ring or a discrete valuation ring, we work under the following assumption on $(T,\mathcal{F},\mathcal{P})$, in which the first four conditions correspond to hypotheses H.1), H.2), H.3), H.4) of \cite[\S1.3]{HoHeeg}, whereas H.5) in \cite[\S1.3]{HoHeeg} is replaced by a stronger one, using the fact that $T$ is a representation of $G_\Q$. While conditions (1)--(5) are of a technical nature, the last one is fundamental for applications of Kolyvagin systems. Recall that we have fixed a complex conjugation $\tau_c\in G_\Q$. 

\begin{assumption}\label{ass:Howard-assumption} The $R[[G_\Q]]$-module $T$ satisfies the following conditions:
\begin{enumerate}
\item $T/\mathfrak{m}T$ is absolutely irreducible over $R/\mathfrak{m}$ as a representation of $G_K$; 
\item there is a finite Galois extension $F$ of $\Q$ containing $K$ such that $G_F$ acts trivially on $T$ and $H^1\bigl(F(\Bmu_{p^\infty})/K),T/\mathfrak{m}T\bigr)=0$; 
\item the local condition $\mathcal{F}$ is cartesian at $Np$ in the category $\mathrm{Quot}(T)$;
\item there is a perfect, skew-symmetric, $G_\Q$-equivariant and $R$-bilinear pairing
\[
(\cdot,\cdot):T\times T\longrightarrow R(1)
\]
for which $\mathcal{F}$ is self-dual;
\item[(5.a)] each of the eigenspaces $T^\pm$ of $\tau_c:T\to T$ is free of rank $1$ over $R$ and there is a splitting $T=T^+\oplus T^-$;
\item[(5.b)] the local condition $\mathcal{F}$ at any prime $v$ of $K$ is stable under the action of $\tau_c$; 
\item[(6)] for every $c^\pm\in H^1(K,T/\mathfrak{m}T)^\pm\smallsetminus\{0\}$ and all integers $s\geq1$, there are infinitely many $\ell\in\mathcal{P}_s$ such that $\loc_\ell(c^\pm)\ne 0$.
\end{enumerate}
\end{assumption}

\subsection{Kolyvagin systems over principal artinian local rings}\label{sec:artinian-rings}

In this subsection, let $R$ be a principal artinian local ring of length $k \geq 1$ and fix a generator $\pi$ of its maximal ideal $\mathfrak{m}$. Fix a set $\mathcal{P}$ of prime numbers that do not divide $Np$, are inert in $K$ and satisfy Assumption \ref{ass:kolyvagin-primes}. Fix also a Selmer structure $\mathcal{F}$ on $T$ over $K$ and suppose that the triple $(T,\mathcal{F},\mathcal{P})$ satisfies Assumption \ref{ass:Howard-assumption}. Thanks to \cite[Lemma 1.5.1]{HoHeeg}, the triple $\bigl(T,\mathcal{F}^a_b(n),\mathcal{P}\bigr)$ satisfies Assumption \ref{ass:Howard-assumption} for all pairwise coprime integers $a,b,n\in\mathcal{N}$. 

\subsubsection{Cyclic summands of Selmer groups}

In the following, for any group of type $\Sel_\bullet(K,\bullet)$, we denote by $\Sel_\bullet^\pm(K,\bullet)$ its submodules on which $\tau_c$ acts as $\pm 1$.

\begin{lemma} \label{InfPrimes} 
Let $n\in\mathcal{N}$ and let $C^{\pm}$ be a cyclic summand of maximal length of $\Sel_{\mathcal{F}(n)}^\pm(K,T)$. If $\Sel_{\mathcal{F}(n)}(K,T)\neq 0$, then there are infinitely many 
    $\ell \in \mathcal{P}$ such that the restriction of
$
\loc_\ell:\Sel_{\mathcal{F}(n)}(K,T) \rightarrow H^1_{\f}(K_\lambda,T)
$
to $C^\pm$ is injective. 
\end{lemma}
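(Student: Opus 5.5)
The idea is to reduce injectivity on a cyclic $R$-module to nonvanishing on its socle, and then apply condition (6) of Assumption \ref{ass:Howard-assumption}. The one subtlety is that condition (6) concerns classes with coefficients in $T/\mathfrak{m}T$, whereas the socle of $C^\pm$ lives in $H^1(K,T)$.

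\emph{Reduction to the socle.} Since $R$ is principal artinian with maximal ideal $(\pi)$, write $C^\pm=R\cdot c$ with $c$ of exact order $\pi^j$, $j\geq1$. (If $C^\pm=0$ the statement is vacuous. The hypothesis $\Sel_{\mathcal{F}(n)}(K,T)\neq0$ guarantees that some eigenspace is nonzero; for an eigenspace that vanishes we interpret $C^\pm=0$.) Every nonzero submodule of $C^\pm$ contains the socle $\pi^{j-1}C^\pm=R\cdot\pi^{j-1}c$, which is the unique minimal nonzero submodule. Hence $\loc_\ell|_{C^\pm}$ is injective if and only if $\loc_\ell(\pi^{j-1}c)\neq0$. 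So it suffices to find infinitely many $\ell\in\mathcal{P}$ with $\loc_\ell(d)\neq0$, where $d\defeq\pi^{j-1}c$ is a nonzero element of $H^1(K,T)^\pm$ killed by $\mathfrak{m}$. Note that $\loc_\ell(d)$ automatically lies in $H^1_{\f}(K_\lambda,T)$ for $\ell\nmid n$, since $d\in\Sel_{\mathcal{F}(n)}$; we only need to avoid the finitely many $\ell\mid n$.

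\emph{Passing to $T/\mathfrak{m}T$.} Let $k$ be the length of $R$. By (5.a), $T$ is free, so multiplication by $\pi^{k-1}$ induces a $G_\Q$-equivariant isomorphism $T/\mathfrak{m}T\simeq\mathfrak{m}^{k-1}T=T[\mathfrak{m}]$. Condition (1) gives $H^0(K,T/\mathfrak{m}T)=0$. The long exact sequence of $0\to T[\mathfrak{m}]\to T\xrightarrow{\pi} \pi T\to0$ then shows that $H^1(K,T[\mathfrak{m}])\to H^1(K,T)[\mathfrak{m}]$ is injective. Running the same sequence through $0\to\pi T\to T\to T/\pi T\to 0$ identifies it with an isomorphism; this is the standard argument in \cite[Lemma 1.2.3]{MR} or \cite[Lemma 1.3.3]{HoHeeg}. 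Thus $d=\iota(\bar d)$ for a unique nonzero $\bar d\in H^1(K,T/\mathfrak{m}T)$. The inclusion $\iota$ commutes with $\tau_c$, so $\bar d$ lies in the $\pm$-eigenspace.

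\emph{Locally.} Suppose $\loc_\ell(d)=0$. Since $\ell\in\mathcal{P}$, $\Frob_\lambda$ acts trivially, so both $H^1_{\f}(K_\lambda,T[\mathfrak{m}])\to H^1_{\f}(K_\lambda,T)$ and $H^1_{\f}(K_\lambda,T)\to H^1(K_\lambda,T)$ are compatible with the isomorphisms $\alpha_\ell$ to $T[\mathfrak{m}]\subset T$ and are therefore injective. Hence $\loc_\ell(\bar d)=0$. By (6), applied with $s=k$ so that $\mathcal{P}_s$ consists of primes in $\mathcal{P}$, there are infinitely many $\ell$ with $\loc_\ell(\bar d)\neq0$. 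Discarding the finitely many $\ell\mid n$ gives the result.

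The main obstacle is the injectivity bookkeeping. Globally, we need $H^1(K,T[\mathfrak{m}])\hookrightarrow H^1(K,T)$, which follows from (1). Locally, we need the unramified parts to embed. The local step must be checked carefully with $\alpha_\ell$ rather than with the full $H^1(K_\lambda,\cdot)$, because there $H^0(K_\lambda,\cdot)$ is nonzero.
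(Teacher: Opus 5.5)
Your reduction is the paper's own: pass to the socle $\pi^{j-1}c$ of the cyclic summand, identify it via \cite[Lemma 1.3.3]{HoHeeg} with a nonzero $\tau_c$-eigenclass $\bar d\in H^1(K,T/\mathfrak{m}T)$, and apply condition (6). The point you do not address is \emph{simultaneity}. The lemma concerns $C^+$ and $C^-$ together. This is why its hypothesis is $\Sel_{\mathcal{F}(n)}(K,T)\neq0$ rather than nonvanishing of one eigenspace, and why the paper's proof treats the case where both eigenspaces are nonzero as the main one. It is also how the lemma is used: in Propositions \ref{prop-structureart} and \ref{classInStub}, Lemma \ref{existSurjPath} and Theorem \ref{artSel} one needs a \emph{single} prime $\ell$ with $\loc_\ell$ injective on $E_n=C_n\oplus D_n$, where $C_n$ and $D_n$ lie in opposite eigenspaces. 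Running your argument once for each sign gives two infinite sets of good primes, and nothing forces them to intersect. So, as written, you prove only the weaker sign-by-sign statement; your remark that the hypothesis merely makes \emph{some} eigenspace nonzero is where this reading enters.

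The repair is short. Apply (6) to the pair $(\bar d^+,\bar d^-)$ at once, reading it as the paper does: it is modelled on the Chebotarev argument of \cite[Lemma 1.6.2]{HoHeeg}, which makes both localizations nonzero for infinitely many $\ell$. If you read (6) one class at a time, you must redo that Chebotarev argument for the extension cut out by $\bar d^+$ and $\bar d^-$ jointly. Then $\loc_\ell$ is $\tau_c$-equivariant, and the images of $C^\pm$ land in the complementary eigenspaces $H^1_{\f}(K_\lambda,T)^\pm$, which meet trivially since $2$ is invertible. Hence injectivity on $C^+$ and on $C^-$ for the same $\ell$ gives injectivity on $C^+\oplus C^-$.

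Your local bookkeeping is correct and more explicit than the paper's ``the conclusion follows''. Your caution about $H^0(K_\lambda,\cdot)$ is unfounded, though. Since $G_{K_\lambda}$ acts trivially on $T$, the map $H^0(K_\lambda,T)=T\to H^0(K_\lambda,\pi T)=\pi T$ is onto, so the whole map $H^1(K_\lambda,T[\mathfrak{m}])\to H^1(K_\lambda,T)$ is injective. This also removes the need to check that $\loc_\ell(\bar d)$ is unramified, which your argument uses tacitly.
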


\begin{proof} We assume that $\Sel_{\mathcal{F}(n)}^\pm(K,T)$ are both non-zero, since the other cases are simpler. Let $C^\pm\subset \Sel_{\mathcal{F}(n)}^\pm(K,T)$ be two cyclic components of maximal length, say $l^\pm$, and choose $R$-generators $c^\pm$. Then $d^\pm=\pi^{l^\pm-1}c^\pm$ are non-zero elements of
$\Sel_{\mathcal{F}(n)}^\pm(K,T)[\mathfrak{m}]$ and it suffices to prove that $\loc_\ell(d^\pm)\neq 0$. Since $\Sel_{\mathcal{F}(n)}^\pm(K,T)[\mathfrak{m}]\simeq \Sel_{\mathcal{F}(n)}^\pm(K,T/\mathfrak{m}T)$ by \cite[Lemma 1.3.3]{HoHeeg} (the isomorphism in \cite{HoHeeg} is $\tau_c$-equivariant), the images $\bar{d}^\pm$ of $d^\pm$ in $\Sel_{\mathcal{F}(n)}(K,T/\mathfrak{m}T)$ are both non-zero. By condition (6) in Assumption \ref{ass:Howard-assumption}, there are infinitely many $\ell\in\mathcal{P}$ such that $\loc_\ell\bigl(\bar{d}^\pm\bigr)\neq0$, and the conclusion follows. \end{proof}

If $\lambda$ is a prime of $K$ whose residual characteristic belongs to $\mathcal{P}$, then $H^1_{\tr}(K_\lambda,T)$ projects isomorphically onto $H^1_\mathrm{s}(K_\lambda,T)$ 
and there is a splitting $H^1(K_\lambda,T)\simeq H^1_{\f}(K_\lambda,T)\oplus H^1_\mathrm{s}(K_\lambda,T)$ (see, \emph{e.g.}, \cite[Proposition 1.1.9]{HoHeeg}). 

\subsubsection{Results on Selmer groups} \label{preliminary-subsubsec}

We collect some preliminary results on Selmer groups.

\begin{proposition}\label{HoStrArt}
For all $n\in\mathcal{N}$, there are an $R$-module $M_n$ and an isomorphism
\[
\Sel_{\mathcal{F}(n)}(K,T)\simeq R^{e(n)}\oplus M _n\oplus M_n
\]
for some $e(n)\in\{0,1\}$.
\end{proposition}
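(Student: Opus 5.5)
This is essentially \cite[Proposition 1.5.5]{HoHeeg} (Howard's structure theorem for Selmer groups of self-dual representations over principal artinian rings), applied to the modified Selmer structure $\mathcal{F}(n)$. The plan is to reduce to $n=1$ and then reproduce Howard's argument.

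\textbf{Reduction to $n=1$.} By \cite[Lemma 1.5.1]{HoHeeg}, recalled above, the triple $\bigl(T,\mathcal{F}(n),\mathcal{P}\bigr)$ again satisfies Assumption \ref{ass:Howard-assumption}. In particular, the transverse local conditions at primes dividing $n$ are cartesian and self-dual for the pairing in (4). So it suffices to treat $\mathcal{F}$ itself and write $S\defeq\Sel_{\mathcal{F}}(K,T)$.

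\textbf{Decomposition of $S$.} I would follow the argument of Howard (and before him Flach and Mazur--Rubin), which has three steps.

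\begin{enumerate}
\item Build a Cassels--Tate type pairing on $S$. For each $0\le i\le k$, the perfect skew-symmetric pairing $T\times T\to R(1)$ and the self-duality of $\mathcal{F}$ give, via Poitou--Tate global duality, an alternating pairing on $\Sel_{\mathcal{F}}(K,T)[\mathfrak{m}^i]$. Concretely, for $x,y\in S$ with $\pi^i x=0$, lift $y$ to a class for $T$ regarded over a larger ring, divide by $\pi^i$, and sum the local invariants.
\item Show that this pairing is non-degenerate on $S/S_{\mathrm{div}}$. Here the ``divisible part'' means the free summand, which is maximal-length cyclic for $R$ of length $k$. Alternating non-degenerate pairings on finite $R$-modules force the non-free part to have the shape $M\oplus M$, as in the classical theory of symplectic finite abelian groups.
\item Bound the number of free summands. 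The cartesian property together with \cite[Lemma 1.3.3]{HoHeeg} shows that $S[\mathfrak{m}^i]\simeq\Sel_{\mathcal{F}}(K,T/\mathfrak{m}^iT)$. Comparing $\dim_{R/\mathfrak{m}} S[\mathfrak{m}]$ with the parity of Selmer ranks gives $S\simeq R^{e}\oplus M\oplus M$, where $e$ has fixed parity. Reducing modulo $\mathfrak{m}$ then lets us force $e\in\{0,1\}$: any two free summands can be absorbed into $M\oplus M$ by setting $M\supset R$.
\end{enumerate}

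The last point is the easy one. If $e\ge 2$, write $R^e=R^{e-2}\oplus R\oplus R$ and enlarge $M$ by one copy of $R$. So only the parity of $e$ matters, and the decomposition with $e\in\{0,1\}$ follows once $S\simeq R^{e}\oplus M'\oplus M'$ for some $e$.

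\textbf{Main obstacle.} The hardest step is the existence of the alternating pairing and its compatibility with the cartesian local conditions at $Np$ and at $\ell\mid n$. In particular, one must check that $\mathcal{F}(n)$ is its own exact orthogonal complement under local Tate duality at each $\lambda\mid n$. I would take this from \cite[Proposition 1.1.9, Lemma 1.5.1]{HoHeeg}: there the transverse condition at $\lambda$ is shown to be maximal isotropic, because $H^1_{\f}$ and $H^1_{\tr}$ are complementary Lagrangians.

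With the duality in place, the algebraic step (non-degenerate alternating pairing implies the $M\oplus M$ shape) is routine. I would cite \cite[Theorem 1.4.2]{MR} or \cite[Proposition 1.5.5]{HoHeeg} for it rather than reprove it.
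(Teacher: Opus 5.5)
Your proposal is correct and is essentially the paper's proof. The paper simply cites Howard's structure theorem \cite[Theorem 1.4.2]{HoHeeg}, which applies to $\mathcal{F}(n)$ because $(T,\mathcal{F}(n),\mathcal{P})$ again satisfies Assumption \ref{ass:Howard-assumption} by \cite[Lemma 1.5.1]{HoHeeg}, exactly as in your reduction step; your remark that pairs of free summands can be absorbed into $M$ also correctly shows that only the parity of $e$ matters.

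Two corrections. First, the structure theorem is Theorem 1.4.2 of \cite{HoHeeg}, not its Proposition 1.5.5. Second, over an artinian $R$ there is no canonical ``divisible part'' $S_{\mathrm{div}}$ to quotient by, so the argument does not use a single pairing on $S/S_{\mathrm{div}}$. Instead it runs through Flach-type pairings on the layers $S[\mathfrak{m}^s]/\mathfrak{m}S[\mathfrak{m}^{s+1}]$, as recalled in Step 3 of the proof of Proposition \ref{prop-structureart}.
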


\begin{proof} See, \emph{e.g.}, \cite[Theorem 1.4.2]{HoHeeg}. \end{proof}

For all $n\in\mathcal{N}$, set
\begin{itemize}
\item $\lambda_n\defeq\length_R(M_n)$;
\item $\rho(n)^\pm\defeq\dim_{R/\mathfrak{m}}\bigl(\Sel_{\mathcal{F}(n)}^\pm(K,T/\mathfrak{m}T)\bigr)=\dim_{R/\mathfrak{m}}\bigl(\Sel_{\mathcal{F}(n)}^\pm(K,T)[\mathfrak{m}]\bigr)$;
\item $\rho(n)\defeq\rho(n)^+ +\rho(n)^-=\dim_{R/\mathfrak{m}}\bigl(\Sel_{\mathcal{F}(n)}(K,T/\mathfrak{m}T)\bigr)=\dim_{R/\mathfrak{m}}\bigl(\Sel_{\mathcal{F}(n)}(K,T)[\mathfrak{m}]\bigr)$,
\end{itemize}
where the rightmost equalities in the last two points stem from \cite[Lemma 1.3.3]{HoHeeg}.

\begin{proposition} \label{SelIncl}
Let $n \in \mathcal{N}$ and $\ell \in \mathcal{P}$ with $\ell \nmid n$. There are integers $a,b,\delta \geq 0$ such that in the diagram of inclusions
\begin{equation} \label{diagram-inclusions-eq}
\begin{tikzcd}
 & \Sel_{\mathcal{F}^\ell(n)}(K,T) & \\
 \Sel_{\mathcal{F}(n)}(K,T) \arrow[ur,hook,"k-a{,}\:k-b"] & & \Sel_{\mathcal{F}(n\ell)}(K,T) \arrow[ul,hook,"a+\delta{,}\:b+\delta"] \\
 & \Sel_{\mathcal{F}_\ell(n)}(K,T) \arrow[ul,hook,"a{,}\:b"] \arrow[ur,hook,"k-a-\delta{,}\:k-b-\delta"] &
\end{tikzcd}
\end{equation}
the cokernel of each map is a direct sum of two cyclic $R$-modules of the indicated lengths.
\end{proposition}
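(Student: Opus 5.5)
This is the analogue of \cite[Lemma 1.5.3 / Proposition 1.5.4]{HoHeeg} and of \cite[Lemma 4.1.7]{MR}. The strategy is to bound the cokernels of the two maps out of the strict Selmer group, and then use global duality to transfer these bounds to the two maps into the relaxed Selmer group.

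\emph{Step 1: the lower-left inclusion.} The strict group is the kernel of
\[
\loc_\ell:\Sel_{\mathcal{F}(n)}(K,T)\to H^1_{\f}(K_\lambda,T),
\]
so the cokernel of $\Sel_{\mathcal{F}_\ell(n)}\hookrightarrow\Sel_{\mathcal{F}(n)}$ is the image of $\loc_\ell$. By \eqref{eq:finite-isomorphism}, $H^1_{\f}(K_\lambda,T)\simeq T\simeq R^2$. Any submodule of $R^2$, with $R$ principal artinian, is isomorphic to $R/\mathfrak{m}^{a}\oplus R/\mathfrak{m}^{b}$ for some $a,b\le k$. The same argument for $\Sel_{\mathcal{F}(n\ell)}$, using the transverse condition and $H^1_{\tr}(K_\lambda,T)\simeq H^1_{\s}(K_\lambda,T)\simeq T$ (via $\beta_\ell$), shows that the cokernel of $\Sel_{\mathcal{F}_\ell(n)}\hookrightarrow\Sel_{\mathcal{F}(n\ell)}$ is also a submodule of $R^2$. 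Hence it is of the form $R/\mathfrak{m}^{a'}\oplus R/\mathfrak{m}^{b'}$.

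\emph{Step 2: duality.} The pairing in condition (4) of Assumption \ref{ass:Howard-assumption} is used together with the self-duality of $\mathcal{F}$, which propagates to $\mathcal{F}(n)$ and makes $H^1_{\f}$ and $H^1_{\tr}$ at $\lambda$ their own orthogonal complements under local Tate duality. Poitou--Tate duality then gives the exact sequence
\[
0\to\Sel_{\mathcal{F}_\ell(n)}\to\Sel_{\mathcal{F}^\ell(n)}\to H^1(K_\lambda,T)\to \Sel_{\mathcal{F}^\ell(n)}^\vee\to\Sel_{\mathcal{F}_\ell(n)}^\vee\to0 .
\]
From this one sees that the image of the relaxed group in $H^1(K_\lambda,T)\simeq R^4$ is a maximal isotropic submodule of length $2k$. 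For $\Sel_{\mathcal{F}(n)}\subset\Sel_{\mathcal{F}^\ell(n)}$, the image of $\Sel_{\mathcal{F}^\ell(n)}$ in $H^1_{\s}(K_\lambda,T)\simeq R^2$ is the cokernel. Duality identifies it with the Pontryagin dual (annihilator) of the image of $\Sel_{\mathcal{F}(n)}$ in $H^1_{\f}$. The annihilator of $R/\mathfrak{m}^a\oplus R/\mathfrak{m}^b\subset R^2$ is $R/\mathfrak{m}^{k-a}\oplus R/\mathfrak{m}^{k-b}$, which gives the upper-left labels. Symmetrically, the cokernel of $\Sel_{\mathcal{F}(n\ell)}\subset\Sel_{\mathcal{F}^\ell(n)}$ has lengths $k-a',k-b'$.

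\emph{Step 3: relating $(a',b')$ to $(a,b)$.} Set $Q=\Sel_{\mathcal{F}^\ell(n)}/\Sel_{\mathcal{F}_\ell(n)}$, which is the image in $H^1(K_\lambda,T)=H^1_{\f}\oplus H^1_{\tr}$ (an identification $R^2\oplus R^2$). It has length $2k$ by Step 2 and surjects onto its projection to $H^1_{\s}$, with kernel the image of $\Sel_{\mathcal{F}(n)}$ in $H^1_\f$. Intersecting $Q$ with $H^1_{\tr}$ gives $R/\mathfrak{m}^{a'}\oplus R/\mathfrak{m}^{b'}$, while its projection to $H^1_\f$ is the annihilator of that intersection. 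This shows that the lengths pair up.

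Next I would show that one can choose diagonal-type bases: $Q$ is a Lagrangian in $R^2\oplus R^2$ with respect to the standard symplectic form pairing $\f$ with $\tr$. The elementary-divisor theory of $Q\cap H^1_\f$ inside $Q$ should then yield a common basis in which $a'=a+\delta$ and $b'=b+\delta$ for a single $\delta\ge0$. The equality of the shift in both coordinates should come from the $\tau_c$-eigenspace decomposition (5.a)--(5.b). Each $\pm$-piece of $H^1_{\f}$ and $H^1_{\tr}$ is free of rank one, and the pairing exchanges the $\pm$ pieces. So on $Q^+\oplus Q^-$ the shifts are forced to agree by the duality between $Q^+$ and $Q^-$. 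Finally, the lower-right labels $k-a-\delta,\,k-b-\delta$ follow by duality from the upper-right ones $a+\delta,\,b+\delta$, exactly as in Step 2.

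I expect Step 3 to be the main obstacle: establishing that the shift $\delta$ is the same for both cyclic summands. I would first try a length count, $\length Q=2k$, combined with the parity constraint of Proposition \ref{HoStrArt} (the $M_n\oplus M_n$ shape for both $n$ and $n\ell$). If that fails, I would fall back on the eigenspace argument, using $\tau_c$ on $H^1_{\f}(K_\lambda,T)^\pm$, each of which is free of rank one.
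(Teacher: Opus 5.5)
Your Steps 1 and 2 are the standard opening (the paper itself only cites \cite[Lemma 1.5.8]{HoHeeg}). But Step 3 carries the entire content of the lemma, namely the common shift $\delta$, and it is not proved. Both mechanisms you propose for it rest on incorrect input.

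\emph{The form is not symplectic.} Since the pairing on $T$ is skew-symmetric, graded commutativity of the cup product makes the local Tate pairing on $H^1(K_\lambda,T)=H^1_{\f}(K_\lambda,T)\oplus H^1_{\tr}(K_\lambda,T)$ \emph{symmetric}. This is essential: for a symplectic hyperbolic form on $R^2\oplus R^2$ the corresponding assertion about Lagrangians is false. For instance, the graph $Q=\{(x,Sx)\}$ of $S=\mathrm{diag}(\pi^{m_1},\pi^{m_2})$ is then Lagrangian, with $Q\cap H^1_{\f}\simeq R/\mathfrak m^{m_1}\oplus R/\mathfrak m^{m_2}$ and $Q\cap H^1_{\tr}=0$. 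This fits the diagram only if $m_1=m_2$, so no elementary-divisor argument for symplectic Lagrangians can produce a common $\delta$.

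\emph{The bookkeeping is off.} The common shift relates $A\defeq Q\cap H^1_{\f}$ (lengths $a,b$) to $C\defeq\mathrm{pr}_{\f}(Q)$, which is the cokernel of $\Sel_{\mathcal{F}(n\ell)}(K,T)\hookrightarrow\Sel_{\mathcal{F}^\ell(n)}(K,T)$ (lengths $a+\delta,b+\delta$). Meanwhile $B\defeq Q\cap H^1_{\tr}$ must have lengths $k-a-\delta,k-b-\delta$, so your ``$a'=a+\delta$'' contradicts your own last sentence.

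\emph{The fallback's premise is wrong.} The local pairing is $\tau_c$-invariant: the coefficient pairing is $G_\Q$-equivariant and the local invariant map is Galois-invariant. Hence the two eigenspaces of $H^1(K_\lambda,T)$ are orthogonal, and $H^1_{\f}(K_\lambda,T)^\pm$ is paired with $H^1_{\tr}(K_\lambda,T)^\pm$. The pairing does not exchange signs, and there is no duality between $Q^+$ and $Q^-$.

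The missing idea is an alternating form on $C/A$.
\begin{itemize}
\item Projecting $Q$ to the two factors gives isomorphisms $Q/(A\oplus B)\simeq C/A$ and $Q/(A\oplus B)\simeq\mathrm{pr}_{\tr}(Q)/B$, hence an isomorphism $\phi\colon C/A\to\mathrm{pr}_{\tr}(Q)/B$.
\item By your Step 2, $C$ and $\mathrm{pr}_{\tr}(Q)$ are the annihilators of $B$ and $A$. So the pairing $H^1_{\f}\times H^1_{\tr}\to R$ induces a perfect pairing $C/A\times\mathrm{pr}_{\tr}(Q)/B\to R$.
\item For $(x,y),(x',y')\in Q$, isotropy of $Q$, $H^1_{\f}$ and $H^1_{\tr}$ gives $\langle x,y'\rangle+\langle y,x'\rangle=0$. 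Symmetry turns this into $\langle x,y'\rangle=-\langle x',y\rangle$. So $(u,u')\mapsto\langle u,\phi(u')\rangle$ is a perfect alternating form on $C/A$, using that $p$ is odd.
\item Hence $C/A\simeq N\oplus N$. Since $C/A$ is generated by two elements, $N\simeq R/\mathfrak m^\delta$.
\item Then $\mathfrak m^\delta C\subset A$. The module $C/\mathfrak m^\delta C$ surjects onto $C/A$, which has length $2\delta$, and has length at most $2\delta$ because $C$ is $2$-generated; this forces $A=\mathfrak m^\delta C$.
\item Therefore $C\simeq R/\mathfrak m^{a+\delta}\oplus R/\mathfrak m^{b+\delta}$. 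Finally $B$, the annihilator of $C$ in $H^1_{\tr}\simeq R^2$, has lengths $k-a-\delta,k-b-\delta$.
\end{itemize}

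Used correctly, the $\tau_c$-splitting gives an even quicker proof. Each $Q^\pm$ is Lagrangian in the symmetric hyperbolic plane $H^1_{\f}(K_\lambda,T)^\pm\oplus H^1_{\tr}(K_\lambda,T)^\pm$. Writing $v=xe+yf$ in dual bases, $\langle v,v\rangle=2xy$ vanishes on $Q^\pm$. This forces $Q^\pm=\mathfrak m^{c}H^1_{\f}(K_\lambda,T)^\pm\oplus\mathfrak m^{k-c}H^1_{\tr}(K_\lambda,T)^\pm$ for some $c=c_\pm$, so the diagram holds with $\delta=0$.
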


\begin{proof} See, \emph{e.g.}, \cite[Lemma 1.5.8]{HoHeeg}. \end{proof}

The next two lemmas deal with eigenspaces for complex conjugation.

\begin{lemma} \label{Ho1.5.3} 
Suppose that $n\ell\in \mathcal{N}$ with $\ell$ a prime number. 
\begin{enumerate}
\item If $\loc_\ell\bigl(\Sel_{\mathcal{F}(n)}^\pm(K,T/\mathfrak{m}T)\bigr)\neq0$, then $\rho(n\ell)^\pm=\rho(n)^\pm-1$ and 
\[
\loc_\ell\bigl(\Sel_{\mathcal{F}(n\ell)}^\pm(K,T/\mathfrak{m}T)\bigr)=0.
\]
\item If $\loc_\ell\bigl(\Sel_{\mathcal{F}(n)}^\pm(K,T/\mathfrak{m}T)\bigr)=0$, then $\rho(n\ell)^\pm=\rho(n)^\pm+1$. 
\item The integer $\rho(n)^\pm$ is even if and only if $\rho(n\ell)^\pm$ is odd.  
\end{enumerate}
In particular, $e(n)$ in Proposition \ref{HoStrArt} is independent of $n \in \mathcal{N}$.
\end{lemma}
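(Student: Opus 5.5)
We argue at the level of the residual Selmer groups $\Sel^\pm_{\mathcal{F}(n)}(K,\bar T)$, with $\bar T\defeq T/\mathfrak{m}T$, and compare them with $\Sel^\pm_{\mathcal{F}(n\ell)}(K,\bar T)$ through the common subgroup $\Sel^\pm_{\mathcal{F}_\ell(n)}(K,\bar T)$ and the common overgroup $\Sel^\pm_{\mathcal{F}^\ell(n)}(K,\bar T)$. All four Selmer structures are stable under $\tau_c$: away from $\ell$ by (5.b), and at $\ell$ because the finite, transverse, strict and relaxed conditions are $\tau_c$-stable. Here one uses that $\tau_c$ normalizes the decomposition group at the inert prime $\lambda$, and that $K^{\tr}_\lambda$ is stable because $K[\ell]$ is Galois over $\Q$. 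Since $p$ is odd, all the exact sequences below split into $\pm$-eigenspaces, which is what makes the argument work eigenspace by eigenspace.

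The key input is the eigenspace version of the Poitou--Tate/global duality count in \cite[Lemma 1.5.3]{HoHeeg}. Over the field $R/\mathfrak{m}$, $H^1_{\f}(K_\lambda,\bar T)\simeq\bar T$ via $\alpha_\ell$ and $H^1_{\s}(K_\lambda,\bar T)\simeq \bar T\otimes G_\ell^{\vee}$ via $\beta_\ell$. Each is $2$-dimensional with $\tau_c$ acting with eigenvalues $+1,-1$, once one checks that $\tau_c$ acts on $G_\ell$ by inversion, which holds since $\Gal(K[\ell]/\Q)$ is generalized dihedral. Hence each of the $\pm$-eigenspaces of $H^1_{\f}$ and $H^1_{\s}$ is one-dimensional. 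There are two exact sequences
\[
0\to\Sel_{\mathcal{F}_\ell(n)}\to\Sel_{\mathcal{F}(n)}\xrightarrow{\loc_\ell}H^1_{\f}(K_\lambda,\bar T),\qquad 0\to\Sel_{\mathcal{F}(n)}\to\Sel_{\mathcal{F}^\ell(n)}\to H^1_{\s}(K_\lambda,\bar T).
\]
Global duality, via the self-dual pairing (4), makes the images of $\Sel_{\mathcal{F}^\ell(n)}$ and $\Sel_{\mathcal{F}(n)}$ in $H^1(K_\lambda,\bar T)$ exact annihilators. The local Tate pairing is $\tau_c$-equivariant into a module on which $\tau_c$ acts by $-1$ (on $\Bmu_p$ twisted appropriately), and the finite and singular parts are each other's duals. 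Combining these, each eigenspace obeys the same "exactly one of the two maps is surjective" dichotomy: $\loc_\ell(\Sel^\pm_{\mathcal{F}(n)})\ne0$ if and only if $\Sel^\pm_{\mathcal{F}^\ell(n)}=\Sel^\pm_{\mathcal{F}(n)}$. The analogous statement with the transverse condition holds for $\Sel_{\mathcal{F}(n\ell)}$, since $H^1_{\tr}$ projects isomorphically onto $H^1_{\s}$.

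Parts (1) and (2) then follow by dimension chasing. In case (1), $\dim\Sel^\pm_{\mathcal{F}_\ell(n)}=\rho(n)^\pm-1$ and $\Sel^\pm_{\mathcal{F}^\ell(n)}=\Sel^\pm_{\mathcal{F}(n)}$. Because $\Sel^\pm_{\mathcal{F}(n\ell)}$ lies between these and meets the finite part trivially, it equals $\Sel^\pm_{\mathcal{F}_\ell(n)}$; this gives $\rho(n\ell)^\pm=\rho(n)^\pm-1$ and $\loc_\ell=0$ on it. In case (2), $\Sel^\pm_{\mathcal{F}_\ell(n)}=\Sel^\pm_{\mathcal{F}(n)}$ and the relaxed group is one dimension larger, with nonzero singular image. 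The transverse Selmer group picks up exactly that extra dimension, so $\rho(n\ell)^\pm=\rho(n)^\pm+1$.

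Part (3) is immediate from (1) and (2), since $\rho$ changes by $\pm1$. For the final claim, $e(n)\equiv\rho(n)\pmod 2$ by Proposition \ref{HoStrArt}, since $M_n\oplus M_n$ contributes an even amount to $\rho$. By (3) applied to both signs, $\rho(n\ell)=\rho(n)^+ +\rho(n)^-\pm1\pm1\equiv\rho(n)\pmod 2$, and induction on the number of prime factors of $n$ gives that $e(n)$ is independent of $n$.

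The main obstacle is the eigenspace refinement of the duality count: making sure that the local Tate pairing really pairs the $+$ part of the finite condition with the appropriate part of the singular condition, so that the one-dimensionality and annihilator statements hold separately for each sign. Here I would follow \cite[Lemma 1.5.3]{HoHeeg} closely and verify its $\tau_c$-equivariance directly.
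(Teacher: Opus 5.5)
Your arguments for (1), for (3) and for the independence of $e(n)$ work once the same-sign dichotomy is granted. Case (2), however, has a genuine gap at the sentence ``the transverse Selmer group picks up exactly that extra dimension''. Put $L^\pm\defeq\loc_\ell\bigl(\Sel^\pm_{\mathcal{F}^\ell(n)}(K,\bar T)\bigr)\subset H^1(K_\lambda,\bar T)^\pm$, and write $H^1_\bullet{}^\pm$ for $H^1_\bullet(K_\lambda,\bar T)^\pm$. In case (2) you correctly get that $L^\pm$ is a line different from $H^1_{\f}{}^\pm$. Since $\Sel^\pm_{\mathcal{F}(n\ell)}(K,\bar T)$ is the preimage of $L^\pm\cap H^1_{\tr}{}^\pm$, what has to be shown is exactly $L^\pm=H^1_{\tr}{}^\pm$.

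Nothing you invoke forces this. The annihilator statements for $\mathcal{F}(n)\subset\mathcal{F}^\ell(n)$ and for $\mathcal{F}(n\ell)\subset\mathcal{F}^\ell(n)$ are equally satisfied if $L^\pm$ is a third line, distinct from both the finite and the transverse one. In that configuration $\Sel^\pm_{\mathcal{F}(n\ell)}=\Sel^\pm_{\mathcal{F}_\ell(n)}$ and $\rho(n\ell)^\pm=\rho(n)^\pm$. That $H^1_{\tr}$ projects isomorphically onto $H^1_{\s}$ does not help, since $L^\pm$ does too. You also cannot fall back on (3), which is deduced from (1) and (2). This step is the content of \cite[Proposition 1.5.5]{HoHeeg}, which the paper cites separately from \cite[Lemma 1.5.3]{HoHeeg}.

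The missing ingredient is the quadratic structure of local cohomology at $\lambda$. The pairing in (4) is skew-symmetric and cup product on $H^1$ is anticommutative, so the local Tate pairing on $H^1(K_\lambda,\bar T)$ is \emph{symmetric}. It is also $\tau_c$-invariant, because Galois acts trivially on $\mathrm{Br}(K_\lambda)$. So $\tau_c$ does not act by $-1$ on the values, as you wrote. The $-1$ on $\mu_p$ is cancelled by the inversion on $G_\ell$, and that is why $H^1_{\f}{}^\pm$ is dual to $H^1_{\s}{}^\pm$ rather than to $H^1_{\s}{}^\mp$; with your sign the dichotomy would cross eigenspaces.

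Consequently the two eigenspaces are orthogonal. Each $H^1(K_\lambda,\bar T)^\pm$ is a nondegenerate symmetric plane containing the two distinct isotropic lines $H^1_{\f}{}^\pm$ and $H^1_{\tr}{}^\pm$, i.e.\ a hyperbolic plane, and since $p$ is odd it has no other isotropic lines. On the other hand, $L^\pm$ is isotropic by global reciprocity, because $\mathcal{F}(n)$ is self-dual at every place other than $\lambda$. (The correct form of your ``exact annihilators'' sentence is that $\loc_\ell\bigl(\Sel_{\mathcal{F}^\ell(n)}(K,\bar T)\bigr)$ is its own annihilator in $H^1(K_\lambda,\bar T)$.) Hence $L^\pm\in\{H^1_{\f}{}^\pm,H^1_{\tr}{}^\pm\}$, which forces $L^\pm=H^1_{\tr}{}^\pm$ in case (2), and the same observation also recovers case (1).
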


\begin{proof} Parts (1) and (2) are \cite[Lemma 1.5.3 and Proposition 1.5.5]{HoHeeg}, 
while part (3) is an immediate consequence of them. \end{proof}

\begin{lemma} \label{lemmaH1f}
For each prime $\lambda$ of $K$ whose residual characteristic lies in $\mathcal{P}$, the $R$-module $H^1_{\f}(K_\lambda,T)$ is free of rank $2$ and each of the two eigenspaces $H^1_{\f}(K_\lambda,T)^\pm$ for the action of $\tau_c$ is free of rank $1$ over $R$.
\end{lemma}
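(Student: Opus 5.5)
The plan is to reduce everything to the canonical isomorphism $\alpha_\ell$ of \eqref{eq:finite-isomorphism} and check that it is compatible with complex conjugation. Let $\ell$ be the residual characteristic of $\lambda$, so $\ell\in\mathcal{P}$ and $\ell$ is inert in $K$. Then $\lambda=\ell\mathcal{O}_K$ is fixed by $\Gal(K/\Q)$, and $\tau_c$ (suitably chosen in its class) acts on $H^1(K_\lambda,T)$. Concretely, since $\Frob_\ell$ is conjugate to $\tau_c$ in $\Gal(K(T)/\Q)$ by Assumption \ref{ass:kolyvagin-primes}, we may realize the action of $\tau_c$ on local cohomology at $\lambda$ through $\Frob_\ell\in G_{\Q_\ell}$. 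Conjugation by an element of $G_{\Q_\ell}$ normalizes $G_{K_\lambda}$, so it induces an action on $H^1(K_\lambda,T)$ that preserves the unramified subgroup $H^1_{\f}(K_\lambda,T)$, because inertia is normal.

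First, $H^1_{\f}(K_\lambda,T)=H^1(\Gal(K_\lambda^{\unr}/K_\lambda),T)\simeq T/(\Frob_\lambda-1)T$, the isomorphism being evaluation of cocycles at $\Frob_\lambda$. Since $\Frob_\lambda=\Frob_\ell^2$ acts trivially on $T$, this quotient is all of $T$. This is exactly $\alpha_\ell$, so $H^1_{\f}(K_\lambda,T)$ is free of rank $2$ over $R$, since $T$ is.

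Second, I will show that $\alpha_\ell$ intertwines the action of $\Frob_\ell$ (equivalently, of $\tau_c$) on $H^1_{\f}(K_\lambda,T)$ with the action of $\Frob_\ell$ on $T$. For an unramified cocycle $c$, the conjugate class is $c'(g)=\Frob_\ell\, c(\Frob_\ell^{-1}g\Frob_\ell)$. Evaluating at $\Frob_\lambda$, which commutes with $\Frob_\ell$ in the procyclic group $\Gal(\Q_\ell^{\unr}/\Q_\ell)$, gives $c'(\Frob_\lambda)=\Frob_\ell\, c(\Frob_\lambda)$. Hence $\alpha_\ell$ is equivariant for this action, where $T$ carries the action of $\Frob_\ell$.

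Finally, $\Frob_\ell$ acts on $T$ through an element conjugate to $\tau_c$ in $\Gal(K(T)/\Q)$, say $\Frob_\ell=\sigma\tau_c\sigma^{-1}$ on $T$. Then $\sigma$ induces an $R$-isomorphism between the $\pm1$-eigenspaces of $\tau_c$ and those of $\Frob_\ell$. By (5.a) of Assumption \ref{ass:Howard-assumption}, the eigenspaces $T^\pm$ are free of rank $1$, so the eigenspaces of $H^1_{\f}(K_\lambda,T)$ are free of rank $1$ as well, and they split it because $2$ is invertible in $R$.

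The main obstacle is making precise which involution on $H^1_{\f}(K_\lambda,T)$ is meant by ``$\tau_c$'' and checking that it agrees with the one relevant for the global eigenspace decomposition under $\loc_\ell$. The issue is that $\tau_c$ and $\Frob_\ell$ agree only up to conjugation and inertia. I plan to handle this as in \cite[Lemma 1.5.3]{HoHeeg}: use the fixed embedding $\overline{\Q}\hookrightarrow\overline{\Q}_\ell$ and observe that different choices change the identification by an $R$-linear automorphism commuting with the involution. The ranks of the eigenspaces are therefore unaffected.
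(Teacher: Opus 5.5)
Your proposal is correct and takes the same route as the paper. The paper cites \cite[Lemma 2.11]{mastella2025anticyclotomic} for the $\tau_c$-equivariance of $\alpha_\ell$ and then invokes (5.a); you prove that equivariance directly via $\alpha_\ell(\Frob_\ell\cdot c)=\Frob_\ell\,\alpha_\ell(c)$ and carry $T^\pm$ over by the $R$-linear conjugating element $\sigma$, which already settles your closing worry. The local involution compatible with $\loc_\ell$ is conjugation by any element of $G_{\Q_\ell}\smallsetminus G_{K_\lambda}$, and $\Frob_\ell$ is such an element because $\ell$ is inert in $K$.
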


\begin{proof} As is shown in \cite[Lemma 2.11]{mastella2025anticyclotomic}, isomorphism $\alpha_\ell$ in \eqref{eq:finite-isomorphism} is $\tau_c$-equivariant. Therefore, the conclusion follows from condition (5.a) in Assumption \ref{ass:Howard-assumption}. \end{proof}

\subsubsection{Free Kolyvagin systems} \label{free-subsubsec}

Given $\epsilon\in\{\pm1\}$, for every $n\in\mathcal{N}$ set 
\[
\epsilon_n\defeq\epsilon\cdot(-1)^{\nu(n)}\in\{\pm1\},
\]
where $\nu(n)$ is the number of distinct primes dividing $n$.

The following notion is inspired by \cite[Definition 2.3.6]{howard2012bipartite}.

\begin{definition}
A Kolyvagin system $\kappa$ for $\bigl(T,\mathcal{F},\mathcal{P},\{\chi_{n,\ell}\}\bigr)$ is \emph{free} if for every $n\in\mathcal{N}$ there is a rank-$1$ free $R$-submodule $C_n$ of $\Sel_{\mathcal{F}(n)}(K,T)$ such that $\kappa_n\in C_n$. Moreover, $\kappa$ has \emph{sign $\epsilon$} if $\kappa_n\in \Sel^{\epsilon_n}_{\mathcal{F}(n)}(K,T)$.
\end{definition}

As an immediate consequence of this definition, one shows that if $\kappa\in\ks\bigl(T,\mathcal{F},\mathcal{P},\{ \chi_{n,\ell}\}\bigr)$ is a free Kolyvagin system of sign $\epsilon$ and $\kappa_n\ne0$ for some $n\in\mathcal{N}$, then one can find a rank-$1$ free $R$-submodule $C_n$ of $\Sel_{\mathcal{F}(n)}^{\epsilon_n}(K,T)$ such that $\kappa_n\in C_n$.

The existence of a non-trivial free Kolyvagin system of sign $\epsilon\in\{\pm1\}$ leads to the following improvement on Proposition \ref{HoStrArt}.

\begin{proposition}\label{prop-structureart}
Suppose that there exists a non-trivial, free $\kappa\in\ks\bigl(T,\mathcal{F},\mathcal{P},\{ \chi_{n,\ell}\}\bigr)$ of sign $\epsilon$. Then there is a $\tau_c$-equivariant decomposition
\[
\Sel_{\mathcal{F}(n)}(K,T)\simeq R\oplus M_n\oplus M_n
\]
with $R\subset \Sel_{\mathcal{F}(n)}^{\epsilon_n}(K,T)$. Moreover, $\rho(n)^{\epsilon_n}$ is odd and $\rho(n)^{-\epsilon_n}$ is even. 
\end{proposition}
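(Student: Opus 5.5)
We first show that $e(n)=1$ in Proposition \ref{HoStrArt}. Since $e(n)$ does not depend on $n$ by Lemma \ref{Ho1.5.3}, it is enough to check this for a single $n_0\in\mathcal{N}$. Choose $n_0$ with $\kappa_{n_0}\neq0$. Because $\kappa$ is free, $\kappa_{n_0}$ lies in a rank-$1$ free submodule $C_{n_0}\subset\Sel^{\epsilon_{n_0}}_{\mathcal{F}(n_0)}(K,T)$ (we identify $G_{n_0}$-coefficients with $R$ after fixing a generator). So $\Sel_{\mathcal{F}(n_0)}(K,T)$ contains a free summand of rank $1$: a free rank-$1$ submodule of a finite module over a principal artinian ring is generated by an element of maximal order $k$, hence is a direct summand. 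If $e(n_0)=0$, then $\Sel\simeq M\oplus M$ and every free summand would occur with even multiplicity. The presence of a free summand alone does not rule this out, so the multiplicity argument needs care; we use parity instead, as follows.

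For each $n$, write $\Sel^\pm_{\mathcal{F}(n)}(K,T)$ as a direct sum of cyclic modules. Conditions (4), (5.a) and (5.b) make complex conjugation compatible with the self-dual structure, so we will use the $\tau_c$-equivariant version of Howard's argument \cite[Theorem 1.4.2]{HoHeeg}. There the decomposition $R^{e}\oplus M\oplus M$ comes from the Cassels--Tate-type alternating pairing on the torsion part; the eigenspaces $\pm$ are orthogonal for the induced pairing twisted by $\tau_c$, which is skew on each eigenspace. Running the argument on each eigenspace separately should yield $\Sel^\pm_{\mathcal{F}(n)}(K,T)\simeq R^{e^\pm(n)}\oplus M_n^\pm\oplus M_n^\pm$ with $e^+(n)+e^-(n)=e(n)\le1$. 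Then $\rho(n)^\pm\equiv e^\pm(n)\pmod 2$.

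The key step is to identify which eigenspace carries the free part. At $n_0$, the free summand $C_{n_0}$ lies in the $\epsilon_{n_0}$-eigenspace. If $e^{\epsilon_{n_0}}(n_0)=0$, then $\Sel^{\epsilon_{n_0}}\simeq M\oplus M$, and its free summands of length $k$ come in pairs. We expect this to be the main obstacle, since a pair of free summands is not excluded a priori. To handle it, we pass to the socle parities: Lemma \ref{Ho1.5.3}(3) says $\rho(n)^\pm$ changes parity exactly when a prime is added. We then compare $n_0$ with $n_0\ell$ for $\ell$ as in Lemma \ref{InfPrimes}, applied to $C_{n_0}$. The Kolyvagin relation gives $\loc_\ell$ of $\kappa_{n_0\ell}$ as the image of $\loc_\ell(\kappa_{n_0})\neq0$ under the finite-singular isomorphism. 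Hence $\kappa_{n_0\ell}$ generates a free summand of the $\epsilon_{n_0\ell}=-\epsilon_{n_0}$ eigenspace. Combined with Proposition \ref{SelIncl}, where $a$ or $b$ equals $0$ on the relevant eigenspace, this shows that the free line moves to the opposite eigenspace while the remaining parts keep their doubled shape. Therefore $e^{\epsilon_n}(n)=1$ whenever $\kappa_n\neq0$ along such a chain. Parity invariance (Lemma \ref{Ho1.5.3}) then propagates $\rho(n)^{\epsilon_n}$ odd and $\rho(n)^{-\epsilon_n}$ even to all $n\in\mathcal{N}$, since each new prime flips both $\epsilon_n$ and the parities.

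Finally, the parity result gives $e^{\epsilon_n}(n)=1$ and $e^{-\epsilon_n}(n)=0$ for every $n$. This yields the $\tau_c$-equivariant decomposition $\Sel_{\mathcal{F}(n)}(K,T)\simeq R\oplus M_n\oplus M_n$ with $M_n=M_n^+\oplus M_n^-$ and $R\subset\Sel^{\epsilon_n}_{\mathcal{F}(n)}(K,T)$.
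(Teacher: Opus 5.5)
There is a genuine gap at the decisive step, where you identify which eigenspace carries the free part. By Lemma \ref{Ho1.5.3}(3), each new prime flips the parity of both $\rho(n)^{+}$ and $\rho(n)^{-}$, and it also flips $\epsilon_n$. So the lemma determines the parity of $\rho(n)^{\epsilon_n}$ at every vertex from its value at one vertex; when $\rho(n)$ is odd, that parity is the same at all vertices. It cannot determine the value itself. Your comparison of $n_0$ with $n_0\ell$ only transports this parity. It is equally compatible with the configuration you want to exclude: $\rho(n)^{\epsilon_n}$ even for every $n$, that is, $\kappa_n$ always lies in one of an even number of free summands of the $\epsilon_n$-eigenspace. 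The sentences ``the remaining parts keep their doubled shape'' and ``therefore $e^{\epsilon_n}(n)=1$'' assume what has to be shown.

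What is missing is a descent that terminates, as in the paper's Step 2. If $\rho(n)^{\epsilon_n}$ were even, it would be $\ge 2$, because $0\neq\kappa_n\in\Sel^{\epsilon_n}_{\mathcal{F}(n)}(K,T)$. Choose $\ell$ with $\loc_\ell$ injective on $C_n$. Then choose $\ell'$ with $\loc_{\ell'}$ injective on $C_{n\ell}\oplus D_{n\ell}$, where $D_{n\ell}$ is a maximal cyclic summand of $\Sel^{\epsilon_n}_{\mathcal{F}(n\ell)}(K,T)$; it is non-zero since $\rho(n\ell)^{\epsilon_n}=\rho(n)^{\epsilon_n}-1\ge 1$. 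Two applications of the Kolyvagin relation give $0\ne\kappa_{n\ell\ell'}\in\Sel^{\epsilon_n}_{\mathcal{F}(n\ell\ell')}(K,T)$, while $\rho(n\ell\ell')^{\epsilon_n}=\rho(n)^{\epsilon_n}-2$ is still even. Iterating, you reach $\rho^{\epsilon_n}=0$ with a non-zero class in that eigenspace, which is a contradiction.

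The second gap concerns $e(n)=1$. You abandon its proof, and the eigenspace-wise version of Howard's argument does not give $e^+(n)+e^-(n)=e(n)\le 1$. Comparing elementary divisors, it only gives $e^+(n)+e^-(n)\equiv e(n)\pmod 2$; for example, $e(n)=0$ with $\Sel^{\pm}_{\mathcal{F}(n)}(K,T)\simeq R$ is not ruled out. Hence the evenness of $\rho(n)^{-\epsilon_n}$ has no base case, and Lemma \ref{Ho1.5.3} can again only propagate it.

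The paper obtains this in Step 1 by a second terminating descent, now on $\lambda_n$. Let $D_n$ be a maximal cyclic summand of $\Sel^{-\epsilon_n}_{\mathcal{F}(n)}(K,T)$, of length $d$. Choosing $\ell$ with $\loc_\ell$ injective on $C_n\oplus D_n$ gives $\kappa_{n\ell}\ne 0$ and $\lambda_{n\ell}=\lambda_n-d$, with a second prime added when $d=0$. At a vertex with $\lambda=0$ one has $\Sel\simeq R^{e}$, and $\kappa\ne 0$ forces $e=1$. Thus $\rho(n)$ is odd, and together with $\rho(n)^{\epsilon_n}$ odd this gives $\rho(n)^{-\epsilon_n}$ even.

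A minor point: $\kappa_{n_0\ell}$ only lies in a free summand and need not generate one, since $\kappa_{n_0}$ may be a proper multiple of a generator of $C_{n_0}$. Your eigenspace-wise Flach--Howard pairing argument and the final regrouping are fine and agree with the paper's Steps 3 and 4.
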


\begin{proof} For every $n\in\mathcal{N}$, let $\Sel_{\mathcal{F}(n)}(K,T)\simeq R^{e(n)}\oplus M_n\oplus M_n$ be any decomposition coming from Proposition \ref{HoStrArt}. We divide the proof into four steps. 

\texttt{Step 1.} We claim that $e(n)=1$ for all $n\in\mathcal N$; note that, by Lemma \ref{Ho1.5.3}, it suffices to prove this equality for a single $n\in\mathcal N$. Fix an $n\in\mathcal N$ such that $\kappa_n\not=0$, which exists because $\kappa$ is not trivial, and recall that $\lambda_n=\length(M_n)$.

If $\lambda_n=0$, then $\Sel_{\mathcal{F}(n)}(K,T) \simeq R^{e(n)}$ and the non-triviality of $\kappa_n$ implies $e(n)=1$. Thus, if $\lambda_n>0$, then to prove the claim it suffices to show that there exists $n'\in\mathcal{N}$ with $\kappa_{n'}\neq0$ and $\lambda_{n'}<\lambda_n$. Since $\kappa$ is free of sign $\epsilon$ and $\kappa_n\ne 0$, we can fix a free $R$-submodule $C_n\subset\Sel^{\epsilon_n}_{\mathcal{F}(n)}(K,T)$ of rank $1$ such that $\kappa_n \in C_n$. 
We also fix $D_n$ to be a cyclic direct summand of $\Sel^{-\epsilon_n}_{\mathcal{F}(n)}(K,T)$ of maximal length, say $d$, and we set $E_n \defeq C_n \oplus D_n$. By Lemma \ref{InfPrimes}, we choose a prime $\ell \in \mathcal{P}$ such that the restriction of $\loc_\ell$ to $E_n$ is injective. Combining the exact sequences
\[
0 \longrightarrow \Sel^{\pm \epsilon_n}_{\mathcal{F}_\ell(n)}(K,T) \longrightarrow \Sel^{\pm \epsilon_n}_{\mathcal{F}(n)}(K,T) \longrightarrow H^1_{\f}(K_\lambda,T)^{\pm \epsilon_n}
\]
with Lemma \ref{lemmaH1f} shows that  $a=k$, $b=d$, $\delta=0$ in diagram \eqref{SelIncl}. Furthermore, by comparing explicitly the lengths of $\Sel_{\mathcal{F}(n)}(K,T)$ and $\Sel_{\mathcal{F}(n\ell)}(K,T)$ using again Proposition \ref{SelIncl}, we obtain the equality 
\begin{equation}\label{eq:relation-lambda-n-nl}
    \lambda_{n\ell}=\lambda_n-d.
\end{equation}
Since $\loc_\ell(\kappa_n) \neq0$, we have 
\begin{equation}\label{reclaw}
\psi_{n \ell}^\ell(\kappa_{n\ell})=\psi_n^\ell(\chi_{n,\ell})(\kappa_n)\neq 0\end{equation}
and, in particular, $\kappa_{n\ell}\neq0$. We distinguish two cases.

\texttt{Case 1: $d>0$.} We immediately deduce from equality \eqref{eq:relation-lambda-n-nl} that $\lambda_{n\ell}<\lambda_n$, so we can take $n'\defeq n\ell$.

\texttt{Case 2: $d=0$.} In this case, equality \eqref{eq:relation-lambda-n-nl} gives $\lambda_{n \ell}=\lambda_n$ and, by Lemma \ref{Ho1.5.3}, we have $\rho(n\ell)^{\epsilon_n}=\rho(n)^{\epsilon_n}-1>0$, the inequality being a consequence of $\lambda_n>0$. Since $\epsilon_{n\ell}=-\epsilon_n$, we can fix a rank-$1$ free $R$-module $C_{n\ell}\subset\Sel^{-\epsilon_{n}}_{\mathcal{F}(n \ell)}(K,T)$ such that $\kappa_{n \ell} \in C_{n \ell}$ and a cyclic direct summand $D_{n \ell}$ of maximal length $d'$ of $\Sel^{\epsilon_n}_{\mathcal{F}(n\ell)}(K,T)$. Set $E_{n\ell} \defeq C_{n\ell} \oplus D_{n \ell}$ and choose a prime $\ell' \in \mathcal{P}$ such that the restriction of $\loc_{\ell'}$ to $E_{n \ell}$ is injective. In this case $d'>0$, so we can take $n'\defeq n\ell\ell'$.

\texttt{Step 2.} Keep $n\in\mathcal{N}$ such that $\kappa_n\ne 0$. We prove that $\rho(n)^{\epsilon_n}$ is odd and $\rho(n)^{-\epsilon_n}$ is even; by Lemma \ref{Ho1.5.3}, the same will be true for every $n\in\mathcal{N}$. \texttt{Step 1} implies that there is an isomorphism
\[
\Sel_{\mathcal{F}(n)}(K,T) \simeq R \oplus M_n \oplus M_n,
\]
from which it follows that $\rho(n)=\rho(n)^{\epsilon_n}+\rho(n)^{-\epsilon_n}$ is odd. Assume, by contradiction, that $\rho(n)^{\epsilon_n}$ is even (and, necessarily, larger than $0$). Combining the freeness of $\kappa$ with Lemma \ref{InfPrimes}, we can choose distinct primes $\ell,\ell'\in\mathcal{P}$ and rank-$1$ free $R$-modules $C_n\subset \Sel_{\mathcal{F}(n)}^{\epsilon_n}(K,T)$, $C_{n\ell}\subset \Sel_{\mathcal{F}(n\ell)}^{-\epsilon_{n}}(K,T)$ such that
\begin{itemize}
\item $\kappa_\star\in C_\star$ for $\star\in\{n,n\ell\}$;
\item $\loc_\ell$ (respectively, $\loc_{\ell'}$) is injective on $C_n$ (respectively, $C_{n\ell}$).
\end{itemize}
In addition, we can also fix a cyclic direct summand $D_{n\ell}$ of $\Sel_{\mathcal{F}(n\ell)}^{\epsilon_{n}}(K,T)$ of maximal length and assume that $\loc_{\ell'}$ is injective on $D_{n\ell}$. Repeating the same argument of \eqref{reclaw} two times, we obtain $\kappa_{n\ell\ell'}\ne 0$. By Lemma \ref{Ho1.5.3}, $\rho(n\ell)^{\epsilon_n}=\rho(n)^{\epsilon_n}-1$, and this quantity is positive since $\rho(n)^{\epsilon_n}\ge 2$. Therefore, $D_{n\ell}$ is nontrivial and, again by Lemma \ref{Ho1.5.3}, we obtain that $\rho(n\ell\ell')^{\epsilon_n}=\rho(n)^{\epsilon_n}-2$. Therefore, as outputs we obtain
\begin{itemize}
\item[(a)] $\kappa_{n\ell\ell'}\ne 0$;
\item[(b)] $\rho(n\ell\ell')^{\epsilon_n}=\rho(n)^{\epsilon_n}-2$.
\end{itemize}
An inductive argument shows that we can iterate the procedure above (adding a new couple of primes at each step) until the quantity in (b) becomes $0$. This, however, contradicts the non-triviality of the Kolyvagin class appearing in (a), yielding the claim.

\texttt{Step 3.} Setting $\Sel_n\defeq\Sel_{\mathcal{F}(n)}(K,T)$ and $\Sel_n^\pm\defeq\Sel_{\mathcal{F}(n)}^\pm(K,T)$ to simplify the notation, for each $s\in\N$ define, as in the proof of \cite[Theorem 1.4.2]{HoHeeg}, the $R/\mathfrak{m}$-vector spaces 
\[
V_s\defeq\Sel_n[\mathfrak{m}^s]\big/\mathfrak{m}\Sel_n[\mathfrak{m}^{s+1}],\quad V_s^\pm\defeq\Sel_n^\pm[\mathfrak{m}^s]\big/\mathfrak{m}\Sel_n^\pm[\mathfrak{m}^{s+1}].
\]
We claim that for each $s\in\{0,\dots,k-1\}$ the dimension of the $R/\mathfrak{m}$-vector space $V_s^\pm$ is even, equal to the number of cyclic components of length smaller or equal to $s$ in $\Sel_n^\pm$. As in the proof of \cite[Theorem 1.4.2]{HoHeeg} for $V_s$, this follows from the existence of a non-degenerate, alternating pairing 
\begin{equation} \label{FH-eq}
\langle\cdot,\cdot\rangle:V_s^\pm\times V_s^\pm\longrightarrow R[\mathfrak{m}]
\end{equation}
whose right kernel is $V_{s+1}^\pm$. The pairing in \eqref{FH-eq} is the restriction of the Flach--Howard pairing 
\begin{equation} \label{FH-eq2}
\langle\cdot,\cdot\rangle:V_s\times V_s\longrightarrow R[\mathfrak{m}]
\end{equation}
that, following the recipe in \cite{Flach}, was built in \cite[\S1.4]{HoHeeg}; the right kernel of \eqref{FH-eq2} is $V_{s+1}$. We need to show that \eqref{FH-eq2} induces a non-degenerate pairing on $V_s^\pm$. Using the fact that the local invariant map is equivariant for the action of $\Gal(K_v/\Q_\ell)$ for all $v\,|\,\ell$, and comparing with the definition of \eqref{FH-eq2} in \cite{HoHeeg}, we get an equality 
\[
\big\langle\tau_v(a),\tau_v(b)\big\rangle=\langle a,b\rangle
\]
for all $a,b\in V_s$, where $\tau_v$ is the generator of $\Gal(K_v/\Q_\ell)$ (in other words, either $\tau_v$ is trivial or $\tau_v$ coincides with $\tau_c$). For $a\in V_s^\pm$ and $b\in V_s^\mp$, we have 
\[
\langle\tau_c(a),\tau_c(b)\rangle = -\langle a,b\rangle,
\]
whence $\langle a,b\rangle=0$. Therefore, the pairing in \eqref{FH-eq2} induces a perfect, alternating pairing as in \eqref{FH-eq} whose right kernel is $V_{s+1}\cap V_s^\pm=V_{s+1}^\pm$. 

\texttt{Step 4.} By the previous steps, we know that, for all $n\in\mathcal{N}$, there is an isomorphism
\[
\Sel^\pm_{\mathcal{F}(n)}(K,T)\simeq R^{r_n^\pm}\oplus N_n^\pm\oplus N_n^\pm
\]  
for an $R$-module $N_n$ with direct factors of length at most $k-1$ and positive integers $r_n^\pm$ with $r_n^{\epsilon_n}$ odd and $r_n^{-\epsilon_n}$ even. By taking 
\[
M_n\defeq\Bigl(R^{\frac{r^{\epsilon_n}_n-1}{2}}\oplus N_n^{\epsilon_n}\Bigr)\oplus\Bigl(R^{\frac{r^{-\epsilon_n}_n}{2}}\oplus N_n^{-\epsilon_n}\Bigr),
\]
we obtain a splitting of $\Sel_{\mathcal{F}(n)}(K,T)$ of the desired form. \end{proof}

Before proving other properties of free Kolyvagin systems, we need two auxiliary results in commutative algebra.

\begin{lemma} \label{expSub}
Let $A\simeq\bigoplus_{i=0}^n R/\mathfrak{m}^{a_i}$ and $B\simeq\bigoplus_{i=0}^n R/\mathfrak{m}^{b_i}$ be finitely generated $R$-modules where the sequences ${(a_i)}_{0 \leq i \leq n}$ and ${(b_i)}_{0 \leq i \leq n}$ are non-increasing. Then there is an injection $A \hookrightarrow B$ if and only if $a_i \leq b_i$ for all $i\in\{0,\dots,n\}$.
\end{lemma}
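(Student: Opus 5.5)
We prove the two implications separately, working throughout over the principal artinian local ring $R$ of length $k$ with uniformizer $\pi$. We pad both sequences with zeros if necessary, so that they have the same index set; this changes neither module. The main tool is the invariant
\[
\dim_{R/\mathfrak{m}}\bigl(\mathfrak{m}^jM\bigr)[\mathfrak{m}],
\]
which, for $M\simeq\bigoplus_i R/\mathfrak{m}^{c_i}$, counts the indices $i$ with $c_i>j$. To see this, note that $\mathfrak{m}^j\bigl(R/\mathfrak{m}^{c}\bigr)\simeq R/\mathfrak{m}^{\max(c-j,0)}$. Its socle is one-dimensional exactly when $c>j$ and zero otherwise.

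For the implication ``$\Leftarrow$'', suppose $a_i\le b_i$ for all $i$. The map $R/\mathfrak{m}^{a_i}\to R/\mathfrak{m}^{b_i}$ given by $x\mapsto \pi^{b_i-a_i}x$ is well defined and injective. Indeed, its kernel consists of the classes $x$ with $\pi^{b_i-a_i}x\in\mathfrak{m}^{b_i}$, that is, $x\in\mathfrak{m}^{a_i}$. This uses that $R$ is principal, so every element of $R$ is a unit times a power of $\pi$, and valuations add as long as they stay below $k$. Taking the direct sum of these maps gives an injection $A\hookrightarrow B$.

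For the implication ``$\Rightarrow$'', let $\iota:A\hookrightarrow B$ be injective. For every $j\ge0$ we have $\iota(\mathfrak{m}^jA)\subset\mathfrak{m}^jB$, so $\iota$ restricts to an injection $(\mathfrak{m}^jA)[\mathfrak{m}]\hookrightarrow(\mathfrak{m}^jB)[\mathfrak{m}]$. By the computation above, this gives
\[
\#\{i: a_i>j\}\le\#\{i:b_i>j\}\qquad\text{for all } j\ge0 .
\]
Now suppose, for a contradiction, that $a_t>b_t$ for some index $t$, and take $j=b_t$. Since $(a_i)$ is non-increasing, $a_0\ge\dots\ge a_t>b_t$, so at least $t+1$ indices satisfy $a_i>j$. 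Since $(b_i)$ is non-increasing, $b_i\le b_t=j$ for all $i\ge t$, so at most $t$ indices satisfy $b_i>j$. This contradicts the displayed inequality.

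The only step that needs care is the socle-dimension computation for $\mathfrak{m}^jM$. It is elementary but relies on $R$ being principal artinian, so that each summand $R/\mathfrak{m}^c$ is uniserial with a one-dimensional socle when nonzero. Everything else is bookkeeping with the ordering of the two sequences.
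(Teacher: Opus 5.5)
Your proof is correct, and it takes a more self-contained route than the paper. The paper only asserts that the lemma follows from the structure theorem for finitely generated modules over a PID, with a citation to Lang. That citation needs two extra steps. First, a principal artinian local ring is not a PID, so one must regard $R$-modules as modules over a DVR surjecting onto $R$ or otherwise adapt the theorem. Second, the structure theorem gives existence and uniqueness of the decomposition, not the embedding criterion.

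Your argument works directly over $R$. For $\Leftarrow$ you build the explicit embedding $x\mapsto\pi^{b_i-a_i}x$ summand by summand. For $\Rightarrow$ you use the invariants $\dim_{R/\mathfrak{m}}(\mathfrak{m}^jM)[\mathfrak{m}]=\#\{i : c_i>j\}$, which can only increase along an injection, and then a clean counting argument. These are essentially the invariants behind the uniqueness part of the structure theorem, so you are supplying the argument the paper leaves to the reference.

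One point to make explicit: your kernel computation and the isomorphism $\mathfrak{m}^j(R/\mathfrak{m}^c)\simeq R/\mathfrak{m}^{\max(c-j,0)}$ both use $c\le k$. This is implicit in the lemma itself. Since $\mathfrak{m}^{k}=0$, an exponent larger than $k$ gives $R/\mathfrak{m}^{c}=R$, and then the statement fails; for example, $a_0=k+1$, $b_0=k$ gives $A=B=R$. State $a_i,b_i\le k$ as a standing convention rather than leaving it tacit.
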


\begin{proof} This is a consequence of the structure theorem for finitely generated modules over a PID (see, \emph{e.g.}, \cite[Ch. III, Theorem 7.5]{lang}). \end{proof}

For a finitely generated $R$-module $M$, let $\mu(M)$ be the minimal number of generators of $M$.

\begin{lemma} \label{injKer}
Let $A$, $B$ be finitely generated $R$-modules. Suppose that all the cyclic direct summands of $A$ have length that is greater than or equal to the length of any cyclic direct summand of $B$. If $f:A \oplus B \rightarrow A$ is an $R$-module homomorphism, then there is an injection $B\hookrightarrow\ker(f)$. Furthermore, if $f$ is surjective,  then there is an isomorphism $B\simeq\ker(f)$.
\end{lemma}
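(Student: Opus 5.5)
We work over the principal artinian local ring $R$ with uniformizer $\pi$ and length $k$. Every finitely generated $R$-module is a direct sum of cyclic modules $R/\mathfrak{m}^{c}$ with $c\le k$. The plan is to show first that $\ker(f)$ is at least as large as $B$ in the sense of Lemma \ref{expSub}, and then to compare lengths in the surjective case.

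\emph{Injectivity part.} Write $A\simeq\bigoplus_{i}R/\mathfrak{m}^{a_i}$ and $B\simeq\bigoplus_{j}R/\mathfrak{m}^{b_j}$, and let $m\defeq\max_j b_j\le\min_i a_i$. The key observation is that $\mathfrak{m}^{m}B=0$, whereas $A[\mathfrak{m}^{m}]=\mathfrak{m}^{\,\cdot}$-torsion of $A$ behaves simply: since each $a_i\ge m$, we have $A[\mathfrak{m}^m]=\pi^{a_i-m}$-parts, so $A[\mathfrak{m}^m]\simeq(R/\mathfrak{m}^m)^{\mu(A)}$ is free over $R/\mathfrak{m}^m$. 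We then count generators.

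Set $M\defeq A\oplus B$ and consider the $\mathfrak{m}^m$-torsion:
\[
M[\mathfrak{m}^m]=A[\mathfrak{m}^m]\oplus B\simeq (R/\mathfrak{m}^m)^{\mu(A)}\oplus B .
\]
The map $f$ sends $M[\mathfrak{m}^m]$ into $A[\mathfrak{m}^m]\simeq(R/\mathfrak{m}^m)^{\mu(A)}$, and we have $\ker(f)=\ker\bigl(f|_{M[\mathfrak{m}^m]}\bigr)\oplus$(nothing else needed) after intersecting. This reduces the problem to modules over $S\defeq R/\mathfrak{m}^m$, a self-injective principal artinian ring: we have $g\colon S^{\mu(A)}\oplus B\to S^{\mu(A)}$ and need $B\hookrightarrow\ker g\subset\ker f$.

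The image of $g$ is a submodule of $S^{\mu(A)}$, so $\ker g$ has length at least $\length(B)$. To obtain an actual injection rather than just a length bound, we use Lemma \ref{expSub}. Let $N\defeq S^{\mu(A)}\oplus B$. For each $t$, the number of cyclic summands of $\ker g$ of length $>t$ equals $\dim\bigl(\mathfrak{m}^t\ker g\bigr)[\mathfrak{m}]$. Using $\ker g\supset\ker(g)$ and the exactness properties of $\mathfrak{m}^t(\cdot)$ and $[\mathfrak{m}]$ on $S$-modules (duality, since $S$ is Gorenstein), one shows
\[
\dim_{R/\mathfrak m}(\mathfrak{m}^t\ker g)[\mathfrak{m}]\;\ge\;\dim(\mathfrak{m}^tN)[\mathfrak{m}]-\mu(A)=\#\{j:b_j>t\}.
\]
This is the inequality of invariants needed in Lemma \ref{expSub}, which then yields $B\hookrightarrow\ker g\subset \ker f$. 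This counting step is the main obstacle. A cleaner route I would try first is Matlis duality: $\ker g$ is dual to $\operatorname{coker}(g^\vee\colon S^{\mu(A)}\to S^{\mu(A)}\oplus B^\vee)$, and the cokernel of a map from a free module of rank $\mu(A)$ surjects onto a quotient of $B^\vee$ in a controlled way.

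\emph{Surjective case.} If $f$ is surjective, then $\length\ker f=\length(A\oplus B)-\length A=\length B$. Combined with the injection $B\hookrightarrow\ker f$, an injection between modules of equal finite length is an isomorphism, so $B\simeq\ker(f)$.
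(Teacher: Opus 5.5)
Your reduction to $g\colon S^{\mu(A)}\oplus B\to S^{\mu(A)}$, the length bound, and the surjective case are fine. The identity $\ker(f)=\ker(f|_{M[\mathfrak m^m]})$ is false in general (take $f=0$); what holds is $\ker g=\ker(f)[\mathfrak m^m]\subseteq\ker f$, and that inclusion is all you use. The gap is that the entire content of the lemma is the inequality
\[
\dim_{R/\mathfrak m}(\mathfrak m^t\ker g)[\mathfrak m]\ \ge\ \#\{j : b_j>t\}\qquad\text{for all }t,
\]
and you assert it (``one shows'') without proof. The justification you give does not work. Neither $(\cdot)[\mathfrak m]$ nor $\mathfrak m^t(\cdot)$ is exact, and passing to a submodule only gives upper bounds: for instance, $\mathfrak m^tK$ can be strictly smaller than $K\cap\mathfrak m^tN$. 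So nothing you invoke forces $\mathfrak m^t\ker g$ to have many cyclic summands, and the phrase ``$\ker g\supset\ker(g)$'' is vacuous. The Matlis-duality route is also unfinished. You would need $\operatorname{coker}(g^\vee)$ to surject onto $B^\vee$ itself, not onto ``a quotient of $B^\vee$''. Proving that requires exactly the missing counting fact: killing a submodule generated by $\mu(A)$ elements lowers the number of cyclic summands of length $>t$ by at most $\mu(A)$.

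Here is one way to close the gap, without reducing to $S$ or using duality. For a finitely generated $R$-module $X$, let $s_{>t}(X)$ be the number of its cyclic summands of length $>t$. Multiplication by $\pi^t$ gives $X/X[\mathfrak m^t]\simeq\mathfrak m^tX$, so $s_{>t}(X)=\mu(\mathfrak m^tX)=\mu\bigl(X/X[\mathfrak m^t]\bigr)$. Put $M=A\oplus B$ and $K=\ker f$. Then $K/K[\mathfrak m^t]=K/(K\cap M[\mathfrak m^t])$ embeds in $M/M[\mathfrak m^t]$ with cokernel $M/(K+M[\mathfrak m^t])$. This cokernel is a quotient of $M/K\simeq f(M)\subseteq A$, and since $\mu(Y)=\dim Y[\mathfrak m]$ for such modules, it needs at most $\mu(A)$ generators. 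Subadditivity of $\mu$ in short exact sequences then gives $s_{>t}(K)\ge s_{>t}(M)-\mu(A)$. This equals $s_{>t}(B)$ when $t<\min_ia_i$; for larger $t$ there is nothing to prove, because $s_{>t}(B)=0$. Lemma \ref{expSub} then yields $B\hookrightarrow K$.

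The paper argues differently. It proves $\mu(M/N)\ge\mu(M)-i(M,N)$ by reducing modulo $\pi$ and matching cyclic summands of $\ker f$ with summands of $A\oplus B$ of no greater length. It then combines this with $\mu(M/\ker f)\le\mu(A)$. Either argument supplies the counting step your proposal lacks.
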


\begin{proof} We divide our strategy into three steps. 

\texttt{Step 1.} Let $M$ and $N$ be finitely generated $R$-modules such that $N \subset M$ and
\begin{equation} \label{eq:decomposition-M-N}
M \simeq \bigoplus_{i=1}^k R/(\pi^{m_i}),\quad N\simeq\bigoplus_{i=1}^t R/(\pi^{n_i}).
\end{equation}
Set
\[
\mathcal{A}(M,N)\defeq\bigl\{S\subset\{1,\dots,t\}\mid\text{$\exists\;\sigma:S\hookrightarrow\{1,\dots,k\}$ such that $n_i\geq m_{\sigma(i)}$ for all $i\in S$}\bigr\}
\]
and $i(M,N)\defeq\max\bigl\{\#S\mid S\in\mathcal{A}(M,N)\bigr\}$. We claim that
\[
\mu(M/N)\geq\mu(M)-i(M,N).
\]
The projection $\mathrm{pr}:M\twoheadrightarrow M/\pi M$ induces a surjection $M/N\twoheadrightarrow(M/\pi M)\big/\mathrm{pr}(N)$, so there is an inequality
\[
\mu(M/N) \geq \mu(M) - \dim_{R/\pi R} \mathrm{pr}(N).
\]
On the other hand, the inequality
\[
\dim_{R/\pi R}\mathrm{pr}(N)\leq i(M,N)
\]
follows by observing that the image under $\mathrm{pr}$ of a generator of a cyclic component of $N$ of length $\ell$ is $0$ in all the components that are associated with a cyclic component of $M$ of length greater than $\ell$.
 More precisely, let $C$ be a cyclic component of $N$ of length $\alpha$, let $D$ be the maximal cyclic component of $M$ containing $C$ and let $\beta=\length_R(D)$. If $\beta>\alpha$, then $C=\pi^{\beta-\alpha}D$, hence $\mathrm{pr}(C)=0$. Thus, if $\mathrm{pr}(C)\ne 0$, then $\alpha=\beta$, \emph{i.e.}, $C=D$. If $C=R/(\pi^{n_i})$ is one of the summands of $N$ in \eqref{eq:decomposition-M-N} with $\mathrm{pr}(C)\ne 0$, then the argument above implies that $C$ is also a direct summand of $M$, so there is an $m_j$ such that $n_i=m_j$. The set $S$ of all indices $i\in\{1,\dots,t\}$ such that $\mathrm{pr}(R/\pi^{n_i})\ne 0$ has cardinality $\dim_{R/\pi R} \mathrm{pr}(N)$ and, thanks to the argument above, $S\subset \mathcal{A}(M,N)$. It follows that $\dim_{R/\pi R} \mathrm{pr}(N) \leq i(M,N)$.

\texttt{Step 2.} Let $A$ and $B$ be as in the statement and let $f:A\oplus B\rightarrow A$ be an $R$-module homomorphism. From \texttt{Step 1}, there is an inequality
\[
\mu\bigl((A\oplus B)/\ker(f)\bigr)\geq\mu(A)+\mu(B)-i\bigl(A\oplus B,\ker(f)\bigr).
\]
Since $f$ induces an injection $(A\oplus B)/\ker(f)\hookrightarrow A$, there is an inequality
\[
\mu(A)\geq\mu\bigl((A\oplus B)/\ker(f)\bigr),
\]
whence
\[
\mu(B)\leq i\bigl(A\oplus B,\ker(f)\bigr).
\]
This means that there are $\mu(B)$ cyclic components of $\ker(f)$ of length greater than or equal to the length of $\mu(B)$ fixed cyclic components of $A \oplus B$. The first claim of the lemma follows from Lemma \ref{expSub}, as all the cyclic components of $A$ have length greater than or equal to the length of each of the cyclic components of $B$.

\texttt{Step 3.} If $f$ is surjective, then $\length(B)=\length\bigl(\ker(f)\bigr)$ and the second statement follows from the first one. \end{proof}

By Proposition \ref{prop-structureart}, the existence of a nontrivial free $\kappa\in\ks\bigl(T,\mathcal{F},\mathcal{P},\{\chi_{n,\ell}\}\bigr)$ of sign $\epsilon$ yields a $\tau_c$-decomposition
\[
\Sel_{\mathcal{F}(n)}(K,T)=R\oplus M_n\oplus M_n=R\oplus (M_n^{\epsilon_n})^2\oplus (M_n^{-\epsilon_n})^2
\]
for every $n\in\mathcal{N}$. From here on, write $\length(M)$ for the length of an $R$-module $M$.

\begin{proposition} \label{SelComp}
Suppose that there exists a non-trivial, free $\kappa\in\ks\bigl(T,\mathcal{F},\mathcal{P},\{\chi_{n,\ell}\}\bigr)$ of sign $\epsilon$, let $n\in\mathcal{N}$ and write isomorphisms
\[
M_n^{\epsilon_n}\simeq\bigoplus_{i\geq0}R/\mathfrak{m}^{d_i^{\epsilon_n}}, \quad M_n^{-\epsilon_n}\simeq\bigoplus_{i\geq0} R/\mathfrak{m}^{d_i^{-\epsilon_n}},
\]
where ${(d_i^{\pm \epsilon_n})}_{i\geq0}$ are non-increasing sequences of non-negative integers with $d_i^{\pm \epsilon_n}=0$ for $i\gg 0$. Furthermore, let $\ell\in \mathcal{P}$ be a prime number not dividing $n$ such that the restriction of the localization $\loc_\ell:\Sel_{\mathcal{F}(n)}(K,T) \rightarrow H^1_{\f}(K_\lambda,T)$ to the direct sum of an $\epsilon_n$-component isomorphic to $R$ and a $-\epsilon_n$-component isomorphic to $R/\mathfrak{m}^{d_0^{-\epsilon_n}}$ is injective. Then there are isomorphisms of $R$-modules
\[
M_{n \ell}^{\epsilon_n}\simeq M_n^{\epsilon_n} \simeq \bigoplus_{i \geq 0}R/\mathfrak{m}^{d_i^{\epsilon_n}}, \quad M_{n \ell}^{-\epsilon_n} \simeq \bigoplus_{i \geq 1}R/\mathfrak{m}^{d_i^{-\epsilon_n}}.
\]
\end{proposition}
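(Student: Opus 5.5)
The plan is to compare $\Sel_{\mathcal{F}(n)}$ and $\Sel_{\mathcal{F}(n\ell)}$ through the common submodule $\Sel_{\mathcal{F}_\ell(n)}(K,T)$ in diagram \eqref{diagram-inclusions-eq}, treating the two eigenspaces separately. Write $\Sel_n^\pm$ for $\Sel^\pm_{\mathcal{F}(n)}(K,T)$, and similarly for the other structures. By Proposition \ref{prop-structureart},
\[
\Sel_n^{\epsilon_n}\simeq R\oplus(M_n^{\epsilon_n})^2,\qquad \Sel_n^{-\epsilon_n}\simeq (M_n^{-\epsilon_n})^2 .
\]
Since $\epsilon_{n\ell}=-\epsilon_n$, we also have
\[
\Sel_{n\ell}^{-\epsilon_n}\simeq R\oplus(M_{n\ell}^{-\epsilon_n})^2,\qquad \Sel_{n\ell}^{\epsilon_n}\simeq (M_{n\ell}^{\epsilon_n})^2 .
\]

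\textbf{Step 1: the strict submodule.} Consider the exact sequences
\[
0\to\Sel^{\pm}_{\mathcal{F}_\ell(n)}\to\Sel_n^{\pm}\xrightarrow{\loc_\ell}H^1_{\f}(K_\lambda,T)^{\pm}.
\]
By Lemma \ref{lemmaH1f}, each target is free of rank $1$.

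\emph{The $\epsilon_n$-part.} The map $\loc_\ell$ is injective on the free summand $R$. It is therefore surjective onto $H^1_{\f}(K_\lambda,T)^{\epsilon_n}\simeq R$, so its restriction to the summand $R$ is an isomorphism. The kernel is then a complement of that summand, hence isomorphic to $(M_n^{\epsilon_n})^2$. This can also be seen from Lemma \ref{injKer} with $A=R$ and $B=(M_n^{\epsilon_n})^2$.

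\emph{The $-\epsilon_n$-part.} Write $\Sel_n^{-\epsilon_n}=D\oplus B$, where $D\simeq R/\mathfrak{m}^{d_0}$ (with $d_0=d_0^{-\epsilon_n}$) is the summand on which $\loc_\ell$ is injective. The image of $\loc_\ell$ is a cyclic module of length $d_0$; it equals $\mathfrak{m}^{k-d_0}H^1_{\f}(K_\lambda,T)^{-\epsilon_n}$, which is isomorphic to $D$. Applying Lemma \ref{injKer} (surjective case) to $D\oplus B\to D$ gives
\[
\Sel^{-\epsilon_n}_{\mathcal{F}_\ell(n)}\simeq B\simeq R/\mathfrak{m}^{d_0}\oplus\bigoplus_{i\ge1}(R/\mathfrak{m}^{d_i})^2 .
\]
The hypothesis of Lemma \ref{injKer} holds because $d_0$ is maximal among all summand lengths.

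This shows $a=k$ and $b=d_0$ in Proposition \ref{SelIncl}.

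\textbf{Step 2: the local conditions at $\ell$ for $n\ell$.} Next I determine $\delta$ and the image of $\Sel_{n\ell}^\pm$ in $H^1_{\s}(K_\lambda,T)^\pm$. By the argument in Step 1 of Proposition \ref{prop-structureart}, we have $\delta=0$, since global duality forces the images of $\Sel_{\mathcal{F}(n)}$ and $\Sel_{\mathcal{F}(n\ell)}$ at $\ell$ to be mutual annihilators under the local Tate pairing. Proposition \ref{SelIncl} then says $\Sel_{\mathcal{F}_\ell(n)}\subset\Sel_{n\ell}$ has cokernel of lengths $(0,k-d_0)$, and these are distributed equivariantly. I expect the cokernel of length $k-d_0$ to lie in the $-\epsilon_n$-eigenspace, because the $\epsilon_n$-singular part is annihilated by the $\epsilon_n$-finite part, which is everything. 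Hence
\[
\Sel_{n\ell}^{\epsilon_n}=\Sel^{\epsilon_n}_{\mathcal{F}_\ell(n)}\simeq (M_n^{\epsilon_n})^2,
\]
which gives $M_{n\ell}^{\epsilon_n}\simeq M_n^{\epsilon_n}$ by uniqueness in the structure theorem.

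\textbf{Step 3: the $-\epsilon_n$-part of $\Sel_{n\ell}$.} There is an exact sequence
\[
0\to\Sel^{-\epsilon_n}_{\mathcal{F}_\ell(n)}\to\Sel^{-\epsilon_n}_{n\ell}\to Q\to0,
\]
where $Q\subset H^1_{\s}(K_\lambda,T)^{-\epsilon_n}\simeq R$ is cyclic of length $k-d_0$. The middle term is $R\oplus(M_{n\ell}^{-\epsilon_n})^2$, and the left term is $R/\mathfrak{m}^{d_0}\oplus\bigoplus_{i\ge1}(R/\mathfrak{m}^{d_i})^2$. The goal is to deduce $M_{n\ell}^{-\epsilon_n}\simeq\bigoplus_{i\ge1}R/\mathfrak{m}^{d_i}$. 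Comparing lengths gives $2\length(M_{n\ell}^{-\epsilon_n})=2\sum_{i\ge1}d_i$. To pin down the isomorphism type, I would pass to the $\mathfrak{m}$-torsion and use Lemma \ref{Ho1.5.3}, which gives $\rho(n\ell)^{-\epsilon_n}=\rho(n)^{-\epsilon_n}+1$. For the individual summands I would use Lemma \ref{expSub}: the submodule forces $M_{n\ell}^{-\epsilon_n}$ to dominate $\bigoplus_{i\ge1}R/\mathfrak{m}^{d_i}$ summand by summand once the $R/\mathfrak{m}^{d_0}$ is absorbed into the free summand $R$. Together with the length equality, this gives the isomorphism.

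I expect this last absorption argument to be the main obstacle. One must show that the extension by a cyclic quotient merges the summand $R/\mathfrak{m}^{d_0}$ into the free $R$, rather than lengthening some $R/\mathfrak{m}^{d_i}$ with $i\ge1$. The first thing I would try is to exploit that the summands of $\Sel_{n\ell}^{-\epsilon_n}$ occur in pairs apart from one copy of $R$. With $Q$ cyclic, the number of generators can grow by at most one, which leaves no room for unpaired changes.
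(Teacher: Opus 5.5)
Your proposal is correct and is essentially the paper's proof. You identify $\Sel^{\pm\epsilon_n}_{\mathcal{F}_\ell(n)}(K,T)$ via Lemma \ref{injKer} and read off the cokernel lengths $(0,k-d_0^{-\epsilon_n})$ from Proposition \ref{SelIncl}; your local-duality reason for placing the nonzero part in the $-\epsilon_n$-eigenspace is valid. You then conclude with Lemma \ref{expSub} plus the length count. That last step already finishes the $-\epsilon_n$ part: the sorted comparison automatically matches $R/\mathfrak{m}^{d_0^{-\epsilon_n}}$ against the free summand $R$, so the ``absorption'' you worry about is not an obstacle.

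Two minor slips. First, $\delta=0$ follows simply from $a=k$ and $k-a-\delta\geq0$; global duality alone only gives containment in the annihilator, not equality. Second, when $d_0^{-\epsilon_n}>0$, Lemma \ref{Ho1.5.3} gives $\rho(n\ell)^{-\epsilon_n}=\rho(n)^{-\epsilon_n}-1$, not $+1$, although you never actually use this.
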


\begin{proof} In this proof, set $\epsilon\defeq\epsilon_n$ to simplify the notation. The exact sequences
\[
0 \longrightarrow \Sel^{\pm \epsilon}_{\mathcal{F}_\ell(n)}(K,T) \longrightarrow \Sel^{\pm \epsilon}_{\mathcal{F}(n)}(K,T) \longrightarrow H^1_{\f}(K_\lambda,T)^{\pm \epsilon}
\]
combined with Lemma \ref{lemmaH1f} and the injectivity of $\loc_\ell$ imply that $a=k$, $b=d_0^{-\epsilon}$, $\delta=0$ in diagram \eqref{diagram-inclusions-eq}.
Indeed, one has
\[
\length\Biggl(\frac{\Sel^\epsilon_{\mathcal{F}(n)}(K,T)}{\Sel^\epsilon_{\mathcal{F}_\ell(n)}(K,T)} \Biggr)=k, \quad \length \Biggl(\frac{\Sel^{-\epsilon}_{\mathcal{F}(n)}(K,T)}{\Sel^{-\epsilon}_{\mathcal{F}_\ell(n)}(K,T)} \Biggr)=d_0^{-\epsilon}.
\]
In particular, by Lemma \ref{injKer} there are isomorphisms
\begin{equation}\label{eq-sel1}
\Sel^\epsilon_{\mathcal{F}_\ell(n)}(K,T)\simeq\bigoplus_{i\geq0}\bigl(R/\mathfrak{m}^{d_i^\epsilon}\bigr)^2,\quad\Sel^{-\epsilon}_{\mathcal{F}_\ell(n)}(K,T)\simeq R/\mathfrak{m}^{d_0^{-\epsilon}} \oplus \bigoplus_{i \geq 1}\bigl(R/\mathfrak{m}^{d_i^{-\epsilon}}\bigr)^2.
\end{equation}
By Proposition \ref{prop-structureart}, $\rho(n\ell)^{-\epsilon}$ is odd, so by Proposition \ref{SelIncl} there are also equalities 
\[
\length\Biggl(\frac{\Sel^\epsilon_{\mathcal{F}(n \ell)}(K,T)}{\Sel^\epsilon_{\mathcal{F}_\ell(n)}(K,T)} \Biggr)=0, \quad \length \Biggl( \frac{\Sel^{-\epsilon}_{\mathcal{F}(n \ell)}(K,T)}{\Sel^{-\epsilon}_{\mathcal{F}_\ell(n)}(K,T)} \Biggr)=k-d_0^{-\epsilon}, 
\]
which imply the equalities
\[
\length(M^\epsilon_{n \ell})=\frac{\length\bigl(\Sel^\epsilon_{\mathcal{F}(n\ell)}(K,T)\bigr)}{2}=\frac{\length\bigl(\Sel^\epsilon_{\mathcal{F}_\ell(n)}(K,T)\bigr)}{2}=\sum_{i \geq 0} d_i^\epsilon,
\]
and
\[
\begin{split}
\length(M^{-\epsilon}_{n \ell})&=\frac{\length\bigl(\Sel^{-\epsilon}_{\mathcal{F}(n\ell)}(K,T)\bigr)-k}{2}\\&=\frac{\length\bigl(\Sel^{-\epsilon}_{\mathcal{F}_\ell(n)}(K,T)\bigr)+k-d_0^{-\epsilon}-k}{2}=\sum_{i \geq 1}d_i^{-\epsilon}.
\end{split}
\]
We assume isomorphisms $M^\epsilon_{n\ell}\simeq\bigoplus_{i\geq0}R/\mathfrak{m}^{c_i^\epsilon}$ and $M^{-\epsilon}_{n\ell}\simeq\bigoplus_{i\geq1}R/\mathfrak{m}^{c_i^{-\epsilon}}$, where ${(c_i^\epsilon)}_{i\geq0}$ and ${(c_i^{-\epsilon})}_{i\geq1}$ are non-increasing sequences with $c_i^{\pm\epsilon}=0$ for $i\gg0$. Since $\bigoplus_{i\geq0}\bigl(R/\mathfrak{m}^{d_i^\epsilon}\bigr)^2$ is a submodule of $\Sel^\epsilon_{\mathcal{F}(n\ell)}(K,T)$ and $\bigoplus_{i\geq1}\bigl(R/\mathfrak{m}^{d_i^{-\epsilon}}\bigr)^2$ is a submodule of $\Sel^{-\epsilon}_{\mathcal{F}(n \ell)}(K,T)$ by \eqref{eq-sel1}, $c_i^{\pm\epsilon} \geq d_i^{\pm\epsilon}$ for all $i$, by Lemma \ref{expSub}. Finally, there are equalities
\[
\sum_{i\geq0}c_i^\epsilon=\length(M^\epsilon_{n\ell})=\sum_{i\geq0}d_i^{\epsilon}, \quad \sum_{i\geq1}c_i^{-\epsilon}=\length(M^{-\epsilon}_{n\ell})=\sum_{i\geq1}d_i^{-\epsilon},
\]
whence $c_i^{\pm\epsilon}=d_i^{\pm \epsilon}$ for all $i$. The proof of the proposition is complete.\end{proof}

Given an $R$-module $M$ and an element $m\in M$, set 
\[
\ind(m,M)\defeq\max\bigl\{a\in\Z, \ a\le k\mid m\in \mathfrak{m}^aM\bigr\}. 
\]
When the module $M$ is clear from the context, we will often write $\ind(m)$ for $\ind(m,M)$. Moreover, for every $n\in\mathcal{N}$, define the $R$-module
\[
\Stub_n\defeq\mathfrak{m}^{\lambda_n}\Sel_{\mathcal{F}(n)}(K,T)\simeq\mathfrak{m}^{\lambda_n} R^{e(n)}.
\]

\begin{proposition} \label{classInStub}
Let $\kappa\in\ks\bigl(T,\mathcal{F},\mathcal{P},\{\chi_{n,\ell}\}\bigr)$ be a non-trivial free Kolyvagin system. Then
\[
\lambda_n\leq\ind\bigl(\kappa_n,\Sel_{\mathcal{F}(n)}(K,T)\bigr)
\]
for all $n\in\mathcal{N}$. In particular, $\kappa_n\in\Stub_n$ for all $n\in\mathcal{N}$.
\end{proposition}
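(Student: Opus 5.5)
Fix $n\in\mathcal{N}$. If $\kappa_n=0$ there is nothing to prove, since $\ind(0)=k\geq\lambda_n$ (note $2\lambda_n+e(n)k\leq$ length, and in fact $\lambda_n\le k$ is not even needed once we observe $0\in\mathfrak{m}^k\Sel$). So assume $\kappa_n\neq0$. By Proposition \ref{prop-structureart} we have a $\tau_c$-equivariant decomposition
\[
\Sel_{\mathcal{F}(n)}(K,T)\simeq R\oplus M_n\oplus M_n,
\]
and by freeness $\kappa_n$ lies in a rank-$1$ free submodule $C_n\subset\Sel^{\epsilon_n}_{\mathcal{F}(n)}(K,T)$. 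The plan is to argue by induction on $\lambda_n$, using the Kolyvagin recursion to move from $n$ to $n\ell$ (or $n\ell\ell'$) where $\lambda$ drops, exactly as in \texttt{Step 1} of the proof of Proposition \ref{prop-structureart}. The base case $\lambda_n=0$ is trivial since $\ind\geq0$.

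For the inductive step, I first compute the index of the localization. Choose, by Lemma \ref{InfPrimes}, a prime $\ell\nmid n$ in $\mathcal{P}$ such that $\loc_\ell$ is injective on $E_n=C_n\oplus D_n$, with $D_n$ a maximal cyclic summand of $\Sel^{-\epsilon_n}_{\mathcal{F}(n)}(K,T)$ of length $d$. Since $C_n$ is free and $\loc_\ell|_{C_n}$ is injective into the free rank-one module $H^1_{\f}(K_\lambda,T)^{\epsilon_n}$ (Lemma \ref{lemmaH1f}), the map $C_n\to H^1_{\f}(K_\lambda,T)^{\epsilon_n}$ is an isomorphism, so $\ind(\loc_\ell\kappa_n)=\ind(\kappa_n,C_n)\ge\ind(\kappa_n,\Sel)$; conversely, the key point is the reverse relation: $\ind(\kappa_n,\Sel_{\mathcal{F}(n)})\geq \ind(\kappa_n,C_n)-\lambda_n$ is useless, so instead I will use that $\phi^{\fs}_\ell$ and $\beta_\ell$ are isomorphisms to get $\ind(\loc_\ell\kappa_n)=\ind(\psi^\ell_{n\ell}(\kappa_{n\ell}))\ge\ind(\kappa_{n\ell},\Sel_{\mathcal{F}(n\ell)})$, and then the induction hypothesis at $n\ell$ gives $\ind(\loc_\ell\kappa_n)\ge\lambda_{n\ell}=\lambda_n-d$ by \eqref{eq:relation-lambda-n-nl}.

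The main obstacle is recovering the lost $d$ and passing from $\ind(\kappa_n,C_n)$ to $\ind(\kappa_n,\Sel_{\mathcal{F}(n)})$. For the latter I will choose $C_n$ to be the $R$-summand of Proposition \ref{prop-structureart}'s decomposition (adjusting via the $\tau_c$-decomposition so that $\kappa_n$ lies in a free summand; if $\kappa_n\in C_n$ with $C_n$ merely a free submodule, it is automatically a direct summand since $R$ is injective over itself as a Gorenstein artinian principal ring), so the two indices agree. To gain the missing $d$, I would exploit the structure of $\Sel_{\mathcal{F}(n\ell)}$ from Proposition \ref{SelComp}: the image $\psi^\ell_{n\ell}$ of $\Sel_{\mathcal{F}(n\ell)}$ in $H^1_{\s}$ is controlled by the diagram \eqref{diagram-inclusions-eq} with $a=k,b=d,\delta=0$, which shows the $-\epsilon_n$ free part of $\Sel_{\mathcal{F}(n\ell)}$ maps onto $\mathfrak{m}^{d}H^1_{\s}(K_\lambda,T)^{-\epsilon_n}\!\otimes G_\ell$-type submodule of length $k-d$; hence $\ind(\psi^\ell_{n\ell}\kappa_{n\ell})\ge\ind(\kappa_{n\ell})+d\ge\lambda_{n\ell}+d=\lambda_n$. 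If $d=0$ no gain is needed but $\lambda$ did not drop, so I handle that case as in Case 2 of Proposition \ref{prop-structureart}, passing to $n\ell\ell'$ with $d'>0$ and chaining the two estimates. This extra-$d$ bookkeeping is where I expect the real work; the final claim $\kappa_n\in\Stub_n$ then follows since $\kappa_n\in\mathfrak{m}^{\lambda_n}\Sel_{\mathcal{F}(n)}(K,T)$ by definition of $\ind$.
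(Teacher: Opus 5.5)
Your proposal is essentially the paper's proof. It uses the same chain $\lambda_n=\lambda_{n\ell}+d\le\ind(\kappa_{n\ell})+d\le\ind(\loc_\ell\kappa_{n\ell})=\ind(\loc_\ell\kappa_n)=\ind\bigl(\kappa_n,\Sel_{\mathcal{F}(n)}(K,T)\bigr)$, driven by the fact that $\loc_\ell\bigl(\Sel_{\mathcal{F}(n\ell)}(K,T)\bigr)$ is $\mathfrak{m}^d$ times a free rank-one module, with the same detour through $n\ell\ell'$ when $d=0$. Inducting on $\lambda_n$ rather than $\rho(n)$ is only cosmetic, since both drop under exactly these moves.

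One small correction: $C_n\subset\Sel_{\mathcal{F}(n)}(K,T)$ gives $\ind(\kappa_n,C_n)\le\ind\bigl(\kappa_n,\Sel_{\mathcal{F}(n)}(K,T)\bigr)$, not $\ge$. Your self-injectivity remark (free submodules over $R$ are direct summands) nonetheless yields the needed equality, a point the paper uses only implicitly.
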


\begin{proof} We proceed by induction on $\rho(n)$. If $\rho(n)=1$, then $M_n=0$ and the claim is trivial. Let $k\geq3$ be an odd integer; we assume the statement for all $n\in\mathcal{N}$ with $\rho(n)=k-2$. Let $n\in\mathcal{N}$ with $\rho(n)=k$. Since $\kappa$ is free, we can fix a free $R$-module $C_n \subset \Sel^{\epsilon_n}_{\mathcal{F}(n)}(K,T)$ of rank $1$ such that $\kappa_n \in C_n$. Let $D_n$ be a cyclic direct summand of $\Sel^{-\epsilon_n}_{\mathcal{F}(n)}(K,T)$ of maximal length $d$ and set $E_n \defeq C_n \oplus D_n$. By Lemma \ref{InfPrimes}, there exists $\ell \in \mathcal{P}$ such that the restriction of $\loc_\ell$ to $E_n$ is injective. We distinguish two cases.
    
\texttt{Case 1: $d>0$.} By Proposition \ref{SelComp}, we have $\lambda_{n\ell}=\lambda_{n}-d$ and $\rho(n \ell)=\rho(n)-2$; by induction, we get
\[
\lambda_{n \ell}\leq\ind\bigl(\kappa_{n\ell},\Sel_{\mathcal{F}(n\ell)}(K,T)\bigr).
\]
It follows that
\[
\begin{aligned}
\lambda_n&=\lambda_{n \ell}+d\leq\ind\bigl(\kappa_{n\ell},\Sel_{\mathcal{F}(n\ell)}(K,T)\bigr)+d\\&\leq\ind\Bigl(\loc_\ell(\kappa_{n\ell}),\loc_\ell\bigl(\Sel_{\mathcal{F}(n\ell)}(K,T)\bigr)\!\Bigr)+d\\
&=\ind\bigl(\loc_\ell(\kappa_{n\ell}),H^1_{\s}(K_\lambda,T) \otimes G_{n\ell}\bigr)\\
&=\ind\bigl(\loc_\ell(\kappa_n),H^1_{\f}(K_\lambda,T)\otimes G_n\bigr)=\ind\bigl(\kappa_n,\Sel_{\mathcal{F}(n)}(K,T)\bigr);
\end{aligned}
\]
here we use the fact that $\loc_\ell\bigl(\Sel_{\mathcal{F}(n\ell)}^{-\epsilon_n}(K,T)\bigr)$ is cyclic of colength $k-d$ in $H^1_{\tr}(K_\lambda,T)^{-\epsilon_n}\simeq R$ (see the proof of Proposition \ref{SelComp}) for the equality in the third line and the fact that $\loc_\ell$ is injective on $C_n$ for the last equality.
    
\texttt{Case 2: $d=0$.} By Proposition \ref{SelComp}, we have $\lambda_n=\lambda_{n\ell}$, $\rho(n\ell)^{\epsilon_n}=\rho(n)^{\epsilon_n}-1$ and $\rho(n\ell)^{-\epsilon_n}=1$. We can fix a free $R$-module $C_{n\ell}\subset \Sel_{\mathcal{F}(n\ell)}^{-\epsilon_n}(K,T)$ of rank $1$ such that $\kappa_{n\ell} \in C_{n\ell}$ and a direct summand $D_{n\ell}$ of $\Sel_{\mathcal{F}(n\ell)}^{\epsilon_n}(K,T)$ of maximal length $d'$. Set $E_{n\ell}\defeq C_{n\ell} \oplus D_{n\ell}$ and choose $\ell'\in \mathcal{P}$ such that the restriction of $\loc_{\ell'}$ to $E_{n \ell}$ is injective. In this case, $d'>0$, so we can argue as in \texttt{case 1} and get the inequality
\[
\lambda_{n\ell}\leq\ind\bigl(\kappa_{n\ell},\Sel_{\mathcal{F}(n\ell)}(K,T)\bigr).
\]
Finally, we apply the arguments in \texttt{Case 1} again and get the inequality
\[
\lambda_{n}\leq\ind\bigl(\kappa_{n},\Sel_{\mathcal{F}(n)}(K,T)\bigr).
\]
The proof is complete. \end{proof}

\subsubsection{Sheaves on graphs}\label{sec:sheaves-graphs}

Let us assume until the end of \S \ref{sec:sheaves-graphs} that there is a non-trivial free $\kappa\in\ks\bigl(T,\mathcal{F},\mathcal{P},\{\chi_{n,\ell}\}\bigr)$ of sign $\epsilon$.

\begin{definition}
    A set $\{\chi_{n,\ell}\}$ of $R$-automorphisms of $T$ is called an \emph{admissible modification} for $(T,\mathcal{F},\mathcal{P})$ if
    $$\psi_n^\ell(\chi_{n,\ell})(\Stub_n)=\psi_{n\ell}^\ell(\Stub_{n\ell})$$
    for every $n \in \mathcal{N}$ and $\ell \in \mathcal{P}$ with $\ell \nmid n$.
\end{definition}

\begin{proposition} \label{modification-prop}
There exists an admissible modification for $(T,\mathcal{F},\mathcal{P})$.
\end{proposition}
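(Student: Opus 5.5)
We construct the automorphisms $\chi_{n,\ell}$ one pair $(n,\ell)$ at a time; the defining condition involves only $n$ and $\ell$, so no compatibility between different pairs is needed. Fix $n\in\mathcal{N}$ and $\ell\in\mathcal{P}$ with $\ell\nmid n$, and write $\epsilon\defeq\epsilon_n$. By Proposition \ref{prop-structureart} there are $\tau_c$-equivariant decompositions
\[
\Sel_{\mathcal{F}(n)}(K,T)=R\oplus M_n\oplus M_n,\qquad \Sel_{\mathcal{F}(n\ell)}(K,T)=R\oplus M_{n\ell}\oplus M_{n\ell},
\]
with the free summands lying in the $\epsilon$- and $(-\epsilon)$-eigenspaces, respectively. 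Hence $\Stub_n=\mathfrak{m}^{\lambda_n}R\cdot c_n$ and $\Stub_{n\ell}=\mathfrak{m}^{\lambda_{n\ell}}R\cdot c_{n\ell}$, where $c_n$ and $c_{n\ell}$ generate the free summands. Consequently $\loc_\ell(\Stub_n)$ is a cyclic submodule of $H^1_{\f}(K_\lambda,T)^{\epsilon}\otimes G_n$, and $\psi^\ell_{n\ell}(\Stub_{n\ell})$ is a cyclic submodule of $H^1_{\s}(K_\lambda,T)^{-\epsilon}\otimes G_{n\ell}$. Here I use that $\beta_\ell$ intertwines $\tau_c$ up to the sign coming from the action of $\tau_c$ on $G_\ell$ (by inversion), as in \cite[Lemma 2.11]{mastella2025anticyclotomic}. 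By Lemma \ref{lemmaH1f}, each of these eigenspaces is free of rank one over $R$.

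The key step is to show that these two cyclic submodules, once transported to $T$ via $\alpha_\ell$ and $\beta_\ell$, have the same index. In other words,
\[
\ind\bigl(\loc_\ell(\Stub_n)\bigr)=\ind\bigl(\psi^\ell_{n\ell}(\Stub_{n\ell})\bigr).
\]
Both images lie in rank-one free modules over the principal artinian ring $R$, so a submodule is determined by its index. I plan to compute both indices from Proposition \ref{SelIncl}, which gives integers $a,b,\delta$ controlling the cokernels in diagram \eqref{diagram-inclusions-eq}.

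On the $\epsilon$-side, the relevant index is $\lambda_n+(\text{length of }\loc_\ell(R c_n)\text{ missing})$, which equals $\lambda_n+(k-a)$ when $a$ is the length attached to the free $\epsilon$-summand. On the $(-\epsilon)$-side, the relevant index is $\lambda_{n\ell}+(\text{colength of }\psi^\ell_{n\ell}(Rc_{n\ell}))$. Comparing total lengths in the diagram, as in the proofs of Propositions \ref{SelComp} and \ref{classInStub}, should give
\[
\lambda_{n\ell}-\lambda_n=\text{(difference of those colengths)},
\]
forcing the two indices to agree. I expect this bookkeeping to be the main obstacle. It requires identifying the free summands with specific cyclic cokernels in the diagram, and the general case allows $\delta>0$ and non-generic $\ell$. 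To handle this, I would use the Flach--Howard pairing from \texttt{Step 3} of Proposition \ref{prop-structureart}, together with the eigenspace splitting, to see that the $M\oplus M$ parts contribute symmetrically. In that case only the free summand affects the difference.

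Once the indices agree, pick generators $x$ of $\alpha_\ell\bigl(\loc_\ell(\Stub_n)\bigr)\subset T^{\epsilon}$ and $y$ of $\beta_\ell\bigl(\psi^\ell_{n\ell}(\Stub_{n\ell})\bigr)\subset T^{\epsilon}$; after untwisting by $G$, both lie in the same eigenline. Since the indices coincide, $y=ux$ for some unit $u\in R^\times$. Define $\chi_{n,\ell}$ to be multiplication by $u$ on $T^{\epsilon}$ and the identity on $T^{-\epsilon}$, using the splitting in (5.a) of Assumption \ref{ass:Howard-assumption}. This is an $R$-automorphism with $\phi^{\fs}_\ell(\chi_{n,\ell})\bigl(\loc_\ell(\Stub_n)\bigr)=\psi^\ell_{n\ell}(\Stub_{n\ell})$, which is exactly admissibility. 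If the index computation only yields the two cyclic modules up to a rank-one free ambient line, the freedom of choosing any $u$, or any automorphism of $T$ sending one line to the other, still suffices.
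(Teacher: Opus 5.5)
Your reduction and your final step are sound. Admissibility can be arranged one pair $(n,\ell)$ at a time. Once the two cyclic images have the same length, an automorphism of $T$ carrying one onto the other exists. You do not even need the eigenline bookkeeping for this: any two cyclic submodules of the same length $m$ in $T\simeq R^2$ are $\Aut_R(T)$-conjugate, since their generators are $\pi^{k-m}$ times primitive vectors, and primitive vectors extend to bases.

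However, the heart of the proposition is never proved, namely the equality of the two lengths (equivalently, indices). You say that comparing lengths ``should give'' it and flag it as the main obstacle. Your plan is to handle $\delta>0$ and non-generic $\ell$ through the Flach--Howard pairing and by identifying the free summands with specific cokernels in \eqref{diagram-inclusions-eq}. As written, this is a gap, and the tool you propose is not what is needed.

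The missing observation is that you never have to track the free summand or the eigenspaces.

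\begin{itemize}
\item Since $\Stub_n=\mathfrak{m}^{\lambda_n}\Sel_{\mathcal{F}(n)}(K,T)$, you have $\loc_\ell(\Stub_n)=\mathfrak{m}^{\lambda_n}\loc_\ell\bigl(\Sel_{\mathcal{F}(n)}(K,T)\bigr)$. The image $\loc_\ell\bigl(\Sel_{\mathcal{F}(n)}(K,T)\bigr)\simeq\Sel_{\mathcal{F}(n)}(K,T)/\Sel_{\mathcal{F}_\ell(n)}(K,T)\simeq R/\mathfrak{m}^a\oplus R/\mathfrak{m}^b$. Because $\Stub_n$ is cyclic, one of the two resulting summands vanishes, so $\length\bigl(\loc_\ell(\Stub_n)\bigr)=\max\{0,a-\lambda_n,b-\lambda_n\}$.
\item In the same way, $\psi^\ell_{n\ell}(\Stub_{n\ell})=\mathfrak{m}^{\lambda_{n\ell}}\bigl(\Sel_{\mathcal{F}(n\ell)}(K,T)/\Sel_{\mathcal{F}_\ell(n)}(K,T)\bigr)$ has length $\max\{0,k-a-\delta-\lambda_{n\ell},k-b-\delta-\lambda_{n\ell}\}$.
\item Finally, $\length\bigl(\Sel_{\mathcal{F}(n)}(K,T)\bigr)=k+2\lambda_n$ and $\length\bigl(\Sel_{\mathcal{F}(n\ell)}(K,T)\bigr)=k+2\lambda_{n\ell}$. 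Comparing both with $\Sel_{\mathcal{F}_\ell(n)}(K,T)$ in the diagram gives $\lambda_{n\ell}=\lambda_n+k-a-b-\delta$.
\end{itemize}

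The last identity turns $k-a-\delta-\lambda_{n\ell}$ into $b-\lambda_n$ and $k-b-\delta-\lambda_{n\ell}$ into $a-\lambda_n$, so the two lengths agree. This treats every $\ell$ and every $\delta$ uniformly, with no pairing argument.

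A smaller point: your $\epsilon$-side index should be $\min\{k,\lambda_n+k-a\}$, where $a$ is the length of $\loc_\ell\bigl(\Sel^{\epsilon}_{\mathcal{F}(n)}(K,T)\bigr)$. It is not the length of $\loc_\ell(Rc_n)$; the two images can differ. They coincide only after multiplying by $\mathfrak{m}^{\lambda_n}$, because $\mathfrak{m}^{\lambda_n}$ kills $M_n$.
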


\begin{proof} We need to prove that the equality
\[
\length\bigl(\loc_\ell(\Stub_n)\bigr)=\length\bigl(\loc_\ell(\Stub_{n\ell})\bigr)
\]
holds for all $n\in\mathcal{N}$ and all $\ell\in\mathcal{P}$ with $\ell\nmid n$. Diagram \eqref{diagram-inclusions-eq} shows that 
\[
\length\bigl(\loc_\ell(\Stub_n)\bigr)=\max\bigl\{0,a-\lambda_n,b-\lambda_n\bigr\}
\]
and
\[
\length\bigl(\loc_\ell(\Stub_{n\ell})\bigr)=\max\bigl\{0,k-a-\delta-\lambda_{n\ell},k-b-\delta-\lambda_{n\ell}\bigr\}
\]
for some $a,b,\delta\in\N$. Again by Proposition \ref{SelIncl}, we have $\lambda_{n\ell}=\lambda_n+k-a-b-\delta$, whence
\[
\length\bigl(\loc_\ell(\Stub_{n\ell})\bigr)=\max\bigl\{0,a-\lambda_n,b-\lambda_n\bigl\}=\length\bigl(\loc_\ell(\Stub_n)\bigr).
\]
The proof is complete. \end{proof}

\begin{proposition} \label{modification-prop2}
There exists an admissible modification $\{ \chi'_{n,\ell}\}$ for $(T,\mathcal{F},\mathcal{P})$ such that $\kappa\in\ks\bigl(T,\mathcal{F},\mathcal{P},\{\chi'_{n,\ell}\}\bigr)$.
\end{proposition}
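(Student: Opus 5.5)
We will construct $\{\chi'_{n,\ell}\}$ by modifying the given $\chi_{n,\ell}$ pair by pair, keeping the Kolyvagin relation for $\kappa$ intact. For this, fix $n\in\mathcal{N}$ and $\ell\in\mathcal{P}$ with $\ell\nmid n$. The only condition involving $\chi_{n,\ell}$ is
\[
\psi_n^\ell(\chi_{n,\ell})(\kappa_n)=\psi_{n\ell}^\ell(\kappa_{n\ell}).
\]
So it suffices to produce an $R$-automorphism $\chi'_{n,\ell}$ of $T$ that agrees with $\chi_{n,\ell}$ on $\alpha_\ell(\loc_\ell(\kappa_n))$ and satisfies $\psi_n^\ell(\chi'_{n,\ell})(\Stub_n)=\psi_{n\ell}^\ell(\Stub_{n\ell})$.

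\emph{Setup.} By Proposition \ref{prop-structureart} and the definition of $\Stub_n$, the module $\Stub_n=\mathfrak{m}^{\lambda_n}C$ is cyclic, where $C\simeq R$ is the free summand lying in $\Sel^{\epsilon_n}_{\mathcal{F}(n)}(K,T)$. Similarly, $\Stub_{n\ell}$ is cyclic and lies in the $\epsilon_{n\ell}=-\epsilon_n$ eigenspace. By Proposition \ref{classInStub}, $\kappa_n\in\Stub_n$ and $\kappa_{n\ell}\in\Stub_{n\ell}$. We transport everything into $T$: set
\[
X\defeq\alpha_\ell(\loc_\ell(\Stub_n))\subset T^{\epsilon_n},\qquad Y\defeq\beta_\ell(\psi_{n\ell}^\ell(\Stub_{n\ell})).
\]
The maps $\alpha_\ell$, $\beta_\ell$ are $\tau_c$-equivariant up to the sign coming from $G_\ell$ (Lemma \ref{lemmaH1f} and \cite[Lemma 2.11]{mastella2025anticyclotomic}), so both $X$ and $Y$ are cyclic submodules sitting inside rank-one free eigenspaces of $T$ (Assumption \ref{ass:Howard-assumption}(5.a)). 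The proof of Proposition \ref{modification-prop} shows they have the same length $L$. Inside $R$, cyclic submodules of equal length coincide (both equal $\mathfrak{m}^{k-L}$). Hence we may pick generators $x$ of $X$ and $y$ of $Y$ whose eigencomponents match up to the fixed identification of eigenspaces.

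\emph{Construction.} Write $\alpha_\ell(\loc_\ell\kappa_n)=ux$ with $u\in R$. The Kolyvagin relation gives $\beta_\ell(\psi^\ell_{n\ell}\kappa_{n\ell})=\chi_{n,\ell}(ux)\in Y$, say $=vy$. We want an automorphism $\chi'$ of $T$ with $\chi'(x)=wy$ for some unit $w$ satisfying $uwy=vy$. Then $\chi'(X)=Y$ and $\chi'(ux)=\chi_{n,\ell}(ux)$.

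\emph{Case $\kappa_n$ not generating.} If $\loc_\ell\kappa_n$ does not generate $X$, i.e.\ $u\in\mathfrak{m}$, the existence of such a unit $w$ is the main obstacle. We must show that $v\equiv uw$ modulo the annihilator of $y$ for some unit $w$, i.e.\ that $\ind(vy)=\ind(ux)$ in the respective cyclic modules. This follows because $\chi_{n,\ell}$ is an automorphism and $X$, $Y$ have the same length: $\chi_{n,\ell}(ux)$ and $ux$ have the same index in $T$. Moreover, $X$ and $Y$ are both of the form $\mathfrak{m}^{k-L}\cdot(\text{rank one free})$, so indices inside $X$ and inside $Y$ are both computed as the index in $T$ shifted by $k-L$. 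Hence $v=uw$ up to $\mathrm{Ann}(y)$ for a unit $w$.

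\emph{Extension to $T$.} We take $\chi'_{n,\ell}$ to map the free line containing $x$ to the line containing $y$, sending a lift $\tilde x\mapsto w\tilde y$, and to act as any isomorphism between the complementary eigenlines. This is possible since $T=T^+\oplus T^-$ with free rank-one pieces, so the result is an automorphism. Then
\[
\chi'_{n,\ell}(ux)=uwy=vy=\chi_{n,\ell}(ux),
\]
so $\kappa$ still satisfies the Kolyvagin relations for $\{\chi'_{n,\ell}\}$, and admissibility holds by construction.

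\emph{Case $\kappa_n$ generating or zero.} If $u$ is a unit, take $w=v/u$. If $\loc_\ell\kappa_n=0$, any $w$ works. If $\kappa_n=0$, the relation forces $\psi^\ell_{n\ell}\kappa_{n\ell}=0$, so any $w$ works as well.
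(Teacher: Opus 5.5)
Your argument is correct, but it is organized differently from the paper's. The paper sets $c_{n,\ell}\defeq\psi_n^\ell(\chi_{n,\ell})(\kappa_n)=\psi_{n\ell}^\ell(\kappa_{n\ell})$ and keeps $\chi'_{n,\ell}=\chi_{n,\ell}$ whenever $c_{n,\ell}\neq0$. Its reasoning is that $\psi_n^\ell(\chi_{n,\ell})(\Stub_n)$ and $\psi_{n\ell}^\ell(\Stub_{n\ell})$ are cyclic of the same length and both contain $c_{n,\ell}$ by Proposition \ref{classInStub}, hence coincide. Only when $c_{n,\ell}=0$ does it choose a new automorphism.

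You instead rebuild $\chi'_{n,\ell}$ on every edge so that it agrees with $\chi_{n,\ell}$ on the single vector $\alpha_\ell(\loc_\ell(\kappa_n))$ and carries $X$ onto $Y$. The key input is that the automorphism $\chi_{n,\ell}$ preserves divisibility indices in $T$. This forces $u$ and $v$ to have the same valuation and yields the unit $w$.

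Your route is the more robust one. The paper's step ``non-trivial intersection, hence equal'' is valid when the two cyclic modules lie in a common free rank-one submodule of the rank-two module $H^1_{\s}(K_\lambda,T)\otimes G_{n\ell}$, for instance when $\chi_{n,\ell}$ respects the $\tau_c$-eigenspaces. It fails for an arbitrary automorphism when $\kappa_n$ does not generate $\Stub_n$. For example, for a basis $\{e,e'\}$ of $T$, the automorphism $e\mapsto e+\pi^{k-1}e'$, $e'\mapsto e'$ fixes $\pi e$ but does not preserve $Re$. In that situation one really has to change $\chi_{n,\ell}$ while keeping its value on $\alpha_\ell(\loc_\ell(\kappa_n))$, which is exactly what you do. The paper's version buys brevity and leaves $\chi_{n,\ell}$ untouched on the edges where it applies.

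Two minor points on your write-up.
\begin{enumerate}
\item Your remarks that cyclic submodules of $R$ of equal length coincide, and that eigencomponents can be matched, are not needed. The $\tau_c$-equivariance of $\beta_\ell$ you invoke is also not established in the paper. All you actually use is that $X$ and $Y$ are each $\mathfrak{m}^{k-L}$ times a free rank-one \emph{direct summand} of $T$. This holds for every cyclic submodule of $T\simeq R^2$, and it is what makes both the index comparison and the extension to an automorphism of $T$ work; say ``direct summand'' rather than just ``rank one free''.
\item In the case $u\in R^\times$, you should note that $v$ is then a unit by the same index argument, so that $w=vu^{-1}$ is indeed a unit.
\end{enumerate}
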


\begin{proof}Set
\[
c_{n,\ell} \defeq \psi_n^\ell(\chi_{n,\ell})(\kappa_n)=\psi_{n \ell}^\ell(\kappa_{n \ell}).
\]
By the arguments in the proof of Proposition \ref{modification-prop}, $\psi_n^\ell(\chi_{n,\ell})(\Stub_n)$ and $\psi_{n\ell}^\ell(\Stub_{n\ell})$ are $R$-cyclic submodules of $H^1_{\s}(K_\lambda,T) \otimes G_{n\ell}$ of the same length. If $c_{n,\ell}\neq0$, then $\psi_n^\ell(\chi_{n,\ell})(\Stub_n)$ and $\psi_{n\ell}^\ell(\Stub_{n\ell})$ have non-trivial intersection by Proposition \ref{classInStub}, so they are equal and we can take $\chi'_{n,\ell} \defeq \chi_{n,\ell}$. If $c_{n,\ell} = 0$, then there exists an $R$-automorphism $\chi'_{n,\ell}$ such that $\psi_n^\ell(\chi'_{n,\ell})(\Stub_n)=\psi_{n\ell}^\ell(\Stub_{n\ell})$. It follows that $\{\chi_{n,\ell}'\}$ is an admissible modification and $\kappa\in\ks\bigl(T,\mathcal{F},\mathcal{P},\{\chi_{n,\ell}'\}\bigr)$.
\end{proof}

In light of Proposition \ref{modification-prop2}, from now on we always assume that $\{\chi_{n,\ell}\} $ is an admissible modification for $(T,\mathcal{F},\mathcal{P})$. We denote by $\mathcal{X}$ the graph whose vertices $v(n)$ are indexed by $n \in \mathcal{N}$ and an edge $e(n,n \ell)$ connects $v(n)$ to $v(n \ell)$ whenever $n \in \mathcal{N}$ and $\ell \in \mathcal{P}$ with $\ell \nmid n$. We define the \emph{Kolyvagin system sheaf} $\ks (\mathcal{X})$ on $\mathcal{X}$ in the following way. For all vertices $v=v(n)$ of $\mathcal{X}$ we set
\[
\ks(v) \defeq \Sel_{\mathcal{F}(n)}(K,T) \otimes G_n
\]
and for all edges $e=e(n,n \ell)$ we set
\[
\ks(e) \defeq H^1_{\s}(K_\lambda,T) \otimes G_{n \ell}.
\]
For every edge $e(n,n\ell)$ with endpoint $v$ we define the vertex-to-edge map
\[
\psi_v^e:\ks(v)\longrightarrow\ks(e),\quad\psi_v^e\defeq\begin{cases}\psi_n^\ell(\chi_{n,\ell}) & \text{if $v=v(n)$},\\[2mm]\psi_{n\ell}^\ell & \text{if $v=v(n\ell)$}. \end{cases}
\]
We define a locally cyclic (see, \emph{e.g.}, \cite[Definition 3.4.2]{MR}) subsheaf $\Stub(\mathcal X)$ of $\ks(\mathcal{X})$, called the \emph{stub sheaf} of $\mathcal X$, as follows:
\begin{itemize}
\item $\Stub\bigl(v(n)\bigr)\defeq\Stub_n$;
\item $\Stub\bigl(e(n,n\ell)\bigr)\defeq\psi_n^\ell(\chi_{n,\ell})(\Stub_n)$;
\item the vertex-to-edge map $\psi_v^e$ is induced by the one of $\ks(\mathcal{X}).$
\end{itemize}

\begin{definition}
A \emph{core vertex} of $\mathcal{X}$ is a vertex $v(n)$ of $\mathcal X$ such that $\Stub_n \simeq R$.
\end{definition}

\begin{remark} \label{corerem} 
A vertex $v(n)$ of $\mathcal X$ is a core vertex if and only if $\rho(n)=1$. Therefore, there is a sign $\epsilon\in\{\pm\}$ such that $\rho(n)^\epsilon=1$ and $\rho(n)^{-\epsilon}=0$. Thus, by Lemma \ref{Ho1.5.3}, for all $\ell\nmid n$ and $\ell\in\mathcal{P}$ there are equalities $\rho(n\ell)^{-\epsilon}=\rho(n)^{-\epsilon}+1=1$; moreover, again by Lemma \ref{Ho1.5.3}, if $\loc_\ell\bigl(\Sel_{\mathcal{F}(n)}^\epsilon(K,T/\mathfrak{m}T)\bigr)=0$, then $\rho(n\ell)^\epsilon=\rho(n)^\epsilon+1=2$ and $\rho(n\ell)=\rho(n)+2=3$, whereas if $\loc_\ell\bigl(\Sel_{\mathcal{F}(n)}^\epsilon(K,T/\mathfrak{m}T)\bigr)\neq0$, then
$\rho(n\ell)^\epsilon=\rho(n)^\epsilon-1=0$ and $\rho(n\ell)=\rho(n)=1$.\end{remark}

From here on, we denote by $\mathcal{X}_0$ the \emph{core subgraph} of $\mathcal X$, \emph{i.e.}, the subgraph of $\mathcal{X}$ whose vertices are the core vertices. Moreover, for every $a\in\mathcal{N}$, we write $\mathcal{X}_{0,a}$ for the subgraph of $\mathcal{X}_0$ consisting of the core vertices $v(n)$ with $a \,|\,n$. Finally, $\Stub(\mathcal{X}_0)$ and $\Stub(\mathcal{X}_{0,a})$ are the sheaves induced by $\Stub(\mathcal{X})$.

\begin{definition}
Let $G \in \{\mathcal{X},\mathcal{X}_0,\mathcal{X}_{0,a}\}$. Given $v$ and $v'$ vertices of $G$, a \emph{path} from $v$ to $v'$ is a finite sequence of vertices $v=v_0,v_1 ,\dots v_t=v'$ such that $v_i$ is connected to $v_{i+1}$ by an edge $e_i$. A path is \emph{surjective} for $\Stub(G)$ if the vertex-to-edge map
$$\psi_{v_{i+1}}^{e_i}: \Stub(v_{i+1}) \longrightarrow \Stub(e_i)$$
is an isomorphism for every $i$.
\end{definition}

\begin{remark} \label{remsurjpath}
A surjective path in $\Stub(G)$ for $G\in\{\mathcal{X},\mathcal{X}_0,\mathcal{X}_{0,a}\}$ from a vertex $v$ to a vertex $v'$ defines, as explained in \cite[\S3.4]{MR}, a surjection $\Stub(v)\twoheadrightarrow\Stub(v')$.
\end{remark} 

We need three lemmas on (core) vertices of $\mathcal X$.

\begin{lemma} \label{existSurjPath}
For any vertex $v$ of $\mathcal{X}$, there are a core vertex $v_0$ and a surjective path from $v_0$ to $v$ in $\mathcal{X}$. For any $a\in\mathcal{N}$, there exists $n\in\mathcal{N}$ with $a\,|\,n$ such that $v(n)$ is a core vertex of $\mathcal X$.
\end{lemma}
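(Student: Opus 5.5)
We argue by induction on $\rho(n)$, which by Proposition \ref{prop-structureart} is always odd; this is the same descent already used in Proposition \ref{classInStub} and Proposition \ref{SelComp}. For the first statement, fix a vertex $v=v(n)$. If $\rho(n)=1$ we take $v_0=v$ and the trivial path. Otherwise we construct a vertex $v(n')$ with $\rho(n')<\rho(n)$ and a surjective path from $v(n')$ to $v(n)$. Concatenating this with the path given by the inductive hypothesis at $v(n')$ yields a surjective path from a core vertex to $v$, since a concatenation of surjective paths is surjective.

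For the inductive step, let $C_n\simeq R$ be the free summand of $\Sel_{\mathcal{F}(n)}^{\epsilon_n}(K,T)$ from Proposition \ref{prop-structureart}. Let $D_n$ be a cyclic summand of $\Sel_{\mathcal{F}(n)}^{-\epsilon_n}(K,T)$ of maximal length $d$. By Lemma \ref{InfPrimes}, we choose $\ell\in\mathcal{P}$ with $\ell\nmid n$ such that $\loc_\ell$ is injective on $C_n\oplus D_n$.

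\texttt{Case $d>0$.} Proposition \ref{SelComp} gives $\lambda_{n\ell}=\lambda_n-d$ and $\rho(n\ell)=\rho(n)-2$. We must check that the edge $e(n,n\ell)$ is surjective towards $v(n)$, i.e.\ that $\psi_n^\ell(\chi_{n,\ell})\colon\Stub_n\to\Stub(e)$ is an isomorphism. Here $\Stub_n=\mathfrak{m}^{\lambda_n}C_n$ up to the choice of splitting, since $\mathfrak{m}^{\lambda_n}$ kills $M_n\oplus M_n$ (its cyclic summands have length at most $\lambda_n$). Because $\loc_\ell$ is injective on $C_n$ and $\phi^{\fs}_\ell$ is an isomorphism, $\psi_n^\ell$ restricted to $\Stub_n$ is injective. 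Its image is $\Stub(e)$ by definition, so $\psi_n^\ell$ is an isomorphism onto $\Stub(e)$. We then set $n'=n\ell$, and the path runs from $v(n\ell)$ to $v(n)$ along this edge.

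\texttt{Case $d=0$.} As in Case 2 of Proposition \ref{classInStub}, we have $\rho(n\ell)=\rho(n)$ and $\lambda_{n\ell}=\lambda_n$. The same injectivity argument shows that the edge $e(n,n\ell)$ is surjective onto $v(n)$. At $n\ell$ the maximal cyclic summand of $\Sel^{\epsilon_{n\ell}}$-complement now has length $d'>0$, so a second prime $\ell'$ chosen by Lemma \ref{InfPrimes} puts us in the first case at $n\ell$. We then take $n'=n\ell\ell'$ and the path $v(n\ell\ell')\to v(n\ell)\to v(n)$.

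For the second statement, given $a\in\mathcal{N}$ we run the same descent starting from $v(a)$, always choosing the new primes among those not dividing the current index. This is possible because Lemma \ref{InfPrimes} provides infinitely many choices. The resulting core vertex is $v(n)$ with $n=a\cdot(\text{new primes})$, so $a\mid n$.

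The main obstacle is the surjectivity check: identifying $\Stub_n$ with $\mathfrak{m}^{\lambda_n}C_n$ inside the $\tau_c$-decomposition. If this identification fails for the chosen $C_n$, we instead use injectivity of $\loc_\ell$ on the free rank-one summand. Length counting then gives $\length(\loc_\ell(\Stub_n))=\length(\Stub_n)=k-\lambda_n$, which again makes the map onto $\Stub(e)$ an isomorphism.
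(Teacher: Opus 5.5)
Your proposal is correct and follows essentially the same route as the paper. You choose $\ell$ by Lemma \ref{InfPrimes} so that $\loc_\ell$ is injective on $C_n\oplus D_n$, split into the cases $d>0$ and $d=0$ (adding a second prime $\ell'$ in the latter), and deduce that $\psi^{e}_{v(n)}\colon \Stub_n\to\Stub(e)$ is an isomorphism from the injectivity of $\loc_\ell$ on the free summand.

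The only difference is cosmetic: you measure progress by the strict decrease of $\rho(n)$, whereas the paper uses the strict increase of $\length(\Stub)$, i.e.\ the decrease of $\lambda_n$. The worry in your last paragraph is unnecessary. A free rank-one submodule over a principal artinian local ring is always a direct summand, and its complement is isomorphic to $M_n\oplus M_n$, hence killed by $\mathfrak{m}^{\lambda_n}$. So $\Stub_n=\mathfrak{m}^{\lambda_n}C_n$ for any choice of $C_n$.
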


\begin{proof} Let $v$ be a vertex of $\mathcal X$. If $v$ is a core vertex, then there is nothing to prove. Otherwise, let $n\in\mathcal{N}$ be such that $v=v(n)$ and set $w_0\defeq v(n)$. Thanks to Proposition \ref{prop-structureart}, we can fix a rank-$1$ free $R$-submodule $C_n$ of $\Sel^{\epsilon_n}_{\mathcal{F}(n)}(K,T)$ and a direct summand $D_n$ of $\Sel^{-\epsilon_n}_{\mathcal{F}(n)}(K,T)$ of maximal length $d$. Set $E_n \defeq C_n \oplus D_n$ and choose $\ell\in\mathcal{P}$ such that the restriction of $\loc_\ell$ to $E_n$ is injective (use Lemma \ref{InfPrimes}). Then, by Proposition \ref{SelComp}, $\lambda_{n\ell}=\lambda_n-d$.

\texttt{Case 1: $d>0$.} Set $w_1\defeq v(n \ell)$ and notice that $\length\bigl(\Stub(w_0)\bigr)<\length\bigl(\Stub(w_1)\bigr)$. Furthermore, the map
\[
\psi_{w_0}^{e(n,n\ell)}:\Stub(w_0)\longrightarrow\Stub\bigl(e(n,n\ell)\bigr)
\]
is an isomorphism, as $\psi_n^\ell(\chi_{n,\ell})$ is injective on $E_n$ by the choice of $\ell$.

\texttt{Case 2: $d=0$.} Fix a rank-$1$ free $R$-module $C_{n\ell}\subset\Sel^{-\epsilon_n}_{\mathcal{F}(n\ell)}(K,T)$ and a direct summand $D_{n\ell}$ of $\Sel^{\epsilon_n}_{\mathcal{F}(n\ell)}(K,T)$ of maximal length $d'>0$. Set $E_{n\ell} \defeq C_{n\ell} \oplus D_{n \ell}$ and choose $\ell'\in\mathcal{P}$ such that the restriction of $\loc_{\ell'}$ to $C_{n \ell}$ is injective (use again Lemma \ref{InfPrimes}). Now set $w_1 \defeq v(n\ell\ell')$ and observe that $\length\bigl(\Stub(w_0)\bigr)<\length\bigl(\Stub(w_1)\bigr)$. Moreover, $\psi_{v(n)}^{e(n,n\ell)}$ and $\psi_{v(n\ell)}^{e(n\ell,n\ell\ell')}$ are isomorphisms by the choices of $\ell$ and $\ell'$.

Iterating this argument, we find a vertex $w_t$ such that $\length(\Stub(w_t))=k$ (it is a core vertex) and a surjective path $w_t, \dots, w_1,w_0=v$. The second part of the statement follows directly from the construction above.
\end{proof}

\begin{lemma} \label{lemma-connected}
If $v(a)$ is a core vertex, then $\mathcal{X}_{0,a}$ is path connected.
\end{lemma}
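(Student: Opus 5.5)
The plan is to adapt the proof of the analogous connectivity statement in Mazur--Rubin \cite[Theorem 4.3.12]{MR} and Howard \cite[\S2]{howard2012bipartite} to the present setting. Fix two core vertices $v(n)$ and $v(n')$ of $\mathcal{X}_{0,a}$, so that $a\mid n$ and $a\mid n'$. First I reduce to the case where $n$ and $n'$ differ by a single prime, or more precisely where $n'=n\ell$, or where $n=bq$ and $n'=bq'$ for primes $q,q'$. The reduction runs by induction on the number of primes of $n\,n'/\gcd(n,n')^2$. The key tool is the following claim: if $v(m)$ is a core vertex with $a\mid m$ and $\ell\in\mathcal{P}$ does not divide $m$, then either $v(m\ell)$ is core, or there is a prime $q$ such that both $v(mq)$ and $v(mq\ell)$ are core. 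This lets me add the primes of $n'$ one at a time to $n$, staying in $\mathcal{X}_{0,a}$, and then remove the primes of $n$ not dividing $n'$ by the symmetric argument.

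To prove the claim I use Remark \ref{corerem}. Since $v(m)$ is core, there is a sign $\epsilon'$ with $\rho(m)^{\epsilon'}=1$ and $\rho(m)^{-\epsilon'}=0$. The vertex $v(m\ell)$ is core exactly when $\loc_\ell$ is nonzero on the line $\Sel^{\epsilon'}_{\mathcal{F}(m)}(K,T/\mathfrak{m}T)$. Suppose it is zero. Then $\rho(m\ell)^{\epsilon'}=2$ and $\rho(m\ell)^{-\epsilon'}=1$. Now apply condition (6) of Assumption \ref{ass:Howard-assumption} to the finitely many nonzero classes involved. This gives a prime $q\nmid m\ell$ with three properties: $\loc_q$ is nonzero on the generator of $\Sel^{\epsilon'}_{\mathcal{F}(m)}(K,T/\mathfrak{m}T)$, so that $v(mq)$ is core with sign $-\epsilon'$; $\loc_q$ is nonzero on $\Sel^{-\epsilon'}_{\mathcal{F}(m\ell)}(K,T/\mathfrak{m}T)$; and $\loc_q$ is nonzero on a suitable $\epsilon'$-class in $\Sel_{\mathcal{F}(m\ell)}(K,T/\mathfrak{m}T)$. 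By Lemma \ref{Ho1.5.3}, adding $q$ to $m\ell$ then lowers $\rho$ on both eigenspaces appropriately. That forces $\rho(mq\ell)=1$, so $v(mq\ell)$ is core.

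The main obstacle is the counting in the case where $\loc_\ell$ vanishes. One must check that $v(mq\ell)$ is reached by edges inside $\mathcal{X}_{0,a}$, via $v(m)\to v(mq)\to v(mq\ell)$. The first edge is fine. For the second, I need $\loc_\ell$ to be nonzero on the $\epsilon'$-line of $\Sel_{\mathcal{F}(mq)}(K,T/\mathfrak{m}T)$, where the $\epsilon'$ refers to the sign after the flip. I plan to obtain this by a Poitou--Tate/parity comparison using the diagram in Proposition \ref{SelIncl} with $k=1$. There, the quantities $a,b,\delta$ describe how the mod-$\mathfrak{m}$ Selmer groups for $m$, $m\ell$, $mq$ and $mq\ell$ sit inside the common relaxed group $\Sel_{\mathcal{F}^{\ell q}(m)}$, and a dimension count should show that the residual class of $v(mq)$ must localize nontrivially at $\ell$. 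If this direct count fails, the fallback is to choose $q$ so that $\loc_q$ is nonzero on two independent classes of $\Sel^{\epsilon'}_{\mathcal{F}^{\ell}(m)}(K,T/\mathfrak{m}T)$ at once, which is possible since by self-duality $\loc_q$ on the $\epsilon'$-part is a map to a one-dimensional space.
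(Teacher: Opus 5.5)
Your key claim is correct, and easier than you make it. If $c$ spans $\Sel_{\mathcal{F}(m)}(K,T/\mathfrak{m}T)$ and $\loc_\ell(c)=0$, then $c\in\Sel_{\mathcal{F}_\ell(m)}(K,T/\mathfrak{m}T)\subset\Sel_{\mathcal{F}(m\ell)}(K,T/\mathfrak{m}T)$. So a prime $q$ obtained from condition (6), applied to $c$ and a generator of $\Sel^{-\epsilon'}_{\mathcal{F}(m\ell)}(K,T/\mathfrak{m}T)$, already makes $v(mq)$ and $v(mq\ell)$ core; your third condition is automatic. Your ``main obstacle'' is not one either: $\mathcal{X}_0$ is the subgraph on the core vertices, so the core vertices $v(mq)$ and $v(mq\ell)$ are joined by the edge $e(mq,mq\ell)$, and no localization condition needs checking.

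The genuine gap is termination. In the second alternative of your claim you introduce a prime $q$ you do not control, and in general $q\nmid n'$. Then the move $v(m)\to v(mq)\to v(mq\ell)$ leaves $\nu\bigl(mn'/\gcd(m,n')^2\bigr)$ unchanged: $\ell$ leaves the count and $q$ enters it, so your induction measure does not decrease. Counting only the primes of $n'$ not yet present does make the adding phase stop, at a core $v(M)$ with $\mathrm{lcm}(n,n')\mid M$. But you must then descend from $v(M)$ to $v(n')$, deleting the primes of $n$ \emph{and} all the auxiliary primes. The symmetric version of your claim says: either $v(m/\ell)$ is core, or $v(mq)$ and $v(mq/\ell)$ are core for a fresh $q$. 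This again trades one excess prime for another, and nothing forces the process to stop. The base case $n=bq$, $n'=bq'$ is also never proved, and it is not immediate when neither $v(b)$ nor $v(bqq')$ is core. Notice too that you never use the hypothesis that $v(a)$ is core.

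That hypothesis is exactly what the paper uses to force progress. It connects every core $v(n)$, $n=ab$, to $v(a)$ by induction on $\nu(b)$.
\begin{itemize}
\item If $\loc_t\bigl(\Sel_{\mathcal{F}(n)}(K,T/\mathfrak{m}T)\bigr)\neq0$ for some $t\mid b$, then $v(n/t)$ is core by Lemma \ref{Ho1.5.3}, and induction applies.
\item Otherwise the Selmer line at $n$ is strict at every prime of $b$. Hence it lies in, and so equals, $\Sel_{\mathcal{F}(a)}(K,T/\mathfrak{m}T)$, which is one-dimensional because $v(a)$ is core.
\end{itemize}
In the second case one picks $q\mid b$ and an auxiliary $\ell$ with $\loc_\ell$ injective on $\Sel_{\mathcal{F}(n)}\oplus\Sel^{-\epsilon_n}_{\mathcal{F}(n/q)}$. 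This gives the core path $v(n),v(n\ell),v(n\ell/q)$, a swap of exactly your kind.

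The decisive extra step is that at $v(n\ell/q)$ some $t\mid b\ell/q$ must localize non-trivially. Otherwise the line $\Sel_{\mathcal{F}(n\ell/q)}$ would be strict at all primes of $b\ell/q$. It would then be contained in $\Sel_{\mathcal{F}(a)}=\Sel_{\mathcal{F}(n)}$, on which $\loc_\ell$ is injective, contradicting strictness at $\ell$. Removing $t$ gives a core vertex whose excess over $a$ has $\nu(b)-1$ prime factors.

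What your proposal lacks is an argument of this kind, pinning down the Selmer line so that every swap is followed by a genuine deletion. With it, the cleanest route is to connect both $v(n)$ and $v(n')$ to $v(a)$, with no adding phase at all.
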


\begin{proof} Let $n=ab \in \mathcal{N}$ be such that $v(n)$ is a core vertex. We prove by induction on the number of prime divisors of $b$ that there exists a path from $v(n)$ to $v(a)$ in $\mathcal{X}_{0,a}$. Combining the fact that $v(n)$ is a core vertex with Proposition \ref{prop-structureart}, we have that $\rho^{\epsilon_n}(n)=1$ and $\rho^{-\epsilon_n}(n)=0$.

\texttt{Case 1.} Assume that there exists $\ell\,|\,b$ such that $\loc_\ell(\Sel_{\mathcal{F}(n)}(K,T/\mathfrak{m}T)) \neq 0$. By point (1) of Lemma \ref{Ho1.5.3}, we must have $\rho^{\epsilon_n}(n/\ell)=0$ and $\rho^{-\epsilon_n}(n/\ell)=1$. In particular, $v(n/\ell)$ is a vertex in $\mathcal{X}_{0,a}$ and we can conclude by induction hypothesis.

\texttt{Case 2}. Assume that $\loc_\ell(\Sel_{\mathcal{F}(n)}(K,T/\mathfrak{m}T))=0$ for all $\ell\,|\,b$ and we choose a prime $q\,|\,b$. Since $\loc_q(\Sel_{\mathcal{F}(n)}(K,T/\mathfrak{m}T))=0$, we have 
    $$\Sel_{\mathcal{F}(n)}(K,T/\mathfrak{m}T) \subset \Sel_{\mathcal{F}(n/q)}(K,T/\mathfrak{m}T) .$$
    Then $\rho^{\epsilon_n}(n/q) \geq 1$ and by Lemma \ref{Ho1.5.3} we have $\rho^{\epsilon_n}(n/q)=2$ and $\rho^{-\epsilon_n}(n/q)=1$.
     By Lemma \ref{InfPrimes}, we can choose a prime $\ell \in \mathcal{P}$ such that $\loc_\ell$ is injective when restricted to a two-dimensional $R/\mathfrak{m}$-vector subspace of $\Sel_{\mathcal{F}(n/q)}(K,T/\mathfrak{m}T)$ which contains $\Sel_{\mathcal{F}(n)}(K,T/\mathfrak{m}T)$ and $\Sel^{-\epsilon_n}_{\mathcal{F}(n/q)}(K,T/\mathfrak{m}T)$. Then, by Lemma \ref{Ho1.5.3} we have $\rho(n\ell/q)=1$ 
    and $\rho(n \ell)=1$.
    Now, we assume by contradiction that 
    \begin{equation}\label{loct}
    \loc_t(\Sel_{\mathcal{F}(n\ell/q)}(K,T/\mathfrak{m}T))=0\end{equation} for every prime $t\,|\,(b\ell/q)$. Then
    $$\Sel_{\mathcal{F}(n\ell/q)}(K,T/\mathfrak{m}T) \subset \Sel_{\mathcal{F}_{b\ell/q}(a)}(K,T/\mathfrak{m}T) \subset \Sel_{\mathcal{F}_{b/q}(a)}(K,T/\mathfrak{m}T) \subset \Sel_{\mathcal{F}(n)}(K,T/\mathfrak{m}T)$$
    where all the inclusions are equalities since $\rho(n)=\rho(n\ell/q)=1$. This combined with \eqref{loct} contradicts the condition $\loc_\ell(\Sel_{\mathcal{F}(n)}(K,T/\mathfrak{m}T)) \neq 0$
    and it follows that there exists a prime $t\,|\,(b\ell/q)$ such that $\loc_t(\Sel_{\mathcal{F}(n\ell/q)}(K,T/\mathfrak{m}T)) \neq 0$. Applying the argument of the first case we see that $\rho(n\ell/(qt))=1$. Finally, $v(n)$, $v(n\ell)$, $v(n \ell/q)$, $v(n\ell/(qt))$ is a path in $\mathcal{X}_{0,a}$ and we can conclude by induction hypothesis.
\end{proof}

\begin{lemma} \label{surjPathX0}
    For any vertex $v$ of $\mathcal{X}$ and any core vertex $v_0$ of $\mathcal{X}$ there is a surjective path from $v_0$ to $v$.
\end{lemma}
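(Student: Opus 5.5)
The plan is to combine Lemma \ref{existSurjPath} with Lemma \ref{lemma-connected}, using that on the core subgraph every vertex-to-edge map of $\Stub(\mathcal{X}_0)$ is either an isomorphism in both directions or can be avoided.

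First, by Lemma \ref{existSurjPath} there is a core vertex $w_0$ and a surjective path from $w_0$ to $v$. It therefore suffices to build a surjective path from an arbitrary core vertex $v_0$ to the core vertex $w_0$ and then concatenate. Concatenation preserves surjectivity, since surjectivity is a condition on each edge separately.

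Second, I connect two core vertices $v_0=v(a)$ and $w_0=v(b)$. By the second part of Lemma \ref{existSurjPath}, applied to $\operatorname{lcm}(a,b)=ab/\gcd(a,b)\in\mathcal{N}$, there is a core vertex $v(n)$ with $\operatorname{lcm}(a,b)\mid n$. Then $v(n)$ lies in both $\mathcal{X}_{0,a}$ and $\mathcal{X}_{0,b}$. By Lemma \ref{lemma-connected}, both graphs are path connected, so there are paths inside $\mathcal{X}_0$ from $v(a)$ to $v(n)$ and from $v(n)$ to $v(b)$. Concatenating gives a path from $v_0$ to $w_0$ all of whose vertices are core vertices.

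The key step is to show that a path inside $\mathcal{X}_0$ can be made surjective. Take an edge $e=e(m,m\ell)$ with both endpoints core. Then $\Stub_m\simeq\Stub_{m\ell}\simeq R$, and since $\{\chi_{n,\ell}\}$ is admissible, $\Stub(e)=\psi_m^\ell(\Stub_m)=\psi_{m\ell}^\ell(\Stub_{m\ell})$ is cyclic. Both vertex-to-edge maps are surjective onto $\Stub(e)$. So $\psi^e_{v}$ is an isomorphism for an endpoint $v$ exactly when $\Stub(e)$ has length $k$, and this condition is symmetric in the two endpoints.

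By the length formula in the proof of Proposition \ref{modification-prop}, with $\lambda=0$ we get $\length\Stub(e)=\max\{a,b\}$ in diagram \eqref{diagram-inclusions-eq}. Among edges between core vertices, Remark \ref{corerem} shows that $\rho(m\ell)=\rho(m)=1$ happens exactly when $\loc_\ell$ is non-zero on $\Sel_{\mathcal{F}(m)}(K,T/\mathfrak{m}T)$. In that case $\loc_\ell$ is injective on $\Sel_{\mathcal{F}(m)}(K,T)\simeq R$, because its kernel would otherwise contain $\mathfrak{m}^{k-1}\Sel$, whose reduction is non-zero. Hence the length is $k$.

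So every edge between core vertices is an isomorphism in both directions, and every path in $\mathcal{X}_0$ is surjective. The main obstacle is this verification, namely that non-vanishing mod $\mathfrak{m}$ forces injectivity on the free rank-one module, via \cite[Lemma 1.3.3]{HoHeeg}. Concatenating with the path from the first step concludes the proof.
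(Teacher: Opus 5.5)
Your proof is correct and takes essentially the same route as the paper's. You reduce to core vertices via Lemma \ref{existSurjPath}, join two core vertices through a common core multiple using Lemma \ref{lemma-connected}, and show that every edge of $\mathcal{X}_0$ gives isomorphisms because $\loc_\ell$ is non-zero modulo $\mathfrak{m}$ (Lemma \ref{Ho1.5.3}) and hence injective by \cite[Lemma 1.3.3]{HoHeeg}. The differences are cosmetic: the paper handles both endpoints at once by showing that $\Sel_{\mathcal{F}_\ell(n)}(K,T)=0$ is the common kernel of the two localization maps, whereas you prove injectivity at $v(m)$ and transfer it to $v(m\ell)$ via admissibility and a length count; your use of $\operatorname{lcm}(a,b)$ in place of $ab$ is also slightly more careful.
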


\begin{proof} Thanks to Lemma \ref{existSurjPath}, we just need to prove the statement when $v$ is a core vertex too. Let $v_0=v(a)$ and $v=v(b)$ with $a, b \in \mathcal{N}$. By Lemma \ref{existSurjPath} there exists $n \in \mathcal{N}$ with $ab |n$ such that $v(n)$ is a core vertex. By Lemma \ref{lemma-connected}, there are paths from $v(a)$ to $v(n)$ in $\mathcal{X}_{0,a}$ and from $v(b)$ to $v(n)$ in $\mathcal{X}_{0,b}$. In particular, there is a path from $v(a)$ to $v(b)$ in $\mathcal{X}_0$. We claim that every path in $\mathcal{X}_0$ is surjective, \emph{i.e.}, we claim that $\loc_\ell$ is injective on $\Sel_{\mathcal{F}(n)}(K,T)$ and $\Sel_{\mathcal{F}(n\ell)}(K,T)$ for every $n \in\mathcal{N}$ and $\ell \in \mathcal{P}$ with $\ell \nmid n$ and $v(n)$ and $v(n \ell)$ core vertices; this claim clearly concludes the proof. To show the claim, since $\rho(n)=\rho(n \ell)=1$, by Lemma \ref{Ho1.5.3} there exists $\epsilon \in \{\pm\}$ such that $\rho^\epsilon(n\ell)=\rho^\epsilon(n)-1$, so we have
\begin{equation} \label{locnonzero}
\loc_\ell\bigl(\Sel_{\mathcal{F}(n)}(K,T/\mathfrak{m}T)\bigr)\neq0,
\end{equation}
by Lemma \ref{Ho1.5.3}.  Since $v(n)$ is a core vertex, $\rho(n)=1$, so the inclusion $\Sel_{\mathcal{F}_\ell(n)}(K,T/\mathfrak{m}T)\subset\Sel_{\mathcal{F}(n)}(K,T/\mathfrak{m}T)$ combined with \eqref{locnonzero} implies $\Sel_{\mathcal{F}_\ell(n)}(K,T/\mathfrak{m}T)=0$. Then, by \cite[Lemma 1.3.3]{HoHeeg}, we also have $\Sel_{\mathcal{F}_\ell(n)}(K,T)=0$ and the claim follows since it is the kernel of both the localization maps we are considering.
    \end{proof}

Now we introduce the notion of global section of the sheaf $\Stub(\mathcal{X})$.

\begin{definition}
A \emph{global section} of $\Stub(\mathcal{X})$ is a function $s$ on the vertices and the edges of $\mathcal{X}$ such that
\[
\bigl(\psi_n^\ell(\chi_{n,\ell})\bigr)\Bigl(s\bigl(v(n)\bigr)\!\Bigr)=s(e)=\psi_{n \ell}^\ell \Bigl(s\bigl(v(n\ell)\bigr)\!\Bigr)
\]
for every edge $e=e(n,n\ell)$ of $\mathcal{X}$.
\end{definition}

From here on, set 
\begin{itemize}
\item $\partial^{(j)}(\kappa)\defeq\min\bigl\{\ind(\kappa_n,\Sel_{\mathcal{F}(n)}(K,T))\mid\nu(n)=j\bigr\}$ for $j\geq0$;
\item $\partial(\kappa)\defeq\min\bigl\{\partial^{(j)}(\kappa)\mid j\geq0\bigr\}$.
\end{itemize}
The following is a rigidity result for free Kolyvagin systems.

\begin{theorem}[Rigidity theorem] \label{Rigidity}
The class $\kappa_n$ generates $\mathfrak{m}^{\partial(\kappa)} \Stub_n$ for all $n \in \mathcal{N}$.
\end{theorem}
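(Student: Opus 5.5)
The plan follows Mazur--Rubin's treatment of Kolyvagin systems over principal artinian rings (\cite[Theorem 4.4.1]{MR}) and Howard's bipartite rigidity (\cite[Theorem 2.5.1]{howard2012bipartite}), using the stub sheaf and the surjective paths constructed above. By Proposition \ref{classInStub}, $\kappa_n\in\Stub_n$ for every $n$, and $\{\chi_{n,\ell}\}$ is admissible (Proposition \ref{modification-prop2}). Hence $\kappa$ is a global section of $\Stub(\mathcal{X})$. Each $\Stub_n\simeq\mathfrak{m}^{\lambda_n}R$ is cyclic, so for every $n$ there is an integer $e_n\geq0$ with $\kappa_n$ generating $\mathfrak{m}^{e_n}\Stub_n$ (with $e_n=k-\lambda_n$ if $\kappa_n=0$). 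Note that $\ind(\kappa_n,\Sel_{\mathcal{F}(n)}(K,T))=\lambda_n+e_n$ when $\kappa_n\neq0$, computed inside the free summand $R$ of Proposition \ref{prop-structureart}. The goal is to show that $e_n$ is independent of $n$ and equals $\partial(\kappa)$.

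\texttt{Step 1: core vertices.} Let $v(a),v(b)$ be core vertices. By the proof of Lemma \ref{surjPathX0}, they are joined by a path in $\mathcal{X}_0$ all of whose vertex-to-edge maps are isomorphisms $\Stub(v)\simeq R\simeq\Stub(e)$. A global section is therefore transported isomorphically along the path, so $e_a=e_b=:e_0$. Since $\lambda_a=0$ at a core vertex, $\ind(\kappa_a)=e_0$.

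\texttt{Step 2: arbitrary vertices.} For an arbitrary vertex $v(n)$, Lemma \ref{surjPathX0} provides a surjective path from a core vertex $v_0$ to $v(n)$. By Remark \ref{remsurjpath}, this yields a surjection $\Stub(v_0)\twoheadrightarrow\Stub(v(n))$ compatible with global sections, mapping $\kappa_{v_0}$ to $\kappa_n$. I will verify this compatibility edge by edge. On each edge $e$ from $w$ to $w'$, the map from $\Stub(w')$ to $\Stub(e)$ is an isomorphism and the map from $\Stub(w)$ to $\Stub(e)$ is surjective, because $\Stub(e)$ is defined as an image and admissibility gives equality. Composing one surjection with the inverse of the other gives a map $\Stub(w)\to\Stub(w')$ that sends $\kappa_w$ to $\kappa_{w'}$, since $\kappa$ is a section. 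A generator of $\mathfrak{m}^{e_0}\Stub(v_0)$ therefore maps to a generator of $\mathfrak{m}^{e_0}\Stub_n$, so $e_n=e_0$ for all $n$.

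\texttt{Step 3: identifying $e_0$ with $\partial(\kappa)$.} For every $n$ with $\kappa_n\neq0$ we have $\ind(\kappa_n)=\lambda_n+e_0\geq e_0$, with equality at core vertices. Core vertices exist by Lemma \ref{existSurjPath}, so the minimum defining $\partial(\kappa)$ equals $e_0$. If $\kappa_n=0$, then $\ind=k\geq e_0$, and this case causes no trouble.

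The main obstacle is making Step 2 rigorous: the surjection attached to a path must really carry $\kappa_{v_0}$ to $\kappa_n$ and preserve ``generator of $\mathfrak{m}^{e}$''. This needs the edge maps on the target side to be isomorphisms, not merely surjections, so that the composite is well defined on sections. I would check carefully that admissibility gives $\Stub(e)=\psi(\Stub(w))$ on both sides. The check that any map of cyclic modules sending $\Stub(w)$ onto $\Stub(w')$ carries $\mathfrak{m}^{e}\Stub(w)$ onto $\mathfrak{m}^{e}\Stub(w')$ is routine.
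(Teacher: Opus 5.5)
Your proposal is correct and is essentially the paper's argument: regard $\kappa$ as a global section of $\Stub(\mathcal{X})$, fix the exponent $\delta$ at a core vertex, transport ``$\kappa_n$ generates $\mathfrak{m}^{\delta}\Stub_n$'' along the surjective paths of Lemma \ref{surjPathX0}, and identify $\delta$ with $\partial(\kappa)$ via $\ind(\kappa_n)=\lambda_n+\delta$ for $\kappa_n\neq0$, with equality at core vertices. Your Step 1 is harmless but redundant, since Lemma \ref{surjPathX0} already allows an arbitrary core vertex as the starting point of the surjective path.
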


\begin{proof} For every $n\in\mathcal{N}$ and $\ell\in\mathcal{P}$ with $\ell\nmid n$, set
\[
s\bigl(v(n)\bigr)\defeq\kappa_n,\quad s\bigl(e(n,n\ell)\bigr)\defeq\psi_n^\ell(\chi_{n,\ell})(\kappa_n).
\]
By definition of a Kolyvagin system and Proposition \ref{classInStub}, $s$ is a global section of $\Stub(\mathcal{X})$. Let $v_0$ be a core vertex of $\mathcal{X}$ (which exists by Lemma \ref{existSurjPath}) and let $\delta \geq 0$ be an integer such that $s(v_0)$ generates $\mathfrak{m}^\delta \Stub(v_0)$. By Lemma \ref{surjPathX0}, there is a surjective path from $v_0$ to any vertex $v$ of $\mathcal{X}$. It follows (\emph{cf.} Remark \ref{remsurjpath}) that there is a surjection $\Stub(v_0)\twoheadrightarrow\Stub(v)$ mapping $s(v_0)$ to $s(v)$. In particular, $s(v)$ generates $\mathfrak{m}^\delta\Stub(v)$ for every vertex $v$ of $\mathcal{X}$. On the other hand, we have
\[
\delta=\ind\bigl(s(v),\Stub(v)\bigr)\leq\ind\bigl(s(v),\ks(v)\bigr) 
\]
for all vertices $v$ of $\mathcal{X}$ with $s(v) \neq 0$, with equality if $v$ is a core vertex. In particular, there are equalities
\[
\begin{split}
\delta&=\min\bigl\{\ind\bigl(s(v),\ks(v)\bigr)\mid\text{$v$ is a vertex of $\mathcal{X}$}\bigr\}\\
&=\min\bigl\{\ind\bigl(\kappa_n,\Sel_{\mathcal{F}(n)}(K,T)\bigr)\mid n\in\mathcal{N}\bigr\}=\partial(\kappa),
\end{split}
\]
and the theorem is proved. \end{proof}

\subsubsection{Structure theorem}

We are now ready to prove the main theorem of this section, that gives an explicit link between the structure of the Selmer group with the $\partial$-invariants attached to a free Kolyvagin system.

\begin{theorem}[Structure theorem] \label{artSel}
Let $\kappa$ be a free Kolyvagin system of sign $\epsilon \in \{\pm 1\}$ in $\ks\bigl(T,\mathcal{F},\mathcal{P},\{\chi_{n,\ell}\}\bigr)$ with $\kappa_1\neq0$ and assume that there are isomorphisms of $R$-modules
\[
\Sel^\epsilon_\mathcal{F}(K,T)\simeq R\oplus\bigoplus_{i\in\N}\bigl(R/\mathfrak{m}^{d_i^\epsilon}\bigr)^2,\quad \Sel^{-\epsilon}_\mathcal{F}(K,T) \simeq \bigoplus_{i\in\N}\bigl(R/\mathfrak m^{d_i^{-\epsilon}}\bigr)^2
\]
with ${(d_i^{\pm \epsilon})}_{i\in\N}$ non-increasing sequences in $\N$ such that $d_i^{\pm \epsilon}=0$ for $i \gg 0$. Then
\[
\partial^{(j)}(\kappa)=\begin{cases} \min\bigl\{k,\partial(\kappa)+\sum_{i\geq j/2}(d_i^\epsilon+d_i^{-\epsilon})\bigr\} & \text{if $j$ is even}\\[2mm]
\min\bigl\{k,\partial(\kappa)+\sum_{i\geq(j+1)/2}(d_i^\epsilon+d_i^{-\epsilon})+d_{(j-1)/2}^{\epsilon}\bigr\} & \text{if $j$ is odd}
\end{cases}
\]
for all $j\in\N$.
\end{theorem}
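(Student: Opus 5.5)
The plan is to combine the rigidity theorem with the inductive control of $\lambda_n$ and of the shape of $M_n^{\pm}$ from Proposition~\ref{SelComp}. By Theorem~\ref{Rigidity}, $\kappa_n$ generates $\mathfrak{m}^{\partial(\kappa)}\Stub_n$. Here $\Stub_n=\mathfrak{m}^{\lambda_n}\Sel_{\mathcal{F}(n)}(K,T)\simeq \mathfrak{m}^{\lambda_n}R$, and the free summand $R$ is not contained in $\mathfrak{m}\Sel$. Hence
\[
\ind\bigl(\kappa_n,\Sel_{\mathcal{F}(n)}(K,T)\bigr)=\min\{k,\partial(\kappa)+\lambda_n\}.
\]
This is the first step. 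It reduces the statement to computing $\min\{\lambda_n\mid \nu(n)=j\}$ and checking that it equals $\sum_{i\ge j/2}(d_i^\epsilon+d_i^{-\epsilon})$ for $j$ even, and $\sum_{i\ge (j+1)/2}(d_i^\epsilon+d_i^{-\epsilon})+d^\epsilon_{(j-1)/2}$ for $j$ odd. Note that $\lambda_1=\sum_i(d_i^\epsilon+d_i^{-\epsilon})$, which is consistent with the formula at $j=0$.

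For the upper bound I will construct $n$ explicitly by iterating Proposition~\ref{SelComp}. Starting at $n=1$, the sign is $\epsilon_1=\epsilon$. Choose $\ell_1$ via Lemma~\ref{InfPrimes} so that $\loc_{\ell_1}$ is injective on the free $\epsilon$-summand plus a maximal $-\epsilon$-summand. Proposition~\ref{SelComp} then kills $d_0^{-\epsilon}$, so $\lambda_{\ell_1}=\lambda_1-d_0^{-\epsilon}$, which matches the odd formula at $j=1$. Now $\epsilon_{\ell_1}=-\epsilon$, the free part sits in the $-\epsilon$-eigenspace, and the next prime kills the largest $\epsilon$-summand $d_0^\epsilon$. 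Alternating in this way, after $j$ steps we have removed $d_0^{-\epsilon},d_0^\epsilon,d_1^{-\epsilon},d_1^\epsilon,\dots$, which gives exactly the claimed values. Primes can be chosen distinct because Lemma~\ref{InfPrimes} supplies infinitely many. There is a small bookkeeping issue in the odd case: the formula is written with $d^\epsilon_{(j-1)/2}$ remaining, so I must check the indexing convention of which eigenspace is cut first. I would reconcile this by tracking $\epsilon_n$ carefully against the statement, and swap roles if the paper's convention demands it.

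For the lower bound I must show $\lambda_n\ge$ the same quantity for every $n$ with $\nu(n)=j$. I will argue by induction on $j$, comparing $n$ with $n\ell$ through diagram~\eqref{diagram-inclusions-eq} together with Lemmas~\ref{injKer} and~\ref{expSub}. Write $m=n/\ell$ and consider $\Sel_{\mathcal{F}_\ell(m)}$, which is a common submodule of $\Sel_{\mathcal{F}(m)}$ and $\Sel_{\mathcal{F}(n)}$. Its cokernel in $\Sel_{\mathcal{F}(m)}$ is bounded by two cyclic modules, one in each eigenspace by Lemma~\ref{lemmaH1f}. Therefore passing from $m$ to $n$ removes at most one cyclic summand from $M_m^{-\epsilon_m}$ and none from $M_m^{\epsilon_m}$, up to the appearance of new summands. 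The goal is a majorization statement, proved by induction: the multiset of lengths of $M_n^{\pm}$ dominates, termwise after sorting, the multiset obtained from $M_1^\pm$ by deleting the largest $\lceil\cdot\rceil$ entries in the alternating pattern. I would use Lemma~\ref{expSub}, applied to $\Sel_{\mathcal{F}_\ell(m)}\subset\Sel_{\mathcal{F}(n)}$, to transport termwise inequalities.

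The main obstacle is this lower bound. A bad choice of $\ell$, for example one where $\loc_\ell$ is not injective on the free part, can change the free summand and the parities in subtle ways; the cases $\delta>0$ and $a<k$ in Proposition~\ref{SelIncl} must be handled. I expect to show that in those cases $\lambda_n$ only increases relative to the optimal choice, using the length identity $\lambda_{n}=\lambda_m+k-a-b-\delta$ from the proof of Proposition~\ref{modification-prop}. I would also use the parity constraints of Proposition~\ref{prop-structureart} to control which eigenspace can lose a summand at each step. Finally, since $\ind$ is capped at $k$, I take the minimum with $k$ at the end.
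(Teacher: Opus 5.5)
Your proposal is correct. Its first step, using Theorem \ref{Rigidity} to get $\ind(\kappa_n)=\min\{k,\partial(\kappa)+\lambda_n\}$ and thus reduce everything to $\min_{\nu(n)=j}\lambda_n$, is exactly the paper's. Both bounds, however, are obtained differently.

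\emph{Upper bound.} The paper inducts on $j$ over \emph{all} free Kolyvagin systems. After choosing one prime $\ell$ as you do, it passes to the auxiliary system $\kappa'_n\defeq\kappa_{n\ell}$ for $(T,\mathcal{F}(\ell),\mathcal{P}\smallsetminus\{\ell\})$. It then applies Theorem \ref{Rigidity} a second time, now to $\kappa'$ for the modified triple, to get $\partial(\kappa')=\partial(\kappa)$. Finally it feeds the shifted shape from Proposition \ref{SelComp} into the inductive hypothesis for $\kappa'$. Your explicit chain $1,\ell_1,\ell_1\ell_2,\dots$, built by iterating Proposition \ref{SelComp}, stays with the single system $\kappa$ and needs neither auxiliary systems nor a second use of rigidity. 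Your indexing worry is also unfounded: deleting $d_0^{-\epsilon},d_0^{\epsilon},d_1^{-\epsilon},d_1^{\epsilon},\dots$ in this order gives precisely the displayed formulas.

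\emph{Lower bound.} The paper argues in one shot for each $n$ with $\nu(n)=j$. It applies Lemma \ref{injKer} to the product of localizations $\Sel^{\pm\epsilon}_{\mathcal{F}}(K,T)\to\prod_{\ell\mid n}H^1_{\f}(K_\lambda,T)^{\pm\epsilon}\simeq R^j$, whose kernel $\Sel_{\mathcal{F}_n}(K,T)$ lies in $\Sel_{\mathcal{F}(n)}(K,T)$, and concludes with Lemma \ref{expSub} and parity. Your prime-by-prime majorization is the same mechanism spread over $j$ steps. It costs you carrying sorted length sequences through the induction, but each step involves only a map to $H^1_{\f}(K_\lambda,T)^{\pm}\simeq R$.

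The ``main obstacle'' you anticipate does not actually arise. For \emph{every} $\ell\nmid m$, apply Lemma \ref{injKer} to $\Sel^{\pm}_{\mathcal{F}(m)}(K,T)\to H^1_{\f}(K_\lambda,T)^{\pm}\simeq R$. This shows that $\Sel_{\mathcal{F}_\ell(m)}(K,T)$ contains a copy of $(M_m^{\epsilon_m})^2$ in the $\epsilon_m$-part. In the $-\epsilon_m$-part it contains a copy of $M_m^{-\epsilon_m}\oplus M'$, where $M'$ is $M_m^{-\epsilon_m}$ with a largest summand removed. This holds regardless of $a,b,\delta$. Meanwhile Proposition \ref{prop-structureart} fixes the location of the free summand for every $n$, so no case analysis is needed. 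Conversely, the length identity $\lambda_{m\ell}=\lambda_m+k-a-b-\delta$ alone would not suffice: it does not propagate the structural (majorization) information that the next inductive step requires.
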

\begin{proof} 
For simplicity, we set $\ind(\kappa_n)\defeq\ind(\kappa_n,\Sel_{\mathcal{F}(n)}(K,T))$. By Theorem \ref{Rigidity}, for all $n\in\mathcal{N}$ there is the equality
\[
\ind(\kappa_n)=\min\bigl\{k,\partial(\kappa)+\lambda_n\bigr\},
\]
which yields the equality
\[
\partial^{(j)}(\kappa)=\min\bigl\{k,\partial(\kappa)+\lambda_n\mid n\in\mathcal{N},\nu(n)=j \bigr\}.
\]
We prove the statement by induction on $j$. As
\[
\partial^{(0)}(\kappa)=\min\bigl\{k,\partial(\kappa)+\length(M_0)\bigr\}=\min\biggl\{k,\partial (\kappa)+ \sum_{i\in\N} (d_i^\epsilon+d_i^{-\epsilon})\biggr\},
\]
the basis of the induction follows. We split the proof of the inductive step into two parts, depending on the parity of $j$.

\texttt{Step 1.} Assume the statement holds true for $j-1$ and for any free Kolyvagin system in $\ks\bigl(T,\mathcal{F},\mathcal{P},\{\chi_{n,\ell}\}\bigr)$; we prove it for $j$ when $j$ is odd. Let $n \in \mathcal{N}$ with $\nu(n)=j$ and consider the maps
\begin{equation}\label{eq:product-of-localizations}
\Sel^{\pm \epsilon}_{\mathcal{F}}(K,T) \longrightarrow\prod_{\ell|n} H^1_{\f}(K_\lambda,T)^{\pm \epsilon}
\end{equation}
induced by the product of localizations, where the right-hand sides are free of rank $j$.
The images of these maps inject into $R$-modules that are respectively isomorphic to 
\[
R \oplus \bigoplus_{i=0}^{\frac{j-1}{2}-1}\bigl(R/\mathfrak{m}^{d_i^\epsilon}\bigr)^2,\quad \bigoplus_{i=0}^{\frac{j-1}{2}-1}(R/\mathfrak{m}^{d_i^{-\epsilon}})^2 \oplus R\big/\mathfrak{m}^{d^{-\epsilon}_{(j-1)/2}}, 
\]
so, by Lemma \ref{injKer}, the kernels contain $R$-submodules that are isomorphic to 
\[
\bigoplus_{i\geq\frac{j-1}{2}}\bigl(R/\mathfrak{m}^{d_i^{\epsilon}}\bigr)^2,\quad R/\mathfrak{m}^{d_{(j-1)/2}^{-\epsilon}}\oplus\bigoplus_{i\geq\frac{j-1}{2}+1} \bigl(R/\mathfrak{m}^{d_i^{-\epsilon}}\bigr)^2,
\]
respectively. Since the kernels are contained in $\Sel^{\pm\epsilon}_{\mathcal{F}(n)}(K,T)$ 
and $\rho(n)^{-\epsilon}$ is odd, there are inequalities
\[
\length(M^\epsilon_n) \geq \sum_{i \geq \frac{j-1}{2}}d_i^\epsilon, \quad \length(M_n^{-\epsilon}) \geq \sum_{i \geq \frac{j+1}{2}} d_i^{-\epsilon}
\]
by Lemma \ref{expSub}
and it follows that
\[
\begin{aligned}
\partial^{(j)}(\kappa)&=\min\bigl\{k,\partial(\kappa)+\length(M_n)\mid\text{$n\in \mathcal{N}$, $\nu(n)=j$}\bigr\}  \\
&\geq \min\Biggl\{k, \partial(\kappa)+ \sum_{i \geq \frac{j+1}{2}}(d_i^\epsilon+d_i^{-\epsilon})+d_{(j-1)/2}^\epsilon\Biggr\}.
\end{aligned}
\]
Now we prove the opposite inequality. Let $C\subset\Sel^{\epsilon}_{\mathcal{F}}(K,T)$ be a rank-$1$ free $R$-module such that $\kappa_1 \in C$ and fix $D$ to be a cyclic direct summand of maximal length of $\Sel^{-\epsilon}_{\mathcal{F}}(K,T)$. Set $E \defeq C \oplus D$. Using Lemma \ref{InfPrimes}, choose $\ell \in \mathcal{P}$ such that the restriction of
\[
\loc_\ell:\Sel_\mathcal{F}(K,T) \rightarrow H^1_{\f}(K_\lambda,T)
\]
to $E$ is injective. 
By Lemma \ref{SelComp}, there are isomorphisms
\[
M_\ell^\epsilon \simeq \bigoplus_{i \geq 0}R/\mathfrak{m}^{d_i^\epsilon}, \quad M_\ell^{-\epsilon} \simeq \bigoplus_{i \geq 1}R/\mathfrak{m}^{d_i^{-\epsilon}}.
\]
Now set $\mathcal{P}'\defeq\mathcal{P}\smallsetminus\{\ell\}$ and consider the Kolyvagin system $\kappa'$ for $(T,\mathcal{F}(\ell),\mathcal{P}')$ obtained by setting $\kappa_n' \defeq \kappa_{n \ell}$ for all $n \in \mathcal{N}$ with $\ell \nmid n$, which is free of sign $-\epsilon$. Since $\kappa_\ell \neq 0$ is contained both in $\kappa$ and in $\kappa'$, it generates both $\mathfrak{m}^{\partial(\kappa)}\Stub_\ell$ and $\mathfrak{m}^{\partial(\kappa')}\Stub_\ell$, by Theorem \ref{Rigidity}. Thus, there is an equality $\partial(\kappa)=\partial(\kappa')$. Finally, we obtain
\[
\begin{split}
\partial^{(j)}(\kappa) \leq \partial^{(j-1)}(\kappa')&=\min\Biggl\{k,\partial(\kappa')+\sum_{i \geq\frac{j-1}{2}}(d_i^\epsilon+d_{i+1}^{-\epsilon})\Biggr\}\\&=\min\Biggl\{k, \partial(\kappa)+\sum_{i\geq\frac{j+1}{2}}(d_i^\epsilon+d_i^{-\epsilon})+d_{(j-1)/2}^\epsilon\Biggr\},
\end{split}
\]
where the first equality follows from our inductive hypothesis.

\texttt{Step 2.} The proof when $j$ is even follows the exact same lines of Step 1, the only difference being that the images of the two maps in \eqref{eq:product-of-localizations} inject into $R$-modules that are respectively isomorphic to 
\[
R \oplus \bigoplus_{i=0}^{\frac{j-2}{2}-1}\bigl(R/\mathfrak{m}^{d_i^\epsilon}\bigr)^2\oplus R/\mathfrak{m}^{d^\epsilon_{(j-2)/2}},\quad\bigoplus_{i=0}^{\frac{j-2}{2}}\bigl(R/\mathfrak{m}^{d_i^{-\epsilon}}\bigr)^2, 
\]
and so the kernels contain $R$-submodules that are respectively isomorphic to 
\[
R/\mathfrak{m}^{d_{(j-2)/2}^\epsilon}\oplus\bigoplus_{i\geq\frac{j}{2}} \bigl(R/\mathfrak{m}^{d_i^{\epsilon}}\bigr)^2,\quad\bigoplus_{i\geq\frac{j}{2}} \bigl(R/\mathfrak{m}^{d_i^{-\epsilon}}\bigr)^2
\]
by Lemma \ref{injKer}. Arguing as in \texttt{Step 1}, one gets the first inequality, while the opposite one is obtained, by the same argument, using the same auxiliary Kolyvagin system $\kappa'$. \end{proof}

\subsection{Universal Kolyvagin systems over a DVR} \label{sectionDVR}

Let $R$ be a complete discrete valuation ring with maximal ideal $\mathfrak{m}=(\pi)$ and quotient field $F$; we assume that $F$ is a finite extension of $\Q_p$ inside $\overline{\Q}_p$. Suppose throughout this section that Assumption \ref{ass:Howard-assumption} is satisfied. Set 
\[
V\defeq T\otimes_RF,\quad A\defeq V/T=T\otimes_R(F/R).
\]
Fix a Selmer structure $\mathcal{F}$ on $V$ over $K$, with the property that $H^1_\mathcal{F}(K_v,V)=H^1_{\f}(K_v,V)$ for any place $v\nmid Np$ of $K$, where $H^1_{\f}(K_v,V)=H^1_{\mathrm{unr}}(K_v,V)$ denotes unramified cohomology at unramified primes.
Then, $\mathcal{F}$ induces a Selmer structure on $T$ (to be denoted with the same symbol) by \emph{propagation}, \emph{i.e.} by defining $H^1_{\mathcal{F}}(K_v,T)$ to be the inverse image of $H^1_{\mathcal{F}}(K_v,V)$ under the map induced by the inclusion $T \hookrightarrow V$ for all primes $v$ of $K$. Thanks to \cite[Lemma 1.1.9]{MR}, we still have that $H^1_{\mathcal{F}}(K_v,T)=H^1_{\unr}(K_v,T)$ for all $v \nmid Np$.
Analogously, let $H^1_\mathcal{F}(K_v,A)$ be the Selmer structure on $A$ obtained by propagation from $H^1_\mathcal{F}(K_v,V)$ via the projection map $V\twoheadrightarrow A$ and define
\[
\Sel_\mathcal{F}(K,A)\defeq\ker\Biggl(H^1(K,A)\longrightarrow \prod_v \frac{H^1(K_v,A)}{H^1_\mathcal{F}(K_v,A)}\Biggr),
\] 
where $v$ varies over all the primes of $K$. Similarly, define Selmer structures on $T_s=T/\mathfrak{m}^sT$ by propagation via the canonical map $T\twoheadrightarrow T_s$ and denote by $\Sel_\mathcal{F}(K,T_s)$ the corresponding Selmer group. We can see that the triple $(T_s,\mathcal{F},\mathcal{P}_s)$ satisfies Assumption \ref{ass:Howard-assumption}. As before, we sometimes omit $\mathcal{F}$, $\mathcal{P}$ and $\{\chi_{n,\ell}\}$ (which we assume fixed throughout) in the notation of 
Kolyvagin systems.

Consider $\kappa \in \uks(T)=\uks(T,\mathcal{F},\mathcal{P},\{ \chi_{n,\ell}\})$. As explained in Section \ref{secUKS}, for any integer $s\geq 1$ we write $\kappa^{(s)}\in\varinjlim_{t\geq s}\ks(T_s,\mathcal{F},\mathcal{P}_t,\{ \chi_{n,\ell}\})$ for the canonical projection of $\kappa$ and we call $\mathfrak{K}_s$ its set of representatives in $\bigsqcup_{t \geq s}\ks(T_s,\mathcal{F},\mathcal{P}_t,\{ \chi_{n,\ell}\})$. We will often denote by $\kappa^{(s,t)}$ an element of $\mathfrak{K}_s$ that lies in $\ks(T_s,\mathcal{P}_t)=\ks(T_s,\mathcal{F},\mathcal{P}_t,\{ \chi_{n,\ell}\})$, for $t\ge s$.

\begin{assumption} \label{asskappaDVR}
    There are $\epsilon \in \{ \pm 1\}$ and $\kappa \in \uks(T)$ such that $\kappa_1 \neq 0$ and  $\kappa_n^{(s,t)} \in \Sel_{\mathcal{F}(n)}(K,T_s)^{\epsilon_n}$ for all $s \geq 1$, for all $\kappa^{(s,t)} \in \mathfrak{K}_s$ and for all $n \in \mathcal{N}_t$, where $\epsilon_n\defeq\epsilon\cdot(-1)^{\nu(n)}$.
\end{assumption}

Recall the integer $e(n)\in\{0,1\}$ in Proposition \ref{HoStrArt}. We need some auxiliary results.

\begin{lemma} \label{freeKS} 
Let $s,t$ be positive integers with such that $t \geq 2s-1$. Then any element of $\mathfrak{K}_s\cap \ks(T_s,\mathcal{P}_t)$ is a free Kolyvagin system of sign $\epsilon$.
\end{lemma}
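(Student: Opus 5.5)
The plan is to reduce freeness over $R_s=R/\mathfrak{m}^s$ to a divisibility statement. Each class $\kappa_n^{(s,t)}$ should be the image of a class defined over the deeper quotient $T_t$. By Howard's control lemma \cite[Lemma 1.3.3]{HoHeeg}, $\Sel_{\mathcal{F}(n)}(K,T_s)$ is identified ($\tau_c$-equivariantly) with $\Sel_{\mathcal{F}(n)}(K,T_t)[\mathfrak{m}^s]$. Under this identification, the reduction map $T_t\twoheadrightarrow T_s$, followed by the inclusion $T_s\simeq T_t[\mathfrak{m}^s]\hookrightarrow T_t$, is multiplication by $\pi^{t-s}$. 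So if $\kappa_n^{(s,t)}$ is the reduction of a class $x_n\in\Sel_{\mathcal{F}(n)}(K,T_t)^{\epsilon_n}$, then $\kappa_n^{(s,t)}$ corresponds to $z_n\defeq\pi^{t-s}x_n$. The sign condition is already given by Assumption \ref{asskappaDVR}, so only freeness needs proof.

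\emph{First step (the main obstacle): producing $x_n$ for every $n\in\mathcal{N}_t$.} The element $\kappa^{(t)}$ of the inverse system projects to $\kappa^{(s)}$. Concretely, take a representative $\kappa^{(t,t')}\in\mathfrak{K}_t$ with $t'\geq t$. Its image under $P^{(t')}_{t,s}$ is a representative of $\kappa^{(s)}$. A priori it agrees with $\kappa^{(s,t)}$ only after restricting to $\mathcal{N}_{t''}$ for some large $t''$, that is, in the direct limit. I would handle this by showing that elements of $\mathfrak{K}_s\cap\ks(T_s,\mathcal{P}_t)$ differing in the direct limit are governed by the same structural data. Alternatively, I would read the compatibility in Remark \ref{remnot} as allowing representatives to be chosen compatibly, so that $\kappa_n^{(s,t)}=P(\kappa_n^{(t,t')})$ for all $n\in\mathcal{N}_{t'}$. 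Then, for a given $n$, I would pass to an $n$ built from primes in $\mathcal{P}_{t'}$, using that freeness is checked class by class. This bookkeeping with the direct limits is where I expect the real difficulty. The numerical input $t\geq 2s-1$ is used only in the next step.

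\emph{Second step: structure.} Apply Proposition \ref{HoStrArt} over the principal artinian ring $R_t$, in its $\tau_c$-equivariant form from the proof of Proposition \ref{prop-structureart}, Step 3. This gives $\Sel^{\epsilon_n}_{\mathcal{F}(n)}(K,T_t)\simeq\bigoplus_j R/\mathfrak{m}^{a_j}$ with $a_j\leq t$. Write $x_n=(x_j)_j$ in these components. The component of $z_n=\pi^{t-s}x_n$ in a summand of length $a_j\leq t-s$ vanishes. Since $t-s\geq s-1$, every summand of length $a_j<s$ satisfies $a_j\leq t-s$, so all summands of length $<s$ drop out. Hence $z_n$ lies in
\[
F\defeq\bigoplus_{a_j\geq s}\bigl(R/\mathfrak{m}^{a_j}\bigr)[\mathfrak{m}^s]\simeq R_s^{\#\{j:\,a_j\geq s\}}.
\]
This $F$ is a free $R_s$-module and a direct summand of $\Sel_{\mathcal{F}(n)}(K,T_t)^{\epsilon_n}[\mathfrak{m}^s]\simeq\bigoplus_j R/\mathfrak{m}^{\min\{a_j,s\}}$.

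\emph{Third step: conclusion.} In the free $R_s$-module $F$, any element can be written as $z_n=\pi^e u$ with $u$ part of a basis, because $R_s$ is a principal artinian local ring. Then $C_n\defeq R_s u$ is a rank-$1$ free $R_s$-submodule of $\Sel^{\epsilon_n}_{\mathcal{F}(n)}(K,T_s)$ containing $\kappa_n^{(s,t)}$; if $z_n=0$, any basis vector of $F$ works. The required free module is nonzero because $F$ always contains the free summand of length $t$ coming from the $R^{e(n)}$-part; its existence follows from the non-vanishing of $\kappa_1$ together with Lemma \ref{Ho1.5.3}. This gives freeness of sign $\epsilon$ for every $n\in\mathcal{N}_t$.
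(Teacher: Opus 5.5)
\textbf{There is a genuine gap in your Step 1.}

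Your Steps 2 and 3 are sound. They show that the image in $\Sel_{\mathcal{F}(n)}(K,T_s)$ of any class of $\Sel_{\mathcal{F}(n)}(K,T_{t'})^{\epsilon_n}$ with $t'\geq 2s-1$ lies in a rank-one free $R_s$-submodule. They also explain the hypothesis well: you need $t'-s\geq s-1$ to kill every summand of length $<s$.

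The failure in Step 1 is not bookkeeping.
\begin{itemize}
\item The lemma concerns an \emph{arbitrary} element $\kappa^{(s,t)}\in\mathfrak{K}_s\cap\ks(T_s,\mathcal{P}_t)$, so you may not choose representatives compatibly.
\item The definition of $\uks$ gives only this: for a representative $\kappa^{(t',t'')}\in\ks(T_{t'},\mathcal{P}_{t''})$ of $\kappa^{(t')}$, the systems $\kappa^{(s,t)}$ and $P^{(t'')}_{t',s}\bigl(\kappa^{(t',t'')}\bigr)$ agree after restriction to $\mathcal{N}_u$, for some $u\geq\max\{t,t''\}$.
\item So $\kappa^{(s,t)}_n$ is known to be a reduction from a deeper level only for $n\in\mathcal{N}_u$, which is in general much smaller than $\mathcal{N}_t$. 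A representative of $\kappa^{(t)}$ with prime set exactly $\mathcal{P}_t$ need not even exist.
\item For $n$ divisible by a prime of $\mathcal{P}_t\smallsetminus\mathcal{P}_u$ there is no lift. Passing to another $n'$ built from primes of $\mathcal{P}_u$ does not help, because freeness is a condition on each individual class $\kappa_n$, $n\in\mathcal{N}_t$.
\end{itemize}
Put differently, $\kappa^{(s,t)}$ may differ from a ``good'' representative by any $\xi\in\ks(T_s,\mathcal{P}_t)$ with $\xi_n=0$ for all $n\in\mathcal{N}_u$. You would have to show that such $\xi$ vanish, or at least take values in free submodules. That is a genuine rigidity statement for Kolyvagin systems not yet known to be free. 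The paper's rigidity tools (Proposition \ref{classInStub}, Theorem \ref{Rigidity}) cannot supply it, since they assume freeness, which is exactly what is being proved.

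\textbf{How the paper proceeds.} It works intrinsically at level $s$, so no lift is needed.
\begin{itemize}
\item It invokes \cite[Lemma 1.6.4]{HoHeeg}, which for $t\geq 2s-1$ places $\kappa^{(s,t)}_n$ in $\Stub^{(s)}_n=\mathfrak{m}^{\lambda_n}\Sel_{\mathcal{F}(n)}(K,T_s)$ for every $n\in\mathcal{N}_t$.
\item Then $\kappa_1\neq0$ gives $\Stub^{(s)}_1\neq0$, hence $e=1$, independently of $n$ and $s$ by Lemma \ref{Ho1.5.3}.
\item Therefore $\Stub^{(s)}_n=\mathfrak{m}^{\lambda_n}R_s$ lies in a free rank-one summand.
\item The sign comes from Assumption \ref{asskappaDVR}.
\end{itemize}
An input of this kind, valid for every representative, is what your proposal lacks.

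A minor point in Step 3: you assert that the $R^{e(n)}$-summand lies in the $\epsilon_n$-eigenspace. That is Proposition \ref{prop-structureart}, which presupposes a free system. You do not need it: when $z_n=0$, any rank-one free submodule of the full Selmer group will do, and $e=1$ suffices.
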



\begin{proof} 
Let $t\ge 2s-1$ and $n\in\mathcal{N}_t$. Let $\Stub_n^{(s)}$ denote the Stub submodule of $\Sel_{\mathcal{F}(n)}(K,T_{s})$ and let $\kappa^{(s,t)}\in \mathfrak{K}_s\cap\ks(T_s,\mathcal{P}_t)$ for some $t\ge 2s-1$. Choose generators of $G_\ell$ for all $\ell\mid n$ and identify $\Sel_{\mathcal{F}(n)}(K,T_s)\otimes G_n$ with $\Sel_{\mathcal{F}(n)}(K,T_s)$. 
Under this identification, by \cite[Lemma 1.6.4]{HoHeeg}, $\kappa^{(s,t)}_n$ belongs to $\Stub_n^{(s)}$. By Assumption \ref{asskappaDVR}, we know that $\kappa_1\ne 0$, hence $\kappa_1^{(s)}\ne 0$ for $s\gg 0$. This implies that, for such an $s$, $\Stub_1^{(s)}\ne 0$, hence $e(1)=1$ (with $T_s$ in place of $T$ in Proposition \ref{HoStrArt}). By Lemma \ref{Ho1.5.3}, $e(n)$ does not depend on $s$ or $n$, so there is an isomorphism
\[
\Sel_{\mathcal{F}(n)}(K,T_{s})\simeq R_s\oplus M_n^{(s)}\oplus M_n^{(s)}
\]
for all $s\ge 1$ and some $R_s$-module $M_n^{(s)}$. Then $\kappa_n^{(s,t)}\in\Stub_n^{(s)}$ is contained in a free $R_s$-component of $\Sel_{\mathcal{F}(n)}(K,T_s)$ and, by Assumption \ref{asskappaDVR}, we also have $\kappa_n\in \Sel_{\mathcal{F}(n)}^{\epsilon_n}(K,T_s)$, which yields the claim. \end{proof}

The existence of a free Kolyvagin system $\kappa^{(s,t)}\in\mathfrak{K}_s$ for every $s\in\N$ allows us to apply all the results of \S \ref{sec:artinian-rings}. In particular, Theorem \ref{artSel} holds for $\kappa^{(s,t)}$ (as, by Proposition \ref{modification-prop2}, one can always find an admissible modification for $\kappa^{(s,t)}$).

In what follows, if $\kappa^{(s,t)}\in\mathfrak{K}_s$, then $\ind\bigl(\kappa_n^{(s,t)}\bigr)$ stands for $\ind\bigl(\kappa_n^{(s,t)},\Sel_{\mathcal{F}(n)}(K,T_s)\bigr)$.

\begin{lemma} \label{lemma3.34}
Let $s_1,s_2,t_1,t_2\geq1$ be integers such that $t_1\ge s_1$ and $t_2\ge s_2$. For any $\kappa^{(s_1,t_1)}\in\mathfrak{K}_{s_1}\cap \ks(T_{s_1},\mathcal{P}_{t_1})$ and  $\kappa^{(s_2,t_2)}\in\mathfrak{K}_{s_2}\cap\ks(T_{s_2},\mathcal{P}_{t_2})$ with $\kappa_1^{(s_1,t_1)}\neq0\neq\kappa_1^{(s_2,t_2)}$, there is an equality 
\[
\ind\bigl(\kappa^{(s_1,t_1)}_{1}\bigr)=\ind\bigl(\kappa^{(s_2,t_2)}_{1}\bigr).
\]
\end{lemma}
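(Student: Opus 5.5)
The plan is to compare both classes through a common quotient. Without loss of generality assume $s_1\le s_2$. The first observation is that the basic class $\kappa_1^{(s,t)}$ does not depend on the representative in $\mathfrak{K}_s$: it is the image of $\kappa_1\in H^1(K,T)$ in $H^1(K,T_s)$, and it lies in $\Sel_{\mathcal{F}}(K,T_s)$. Moreover, the projection $P_{s_2,s_1}$ sends $\kappa_1^{(s_2,t_2)}$ to $\kappa_1^{(s_1,t_1)}$ under the map $H^1(K,T_{s_2})\to H^1(K,T_{s_1})$ induced by $T_{s_2}\twoheadrightarrow T_{s_1}$. So the statement reduces to showing that this reduction map preserves $\ind$ of the basic class, provided the image is non-zero.

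The key structural input is the decomposition of the Selmer groups. By Lemma \ref{freeKS}, after possibly enlarging $t_i$ (which does not change $\kappa_1^{(s_i,t_i)}$), both systems are free of sign $\epsilon$. Proposition \ref{prop-structureart} then gives
\[
\Sel_{\mathcal{F}}(K,T_{s})\simeq R_s\oplus M^{(s)}\oplus M^{(s)},
\]
and Proposition \ref{classInStub} places $\kappa_1^{(s,t)}$ in $\Stub_1^{(s)}=\mathfrak{m}^{\lambda^{(s)}}R_s$. Hence $\ind(\kappa_1^{(s,t)})$ is just the valuation of $\kappa_1^{(s,t)}$ inside the free summand $R_s$, which I take to be $\epsilon$-isotypic.

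Next I would use \cite[Lemma 1.3.3]{HoHeeg} (the control isomorphism $\Sel_{\mathcal{F}}(K,T_{s_2})[\mathfrak{m}^{s_1}]\simeq\Sel_{\mathcal{F}}(K,T_{s_1})$, which holds by the cartesian property, Assumption \ref{ass:Howard-assumption}(3)) together with the commutative diagram relating multiplication by $\pi^{s_2-s_1}$ on $T_{s_2}$ to the inclusion $T_{s_1}\hookrightarrow T_{s_2}$. Under this inclusion, the image of $\kappa_1^{(s_1,t_1)}$ equals $\pi^{s_2-s_1}\kappa_1^{(s_2,t_2)}$. An element $x\in R_{s_2}$-component with $\pi^{s_2-s_1}x\neq 0$ then satisfies
\[
\ind(\pi^{s_2-s_1}x,\ \Sel(K,T_{s_2}))=\ind(x)+s_2-s_1 .
\]
Transporting this back via the isomorphism with $\Sel(K,T_{s_1})$, whose free summand is $\pi^{s_2-s_1}R_{s_2}\simeq R_{s_1}$ and $\tau_c$-compatible, gives $\ind(\kappa_1^{(s_1,t_1)})=\ind(\kappa_1^{(s_2,t_2)})$. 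Here $\kappa_1^{(s_1,t_1)}\neq 0$ ensures that no truncation at $k=s_1$ occurs.

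I expect the main obstacle to be the index bookkeeping across the identification $\Sel(K,T_{s_2})[\mathfrak{m}^{s_1}]\simeq\Sel(K,T_{s_1})$. The index in the $\mathfrak{m}^{s_1}$-torsion is measured relative to a different module than the index in $\Sel(K,T_{s_2})$, and the $M\oplus M$ parts may have summands of length exceeding $s_1$. To handle this, I would first use Theorem \ref{Rigidity} in both systems to identify $\ind(\kappa_1)$ with $\partial+\lambda_1$. I would then compare $\lambda_1^{(s)}$ and $\partial$ directly: $\lambda_1^{(s)}$ is the length of the torsion part truncated at $s$, and is eventually constant, equal to the length of $\Sha$ once $s$ is large. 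Choosing a core vertex $n$ common to both systems, where $\Sel$ is free of rank one and reduction is simply $R_{s_2}\to R_{s_1}$, shows that $\partial$ is preserved by the reduction as long as the class stays non-zero.
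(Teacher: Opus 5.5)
Your first three paragraphs are correct and follow essentially the paper's proof: the basic class is independent of $t$ and of the representative, Lemma \ref{freeKS} places $\kappa_1^{(s_2,t_2)}$ in a free rank-one direct summand $C$ of $\Sel_{\mathcal{F}}(K,T_{s_2})$, and the commutative triangle from \cite[Lemma 1.3.3]{HoHeeg} involving multiplication by $\pi^{s_2-s_1}$ compares the two indices. The rigidity-based fallback in your last paragraph is unnecessary (and its appeal to the constancy of $\lambda_1^{(s)}$ ``once $s$ is large'' would not cover an arbitrary $s_1$), because the worry it addresses is already settled in paragraph three: $C[\mathfrak{m}^{s_1}]=\pi^{s_2-s_1}C$ is a direct summand of $\Sel_{\mathcal{F}}(K,T_{s_2})[\mathfrak{m}^{s_1}]$, and the index of an element lying in a direct summand can be computed inside that summand, so summands of $M\oplus M$ of length exceeding $s_1$ play no role.
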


\begin{proof} Without loss of generality, suppose $s_2 \geq s_1$. Since $\kappa_1^{(s_1,t_1)}$ and $\kappa_1^{(s_2,t_2)}$ are independent of $t_1$ and $t_2$, we can also assume that $t_2\ge t_1$ and that $t_i \geq 2s_i-1$ for $i=1,2$. Set $\kappa^{(s_1,t_2)}\defeq P_{s_2,s_1}^{(t_2)}\bigl(\kappa^{(s_2,t_2)}\bigr)$ for the image of $\kappa^{(s_2,t_2)}$ under the map
\[
P_{s_2,s_1}^{(t_2)}:\ks(T_{s_2},\mathcal{P}_{t_2}) \longrightarrow \ks(T_{s_1},\mathcal{P}_{t_2})
\]
induced by the canonical projection $T_{s_2}\twoheadrightarrow T_{s_1}$ (\emph{cf.} Remark \ref{remnot}); observe that $\kappa^{(s_1,t_2)}\in\mathfrak{K}_{s_1}$. Then there exist $t\geq\max\{t_1,t_2\}$ and $\kappa^{(s_1,t)}\in\ks(T_{s_1},\mathcal{P}_t)$ such that
\[
F_{t_1,t}^{(s_1)}\bigl(\kappa^{(s_1,t_1)}\bigr)=\kappa^{(s_1,t)}=F_{t_2,t}^{(s_1)}\bigl(\kappa^{(s_1,t_2)}\bigr),
\]
where $F_{t_i,t}^{(s_1)}:\ks(T_{s_1},\mathcal{P}_{t_i})\rightarrow \ks(T_{s_1},\mathcal{P}_t)$ are the canonical maps (\emph{cf.} Remark \ref{remnot}). It follows that there is an equality
\[
\ind\bigl(\kappa^{(s_1,t_1)}_{1}\bigr)=\ind\bigl(\kappa^{(s_1,t)}_{1}\bigr)=\ind\bigl(\kappa^{(s_1,t_2)}_{1}\bigr).
\]
In particular, $\kappa^{(s_1,t_2)}_{1} \neq 0$ and, by Lemma \ref{freeKS}, both $\kappa^{(s_2,t_2)}_{1}$ and $\kappa^{(s_1,t_2)}_{1}$ lie in a direct summand of the respective Selmer groups that is free of rank $1$. 
The commutative triangle
\[ 
\xymatrix@C=45pt@R=40pt{\Sel_{\mathcal{F}}(K,T_{s_2})\ar[r]^-{\pi^{s_2-s_1}\cdot} \ar[d]& \Sel_{\mathcal{F}}(K,T_{s_2})[\mathfrak{m}^{s_1}]\\ \Sel_{\mathcal{F}}(K,T_{s_1})\ar[ru]^-\simeq} 
\] 
coming from \cite[Lemma 1.3.3]{HoHeeg} shows that
\[
\ind\bigl(\kappa^{(s_2,t_2)}_{1}\bigr)=\ind\bigl(\kappa^{(s_1,t_2)}_{1}\bigr),
\]
and the proof is complete.
\end{proof}

Thanks to Lemma \ref{lemma3.34}, we can define the index $\ind(\kappa_1)$ of $\kappa_1=\varprojlim_s\kappa_1^{(s,t)}$ to be $\ind\bigl(\kappa^{(s,t)}_{1}\bigr)$ for any integer $s\geq1$ and any $\kappa^{(s,t)}\in\mathfrak{K}_s$ such that $\kappa^{(s,t)}_{1}\neq0$. Moreover, set
\begin{equation} \label{delta-def-eq}
\delta^{(j)}\bigl(\kappa^{(s)}\bigr)\defeq\max_{\kappa^{(s,t)}\in \mathfrak{K}_{s}}\bigl\{\partial^{(j)}(\kappa^{(s,t)})\bigr\}\in\N, \quad\delta\bigl(\kappa^{(s)}\bigr)\defeq\max_{\kappa^{(s,t)}\in \mathfrak{K}_{s}}\bigl\{\partial(\kappa^{(s,t)})\bigr\}\in\N
\end{equation}
for all integers $s>\ind(\kappa_1)$ and all $j\in\N$.

\begin{remark} 
By Theorem \ref{artSel} and Lemma \ref{freeKS}, there are inequalities 
\[
\partial^{(j+1)}\bigl(\kappa^{(s,t)}\bigr)\leq\partial^{(j)}\bigl(\kappa^{(s,t)}\bigr)\leq\ind(\kappa_1)
\]
for all $j\in\N$ and all $t\geq 2s-1$. Moreover, by definition, for $t\geq t'$ there are inequalities 
\[
\partial^{(j)}\bigl(\kappa^{(s,t)}\bigr)\geq\partial^{(j)}\bigl(\kappa^{(s,t')}\bigr),\quad\partial\bigl(\kappa^{(s,t)}\bigr)\geq\partial\bigl(\kappa^{(s,t')}\bigr),
\]
so the definitions of $\delta^{(j)}\bigl(\kappa^{(s)}\bigr)$ and $\delta\bigl(\kappa^{(s)}\bigr)$ in \eqref{delta-def-eq} are meaningful and give natural numbers.  
\end{remark} 

\begin{lemma} \label{existenceKol}
Let $s>\ind(\kappa_1)$. There is a free $\kappa^{(s,t)}\in\mathfrak{K}_s$ such that
\[
\delta\bigl(\kappa^{(s)}\bigr)=\partial\bigl(\kappa^{(s,t)}\bigr),\quad\delta^{(j)}\bigl(\kappa^{(s)}\bigr)=\partial^{(j)}\bigl(\kappa^{(s,t)}\bigr)
\]
for all $j\ge 0$.
\end{lemma}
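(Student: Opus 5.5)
The plan is to exploit the directed structure of $\mathfrak{K}_s$: any two representatives become equal after enlarging $t$. The invariants $\partial^{(j)}$ and $\partial$ are monotone non-decreasing under the transition maps $F^{(s)}_{t,t'}$, since passing from $\mathcal{P}_t$ to $\mathcal{P}_{t'}\subset\mathcal{P}_t$ only shrinks the set of $n$ over which the minima are taken.

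First, for each $j\ge0$, pick a representative $\kappa^{(s,t_j)}\in\mathfrak{K}_s$ realizing the maximum $\delta^{(j)}(\kappa^{(s)})$. Also pick $\kappa^{(s,t_\infty)}$ realizing $\delta(\kappa^{(s)})$. These maxima exist because the quantities are bounded by $\ind(\kappa_1)<s$, as in the Remark preceding the statement.

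Only finitely many $j$ need to be handled. By the Remark, $\partial^{(j)}(\kappa^{(s,t)})$ is non-increasing in $j$ and bounded below by $\partial(\kappa^{(s,t)})$. Moreover, for free representatives, Theorem \ref{artSel} shows that $\partial^{(j)}(\kappa^{(s,t)})=\partial(\kappa^{(s,t)})$ as soon as $j$ exceeds twice the number of nonzero $d_i^{\pm\epsilon}$. This number is bounded uniformly in $t$ by the minimal number of generators of $\Sel_{\mathcal F}(K,T_s)$, which does not depend on $t$. So for $j\ge J$, with $J$ uniform, we have $\delta^{(j)}=\delta$.

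Next, choose $t\ge\max\{t_0,\dots,t_J,t_\infty,2s-1\}$ so large that the images of all these finitely many representatives in $\ks(T_s,\mathcal{P}_t)$ coincide; call the common image $\kappa^{(s,t)}$. This is possible because they all represent the same element $\kappa^{(s)}$ of the direct limit, and finitely many equalities in a direct limit can be achieved at a single stage.

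By monotonicity, $\partial^{(j)}(\kappa^{(s,t)})\ge\partial^{(j)}(\kappa^{(s,t_j)})=\delta^{(j)}(\kappa^{(s)})$. Maximality gives the reverse inequality, so equality holds for $j\le J$, and likewise $\partial(\kappa^{(s,t)})=\delta(\kappa^{(s)})$.

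For $j>J$ the argument is a sandwich:
\[
\delta^{(j)}(\kappa^{(s)})\ge\partial^{(j)}(\kappa^{(s,t)})\ge\partial(\kappa^{(s,t)})=\delta(\kappa^{(s)})=\delta^{(j)}(\kappa^{(s)}).
\]
Freeness of $\kappa^{(s,t)}$ follows from Lemma \ref{freeKS}, since $t\ge 2s-1$.

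The main obstacle is the uniform stabilization index $J$, that is, controlling all infinitely many $j$ with finitely many choices. The first route would be the bound via Theorem \ref{artSel} described above. A fallback is to note that $\delta(\kappa^{(s)})\le\delta^{(j)}(\kappa^{(s)})$ is a non-increasing sequence of naturals in $j$, hence eventually constant, say equal to $c$ for $j\ge J$. One then checks that $c=\delta(\kappa^{(s)})$ by taking, for a representative attaining $\delta$, the index $j$ where its $\partial^{(j)}$ attains $\partial$. For the indices $j\ge J$, the sandwich above then works once the representative realizing $\delta^{(J)}$ and the one realizing $\delta$ are both included among the finitely many being unified.
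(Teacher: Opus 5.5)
Your main argument is correct and is essentially the paper's proof. Representatives attaining $\delta(\kappa^{(s)})$ and $\delta^{(0)}(\kappa^{(s)}),\dots,\delta^{(J)}(\kappa^{(s)})$ are merged at one level $t\ge 2s-1$, monotonicity under the maps $F^{(s)}_{t,t'}$ plus maximality settles $j\le J$, and a sandwich settles $j>J$.

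The one real difference is the choice of $J$. The paper takes $J=\ind(\kappa_1)-\delta(\kappa^{(s)})$, asserting that $\partial^{(j)}$ decreases strictly until it stabilizes, but by Theorem \ref{artSel} this fails in general. For instance, with $d_0^{\epsilon}=1$ and all other $d_i^{\pm\epsilon}=0$ one has $\partial^{(0)}=\partial^{(1)}=\partial+1$, so $J=1$ is too small (doubling $J$ would suffice). Your uniform index, read off from the $t$-independent module $\Sel_{\mathcal{F}}(K,T_s)$, is sound.

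Your fallback, on the other hand, does not establish $c=\delta(\kappa^{(s)})$ as written. Knowing where one representative's $\partial^{(j)}$ reaches its $\partial$ says nothing about $\delta^{(j)}$, which is a maximum over all representatives. Justifying that interchange of $\lim_j$ and $\max$ is precisely what the uniform bound of your first route does, so rely on that route.
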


\begin{proof} Let $\kappa^{(s,t')}\in\mathfrak{K}_s$ be such that $\delta\bigl(\kappa^{(s)}\bigr)=\partial\bigl(\kappa^{(s,t')}\bigr)$ with $t'\geq 2s-1$
and set 
\[
J\defeq\ind(\kappa_1)-\partial\bigl(\kappa^{(s,t')}\bigr)\in\N.
\]
Moreover, choose $\kappa^{(s,t_j)}\in \mathfrak{K}_s$ for $j\in\{0,\dots,J\}$ such that $\delta^{(j)}\bigl(\kappa^{(s)}\bigr)=\partial^{(j)}\bigl(\kappa^{(s,t_j)}\bigr)$. Then there is $\kappa^{(s,t)}\in\mathfrak{K}_s$ with $t\geq\max\{t',t_0,\dots,t_J\}$ such that
\[
F^{(s)}_{t',t}\bigl(\kappa^{(s,t')}\bigr)=\kappa^{(s,t)}=F_{t_j,t}^{(s)}\bigl(\kappa^{(s,t_j)}\bigr)
\]
for all $j\in\{0,\dots,J\}$ and it satisfies the stated properties for $\delta$ and $\delta^{(j)}$ with $0\le j\le J$. 

By Theorem \ref{artSel}, we have that $\partial(\kappa^{(s,t)})\le \partial^{(j)}(\kappa^{(s,t)})\le \ind(\kappa_1)$ for all $j\ge 0$, and $\partial^{(j)}(\kappa^{(s,t)})$ is strictly decreasing in $j$ until it stabilizes at $\partial(\kappa^{(s,t)})$ for all $j\ge J$. Therefore, we know that $\partial^{(J)}(\kappa^{(s,t)})=\partial(\kappa^{(s,t)})=\delta(\kappa^{(s)})=\delta^{(J)}(\kappa^{(s)})$. Since $\partial(\kappa^{(s,t)})\le \partial^{(j)}(\kappa^{(s,t)})\le \partial^{(J)}(\kappa^{(s,t)})$ and $\delta(\kappa^{(s,t)})\le \delta^{(j)}(\kappa^{(s,t)})\le \delta^{(J)}(\kappa^{(s,t)})$ for all $j>J$, we obtain the claimed relations also for the remaining $j$'s.
\end{proof}

From the proof of the previous lemma, it is also clear that if $\kappa^{(s,t)}\in\mathfrak{K}_s\cap\ks(T_s,\mathcal{P}_t)$ satisfies the statement of the lemma, then $F^{(s)}_{t,t'}\bigl(\kappa^{(s,t)}\bigr)\in \mathfrak{K}_s\cap\ks(T_s,\mathcal{P}_{t'})$ satisfies the same relations for every $t'\ge t$.

\begin{lemma} \label{boundIncrSeq}
The sequences $\bigl(\delta(\kappa^{(s)})\bigr)_{s>\ind(\kappa_1)}$ and $\bigl(\delta^{(j)}(\kappa^{(s)})\bigr)_{s>\ind(\kappa_1)}$ are constant.
\end{lemma}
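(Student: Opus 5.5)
The plan is to fix integers $s'\geq s>\ind(\kappa_1)$ and to show $\delta^{(j)}\bigl(\kappa^{(s')}\bigr)=\delta^{(j)}\bigl(\kappa^{(s)}\bigr)$ for every $j$, and likewise for $\delta$. Everything should follow from a single comparison of indices under the reduction map $P^{(t)}_{s',s}$.

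\emph{Index comparison under reduction.} Take $t\geq 2s'-1$ and $\kappa^{(s',t)}\in\mathfrak{K}_{s'}\cap\ks(T_{s'},\mathcal{P}_t)$, and set $\kappa^{(s,t)}\defeq P^{(t)}_{s',s}\bigl(\kappa^{(s',t)}\bigr)\in\mathfrak{K}_s$. By Lemma \ref{freeKS} both systems are free. Consider the map $H^1(K,T_{s'})\to H^1(K,T_s)$ induced by reduction, followed by the isomorphism $\Sel_{\mathcal{F}(n)}(K,T_s)\simeq\Sel_{\mathcal{F}(n)}(K,T_{s'})[\mathfrak{m}^s]$ of \cite[Lemma 1.3.3]{HoHeeg}. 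As in the triangle in the proof of Lemma \ref{lemma3.34}, this composite is multiplication by $\pi^{s'-s}$. If $x$ lies in a free rank-one summand $R_{s'}$ of $\Sel_{\mathcal{F}(n)}(K,T_{s'})$, then $\pi^{s'-s}R_{s'}\simeq R_s$ is a free summand of the $\mathfrak{m}^s$-torsion. I therefore expect
\[
\ind\bigl(\kappa^{(s,t)}_n\bigr)=\min\bigl\{s,\ind\bigl(\kappa^{(s',t)}_n\bigr)\bigr\}
\]
for every $n\in\mathcal{N}_t$. The one point to check is that the index of $\pi^{s'-s}x$ inside $\Sel[\mathfrak{m}^s]$ is computed inside that free summand. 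This holds because the summand is a direct summand of the whole Selmer group, so it remains a direct summand after taking $\mathfrak{m}^s$-torsion.

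\emph{Passing to $\partial^{(j)}$.} Taking minima over $n$ with $\nu(n)=j$ gives
\[
\partial^{(j)}\bigl(\kappa^{(s,t)}\bigr)=\min\bigl\{s,\partial^{(j)}\bigl(\kappa^{(s',t)}\bigr)\bigr\}.
\]
By the Remark following \eqref{delta-def-eq}, $\partial^{(j)}\bigl(\kappa^{(s',t)}\bigr)\leq\ind(\kappa_1)<s$, so in fact $\partial^{(j)}\bigl(\kappa^{(s,t)}\bigr)=\partial^{(j)}\bigl(\kappa^{(s',t)}\bigr)$. The same argument gives the analogous equality for $\partial$. Taking the maximum over $\kappa^{(s',t)}\in\mathfrak{K}_{s'}$ then yields $\delta^{(j)}\bigl(\kappa^{(s)}\bigr)\geq\delta^{(j)}\bigl(\kappa^{(s')}\bigr)$.

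\emph{Reverse inequality.} Let $\kappa^{(s,t)}\in\mathfrak{K}_s$ be arbitrary. Since $\kappa$ is a point of the inverse limit, $P_{s',s}\bigl(\kappa^{(s')}\bigr)=\kappa^{(s)}$ in the direct limit. Hence there are $t''\geq t$ and $\kappa^{(s',t'')}\in\mathfrak{K}_{s'}$, which we may take with $t''\geq 2s'-1$, such that
\[
F^{(s)}_{t,t''}\bigl(\kappa^{(s,t)}\bigr)=P^{(t'')}_{s',s}\bigl(\kappa^{(s',t'')}\bigr).
\]
The operation $F$ only restricts the index set $\mathcal{N}_t\supset\mathcal{N}_{t''}$, so it can only increase $\partial^{(j)}$. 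Combined with the previous step this gives
\[
\partial^{(j)}\bigl(\kappa^{(s,t)}\bigr)\leq\partial^{(j)}\bigl(P\kappa^{(s',t'')}\bigr)=\partial^{(j)}\bigl(\kappa^{(s',t'')}\bigr)\leq\delta^{(j)}\bigl(\kappa^{(s')}\bigr).
\]
Maximizing over $\kappa^{(s,t)}$ gives the reverse inequality, and the same argument applies verbatim to $\delta$. This proves that both sequences are constant.

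The main obstacle is the first step, namely making the index comparison rigorous. One must make sure the identification in \cite[Lemma 1.3.3]{HoHeeg} is compatible with the modified Selmer structures $\mathcal{F}(n)$, so that the triangle holds for every $n$ and not only for $n=1$. One must also use the Stub description (Proposition \ref{classInStub}, Lemma \ref{freeKS}) so that $\kappa_n^{(s',t)}$ genuinely lies in a free summand, which is what keeps its index from dropping under reduction. For the case $\kappa^{(s',t)}_n=0$, where the index is $s'$ by definition, the formula still holds, since the reduction is $0$ with index $s$.
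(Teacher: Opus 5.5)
Your proof is correct and uses the same ingredients as the paper's proof. These are the $\pi^{s'-s}$ triangle from the end of the proof of Lemma \ref{lemma3.34}, applied to classes lying in a free rank-one summand (Lemma \ref{freeKS}); the identity $P_{s',s}(\kappa^{(s')})=\kappa^{(s)}$ in the direct limit; and the monotonicity of $\partial^{(j)}$ under the maps $F$, which also justifies restricting the maximum defining $\delta^{(j)}(\kappa^{(s')})$ to representatives with $t\geq 2s'-1$. The difference is only organizational: the paper compares the simultaneous maximizers provided by Lemma \ref{existenceKol}, whereas you prove the two inequalities separately and make explicit the truncation $\min\{s,\cdot\}$ and why the hypothesis $s>\ind(\kappa_1)$ makes it harmless, points the paper leaves tacit.
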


\begin{proof} Let $s_2\geq s_1> \ind(\kappa_1)$ be integers and let $i \in \{1,2\}$. By Lemma \ref{existenceKol}, there exist $\kappa^{(s_i,t_i)} \in \mathfrak{K}_{s_i}$ such that
\[
\delta\bigl(\kappa^{(s_i)}\bigr)=\partial\bigl(\kappa^{(s_i,t_i)}\bigr),\quad\delta^{(j)}\bigl(\kappa^{(s_i)}\bigr)=\partial^{(j)}\bigl(\kappa^{(s_i,t_i)}\bigr)
\]
for all $j\ge 0$. Without loss of generality, we can assume $t_i>2s_i-1$ and $t_2>t_1$. In particular, by Lemma \ref{freeKS}, the systems $\kappa^{(s_i,t_i)}$ are free. Setting $\kappa^{(s_1,t_2)} \defeq P^{(t_2)}_{s_2,s_1}\bigl(\kappa^{(s_2,t_2)}\bigr)$, the same argument that occurs at the end of the proof of Lemma \ref{lemma3.34} shows that
\[
\partial\bigl(\kappa^{(s_1,t_2)}\bigr)=\partial\bigl(\kappa^{(s_2,t_2)}\bigr),\quad\partial^{(j)}\bigl(\kappa^{(s_1,t_2)}\bigr)=\partial^{(j)}\bigl(\kappa^{(s_2,t_2)}\bigr).
\]
Then there exist $t \geq \max\{t_1,t_2\}$ and $\kappa^{(s_1,t)} \in \ks(T_{s_1},\mathcal{P}_t)$ such that
\[
F^{(s_1)}_{t_1,t}\bigl(\kappa^{(s_1,t_1)}\bigr)=\kappa^{(s_1,t)}=F^{(s_2)}_{t_2,t}\bigl(\kappa^{(s_1,t_2)}\bigr)
\]
and
\[
\partial\bigl(\kappa^{(s_1,t_i)}\bigr)=\partial\bigl(\kappa^{(s_1,t)}\bigr),\quad\partial^{(j)}\bigl(\kappa^{(s_1,t_i)}\bigr)=\partial^{(j)}\bigl(\kappa^{(s_1,t)}\bigr)
\]
for all $i\in\{0,1\}$ and $j\ge 0$. The proof is complete. 
\end{proof}

Thanks to Lemma \ref{boundIncrSeq}, we can set
\[
\delta(\kappa)\defeq\lim_{s\rightarrow\infty}\delta\bigl(\kappa^{(s)}\bigr),\quad\delta^{(j)}(\kappa)\defeq\lim_{s\rightarrow\infty}\delta^{(j)}\bigl(\kappa^{(s)}\bigr).
\]

\begin{theorem}[Structure theorem] \label{thmDVR}
Let $\kappa\in\uks\bigl(T,\mathcal{F},\mathcal{P},\{\chi_{n,\ell}\}\bigr)$ satisfying Assumption 3.32 for a suitable $\epsilon\in\{\pm 1\}$. For all $i\in\N$, set 
\[
d^\epsilon_i\defeq\delta^{(2i+1)}(\kappa)-\delta^{(2i+2)}(\kappa),\quad d^{-\epsilon}_i\defeq\delta^{(2i)}(\kappa)-\delta^{(2i+1)}(\kappa).
\]
Then there are isomorphisms
\[
\Sel^\epsilon_\mathcal{F}(K,A)\simeq(F/R)\oplus\bigoplus_{i\geq0}\bigl(R/\mathfrak{m}^{d^\epsilon_i}\bigr)^2,\quad\Sel^{-\epsilon}_\mathcal{F}(K,A)\simeq\bigoplus_{i\geq0}\bigl(R/\mathfrak{m}^{d^{-\epsilon}_i}\bigr)^2.
\]
Furthermore, the sequences $\bigl(d^{\pm \epsilon}_i\bigr)_{i\in\N}$ are non-increasing.
\end{theorem}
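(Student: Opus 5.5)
The plan is to compile the artinian structure theorem (Theorem \ref{artSel}) over the quotients $T_s$ and pass to the limit. First, relate $\Sel_\mathcal{F}(K,T_s)$ to $\Sel_\mathcal{F}(K,A)$. The residual representation is absolutely irreducible, so $H^0(K,A)=0$. The local conditions are propagated from $V$, so they are cartesian. Hence, as in \cite[Lemma 1.3.3]{HoHeeg}, there are $\tau_c$-equivariant isomorphisms $\Sel_\mathcal{F}(K,T_s)\simeq\Sel_\mathcal{F}(K,A[\mathfrak{m}^s])\simeq\Sel_\mathcal{F}(K,A)[\mathfrak{m}^s]$, and $\Sel_\mathcal{F}(K,A)=\varinjlim_s\Sel_\mathcal{F}(K,A)[\mathfrak{m}^s]$. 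Since $\kappa_1\neq0$, Lemma \ref{freeKS} and Proposition \ref{prop-structureart} give, for every $s$ large enough, a decomposition
\[
\Sel^\epsilon_\mathcal{F}(K,T_s)\simeq R_s\oplus\bigoplus_{i}\bigl(R/\mathfrak{m}^{d_i^\epsilon(s)}\bigr)^2,\quad \Sel^{-\epsilon}_\mathcal{F}(K,T_s)\simeq\bigoplus_i\bigl(R/\mathfrak{m}^{d_i^{-\epsilon}(s)}\bigr)^2,
\]
with non-increasing exponents $d_i^{\pm\epsilon}(s)\le s$. Comparing the $\mathfrak{m}^s$-torsion of a cofinitely generated $R$-module $(F/R)^{r}\oplus(\text{finite})$ across $s$ forces $\Sel^{\pm\epsilon}_\mathcal{F}(K,A)\simeq(F/R)^{r^\pm}\oplus\bigoplus_i(R/\mathfrak{m}^{a_i^\pm})^2$. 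The parity and rank-one statements show $r^\epsilon=1$ and $r^{-\epsilon}=0$. Otherwise, for $s\gg0$, the divisible part would produce at least two or three copies of $R_s$ on one side, making $\lambda_1^{(s)}\ge s$. That would contradict $\ind(\kappa_1)<s$ together with Proposition \ref{classInStub}. For $s$ larger than $\ind(\kappa_1)$ and all $a_i^\pm$, we therefore get $d_i^{\pm\epsilon}(s)=a_i^\pm$, independent of $s$.

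Second, identify the $a_i^\pm$ with the $\delta$-invariants. Fix such an $s$ and use Lemma \ref{existenceKol} to choose a free $\kappa^{(s,t)}\in\mathfrak{K}_s$ realizing $\delta(\kappa^{(s)})$ and all the $\delta^{(j)}(\kappa^{(s)})$ simultaneously. Apply Theorem \ref{artSel} to $\kappa^{(s,t)}$, with $k=s$ and the admissible modification from Proposition \ref{modification-prop2}. This gives $\partial^{(j)}(\kappa^{(s,t)})$ as the stated minimum. Since $\partial^{(0)}=\ind(\kappa^{(s,t)}_1)=\ind(\kappa_1)<s$, none of the minima is attained at $k$. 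So, writing $\delta^{(j)}=\delta^{(j)}(\kappa)$ via Lemma \ref{boundIncrSeq}, we get the explicit formulas
\[
\delta^{(2i)}=\delta+\sum_{m\ge i}(a_m^\epsilon+a_m^{-\epsilon}),\qquad \delta^{(2i+1)}=\delta+\sum_{m\ge i+1}(a_m^\epsilon+a_m^{-\epsilon})+a_i^\epsilon .
\]
Taking consecutive differences yields $\delta^{(2i)}-\delta^{(2i+1)}=a_i^{-\epsilon}$ and $\delta^{(2i+1)}-\delta^{(2i+2)}=a_i^{\epsilon}$. These are exactly $d_i^{-\epsilon}$ and $d_i^{\epsilon}$ as defined in the statement, which gives the isomorphisms; the monotonicity of the sequences $(d_i^{\pm\epsilon})$ is inherited from that of $(a_i^\pm)$.

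The main obstacle is the first step. It requires the control isomorphism $\Sel_\mathcal{F}(K,T_s)\simeq\Sel_\mathcal{F}(K,A)[\mathfrak{m}^s]$ compatibly with $\tau_c$, and it requires excluding extra divisible parts. For the latter, I would argue via the stub: the index bound $\ind(\kappa_1^{(s,t)})=\ind(\kappa_1)$ is fixed (Lemma \ref{lemma3.34}), so $\lambda_1^{(s)}$ stays bounded. Any extra $F/R$ summand would force $\lambda_1^{(s)}\to\infty$.
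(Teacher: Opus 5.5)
Your proposal is correct and follows essentially the same route as the paper. You use the control isomorphism $\Sel_\mathcal{F}(K,T_s)\simeq\Sel_\mathcal{F}(K,A)[\mathfrak{m}^s]$, compare it with the artinian decomposition attached to a free $\kappa^{(s,t)}$ realizing all the $\delta$-invariants (Lemma \ref{existenceKol}), and read off the exponents from Theorem \ref{artSel} once you know none of the minima equals $s$. The only minor variation is how you rule out extra $F/R$ summands: you use Proposition \ref{classInStub} directly ($\lambda_1\le\ind(\kappa_1)<s$), while the paper uses the $j=0$ case of Theorem \ref{artSel} together with the constancy of $\delta^{(0)}(\kappa^{(s)})$, which is the same bound in disguise.
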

\begin{proof} By \cite[Proposition B.2.7]{RubinES}, the $R$-module $\Sel_{\mathcal{F}}(K,A)$ is cofinitely generated, so there are isomorphisms
\[
\Sel^\epsilon_{\mathcal{F}}(K,A) \simeq \left( F/R \right) ^{r^\epsilon} \oplus \bigoplus_{i \geq 0 }  R/ \mathfrak{m}^{c^\epsilon_i}, \quad \Sel^{-\epsilon}_{\mathcal{F}}(K,A) \simeq \left( F/R \right) ^{r^{-\epsilon}} \oplus \bigoplus_{i \geq 0 }  R/ \mathfrak{m}^{c^{-\epsilon}_i}
\]
for some $r^{\pm\epsilon}\in\N$ and ${(c^{\pm \epsilon}_i)}_{i\in\N}$ non-increasing sequences of non-negative integers such that $c^{\pm \epsilon}_i=0$ for $i\gg0$. For all integers $s\geq1$, there are isomorphisms
\begin{equation} \label{eqCT}
\Sel^{\pm \epsilon}_\mathcal{F}(K,T_s) \simeq \Sel^{\pm \epsilon}_{\mathcal{F}}(K,A)[\mathfrak{m}^s] \simeq\bigl(R/\mathfrak{m}^s \bigr)^{r^{\pm \epsilon}} \oplus \bigoplus_{i \geq 0} R/\mathfrak{m}^{\min \{s,c^{\pm \epsilon}_i \}}.
\end{equation} 
If we choose an integer $s\geq1$ such that $s > c^{\pm \epsilon}_i$ for all $i$, then we can rewrite \eqref{eqCT} as 
\begin{equation} \label{eqCT1}
\Sel^{\pm \epsilon}_\mathcal{F}(K,T_s) \simeq\bigl(R/\mathfrak{m}^s\bigr)^{r^{\pm \epsilon}} \oplus  
\bigoplus_{i \geq 0} R/\mathfrak{m}^{c^{\pm \epsilon}_i}.
\end{equation}
We can also assume $s>\ind(\kappa_1)$ and recall that, by Lemmas \ref{freeKS} and \ref{existenceKol}, there are $t\gg 0$ and a free $\kappa^{(s,t)} \in \mathfrak{K}_s\cap\ks(T_s,\mathcal{P}_t)$ of sign $\epsilon$ such that
\begin{equation}\label{eq:structureDVR}
    \delta(\kappa^{(s)})=\partial(\kappa^{(s,t)}), \:\:\:\:\delta^{(j)}(\kappa^{(s)})=\partial^{(j)}(\kappa^{(s,t)})
\end{equation}
for all $j\in\N$; observe that $\kappa^{(s,t)}_{1} \neq 0$ in $\Sel^\epsilon_\mathcal{F}(K,T_s)$. Choose integers $d^{\pm \epsilon}_i\geq 0$ as in Theorem \ref{artSel}, so that 
\begin{equation} \label{eqAR}
\Sel^\epsilon_\mathcal{F}(K,T_s) \simeq R/\mathfrak{m}^s\oplus \bigoplus_{i\geq 0} \Bigl(R/\mathfrak{m}^{d^\epsilon_i}\Bigr)^2, \quad \Sel^{-\epsilon}_\mathcal{F}(K,T_s) \simeq  \bigoplus_{i\geq 0} \Bigl(R/\mathfrak{m}^{d^{-\epsilon}_i}\Bigr)^2
\end{equation} 
with 
\begin{equation}  \label{equality2}
\partial^{(j)}\bigl(\kappa^{(s,t)}\bigr)=\begin{cases} \min\bigl\{s,\partial(\kappa^{(s,t)})+\sum_{i\geq j/2}(d_i^\epsilon + d_i^{-\epsilon})\bigr\} & \text{if $j$ is even}\\[2mm]
\min\bigl\{s,\partial(\kappa^{(s,t)})+\sum_{i\geq(j+1)/2}(d_i^\epsilon+d_i^{-\epsilon})+d_{(j-1)/2}^{\epsilon}\bigr\} & \text{if $j$ is odd}
\end{cases}
\end{equation}
for all $j\in\N$. Then $r^\epsilon$ is odd, $r^{-\epsilon}$ is even and there are equalities
\[
d^\epsilon_i=\begin{cases}s&\text{for $0\leq i\leq\displaystyle{\frac{r^\epsilon-1}{2}}-1$}\\[3mm]c^\epsilon_{2i-r^\epsilon+1}=c^\epsilon_{2i-r^\epsilon+2}&\text{for $i\geq\displaystyle{\frac{r^\epsilon-1}{2}}$}\end{cases}
\]
and
\[
d^{-\epsilon}_i=\begin{cases}s&\text{for $0\leq i\leq\displaystyle{\frac{r^{-\epsilon}}{2}}-1$}\\[3mm]c^{-\epsilon}_{2i-r^{-\epsilon}}=c^{-\epsilon}_{2i-r^{-\epsilon}+1}&\text{for $i\geq\displaystyle{\frac{r^{-\epsilon}}{2}}$}\end{cases}.
\]
Therefore, there are isomorphisms
\[
\Sel^\epsilon_\mathcal{F}(K,T_s)\simeq R/\mathfrak{m}^s \oplus\Bigl(\bigl(R/\mathfrak{m}^s\bigr)^2 \Bigr)^{\frac{r^\epsilon-1}{2}}\oplus\bigoplus_{i \geq \frac{r^\epsilon-1}{2}}\bigl(R/\mathfrak{m}^{d^\epsilon_i}\bigr)^2
\]
and
\[
\Sel^{-\epsilon}_\mathcal{F}(K,T_s)\simeq \Bigl(\bigl(R/\mathfrak{m}^s\bigr)^2 \Bigr)^{\frac{r^{-\epsilon}}{2}}\oplus\bigoplus_{i \geq \frac{r^{-\epsilon}}{2}}\bigl(R/\mathfrak{m}^{d^{-\epsilon}_i}\bigr)^2.
\]
Since $\bigl(\delta^{(0)}(\kappa^{(s)})\bigr)_{s>\ind(\kappa_1)}$ is constant by Lemma \ref{boundIncrSeq}, from \eqref{equality2} with $j=0$ we find that $d^{\pm \epsilon}_i < s$ for all $i\in\N$ (and $s\gg0$), whence $r^\epsilon=1$ and $r^{-\epsilon}=0$. Moreover, since $\bigl(\delta^{(j)}(\kappa^{(s)})\bigr)_{s>\ind(\kappa_1)}$ is constant (again by Lemma \ref{boundIncrSeq}), equalities \eqref{eq:structureDVR} and \eqref{equality2} show that for $s\gg0$ the equality
\begin{equation}\label{eqpart}
\delta^{(j)}(\kappa^{(s)})=\begin{cases}\delta(\kappa^{(s)})+\sum_{i\geq j/2} (d^\epsilon_i+d^{-\epsilon}_i) & \text{if $j$ is even}\\[2mm]
\delta(\kappa^{(s)})+\sum_{i\geq(j+1)/2}(d_i^\epsilon+d_i^{-\epsilon})+d^\epsilon_{(j-1)/2} & \text{if $j$ is odd}
\end{cases}
\end{equation}
holds for all $j\in\N$. Letting $s\rightarrow+\infty$ in \eqref{eqpart} yields the equality
\begin{equation} \label{eqpart1}
\delta^{(j)}(\kappa)=\begin{cases}\delta(\kappa)+\sum_{i\geq j/2} (d^\epsilon_i+d^{-\epsilon}_i) & \text{if $j$ is even}\\[2mm]
\delta(\kappa)+ \sum_{i \geq (j+1)/2}(d_i^\epsilon+d_i^{-\epsilon})+d^\epsilon_{(j-1)/2} & \text{if $j$ is odd}
\end{cases},
\end{equation}
so for $i=j$ we get 
\[
d^\epsilon_i=\delta^{(2i+1)}(\kappa)-\delta^{(2i+2)}(\kappa),\quad d^{-\epsilon}_i=\delta^{(2i)}(\kappa)-\delta^{(2i+1)}(\kappa).
\]
The proof of the theorem is complete. \end{proof}

Denote by
\[
\Sha_\mathcal{F}(K,A)\defeq\Sel_\mathcal{F}(K,A)\big/\divv\bigl(\Sel_\mathcal{F}(K,A)\bigr)
\]
the Shafarevich--Tate group of $A$ over $K$ with respect to $\mathcal{F}$, where
$\divv\bigl(\Sel_\mathcal{F}(K,A)\bigr)$ is the maximal divisible subgroup of $\Sel_\mathcal{F}(K,A)$. Moreover, write $X(T)$ for the Pontryagin dual of $\Sha_\mathcal{F}(K,A)$, \emph{i.e.}, set
\[
X(T)\defeq\Hom\Bigl(\Sha_\mathcal{F}(K,A),\Q_p/\Z_p\Bigr).
\]
Theorem \ref{thmDVR} implies that there are isomorphisms
\begin{equation}\label{ShaX(T)}
X^{\pm \epsilon}(T) \simeq \bigoplus_{i\geq0}\bigl(R/\mathfrak{m}^{d^{\pm \epsilon}_i}\bigr)^2
\end{equation}
with $d^\epsilon_i=\delta^{(2i+1)}(\kappa)-\delta^{(2i+2)}(\kappa)$ and $d^{-\epsilon}_i=\delta^{(2i)}(\kappa)-\delta^{(2i+1)}(\kappa)$. 
In particular, by Theorem \ref{FittingDVR}, the equalities
\begin{equation}\label{eq:fitting-DVR+}
\Fitt_i\bigl(X^\epsilon(T)\bigr)=\mathfrak{m}^{2\sum_{s \geq i/2}(\delta^{(2s+1)}(\kappa)-\delta^{(2s+2)}(\kappa))}
\end{equation}
and
\begin{equation}\label{eq:fitting-DVR-}
\Fitt_i\bigl(X^{-\epsilon}(T)\bigr)=\mathfrak{m}^{2\sum_{s \geq i/2}(\delta^{(2s)}(\kappa)-\delta^{(2s+1)}(\kappa))}
\end{equation}
hold for all even $i\in\N$.

\subsection{Universal Kolyvagin systems over $\Lambda$}

We turn our attention to universal Kolyvagin systems over $\Lambda$.

\subsubsection{Assumptions} 

Let $\Lambda\defeq\cO\llbracket T\rrbracket$ be the completed power series ring in the variable $T$ with coefficients in a discrete valuation ring $\cO$ that is finite over $\Z_p$; let $\pi$ be a uniformizer of $\cO$, let $\mathfrak{m}=(\pi)$ be its maximal ideal and let $\F\defeq\cO/\mathfrak{m}$ its residue field. Denote by $\mathfrak{m}_\Lambda=(\pi,T)$ the maximal ideal of $\Lambda$. 

Now let $\mathbf{T}$ be a free $\Lambda$-module of rank $2$ endowed with a continuous action of $G_\Q$. Set $\mathbf{A}\defeq\Hom_{\cont}(\mathbf{T},\Bmu_{p^\infty})$, where $\Bmu_{p^\infty}\subset\overline\Q$ is the group of roots of unity of $p$-power order, and, by an abuse of notation, denote by $\mathcal{F}$ also the Selmer structure for $\mathbf{A}$ defined by letting $H^1_\mathcal{F}(K_v,\mathbf{A})$ be the orthogonal complement of $H^1_\mathcal{F}(K_v,\mathbf{T})$ with respect to the local Tate pairing for all primes $v$ of $K$. The Selmer group of $\mathbf{A}$ over $K$ with respect to $\mathcal{F}$ is
\[
\Sel_\mathcal{F}(K,\mathbf{A}) \defeq \ker \Biggl(H^1(K,\mathbf{A})\longrightarrow \prod_v \frac{H^1(K_v,\mathbf{A})}{H^1_\mathcal{F}(K_v,\mathbf{A})}\Biggr),
\]
where $v$ varies over all the primes of $K$. As in \cite[Proposition 2.1.5]{MR}, one can show that $\Sel_\mathcal{F}(K,\mathbf{T})$ is a finitely generated $\Lambda$-module and, by \cite[Lemma 2.3.3]{MR}, $\Sel_\mathcal{F}(K,\mathbf{A})$ is a cofinitely generated $\Lambda$-module. The \emph{big Shafarevich--Tate group} of $\mathbf{A}$ over $K$ with respect to $\mathcal{F}$ is
\[
\Sha_\mathcal{F}(K,\mathbf{A})\defeq \Sel_\mathcal{F}(K,\mathbf{A})\big/\divv_{\mathfrak{m}_\Lambda} \bigl(\Sel_\mathcal{F}(K,\mathbf{A})\bigr),
\]
where $\divv_{\mathfrak{m}_\Lambda}\bigl(\Sel_\mathcal{F}(K,\mathbf{A})\bigr)$ is the maximal $\mathfrak{m}_\Lambda$-divisible subgroup of $\Sel_\mathcal{F}(K,\mathbf{A})$; consider its Pontryagin dual
\[ 
X \defeq\Sha_\mathcal{F}(K,\mathbf{A})^\vee.
\]
Let $\mathfrak{P}$ be a height $1$ prime ideal of $\Lambda$. Let $\mathcal{O}_{\mathfrak{P}}$ be the integral closure of $\Lambda/\mathfrak{P}$, which is a discrete valuation ring such that $[\mathcal{O}_{\mathfrak{P}}:\Lambda/\mathfrak{P}]<\infty$; let $\pi_\mathfrak{P}$ be a uniformizer of $\mathcal{O}_{\mathfrak P}$. The composition of the canonical projection $\Lambda \twoheadrightarrow \Lambda/\mathfrak{P}$ and the canonical inclusion $\Lambda/\mathfrak{P}\hookrightarrow \mathcal{O}_{\mathfrak P}$ induces a map $s_\mathfrak{P}:\Lambda\rightarrow \mathcal{O}_\mathfrak{P}$. Let $F_{\mathfrak P}$ be the fraction field of $\mathcal O_{\mathfrak P}$ and set 
\begin{equation} \label{T-V-A-eq}
T_{\mathfrak{P}}\defeq\mathbf{T} \otimes_\Lambda \mathcal{O}_{\mathfrak{P}},\quad V_\mathfrak{P}\defeq T_\mathfrak{P}\otimes_{\mathcal{O}_\mathfrak{P}}F_\mathfrak{P},\quad A_\mathfrak{P}\defeq V_\mathfrak{P}/T_\mathfrak{P},
\end{equation}
where the tensor product in the definition of $T_{\mathfrak{P}}$ is taken with respect to $s_{\mathfrak P}$. It follows that $T_{\mathfrak{P}}$ is a free $\mathcal{O}_{\mathfrak{P}}$-module of rank $2$ equipped with a continuous action of $G_\Q$. By an abuse of notation, the obvious map $\mathbf{T}\rightarrow T_\mathfrak{P}$ and the map $H^1(K,\mathbf{T})\rightarrow H^1(K,T_\mathfrak{P})$ induced in cohomology will be still denoted by $s_\mathfrak{P}$.

We assume that for each height-$1$ prime ideal $\mathfrak{P}$ of $\Lambda$ we are given a Selmer structure $\mathcal{F}_\mathfrak{P}$ on $V_\mathfrak{P}$; then, for all integers $n\geq1$, we define by propagation Selmer structures on $A_\mathfrak{P}$ and $T_\mathfrak{P}$, still denoted by $\mathcal{F}_\mathfrak{P}$. Moreover, we assume that $T_\mathfrak{P}$ is endowed with a perfect, symmetric, $\mathcal{O}_\mathfrak{P}$-bilinear pairing
\[
{(\cdot,\cdot)}_\mathfrak{P}:T_\mathfrak{P} \times T_\mathfrak{P}\longrightarrow\cO_\mathfrak{P}(1)
\]
such that $(x^\sigma,y^{\tau_c \sigma \tau_c})=(x,y)^\sigma$ for all $\sigma \in G_K$, where $\tau_c \in G_\Q$ is a fixed complex conjugation. In particular, this pairing induces a perfect, $G_K$-equivariant, $\cO_\mathfrak{P}$-bilinear pairing
\[
{(\cdot,\cdot)}_\mathfrak{P}:T_{\mathfrak{P}}\times A_\mathfrak{P}\longrightarrow \Bmu_{p^\infty}.
\]
In addition, we assume that $\mathcal{F}_\mathfrak{P}$ is self-dual. Observe that the dual of the map $s_{\mathfrak{P}}:\mathbf{T}\rightarrow T_{\mathfrak{P}}$ induces a canonical map $A_\mathfrak{P}\rightarrow \mathbf{A}$. We will work under the following two assumptions.

\begin{assumption} \label{asstripleR}
For every height-$1$ prime ideal $\mathfrak{P}$ of $\Lambda$, the properties below hold true:
\begin{enumerate}
\item the triple $(T_\mathfrak{P},\mathcal{F}_\mathfrak{P},\mathcal{P})$ satisfies Assumption \ref{ass:Howard-assumption};
\item the Selmer structure $\mathcal{F}_\mathfrak{P}$ is compatible 
with the Selmer structure $\mathcal{F}$ on $\mathbf{T}$ in the sense that 
the specialization map $s_\mathfrak{P}:H^1(K,\mathbf{T})\rightarrow H^1(K,T_\mathfrak{P})$ induces a map 
\begin{equation} \label{smap}
s_\mathfrak{P}:\Sel_{\mathcal{F}}(K,\mathbf{T})/\mathfrak{P}\longrightarrow \Sel_{\mathcal{F}_\mathfrak{P}}(K,T_\mathfrak{P})
\end{equation}
and the specialization map $i_\mathfrak{P}:H^1(K,A_\mathfrak{P}) \rightarrow H^1(K,\mathbf{A})$ induces a map
\begin{equation} \label{imap}
i_\mathfrak{P}:\Sel_{\mathcal{F}_\mathfrak{P}}(K,A_\mathfrak{P})\longrightarrow \Sel_\mathcal{F}(K,\mathbf{A})[\mathfrak{P}];
\end{equation}
\item (\emph{control theorem}): there is a finite set $\Sigma_\mathrm{CT}$ of height-$1$ prime ideals of $\Lambda$ such that for all $\mathfrak{P}\notin\Sigma_\mathrm{CT}$ the kernel and the cokernel of the maps $s_\mathfrak{P}$ and $i_\mathfrak{P}$ are finite, of order bounded by a constant that depends on $[\mathcal{O}_\mathfrak{P}:\Lambda/\mathfrak{P}]$ but not on $\mathfrak{P}$ itself. 
\end{enumerate}
\end{assumption}

Let $\kappa\in\uks\bigl(\mathbf{T},\mathcal F)=\uks(\mathbf{T},\mathcal{F},\mathcal{P},\{\chi_{n,\ell}\}\bigr)$, let $\mathfrak{P}$ be a height-$1$ prime ideal of $\Lambda$ and write $\kappa_\mathfrak{P}$ for the universal Kolyvagin system for $(T_\mathfrak{P},\mathcal{F}_\mathfrak{P},\mathcal{P})$ that is the image of $\kappa$ under the natural map $\uks(\mathbf{T},\mathcal{F})\longrightarrow \uks(T_\mathfrak{P},\mathcal{F}_\mathfrak{P})$. 

\begin{assumption} \label{asskolyR}
There are $\kappa\in\uks\bigl(\mathbf{T},\mathcal F)$ and a finite set $\Sigma_{\EX}\supset\Sigma_{\mathrm{CT}}$ of height-$1$ prime ideals of $\Lambda$ such that $\kappa_{\mathfrak{P},1}\neq0$ for all $\mathfrak{P}\notin\Sigma_{\EX}$. Furthermore, there is $\epsilon\in\{\pm1\}$ such that $\kappa_n^{(s,t)} \in \Sel_{\mathcal{F}(n)}(K,T_s)^{\epsilon_n}$ for all $s\geq1$, all $\kappa^{(s,t)}\in \mathfrak{K}_s$ and all $n\in\mathcal{N}_t$, where $\epsilon_n\defeq\epsilon\cdot(-1)^{\nu(n)}$.
\end{assumption}

In particular, $\kappa_{\mathfrak{P}}$ satisfies Assumption \ref{asskappaDVR} for all $\mathfrak{P}\notin\Sigma_{\EX}$. The map $i_\mathfrak{P}$ in \eqref{imap} takes the maximal divisible subgroup of $\Sel_{\mathcal{F}_\mathfrak{P}}(K,A_\mathfrak{P})$ to the maximal $\mathfrak{m}_\Lambda$-divisible subgroup $\mathrm{div}_{\mathfrak{m}_\Lambda}\bigl(\Sel_\mathcal{F}(K,\mathbf{A})\bigr)$ of $\Sel_\mathcal{F}(K,\mathbf{A})$; thus, $i_{\mathfrak P}$ induces a map
\[
i_\mathfrak{P}:\Sha_{\mathcal{F}_\mathfrak{P}}(K,A_\mathfrak{P})\longrightarrow \Sha_\mathcal{F}(K,\mathbf{A})[\mathfrak{P}],
\]
still denoted by the same symbol. Therefore, Pontryagin duality gives a map 
\begin{equation} \label{cospShadual}
i^\vee_\mathfrak{P}:X /\mathfrak{P} X\longrightarrow X({T}_{\mathfrak{P}}).
\end{equation}
Taking $\mathfrak{P}$-torsion in the short exact sequence of $\Lambda$-modules 
\[
0\longrightarrow\mathrm{div}_{\mathfrak{m}_\Lambda}\bigl(\Sel_\mathcal{F}(K,\mathbf{A})\bigr)\longrightarrow\Sel_\mathcal{F}(K,\mathbf{A})\longrightarrow\Sha_\mathcal{F}(K,\mathbf{A})\longrightarrow0
\]
defining $\Sha_\mathcal{F}(K,\mathbf{A})$, we obtain an exact sequence of $\Lambda$-modules
\begin{equation} \label{shaP}
0\longrightarrow\mathrm{div}_{\mathfrak{m}_\Lambda}\bigl(\Sel_\mathcal{F}(K,\mathbf{A})\bigr)[\mathfrak{P}]\longrightarrow\Sel_\mathcal{F}(K,\mathbf{A})[\mathfrak{P}]\longrightarrow\Sha_\mathcal{F}(K,\mathbf{A})[\mathfrak{P}].
\end{equation}

\begin{lemma} \label{right-onto-lemma}
The rightmost map in exact sequence \eqref{shaP} is surjective. 
\end{lemma}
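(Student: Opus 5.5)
The plan is to show that any $\mathfrak{P}$-torsion element of $\Sha_\mathcal{F}(K,\mathbf{A})$ lifts to a $\mathfrak{P}$-torsion element of $\Sel_\mathcal{F}(K,\mathbf{A})$. The key input is that $D\defeq\mathrm{div}_{\mathfrak{m}_\Lambda}\bigl(\Sel_\mathcal{F}(K,\mathbf{A})\bigr)$ is $\mathfrak{P}$-divisible. More precisely, we want $xD=D$ for every $x\in\mathfrak{P}$, or at least for a generator of $\mathfrak{P}$. Once this holds, the argument is a diagram chase. Since $\Lambda$ is a UFD, the height-one prime $\mathfrak{P}$ is principal, say $\mathfrak{P}=(f)$. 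Let $\bar{y}\in\Sha_\mathcal{F}(K,\mathbf{A})[\mathfrak{P}]$ and choose any lift $y\in\Sel_\mathcal{F}(K,\mathbf{A})$. Then $fy\in D$. If $fD=D$, we can pick $d\in D$ with $fd=fy$. The element $y-d$ still maps to $\bar{y}$ and is killed by $f$, hence by $\mathfrak{P}$. This gives the surjectivity claimed in the lemma.

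The real work is proving $fD=D$. First, I will use that $\Sel_\mathcal{F}(K,\mathbf{A})$ is cofinitely generated over $\Lambda$, so its Pontryagin dual $Y$ is a finitely generated $\Lambda$-module. The maximal $\mathfrak{m}_\Lambda$-divisible subgroup $D$ is dual to the quotient $Y/Y_0$, where $Y_0$ is the largest submodule whose dual quotient is $\mathfrak{m}_\Lambda$-divisible. Next, I will identify $\mathfrak{m}_\Lambda$-divisibility (that is, $\mathfrak{m}_\Lambda D=D$) on the dual side with the dual module having no nonzero $\mathfrak{m}_\Lambda$-torsion in the appropriate sense. The statement I am aiming for is: if $D$ is a discrete cofinitely generated $\Lambda$-module with $\mathfrak{m}_\Lambda D=D$, then $D^\vee$ is torsion-free, and in fact every nonzero $g\in\Lambda$ is injective on $D^\vee$. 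Dually, $gD=D$ for every such $g$, and in particular for $g=f$.

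To prove this, I would argue as follows. Take $g\in\mathfrak{m}_\Lambda$ nonzero. The quotient $D/gD$ is dual to $D^\vee[g]$, and since $D$ is $\mathfrak{m}_\Lambda$-divisible so is $D/gD$. But $D/gD$ is a cofinitely generated $\Lambda/(g)$-module. By Nakayama (in its dual form, cofinitely generated plus $\mathfrak{m}$-divisible forces zero), the equality $\mathfrak{m}_\Lambda(D/gD)=D/gD$ implies $D/gD=0$. If $g$ is a unit, then $gD=D$ trivially. This Nakayama step is the delicate point. The standard definition of ``maximal $\mathfrak{m}_\Lambda$-divisible subgroup'' may mean divisibility by each element of $\mathfrak{m}_\Lambda$, or alternatively $D=\mathfrak{m}_\Lambda D$. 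I will check that either reading yields $\mathfrak{m}_\Lambda D=D$ for the maximal such subgroup, which holds because it is the sum of all such submodules. Then $D/gD$ inherits this property.

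The main obstacle is justifying the dual Nakayama lemma for the discrete, cofinitely generated module $D/gD$. I would do this on the compact side. The dual of $D/gD$ is $D^\vee[g]$, which is a finitely generated $\Lambda$-module $N$. The condition $\mathfrak{m}_\Lambda(D/gD)=D/gD$ dualizes to $N[\mathfrak{m}_\Lambda]=0$. I also expect to need that $N$ has finite length or is compact with $\bigcap\mathfrak{m}_\Lambda^kN=0$, so that $N[\mathfrak{m}_\Lambda]=0$ forces $N=0$. Since $N[\mathfrak{m}_\Lambda]=0$ alone does not kill a general module, I would instead run Nakayama directly on $D$. Writing $D$ as a direct limit, I would use the Matlis-type duality for cofinitely generated modules, under which $D/\mathfrak{m}_\Lambda D$ is dual to $D^\vee[\mathfrak{m}_\Lambda]$, applied to the module $D/gD$ itself. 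If this becomes too delicate, I would fall back on the fact that $f$ annihilates only a $\Lambda/\mathfrak{P}$-cofinitely generated piece.
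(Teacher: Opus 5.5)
Your reduction is fine. Write $D\defeq\divv_{\mathfrak{m}_\Lambda}(\Sel_\mathcal{F}(K,\mathbf{A}))$ and let $f$ generate $\mathfrak{P}$. Once $fD=D$, your lifting argument proves the lemma; equivalently, the snake lemma for multiplication by $f$ on $0\to D\to\Sel_\mathcal{F}(K,\mathbf{A})\to\Sha_\mathcal{F}(K,\mathbf{A})\to 0$ puts the cokernel of the map inside $D/fD$.

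The gap is your proof of $fD=D$. You weaken the hypothesis to $\mathfrak{m}_\Lambda D=D$ and then invoke a ``dual Nakayama'' claiming that a cofinitely generated discrete module $M$ with $\mathfrak{m}_\Lambda M=M$ vanishes. That is false. For discrete modules Nakayama says $M[\mathfrak{m}_\Lambda]=0\Rightarrow M=0$, whereas $\mathfrak{m}_\Lambda M=M$ only says $M^\vee[\mathfrak{m}_\Lambda]=0$, i.e.\ $M^\vee$ has no nonzero finite submodule. For instance, $D=\Hom_{\cont}(\Lambda/(T),\Q_p/\Z_p)$ has $\mathfrak{m}_\Lambda D=\pi D=D$ but $TD=0$, so $D/TD=D\neq 0$. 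This is exactly the phenomenon you flag yourself (``$N[\mathfrak{m}_\Lambda]=0$ alone does not kill a general module''), and your fallback does not get around it.

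In fact nothing can be extracted from $\mathfrak{m}_\Lambda D=D$ alone. Let $Y$ be the dual of the Selmer group and $Y_{\mathrm{fin}}$ its maximal finite submodule. The maximal submodule with $\mathfrak{m}_\Lambda D=D$ is dual to $Y/Y_{\mathrm{fin}}$. Take $Y=\Lambda/(T^2,\pi T)$ and $f=T$: then $Y_{\mathrm{fin}}\cap TY=Y_{\mathrm{fin}}\neq 0=TY_{\mathrm{fin}}$. So the dual map $Y_{\mathrm{fin}}/TY_{\mathrm{fin}}\to Y/TY$ is not injective, and the surjectivity in the lemma fails.

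What is missing is the correct notion of divisibility, and with it the step becomes immediate. The maximal $\mathfrak{m}_\Lambda$-divisible submodule must be the maximal $D$ with $xD=D$ for every \emph{nonzero} $x\in\mathfrak{m}_\Lambda$; equivalently, $D^\vee$ is $\Lambda$-torsion-free. This is the reading under which $X$ is the torsion part of $Y$ rather than a finite module, as the rest of the paper requires. Such a maximal $D$ exists because a sum of submodules with this property again has it. Since a generator $f$ of the height-one prime $\mathfrak{P}$ is a nonzero element of $\mathfrak{m}_\Lambda$, $fD=D$ holds by definition. So drop the Nakayama detour, and the step where you trade strong divisibility for $\mathfrak{m}_\Lambda D=D$, and keep only your diagram chase.
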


\begin{proof} See the arguments in \cite[Lemma 3.26]{DRLV}.\end{proof}

\begin{proposition} \label{CTSha1}
For all height-$1$ prime ideals $\mathfrak{P}$ of $\Lambda$ not in $\Sigma_{\EX}$, the kernel and the cokernel of the map $i^\vee_\mathfrak{P}$ in $\eqref{cospShadual}$ are finite of order bounded by a constant depending on $[\mathcal{O}_{\mathfrak{P}}:\Lambda/\mathfrak{P}]$. 
\end{proposition}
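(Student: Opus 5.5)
The plan is to reduce the statement, via Pontryagin duality, to a comparison between $\Sha_{\mathcal{F}_\mathfrak{P}}(K,A_\mathfrak{P})$ and $\Sha_\mathcal{F}(K,\mathbf{A})[\mathfrak{P}]$. It therefore suffices to show that the map
\[
i_\mathfrak{P}:\Sha_{\mathcal{F}_\mathfrak{P}}(K,A_\mathfrak{P})\longrightarrow\Sha_\mathcal{F}(K,\mathbf{A})[\mathfrak{P}]
\]
has finite kernel and cokernel of bounded order. I would work with the commutative diagram whose top row is the exact sequence
\[
0\to\divv\bigl(\Sel_{\mathcal{F}_\mathfrak{P}}(K,A_\mathfrak{P})\bigr)\to\Sel_{\mathcal{F}_\mathfrak{P}}(K,A_\mathfrak{P})\to\Sha_{\mathcal{F}_\mathfrak{P}}(K,A_\mathfrak{P})\to0 .
\]
Its bottom row is the sequence \eqref{shaP}, which is right exact by Lemma \ref{right-onto-lemma}. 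The vertical maps are induced by $i_\mathfrak{P}$ from \eqref{imap}.

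\textbf{Cokernel.} Since the bottom row is surjective onto $\Sha_\mathcal{F}(K,\mathbf{A})[\mathfrak{P}]$, the cokernel of the induced map on $\Sha$ is a quotient of the cokernel of the middle map $i_\mathfrak{P}$. By Assumption \ref{asstripleR}(3), the latter is finite and bounded, because $\mathfrak{P}\notin\Sigma_{\EX}\supset\Sigma_{\mathrm{CT}}$.

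\textbf{Kernel.} The snake lemma bounds the kernel of the map on $\Sha$ by two quantities. The first is the kernel of the middle map, which is bounded by the control theorem. The second is the cokernel of the left map
\[
\divv\bigl(\Sel_{\mathcal{F}_\mathfrak{P}}(K,A_\mathfrak{P})\bigr)\to\divv_{\mathfrak{m}_\Lambda}\bigl(\Sel_\mathcal{F}(K,\mathbf{A})\bigr)[\mathfrak{P}].
\]
Controlling this cokernel is the main obstacle.

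Here I would use that $\kappa_{\mathfrak{P},1}\neq0$ for $\mathfrak{P}\notin\Sigma_{\EX}$. By Theorem \ref{thmDVR}, this gives $\divv\bigl(\Sel_{\mathcal{F}_\mathfrak{P}}(K,A_\mathfrak{P})\bigr)\simeq F_\mathfrak{P}/\mathcal{O}_\mathfrak{P}$, of corank $1$. On the other side, $\divv_{\mathfrak{m}_\Lambda}\bigl(\Sel_\mathcal{F}(K,\mathbf{A})\bigr)$ is the dual of the $\Lambda$-free part of the dual Selmer group $\Sel_\mathcal{F}(K,\mathbf{A})^\vee$. I expect this free part to have rank $1$: its $\Lambda$-rank equals the generic $\mathcal{O}_\mathfrak{P}$-corank of $\Sel_{\mathcal{F}_\mathfrak{P}}(K,A_\mathfrak{P})$, by the control theorem applied at infinitely many $\mathfrak{P}$.

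Granting rank $1$, the module $\divv_{\mathfrak{m}_\Lambda}(\ldots)[\mathfrak{P}]$ is the dual of $\Lambda/\mathfrak{P}$. This is divisible as a $\Lambda/\mathfrak{P}$-module, with corank equal to $1$ over $\mathcal{O}_\mathfrak{P}$ up to the finite index $[\mathcal{O}_\mathfrak{P}:\Lambda/\mathfrak{P}]$. The divisible part $F_\mathfrak{P}/\mathcal{O}_\mathfrak{P}$ maps into it with finite kernel, by the control theorem. The image is therefore divisible of full corank, so the cokernel is a quotient of a corank-$0$ divisible-modulo-finite group. Concretely, it is bounded by the index $[\mathcal{O}_\mathfrak{P}:\Lambda/\mathfrak{P}]$ together with the control constants, with the usual care that a divisible image of full corank inside a cofinitely generated module of the same corank has finite cokernel controlled by the torsion of the target. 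This follows the pattern of \cite[Lemma 3.26]{DRLV}, which I would adapt. Dualizing the bounds on the kernel and cokernel of $i_\mathfrak{P}$ then gives the statement for $i^\vee_\mathfrak{P}$.
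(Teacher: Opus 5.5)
Your proposal is correct and follows the same route as the argument the paper defers to in \cite[Proposition 3.27]{DRLV}. That route is the snake lemma on the two-row diagram made exact by Lemma \ref{right-onto-lemma}, the control theorem for the middle vertical map, and a corank comparison of the divisible parts via Theorem \ref{thmDVR}. One small correction: the dual of $\divv_{\mathfrak{m}_\Lambda}\bigl(\Sel_\mathcal{F}(K,\mathbf{A})\bigr)$ is $\Sel_\mathcal{F}(K,\mathbf{A})^\vee$ modulo its $\Lambda$-torsion, which is torsion-free of rank $1$ (isomorphic to an ideal $I$ of finite colength) but not necessarily free, so $\divv_{\mathfrak{m}_\Lambda}\bigl(\Sel_\mathcal{F}(K,\mathbf{A})\bigr)[\mathfrak{P}]$ is the dual of $I/\mathfrak{P}I$ rather than of $\Lambda/\mathfrak{P}$; its quotient by its maximal divisible subgroup has order at most $\#(\Lambda/I)$, and since your divisible image of full corank fills out that maximal divisible subgroup, this constant (independent of $\mathfrak{P}$) bounds the cokernel of your left vertical map.
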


\begin{proof} See, \emph{e.g.}, the arguments in the proof of \cite[Proposition 3.27]{DRLV}. \end{proof}

The map $i_\mathfrak{P}^\vee$ in \eqref{cospShadual} induces a map
\begin{equation} \label{ipvee}
i_\mathfrak{P}^\vee: X/\mathfrak{P}X \otimes_{\Lambda/\mathfrak{P}} \mathcal{O}_\mathfrak{P} \longrightarrow X(T_\mathfrak{P}) \otimes_{\Lambda/\mathfrak{P}} \mathcal{O}_\mathfrak{P},
\end{equation}
denoted by the same symbol. Composing with the map $X(T_\mathfrak{P}) \otimes_{\Lambda/\mathfrak{P}} \mathcal{O}_\mathfrak{P}\rightarrow 
X(T_\mathfrak{P})$ defined on pure tensors by $x\otimes a\mapsto ax$, we obtain a further map
\begin{equation} \label{ipvee2}
i_\mathfrak{P}^\vee: X/\mathfrak{P}X \otimes_{\Lambda/\mathfrak{P}} \mathcal{O}_\mathfrak{P} \longrightarrow X(T_\mathfrak{P}).
\end{equation}
Note that $i_\mathfrak{P}^\vee$ is $G_\Q$-equivariant, hence it induces maps
\[
(i_\mathfrak{P}^\vee)^\pm:X^\pm \otimes_\Lambda \mathcal{O}_\mathfrak{P} \rightarrow X^\pm(T_\mathfrak{P}).
\]
For a height-$1$ prime ideal $\mathfrak{P}$ of $\Lambda$ and $j\in\N$, we introduce the following notation:  
\begin{itemize}
\item if $\mathfrak{P}=(f)$ with $f\neq\pi$, then set $\mathfrak{P}_j\defeq(f+\pi^j)$ for all $j\ge 0$ and write $e_\mathfrak{P}$ for the ramification index of $\Frac(\cO_{\mathfrak{P}})$ over $\Frac(\cO)$;
\item if $\mathfrak{P} = (\pi)$, then set $\mathfrak{P}_j\defeq(\pi + T^j)$ and $e_\mathfrak{P}\defeq1$.
\end{itemize}
There is an integer $N(\mathfrak{P})\geq1$ such that for all $j\geq N(\mathfrak{P})$ the following properties hold:
\begin{itemize}
\item $\mathfrak{P}_j$ is a height-$1$ prime ideal of $\Lambda$;
\item $\mathfrak{P}_j\notin\Sigma_{\EX}$;
\item if $\mathfrak{P}\neq(\pi)$, then there is an isomorphism of rings $\Lambda/\mathfrak{P}_j\simeq\Lambda/\mathfrak{P}$.
\end{itemize}
For all integers $j \geq N(\mathfrak{P})$, set
\[
\mathcal{O}_j\defeq\mathcal{O}_{\mathfrak{P}_j},\quad T_j\defeq {T}_{\mathfrak{P}_j},\quad \kappa_j\defeq\kappa_{\mathfrak{P}_j},\quad \kappa_{j,n}\defeq(\kappa_{\mathfrak{P}_j})_n,\quad s_j=s_{\mathfrak{P}_j},\quad i_j=i_{\mathfrak{P}_j},\quad i_j^\vee=i_{\mathfrak{P}_j}^\vee.
\]
Moreover, denote by $\pi_j$ a uniformizer of $\cO$ and by $e_j$ the ramification index of $\Frac(\cO_j)$ over $\Frac(\cO)$.

\begin{lemma} \label{extScalars}
Let $\mathfrak{P}$ be a height-$1$ prime ideal of $\Lambda$ not contained in $\Sigma_\mathrm{EX}$. The kernel and the cokernel of $(i_j^\vee)^\pm$ are finite and bounded by a constant depending on $\mathfrak{P}$ but not on $j$ for $j\geq N(\mathfrak{P})$.
\end{lemma}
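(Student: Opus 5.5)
The key point is that the bound in Proposition \ref{CTSha1} depends only on $[\mathcal{O}_{\mathfrak{P}_j}:\Lambda/\mathfrak{P}_j]$ (through the control theorem in Assumption \ref{asstripleR}(3)). So the plan is: show that this index, together with the extra error from extending scalars in \eqref{ipvee}--\eqref{ipvee2}, is bounded independently of $j\geq N(\mathfrak{P})$, and then pass to $\pm$-eigenspaces.

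\emph{Step 1: a uniform index.} For $j\geq N(\mathfrak{P})$ we have $\mathfrak{P}_j\notin\Sigma_{\EX}$, so Proposition \ref{CTSha1} applies to each $\mathfrak{P}_j$. If $\mathfrak{P}\neq(\pi)$, the chosen ring isomorphism $\Lambda/\mathfrak{P}_j\simeq\Lambda/\mathfrak{P}$ identifies the integral closures $\mathcal{O}_j\simeq\mathcal{O}_\mathfrak{P}$. Hence $[\mathcal{O}_j:\Lambda/\mathfrak{P}_j]=[\mathcal{O}_\mathfrak{P}:\Lambda/\mathfrak{P}]$ is independent of $j$. If $\mathfrak{P}=(\pi)$, then $f_j=\pi+T^j$ is Eisenstein, so $\Lambda/\mathfrak{P}_j\simeq\cO[T]/(T^j+\pi)$. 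This ring is already a DVR (the ring of integers of a totally ramified extension), so the index is $1$. In both cases the constant of Proposition \ref{CTSha1} is uniform in $j$, which bounds $\ker$ and $\operatorname{coker}$ of $i_j^\vee:X/\mathfrak{P}_jX\to X(T_j)$ uniformly.

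\emph{Step 2: extending scalars.} We tensor the map with $\mathcal{O}_j$ over $\Lambda/\mathfrak{P}_j$ and compose with the multiplication map $X(T_j)\otimes\mathcal{O}_j\to X(T_j)$.
\begin{itemize}
\item Since $\mathcal{O}_j/(\Lambda/\mathfrak{P}_j)$ is finite of uniformly bounded order, the functors $\operatorname{Tor}_1$ and $\otimes$ with $\mathcal{O}_j$ change finite kernels and cokernels by amounts controlled by that order and by the number of generators.
\item Here $X/\mathfrak{P}_jX$ is generated by $\mu(X)$ elements, and $X(T_j)$ is generated by at most $\mu(X)$ elements up to the bounded cokernel.
\item The multiplication map $X(T_j)\otimes_{\Lambda/\mathfrak{P}_j}\mathcal{O}_j\to X(T_j)$ is surjective. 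Its kernel is killed by the conductor of $\Lambda/\mathfrak{P}_j$ in $\mathcal{O}_j$ and has at most $\mu(X)\cdot\operatorname{rank}$ generators, so it too is bounded uniformly.
\end{itemize}
In the case $\mathfrak{P}=(\pi)$ this step is trivial, since $\mathcal{O}_j=\Lambda/\mathfrak{P}_j$.

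\emph{Step 3: eigenspaces.} Every map involved is $\tau_c$-equivariant and $p$ is odd, so all modules split as $M=M^+\oplus M^-$ and the maps split accordingly. The kernel and cokernel of $(i_j^\vee)^\pm$ are therefore direct summands of those of $i_j^\vee$, and inherit the same bound.

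The main obstacle is Step 2: making sure the scalar-extension and multiplication error terms depend only on the bounded index and on $\mu(X)$, and not on $j$. I would handle this with the snake lemma applied to $0\to\Lambda/\mathfrak{P}_j\to\mathcal{O}_j\to Q_j\to0$, where $|Q_j|$ is uniformly bounded by Step 1.
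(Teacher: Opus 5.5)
Your proof is correct and follows essentially the route that the paper delegates to \cite[Lemma 3.29]{DRLV}. The index $[\mathcal{O}_j:\Lambda/\mathfrak{P}_j]$ is constant in $j$ (via $\Lambda/\mathfrak{P}_j\simeq\Lambda/\mathfrak{P}$, or equal to $1$ in the Eisenstein case $\mathfrak{P}=(\pi)$), which makes Proposition \ref{CTSha1} uniform; base change to $\mathcal{O}_j$ then adds a uniformly bounded error, and the $\tau_c$-eigenspaces split off because $p$ is odd.

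The only loosely argued point is the $\operatorname{Tor}_1$ bullet of Step 2, and you can bypass it by applying your own conductor trick to the whole composite. Write $\phi_j\colon X/\mathfrak{P}_jX\to X(T_j)$ for the map \eqref{cospShadual} at $\mathfrak{P}_j$, $\Phi_j$ for the composite \eqref{ipvee2}, and $\mathfrak{c}_j$ for the conductor of $\Lambda/\mathfrak{P}_j$ in $\mathcal{O}_j$. Then $\operatorname{coker}\Phi_j$ is a quotient of $\operatorname{coker}\phi_j$, and $\mathfrak{c}_j\ker\Phi_j\subset\{x\otimes 1 : x\in\ker\phi_j\}$. Since $\ker\Phi_j$ needs at most $\mu(X)$ generators over the discrete valuation ring $\mathcal{O}_j$, this gives $\#\ker\Phi_j\le\#(\mathcal{O}_j/\mathfrak{c}_j)^{\mu(X)}\cdot\#\ker\phi_j$, where $\#(\mathcal{O}_j/\mathfrak{c}_j)$ is independent of $j$.
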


\begin{proof}
   It follows directly from \cite[Lemma 3.29]{DRLV}.
\end{proof}

We give a first result on the structure of $X^\pm$.

\begin{proposition} \label{propX=M+M}
There exist torsion $\Lambda$-modules $M^{\pm }$ such that $X^{\pm } \sim M^{\pm } \oplus M^{\pm }$.
\end{proposition}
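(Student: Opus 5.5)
By Theorem \ref{str-pseudo}, write $X^\pm\sim\bigoplus_{j}\bigoplus_{t}\Lambda/\p_j^{k^\pm_{j,t}}$. For each $j$, the multiset $\{k^\pm_{j,t}\}_t$ is exactly the multiset of exponents appearing in the localization $(X^\pm)_{\p_j}$, a finitely generated torsion module over the DVR $\Lambda_{\p_j}$. Proving $X^\pm\sim M^\pm\oplus M^\pm$ therefore reduces to a local statement: for every height-$1$ prime $\mathfrak{P}$ of $\Lambda$, each cyclic summand $\Lambda_\mathfrak{P}/\mathfrak{P}^k$ occurs in $(X^\pm)_\mathfrak{P}$ with even multiplicity. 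Given this, we take $M^\pm\defeq\bigoplus_j\bigoplus_{t}\Lambda/\p_j^{k_{j,t}}$, using half of each multiplicity.

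To detect the multiplicities at a fixed $\mathfrak{P}$, we will use the approximating primes $\mathfrak{P}_j$ with $j\ge N(\mathfrak{P})$ together with the DVR results. Since $\mathfrak{P}_j\notin\Sigma_{\EX}$, the system $\kappa_{\mathfrak{P}_j}$ satisfies Assumption \ref{asskappaDVR}. Theorem \ref{thmDVR}, in the form \eqref{ShaX(T)}, then gives $X^\pm(T_j)\simeq\bigoplus_i(\cO_j/\pi_j^{d_i^{\pm}(j)})^2$, which is a square. By Lemma \ref{extScalars}, $X^\pm\otimes_\Lambda\cO_j$ agrees with $X^\pm(T_j)$ up to finite kernel and cokernel, and these are bounded independently of $j$.

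Next, we compute $X^\pm\otimes_\Lambda\cO_j$ from the pseudo-isomorphism class of $X^\pm$. A pseudo-null module contributes a bounded finite error after tensoring with $\Lambda/\mathfrak{P}_j$, uniformly in $j$ (this is the standard argument, as in \cite{DRLV}). A summand $\Lambda/\p_i^{k}$ with $\p_i\ne\mathfrak{P}$ contributes a finite module whose order stays bounded as $j$ grows, because $\mathfrak{P}_j\to\mathfrak{P}$ $\mathfrak{m}_\Lambda$-adically and so $\p_i+\mathfrak{P}_j$ stabilises. A summand $\Lambda/\mathfrak{P}^{k}$ contributes $\cO_j/(f^k)$. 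Here $f\equiv-\pi^j\pmod{\mathfrak{P}_j}$, so this is $\cO_j/\pi^{jk}\cO_j$, i.e.\ cyclic of exponent $jke_j$ (resp.\ the analogous statement for $\mathfrak{P}=(\pi)$). Hence, for $j\gg0$, the cyclic summands of $X^\pm(T_j)$ whose exponents grow linearly in $j$ are in bijection with the summands $\Lambda/\mathfrak{P}^{k_t}$ of $X^\pm$, with exponents $jk_te_j$ up to a bounded error.

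The main obstacle is making this comparison of elementary divisors rigorous in the presence of bounded finite kernels and cokernels. The plan is to use Fitting ideals. If $M\to N$ is a map of finitely generated torsion $\cO_j$-modules with kernel and cokernel of length at most $C$, then the valuations of $\Fitt_i(M)$ and $\Fitt_i(N)$ differ by at most a constant depending only on $C$. By Theorem \ref{FittingDVR}, this means the $i$-th largest exponents differ by at most $C'$. In $X^\pm(T_j)$ the exponents come in equal pairs. Dividing by $je_j$ and letting $j\to\infty$ then shows that the sorted list $(k_t)$ has each value repeated an even number of times. This gives the local evenness at $\mathfrak{P}$; since $\mathfrak{P}$ was arbitrary, the construction of $M^\pm$ in the first paragraph yields $X^\pm\sim M^\pm\oplus M^\pm$.
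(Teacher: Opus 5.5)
Your proposal is correct and follows essentially the same route as the paper. You specialize at the approximating primes $\mathfrak{P}_j$, use Theorem \ref{thmDVR} to see that $X^\pm(T_j)$ is a square and Lemma \ref{extScalars} to compare it with $X^\pm\otimes_\Lambda\cO_j$ up to bounded error, and let $j\to\infty$ to force the exponents of the $\mathfrak{P}$-primary part to pair up. The paper does the comparison through the composite map $N_\mathfrak{P}\otimes_\Lambda\cO_j\to N\otimes_\Lambda\cO_j\to X^\epsilon\otimes_\Lambda\cO_j\to X^\epsilon(T_j)$ and defers the details to \cite[Proposition 3.32]{DRLV}; you make that step explicit via the Fitting-ideal inequalities and Theorem \ref{FittingDVR}, which is a valid way to fill it in.
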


\begin{proof} Let $\epsilon \in \{\pm 1\}$. Fix a prime ideal $\mathfrak{P}$ of $\Lambda$ such that $\mathfrak{P}\,|\,\charr(X^\epsilon)$, where $\charr(X^\epsilon)$ denotes the characteristic ideal of $X^\epsilon$. We assume
\[
X^\epsilon \sim N=N' \oplus N_{\mathfrak{P}},
\]
where $\charr (N')$ is coprime with $\mathfrak{P}$ and $N_{\mathfrak{P}}\simeq\bigoplus_{i=1}^n \Lambda/\mathfrak{P}^{e_i}$. We need to show that, for all integers $e\geq1$, the number of indices $i$ such that $e_i=e$ is even. Considering the composition 
\[
N_\mathfrak{P}\otimes_\Lambda\cO_j\longrightarrow N\otimes_\Lambda\cO_j\longrightarrow X^\epsilon\otimes_\Lambda\cO_j\longrightarrow X^\epsilon(T_j)=M_j\oplus M_j,
\]
the claim follows from arguments similar to those in the proof of \cite[Proposition 3.32]{DRLV}. \end{proof}

\subsubsection{Construction of the ideals $\mathfrak{D}^{\pm}_i$} \label{constrideals}

Let $r \geq 0$ and $s \geq 1$ be integers. For all $\kappa^{(s,t)}\in\mathfrak{K}_s$ with $t \geq 2s$, define
\[
\mathfrak{C}_r(\kappa^{(s,t)}) \defeq \Bigl(\loc_\ell(\kappa^{(s,t)}_{n}):  n \in \mathcal{N}_{t}, \,  \ell \in \mathcal{P}_{t},\,  \ell \nmid n, \, \nu(n) \leq r \Bigr) \subset \Lambda/\mathfrak{m}_\Lambda^s,
\]
where the inclusion is obtained by considering localizations as maps
\[
\Sel^\pm_{\mathcal{F}(n)}(K,\mathbf{T}_s)\longrightarrow  H^1_{\f}(K_\lambda,\mathbf{T}_s)^\pm\simeq\Lambda/\mathfrak{m}_\Lambda^s,
\]
the isomorphism being induced by a fixed isomorphism $\mathbf{T}^\pm\simeq\Lambda$. It follows that there are isomorphisms $H^1_{\f}(K_\lambda,\mathbf{T}_s)^\pm\simeq\mathbf{T}_s^\pm\simeq \Lambda/\mathfrak{m}_\Lambda^s$. Now consider the ideals
\begin{equation} \label{defideals}
\mathfrak{C}_r(s)\defeq\bigcap_{\kappa^{(s,t)}\in\mathfrak{K}_{s},t\geq 2s}\mathfrak{C}_r\bigl(\kappa^{(s,t)}\bigr)
\end{equation}
of $\Lambda/\mathfrak{m}_\Lambda^s$.

\begin{lemma}\label{defC_rlemma}
The ideals $\mathfrak{C}_r(s)$ form an inverse system varying $s \geq 1$.
\end{lemma}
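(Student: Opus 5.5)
We have to exhibit, for every $s'\geq s$, a transition map $\mathfrak{C}_r(s')\to\mathfrak{C}_r(s)$, and show these maps are compatible. The natural candidate is the reduction $\Lambda/\mathfrak{m}_\Lambda^{s'}\twoheadrightarrow\Lambda/\mathfrak{m}_\Lambda^s$. The claim to prove is therefore that this reduction sends $\mathfrak{C}_r(s')$ into $\mathfrak{C}_r(s)$. Compatibility for $s''\geq s'\geq s$ then holds automatically, because the reductions compose.

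The first step is to compare localizations before and after projection. Take $\kappa^{(s',t')}\in\mathfrak{K}_{s'}$ with $t'\geq 2s'$, and set $\kappa^{(s,t')}\defeq P^{(t')}_{s',s}\bigl(\kappa^{(s',t')}\bigr)$. This lies in $\mathfrak{K}_s$, and $t'\geq 2s'\geq 2s$. The isomorphisms $\alpha_\ell$ and the identifications $H^1_{\f}(K_\lambda,\mathbf{T}_s)^\pm\simeq\mathbf{T}_s^\pm\simeq\Lambda/\mathfrak{m}_\Lambda^s$ are functorial in $s$ (\cite[Lemma 1.2.1]{MR}), and they all come from the single fixed isomorphism $\mathbf{T}^\pm\simeq\Lambda$. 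It follows that $\loc_\ell\bigl(\kappa^{(s,t')}_n\bigr)$ is the reduction modulo $\mathfrak{m}_\Lambda^s$ of $\loc_\ell\bigl(\kappa^{(s',t')}_n\bigr)$. The index sets over $n$ and $\ell$ coincide, since both are taken in $\mathcal{N}_{t'}$ and $\mathcal{P}_{t'}$. Hence the reduction of $\mathfrak{C}_r(\kappa^{(s',t')})$ equals $\mathfrak{C}_r(\kappa^{(s,t')})$.

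The second step is to show that, for a fixed $s$, the ideal $\mathfrak{C}_r(\kappa^{(s,t)})$ does not depend on the representative, or at least that it only shrinks as $t$ grows. This is where the main obstacle lies. The intersection in \eqref{defideals} runs over all representatives in $\mathfrak{K}_s$, and only some of these are projections from level $s'$. For $t'\geq t$ we have $F^{(s)}_{t,t'}$, which restricts to $\mathcal{N}_{t'}\subset\mathcal{N}_t$ and $\mathcal{P}_{t'}\subset\mathcal{P}_t$. This gives $\mathfrak{C}_r(F^{(s)}_{t,t'}\kappa^{(s,t)})\subset\mathfrak{C}_r(\kappa^{(s,t)})$. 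Any two representatives of $\kappa^{(s)}$ become equal after applying the maps $F^{(s)}$ for some large $t''$. So for every $\kappa^{(s,t)}\in\mathfrak{K}_s$ there is a $t''\geq\max\{t,t'\}$ with
\[
\mathfrak{C}_r\bigl(F^{(s)}_{t',t''}\kappa^{(s,t')}\bigr)=\mathfrak{C}_r\bigl(F^{(s)}_{t,t''}\kappa^{(s,t)}\bigr)\subset\mathfrak{C}_r\bigl(\kappa^{(s,t)}\bigr).
\]
Similarly, the element $F^{(s)}_{t',t''}\kappa^{(s,t')}$ is the projection of $F^{(s')}_{t',t''}\kappa^{(s',t')}$, which also lies in $\mathfrak{K}_{s'}$.

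Putting the two steps together, take $x\in\mathfrak{C}_r(s')$ and any $\kappa^{(s,t)}\in\mathfrak{K}_s$. By the second step there is a $\kappa^{(s',t'')}\in\mathfrak{K}_{s'}$ whose projection has its $\mathfrak{C}_r$ contained in $\mathfrak{C}_r(\kappa^{(s,t)})$. Since $x\in\mathfrak{C}_r(\kappa^{(s',t'')})$, the first step shows that $\bar x$ lies in the reduction of that ideal, hence in $\mathfrak{C}_r(\kappa^{(s,t)})$. As $\kappa^{(s,t)}$ was arbitrary, $\bar x\in\mathfrak{C}_r(s)$, which gives the transition maps and completes the proof of Lemma \ref{defC_rlemma}.
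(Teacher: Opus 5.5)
Your proof is correct and is essentially the paper's argument, which simply asserts that the claim is clear from the compatibility of the localization maps with reduction modulo $\mathfrak{m}_\Lambda^s$. Your second step spells out something the paper leaves implicit. Because $\mathfrak{C}_r(s)$ is an intersection over all representatives in $\mathfrak{K}_s$, not only those projected from level $s'$, one needs the cofinality argument via the maps $F^{(s)}_{t,t''}$ and their commutation with $P^{(t)}_{s',s}$ to reduce to projected representatives.
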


\begin{proof}
This is clear from the compatibility of the localization maps. 
\end{proof}

Thanks to the previous lemma we can define
\[
\mathfrak{C}_r \defeq \varprojlim_{s} \mathfrak{C}_r(s) \subset \Lambda.
\]
Now let $i\in\N$ be even. Since $(\mathfrak{C}_r)_{r \geq 0}$ is a non-decreasing sequence of ideals, by Proposition \ref{monoid} we can define
\[
[\mathfrak{D}_i^\epsilon] \defeq \prod_{r \geq i/2} \frac{[\mathfrak{C}_{2r+1}]}{[\mathfrak{C}_{2r+2}]},\quad [\mathfrak{D}_i^{-\epsilon}] \defeq \prod_{r\geq i/2} \frac{[\mathfrak{C}_{2r}]}{[\mathfrak{C}_{2r+1}]},
\]
where $\mathfrak{D}_i^\pm$ are some chosen representatives of the corresponding classes in $I(\Lambda)/\sim$ (see Section \ref{sec:fitting-over-Lambda}). Since $\Lambda$ is noetherian, $[\mathfrak{C}_r]=[\mathfrak{C}_{r+1}]$ for all $r \gg 0$, so the infinite products in the previous definitions are actually finitely supported (recall that $I(\Lambda)/\sim$ is cancellative).

\subsubsection{Main result}

Given two sequences of real numbers ${(x_j)}_{j\geq1}$ and ${(y_j)}_{j\geq1}$, we write $x_j \prec y_j$ if $\liminf(y_j-x_j)\neq-\infty$; moreover, we write $x_j \sim y_j$ if $x_j \prec y_j$ and $y_j \prec x_j$.

\begin{proposition} \label{fittingspec}
Let $M$ be a finitely generated torsion $\Lambda$-module and let $\mathfrak{P}$ be a height $1$ prime ideal of $\Lambda$. If there are equalities
\[
\Fitt_i(M)\Lambda_\mathfrak{P}=\mathfrak{P}^m \Lambda_\mathfrak{P},\quad\Fitt_{\mathcal{O}_j,i}(M \otimes_\Lambda \mathcal{O}_j)=\pi_j^{m_j}\mathcal{O}_j,
\]
then $m_j \sim m e_\mathfrak{P} j$.
\end{proposition}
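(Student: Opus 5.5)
We reduce to the structure theorem up to pseudo-isomorphism and then compute the Fitting ideals of each cyclic summand after base change to $\mathcal{O}_j$. Fitting ideals commute with base change, so $\Fitt_{\mathcal{O}_j,i}(M\otimes_\Lambda\mathcal{O}_j)=\Fitt_i(M)\mathcal{O}_j$. It therefore suffices to compute the $\pi_j$-adic valuation of the image of $\Fitt_i(M)$ in $\mathcal{O}_j$, where $\mathcal{O}_j$ is reached via $s_{\mathfrak{P}_j}$.

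By Theorem \ref{fitt-pseudo} we have $\Fitt_i(M)\sim I\defeq\mathfrak{P}^m\cdot J$, where $J$ is a product of powers of height-one primes different from $\mathfrak{P}$. Pseudo-equivalence only changes ideals up to ideals of finite colength: there are ideals $I_1\subset\Fitt_i(M)\subset I_2$ with $I_1=\mathfrak{m}_\Lambda^{c}I$ and $I_2$ of the form $I\cdot$(finite colength)$^{-1}$. More concretely, I plan to use the fact that $\Fitt_i(M)$ and $I$ contain $\mathfrak{m}_\Lambda^{c}\Fitt_i(M)$ and $\mathfrak{m}_\Lambda^{c}I$ respectively, for some $c$ independent of $j$. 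Since $\mathfrak{m}_\Lambda$ maps into $\pi_j\mathcal{O}_j$ with valuation bounded by that of $\pi$ (namely $e_j$, which equals $e_{\mathfrak{P}}$ for $j\gg0$ because $\Lambda/\mathfrak{P}_j\simeq\Lambda/\mathfrak{P}$), the valuations of the images of $\Fitt_i(M)$ and $I$ differ by a bounded amount. This gives $m_j\sim v_{\pi_j}(s_j(I))$.

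It remains to compute $v_{\pi_j}(s_j(I))=m\,v_{\pi_j}(s_j(f))+v_{\pi_j}(s_j(J))$, where $\mathfrak{P}=(f)$. First, $s_j(f)=-\pi^j$ in $\Lambda/\mathfrak{P}_j$, so $v_{\pi_j}(s_j(f))=j\,e_j=je_{\mathfrak{P}}$. In the case $\mathfrak{P}=(\pi)$ we instead have $s_j(\pi)=-T^j$, and since $\Lambda/(\pi+T^j)$ is a DVR with uniformizer $T$, this again gives valuation $j=je_{\mathfrak{P}}$.

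For each prime $\mathfrak{Q}=(g)\neq\mathfrak{P}$ dividing $J$, we must show that $v_{\pi_j}(s_j(g))$ stays bounded in $j$. Since $g\equiv g\bmod (f)$ and $f\equiv-\pi^j$ modulo $\mathfrak{P}_j$, the image $s_j(g)$ is congruent modulo $\pi^j$ to the image of $g$ in $\Lambda/\mathfrak{P}\simeq\Lambda/\mathfrak{P}_j$. That image is a nonzero element, because $\mathfrak{Q}\neq\mathfrak{P}$, and it has fixed valuation. Hence for $j$ large the valuation is constant.

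The main obstacle is making the comparison between $\Fitt_i(M)$ and its pseudo-equivalent model uniform in $j$. Concretely, one needs a fixed finite-colength ideal $\mathfrak{a}$ with $\mathfrak{a}I\subset\Fitt_i(M)$ and $\mathfrak{a}\Fitt_i(M)\subset I$, where $I$ is principal. The plan is to take $\mathfrak{a}=\mathfrak{m}_\Lambda^c$. The identification $\Lambda/\mathfrak{P}_j\simeq\Lambda/\mathfrak{P}$ must also be compatible with the congruence argument above, which holds for $j\ge N(\mathfrak{P})$. Since $s_j(\mathfrak{m}_\Lambda^c)$ has $\pi_j$-valuation at most $c\,e_{\mathfrak{P}}$, this bounded error yields $m_j\sim me_{\mathfrak{P}}j$.
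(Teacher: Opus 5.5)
The paper proves this statement only by citing \cite[Proposition 4.2]{DRLV}, so your argument has to stand on its own. Its architecture is sound. Fitting ideals commute with base change, so $\pi_j^{m_j}\cO_j=s_j(\Fitt_i(M))\cO_j$. Your computation $v_{\pi_j}(s_j(f))=je_{\mathfrak P}$ is also correct in both cases; for $\mathfrak P=(\pi)$ this uses that $\Lambda/(\pi+T^j)$ is a DVR with uniformizer $T$. However, the two error estimates on which everything rests are not established as written.

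\emph{The comparison with $I$.} The inclusion $\mathfrak m_\Lambda^c I\subseteq\Fitt_i(M)$, your ``main obstacle'', is only announced, not proved. Moreover, your bound on the valuation of $s_j(\mathfrak m_\Lambda)$ ``by that of $\pi$, namely $e_j=e_{\mathfrak P}$'' fails for $\mathfrak P=(\pi)$. There $\Lambda/\mathfrak P_j\not\simeq\Lambda/\mathfrak P$ and $e_j=j$, so the error would grow with $j$. You must use $s_j(T)$, which has valuation $1$.

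\emph{The primes $(g)\neq\mathfrak P$.} You argue boundedness of $v_{\pi_j}(s_j(g))$ through an identification $\Lambda/\mathfrak P\simeq\Lambda/\mathfrak P_j$ that would have to be compatible with the maps from $\Lambda$ modulo $\pi^j$ and with the valuations on the normalizations. Nothing supplies such a compatible isomorphism: the paper only asserts an abstract ring isomorphism. The canonical equality $\Lambda/(\mathfrak P,\pi^j)=\Lambda/(\mathfrak P_j,\pi^j)$ does not by itself determine the $\cO_j$-valuation of $s_j(g)$.

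Both gaps close with a single observation.
\begin{itemize}
\item Since $f^m\Lambda_{\mathfrak P}\cap\Lambda=f^m\Lambda$, you can write $\Fitt_i(M)=f^m\mathfrak a$ with $\mathfrak a$ an ideal, $\mathfrak a\not\subseteq\mathfrak P$.
\item The only height-one prime containing $f$ is $\mathfrak P$. Hence $\mathfrak a+\mathfrak P$ is $\mathfrak m_\Lambda$-primary or all of $\Lambda$, so it contains $\mathfrak m_\Lambda^c$ for some $c$.
\item Apply $s_j$ and use $s_j(f)=-\pi^j$. This gives $s_j(\mathfrak a)\cO_j+\pi^j\cO_j\supseteq\pi^c\cO_j$, hence $v_{\pi_j}(s_j(\mathfrak a))\le c\,e_{\mathfrak P}$ for $j>c$. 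For $\mathfrak P=(\pi)$, replace $\pi^j,\pi^c$ by $T^j,T^c$.
\item Therefore $0\le m_j-mje_{\mathfrak P}=v_{\pi_j}(s_j(\mathfrak a))\le c\,e_{\mathfrak P}$ for all $j>c$, which is the claim.
\end{itemize}
This argument uses the isomorphism $\Lambda/\mathfrak P_j\simeq\Lambda/\mathfrak P$ only to get $e_j=e_{\mathfrak P}$. It needs neither the decomposition into prime factors nor any congruence between $s_j(g)$ and its image in $\Lambda/\mathfrak P$.
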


\begin{proof} See, \emph{e.g.}, \cite[Proposition 4.2]{DRLV}. \end{proof}

Now fix $i\in\N$, a height-$1$ prime ideal $\mathfrak{P}$ of $\Lambda$ and set
\begin{equation} \label{alphabeta}
\mathfrak{P}^{\alpha^{\pm }_i} \Lambda_\mathfrak{P}\defeq\Fitt_i(X^{\pm}) \Lambda_\mathfrak{P},\quad \mathfrak{P}^{\beta^{\pm }_i} \Lambda_\mathfrak{P}\defeq(\mathfrak{D}_i^{\pm })^2 \Lambda_\mathfrak{P}=\Fitt_0\bigl(\Lambda/(\mathfrak{D}_i^{\pm })^2\bigr)\Lambda_{\mathfrak{P}}.
\end{equation}
Moreover, for all integers $j\geq N(\mathfrak{P})$ put
\begin{equation} \label{ajbj}
\Bigl(\pi_j^{a^{\pm}_j}\Bigr)\defeq\Fitt_{\cO_j,i}(X^{\pm}\otimes_\Lambda\cO_j),\quad
\Bigl(\pi_j^{b^{\pm}_j}\Bigr)\defeq s_j\bigl((\mathfrak{D}_i^{\pm})^2\bigr)\cO_j=\Fitt_{\cO_j,0}\bigl((\Lambda / (\mathfrak{D}_i^{\pm})^2)\otimes_\Lambda\cO_j\bigr).
\end{equation}
By Proposition \ref{fittingspec}, we have
\begin{equation} \label{a_jb_jsim}
a^{\pm}_j\sim\alpha^{\pm}_i e_\mathfrak{P} j,\quad b^{\pm}_j\sim\beta^{\pm}_i e_\mathfrak{P} j.
\end{equation}
On the other hand, by Lemma \ref{extScalars}, for each $j$ there are exact sequences of $\mathcal{O}_j$-modules 
\begin{equation} \label{KXTC} 
0 \longrightarrow K^{\pm }_j \longrightarrow X^{\pm } \otimes_\Lambda \mathcal{O}_j \longrightarrow X^{\pm }(T_j) \longrightarrow C^{\pm }_j \longrightarrow 0,
\end{equation}
where $K^{\pm }_j$ and $C^{\pm }_j$ are finite, of order bounded independently of $j$. If we set
\[
\Fitt_0(K^{\pm }_j)\defeq\Bigl(\pi_j^{k^{\pm }_j}\Bigr),\quad\Fitt_i\bigl(X^{\pm }(T_j)\bigr)\defeq\Big(\pi_j^{t^{\pm}_j}\Bigr),\quad\Fitt_0(C^{\pm }_j)\defeq\Bigl(\pi_j^{c^{\pm }_j}\Bigr),
\]
then there is $c\in\N$ independent of $j$ such that $k^\pm_j,c^\pm_j \leq c$. Combining the exact sequence \eqref{KXTC} with \cite[Proposition 2.3.(8)]{DRLV}, we get $a^\pm_j + c^\pm_j \geq t^\pm_j$, hence $t^\pm_j\prec a^\pm_j$. Taking Pontryagin duals of the sequence \eqref{KXTC}, we obtain the exact sequence 
\[
X^\pm(T_j)^\vee\longrightarrow (X^\pm\otimes_\Lambda \mathcal{O}_j)^\vee\longrightarrow (K^\pm_j)^\vee\longrightarrow0.
\]
The $\mathcal{O}_j$-modules $K^\pm_j$, $X^\pm\otimes_\Lambda\cO_j$, $X^\pm(T_j)$ are
finite (the second and the third being torsion modules over a DVR with finite residue field), so they are isomorphic to their Pontryagin duals as $\cO_j$-modules; it follows that
\[
\Fitt_0\bigl(K^\pm_j\bigr)=\Fitt_0\bigl((K^\pm_j)^\vee\bigr),\quad\Fitt_i(Y)=\Fitt_i\bigl(Y^\vee\bigr)
\]
for $Y\in\bigl\{X^\pm\otimes_\Lambda\cO_j, X^\pm(T_j)\bigr\}$. Therefore, again by \cite[Proposition 2.3]{DRLV}, we have $t^\pm_j + k^\pm_j \geq a^\pm_j$, hence $a^\pm_j\prec t^\pm_j$; we conclude that 
\begin{equation}\label{aj=tj}
a^\pm_j \sim t^\pm_j. 
\end{equation} 
As a consequence of Theorem \ref{thmDVR}, equations \eqref{eq:fitting-DVR+} and \eqref{eq:fitting-DVR-} yield also the equalities
\[
t_j^\epsilon=2\cdot\sum_{s\geq i/2}\bigl(\delta^{(2s+1)}(\kappa_j)-\delta^{(2s+2)}(\kappa_j)\bigr),\quad t_j^{-\epsilon}=2\cdot\sum_{s\geq i/2}\bigl(\delta^{(2s)}(\kappa_j)- \delta^{(2s+1)}(\kappa_j)\bigr).
\]
For all integers $r \geq 0$ and $k\gg0$, denote by $I_r(k) \subset \Lambda$ the inverse image of $\mathfrak{C}_r(k)$ via the projection $\Lambda\to\Lambda/\mathfrak{m}_\Lambda^k$. Observe that the sequence $\bigl(I_r(k)\bigr)_k$ is non-increasing under inclusion and $\mathfrak{C}_r=\bigcap_{k \ge 0}I_r(k)$.

\begin{lemma} \label{kj0}
For every $j\geq N(\mathfrak P)$, there is an integer $k_{j,0} >0$ such that
$$s_j\bigl(I_r(k_{j,0})\bigr)\mathcal{O}_j = s_j(\mathfrak{C}_r) \mathcal{O}_j.$$
\end{lemma}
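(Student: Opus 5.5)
Fix $r$ and $j$, and consider the descending chain of ideals $s_j\bigl(I_r(k)\bigr)\mathcal{O}_j$ of $\mathcal{O}_j$ indexed by $k$. The plan is to show that this chain stabilizes and that its stable value is $s_j(\mathfrak{C}_r)\mathcal{O}_j$. Since $\mathcal{O}_j$ is a discrete valuation ring, its ideals are $0$ and the powers $\pi_j^a\mathcal{O}_j$. A descending chain of such ideals therefore either stabilizes at a nonzero ideal or tends to $0$ (i.e.\ the exponents go to infinity). In either case there is a well-defined limit ideal $J\defeq\bigcap_k s_j\bigl(I_r(k)\bigr)\mathcal{O}_j$. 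Because $\mathfrak{C}_r\subset I_r(k)$ for every $k$, we have $s_j(\mathfrak{C}_r)\mathcal{O}_j\subset J$. The task is to prove the reverse inclusion, together with the fact that the chain actually stabilizes at a finite stage $k_{j,0}$.

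For stabilization in the case $J\neq0$, there is nothing more to do. In the case where the exponents tend to infinity, $J=0$, and we must show $s_j(\mathfrak{C}_r)\mathcal{O}_j=0$. We also need to handle this case separately, since "there is $k_{j,0}$" would otherwise fail. I expect that in this case the ideals $\mathfrak{C}_r$ lie in $\mathfrak{P}_j$, and I would then use $I_r(k)\subset \mathfrak{C}_r+\mathfrak{m}_\Lambda^k$ to handle it.

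The key tool is an Artin--Rees / Chevalley type argument. Since $\Lambda$ is a complete noetherian local ring and $(I_r(k))_k$ is a descending chain of ideals with $\bigcap_k I_r(k)=\mathfrak{C}_r$, Chevalley's lemma gives, for every $m$, an index $k(m)$ with $I_r(k(m))\subset\mathfrak{C}_r+\mathfrak{m}_\Lambda^m$. Indeed, the images of $I_r(k)$ in the artinian ring $\Lambda/(\mathfrak{C}_r+\mathfrak{m}_\Lambda^m)$ form a descending chain whose intersection is $0$; here one uses that $\Lambda/\mathfrak{C}_r$ is complete, so $\mathfrak{C}_r$ is closed. Applying $s_j$ then gives
\[
s_j\bigl(I_r(k(m))\bigr)\mathcal{O}_j\subset s_j(\mathfrak{C}_r)\mathcal{O}_j+s_j(\mathfrak{m}_\Lambda)^m\mathcal{O}_j\subset s_j(\mathfrak{C}_r)\mathcal{O}_j+\pi_j^m\mathcal{O}_j .
\]
This uses that $s_j(\mathfrak{m}_\Lambda)\subset\pi_j\mathcal{O}_j$, which holds because $s_j$ is a local homomorphism onto a finite extension.

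Suppose first that $s_j(\mathfrak{C}_r)\mathcal{O}_j=\pi_j^a\mathcal{O}_j$ with $a<\infty$. Choosing $m>a$ and $k_{j,0}\defeq k(m)$, the right-hand side above equals $\pi_j^a\mathcal{O}_j$. Combined with $\mathfrak{C}_r\subset I_r(k_{j,0})$, this gives the claimed equality. If instead $s_j(\mathfrak{C}_r)=0$, the inclusions only give $s_j(I_r(k(m)))\mathcal{O}_j\subset\pi_j^m\mathcal{O}_j$, so the chain tends to $0$ without ever reaching it. In that case I would need to argue that this degenerate case does not occur for the primes of interest. For $j\ge N(\mathfrak{P})$ we have $\mathfrak{P}_j\notin\Sigma_{\EX}$, so $\kappa_{j,1}\neq0$, and the class $\kappa_1$ localized at suitable $\ell$ gives an element of $\mathfrak{C}_0\subset\mathfrak{C}_r$ not lying in $\mathfrak{P}_j$. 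This shows $s_j(\mathfrak{C}_r)\neq0$. Making this nonvanishing precise is where I expect the main obstacle: it requires choosing $\ell$ via Assumption \ref{ass:Howard-assumption}(6) so that $\loc_\ell$ of the specialization of $\kappa_1$ is nonzero, uniformly for all representatives in $\mathfrak{K}_s$.
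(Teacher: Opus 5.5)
\paragraph{What works.} Your first half is fine. The inclusion $I_r(k(m))\subset\mathfrak{C}_r+\mathfrak{m}_\Lambda^m$ is Chevalley's lemma. Here it is plain compactness of $\Lambda$: each $I_r(k)\supseteq\mathfrak{m}_\Lambda^k$ is closed, $\mathfrak{C}_r+\mathfrak{m}_\Lambda^m$ is open, and $\bigcap_kI_r(k)=\mathfrak{C}_r$. This proves the lemma whenever $s_j(\mathfrak{C}_r)\mathcal{O}_j\neq0$.

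You also correctly see that the case $s_j(\mathfrak{C}_r)=0$ must be ruled out, since $s_j(I_r(k))\mathcal{O}_j\supseteq s_j(\mathfrak{m}_\Lambda^k)\mathcal{O}_j\neq0$ for every $k$. Drop your earlier idea of using $I_r(k)\subset\mathfrak{C}_r+\mathfrak{m}_\Lambda^k$ at the same index: nothing guarantees that $\mathfrak{C}_r(k)$ is the image of $\mathfrak{C}_r$.

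\paragraph{The gap.} Ruling out $s_j(\mathfrak{C}_r)=0$ is the real content of the lemma, and your proposal leaves it open. The route you sketch cannot work, because there is no element ``$\loc_\ell(\kappa_1)$'' of $\Lambda$. For a fixed $\ell$, the identification $H^1_{\f}(K_\lambda,\mathbf{T}_s)^\pm\simeq\Lambda/\mathfrak{m}_\Lambda^s$ and the requirement $\ell\in\mathcal{P}_t$ with $t\geq 2s$ only hold for bounded $s$. Indeed, $(\ell+1)\mathbf{T}_t=0$ forces $\pi^t\mid\ell+1$, so $\bigcap_t\mathcal{P}_t=\emptyset$. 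Moreover, $\mathfrak{C}_0(s)$ is an \emph{intersection} over all $t\geq 2s$. A single generator of one $\mathfrak{C}_0(\kappa^{(s,t)})$ is therefore not an element of $\mathfrak{C}_0(s)$. Still less is it part of a compatible system defining an element of $\mathfrak{C}_0$ outside $\mathfrak{P}_j$. Note also that $\kappa_1^{(s,t)}$ does not depend on the representative, so the uniformity you need is in $s$ and $t$, not over $\mathfrak{K}_s$.

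\paragraph{How to close it.} Stay on the specialized side and bound the chain $s_j(I_r(k))\mathcal{O}_j$ from below uniformly in $k$; non-vanishing of $s_j(\mathfrak{C}_r)$ then becomes an output. Non-vanishing of the localization at each level is not enough: you need its $\pi_j$-adic valuation bounded independently of $k$.
\begin{enumerate}
\item Given $k$, the family in \eqref{defideals} is directed and $\Lambda/\mathfrak{m}_\Lambda^k$ is finite. Hence $\mathfrak{C}_r(k)=\mathfrak{C}_r(\kappa^{(k,t)})\supseteq\mathfrak{C}_0(\kappa^{(k,t)})$ for some $t\geq 2k$.
\item Since $\mathfrak{P}_j\notin\Sigma_{\EX}$, we have $\kappa_{j,1}\neq0$. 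Its image in the Selmer group of $T_j/s_j(\mathfrak{m}_\Lambda^k)T_j$ lies in a free rank-one direct summand and has index $\ind(\kappa_{j,1})$ for $k$ large (as in Lemmas \ref{freeKS} and \ref{lemma3.34}).
\item Lemma \ref{InfPrimes} produces primes in every $\mathcal{P}_t$, thanks to condition (6) of Assumption \ref{ass:Howard-assumption}. It gives $\ell\in\mathcal{P}_t$ with $\loc_\ell$ injective on that summand.
\item Then any lift $x\in I_r(k)$ of $\loc_\ell(\kappa_1^{(k,t)})$ satisfies $s_j(x)\mathcal{O}_j=\pi_j^{\ind(\kappa_{j,1})}\mathcal{O}_j$. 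This is the same localization computation as in the proof of Theorem \ref{mainThmR}, which does not use the present lemma.
\item Since $\ind(\kappa_{j,1})$ is independent of $k$ and $t$, the decreasing chain $s_j(I_r(k))\mathcal{O}_j$ is bounded below. It therefore stabilizes at some $\pi_j^a\mathcal{O}_j\neq0$.
\item Your Chevalley inclusion with $m=a+1$ gives $\pi_j^a\in s_j(\mathfrak{C}_r)\mathcal{O}_j+\pi_j^{a+1}\mathcal{O}_j$. Together with $s_j(\mathfrak{C}_r)\mathcal{O}_j\subseteq\pi_j^a\mathcal{O}_j$, this forces equality. Any index past stabilization serves as $k_{j,0}$.
\end{enumerate}
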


\begin{proof} See the proof of \cite[Lemma 4.3]{DRLV}. \end{proof}

\begin{lemma} \label{eqlengths}
Let $I_1$, $I_2$, $I_3$ be ideals of $\Lambda$ such that $I_1 I_2 \sim I_3$. Then
\[
\length\bigl(\cO_j/s_j(I_3)\cO_j\bigr)\sim\length\bigl(\cO_j/s_j(I_1)\cO_j\bigr) +\length\bigl(\cO_j/s_j(I_2)\cO_j\bigr).
\]
\end{lemma}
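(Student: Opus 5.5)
The plan is to reduce the statement to Proposition \ref{fittingspec}, by translating the pseudo-equality $I_1I_2\sim I_3$ into a statement about orders of vanishing along the fixed height-$1$ prime $\mathfrak{P}$ and along the finitely many other height-$1$ primes where the ideals are not locally trivial. Here $\mathcal{O}_j$ comes from the auxiliary primes $\mathfrak{P}_j$. First I would observe that for any ideal $I$ of $\Lambda$ we have $s_j(I)\mathcal{O}_j=\Fitt_{\mathcal{O}_j,0}\bigl((\Lambda/I)\otimes_\Lambda\mathcal{O}_j\bigr)$, as in \eqref{ajbj}. Hence
\[
\length\bigl(\mathcal{O}_j/s_j(I)\mathcal{O}_j\bigr)=\ord_{\pi_j}\Fitt_{\mathcal{O}_j,0}\bigl((\Lambda/I)\otimes_\Lambda\mathcal{O}_j\bigr).
\]
If $I=0$ the length is infinite; the statement is then read in the obvious way, or one notes that the relevant ideals are nonzero. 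I will therefore assume all three ideals are nonzero, so that $\Lambda/I$ is torsion.

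For nonzero $I$, the module $\Lambda/I$ is finitely generated torsion. Write $I\Lambda_\mathfrak{Q}=\mathfrak{Q}^{m_\mathfrak{Q}(I)}\Lambda_\mathfrak{Q}$ for each height-$1$ prime $\mathfrak{Q}$. Proposition \ref{fittingspec}, applied with $M=\Lambda/I$ and $i=0$, gives
\[
\length\bigl(\mathcal{O}_j/s_j(I)\mathcal{O}_j\bigr)\sim m_\mathfrak{P}(I)\,e_\mathfrak{P}\,j .
\]
This is the key step. Since $I_1I_2\sim I_3$ means $I_1I_2\Lambda_\mathfrak{P}=I_3\Lambda_\mathfrak{P}$, and $\Lambda_\mathfrak{P}$ is a DVR, we get $m_\mathfrak{P}(I_3)=m_\mathfrak{P}(I_1)+m_\mathfrak{P}(I_2)$. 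The relation $\sim$ on sequences is additive: if $x_j\sim x'_j$ and $y_j\sim y'_j$, then the differences are eventually bounded, so $x_j+y_j\sim x'_j+y'_j$. Combining these,
\[
\length(\mathcal{O}_j/s_j(I_3)\mathcal{O}_j)\sim m_\mathfrak{P}(I_3)e_\mathfrak{P}j=m_\mathfrak{P}(I_1)e_\mathfrak{P}j+m_\mathfrak{P}(I_2)e_\mathfrak{P}j\sim \length(\mathcal{O}_j/s_j(I_1)\mathcal{O}_j)+\length(\mathcal{O}_j/s_j(I_2)\mathcal{O}_j).
\]

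The main obstacle is checking that Proposition \ref{fittingspec} really yields a two-sided bounded error, as the definition of $\sim$ on sequences requires. If I had to reprove it, I would proceed as follows. Since $I_1I_2$ and $I_3$ agree at every height-$1$ prime, they differ by a pseudo-null discrepancy. Concretely, $\Lambda/I_1I_2$ and $\Lambda/I_3$ have the same characteristic ideal, and there is a pseudo-isomorphism between their reflexive hulls.

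Specializing along $\mathfrak{P}_j$ then changes lengths only by the length of a finite module of size bounded independently of $j$. This uses the fact that a pseudo-null module $Z$ satisfies $Z\otimes\mathcal{O}_j$ of bounded length, because $\mathfrak{P}_j$ converges $\mathfrak{m}_\Lambda$-adically to $\mathfrak{P}$ and $Z$ is killed by an $\mathfrak{m}_\Lambda$-primary ideal.

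Finally, the length is additive along $I_1I_2$: we have $s_j(I_1I_2)\mathcal{O}_j=s_j(I_1)\mathcal{O}_j\cdot s_j(I_2)\mathcal{O}_j$, and valuations add in the DVR $\mathcal{O}_j$. This lets one bypass the localization computation and directly compare $s_j(I_1I_2)$ with $s_j(I_3)$.
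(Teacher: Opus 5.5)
Your argument is correct and is essentially the paper's proof: apply Proposition~\ref{fittingspec} to $M=\Lambda/I_s$ with $i=0$, so that $\Fitt_{\cO_j,0}(M\otimes_\Lambda\cO_j)=s_j(I_s)\cO_j$ and hence $\length(\cO_j/s_j(I_s)\cO_j)\sim m_s e_\mathfrak{P} j$; then use $m_3=m_1+m_2$, which comes from localizing $I_1I_2\sim I_3$ at $\mathfrak{P}$. Your closing paragraphs are not needed, because the relation $\sim$ in Proposition~\ref{fittingspec} is already two-sided by definition (and the sketched bypass via a pseudo-null discrepancy would also have to control $\mathrm{Tor}$ terms after base change to $\cO_j$).
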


\begin{proof} For all $s\in\{1,2,3\}$, we have
\[
\length\bigl(\cO_j/s_j(I_s)\cO_j\bigr)=\length_{\cO_j}\bigl((\Lambda/I_s) \otimes_\Lambda\cO_j\bigr)\sim m_se_\mathfrak{P}j,
\]
where $I_s \Lambda_{\mathfrak{P}}=\mathfrak{P}^{m_s} \Lambda_\mathfrak{P}$ and the equivalence is a consequence of Proposition \ref{fittingspec}. On the other hand, $I_1 I_2 \sim I_3$ by assumption, hence $m_3=m_1 + m_2$ and the lemma follows. \end{proof}

Now we can prove

\begin{theorem} \label{mainThmR}
Under Assumptions \ref{asstripleR} and \ref{asskolyR}, $\Fitt_i(X^{\pm}) \sim  (\mathfrak{D}^{\pm}_i)^2$ for all even $i\in\N$.
\end{theorem}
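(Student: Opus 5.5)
Fix an even $i$ and a height-$1$ prime $\mathfrak{P}$ of $\Lambda$. Since both sides are classes in $I(\Lambda)/\sim$, it suffices to show $\alpha_i^\pm=\beta_i^\pm$ in the notation \eqref{alphabeta}. By \eqref{a_jb_jsim}, $a_j^\pm\sim\alpha_i^\pm e_\mathfrak{P}j$ and $b_j^\pm\sim\beta_i^\pm e_\mathfrak{P}j$ along the specializations $\mathfrak{P}_j$, $j\ge N(\mathfrak{P})$. Since $e_\mathfrak{P}j\to\infty$ while the error terms stay bounded, the equality $\alpha_i^\pm=\beta_i^\pm$ will follow once I show $a_j^\pm\sim b_j^\pm$. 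By \eqref{aj=tj}, this is equivalent to $t_j^\pm\sim b_j^\pm$.

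The value of $t_j^\pm$ is given by Theorem \ref{thmDVR}, via \eqref{eq:fitting-DVR+} and \eqref{eq:fitting-DVR-}. For the sign $\epsilon$ the sum telescopes to $t_j^\epsilon=2\bigl(\delta^{(i+1)}(\kappa_j)-\delta^{(\infty)}(\kappa_j)\bigr)$ minus the even terms. More cleanly, $t_j^\epsilon=2\sum_{r\ge i/2}\bigl(\delta^{(2r+1)}(\kappa_j)-\delta^{(2r+2)}(\kappa_j)\bigr)$, and similarly for $-\epsilon$. On the other side, write $\mathfrak{D}_i^\epsilon\cdot\prod_{r\ge i/2}\mathfrak{C}_{2r+2}\sim\prod_{r\ge i/2}\mathfrak{C}_{2r+1}$, a finite product by noetherianity. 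Lemma \ref{eqlengths}, applied repeatedly, gives
\[
b_j^\epsilon\sim 2\sum_{r\ge i/2}\Bigl(\length\bigl(\cO_j/s_j(\mathfrak{C}_{2r+1})\cO_j\bigr)-\length\bigl(\cO_j/s_j(\mathfrak{C}_{2r+2})\cO_j\bigr)\Bigr),
\]
and analogously for $-\epsilon$. So everything reduces to the following key comparison:
\[
\length\bigl(\cO_j/s_j(\mathfrak{C}_r)\cO_j\bigr)\sim e_j\,\delta^{(r)}(\kappa_j),
\]
up to a uniform normalisation. Here $e_j$ accounts for the passage from $\pi$-adic to $\pi_j$-adic valuations; if it is harmless I will absorb it into the definitions. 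I also need the error to be bounded independently of $j$ for each of the finitely many $r$ involved.

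To prove the key comparison, first use Lemma \ref{kj0} to replace $\mathfrak{C}_r$ by $I_r(k)$ for a fixed large $k=k_{j,0}$. Then $s_j(\mathfrak{C}_r)\cO_j$ is the ideal of $\cO_j/\pi_j^{s}$ generated by the images of the $\loc_\ell(\kappa_n^{(k,t)})$ with $\nu(n)\le r$, intersected over representatives. Specializing $\kappa^{(k,t)}$ along $s_j$ gives representatives of $\kappa_j$ modulo a power of $\pi_j$. I then argue as follows.

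First, by the Rigidity Theorem \ref{Rigidity} and Theorem \ref{artSel} for the free system $\kappa_j^{(s,t)}$ of Lemma \ref{existenceKol}, the minimal index of $\kappa_{j,n}$ over $\nu(n)\le r$ equals $\partial^{(r)}$. This holds because $\partial^{(r)}$ is non-increasing in $r$.

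Second, by Lemma \ref{InfPrimes} one can choose $\ell$ so that $\loc_\ell$ is injective on the free rank-one summand containing $\kappa_{j,n}$. Hence the ideal generated by all $\loc_\ell(\kappa_{j,n})$ with $\nu(n)\le r$ is exactly $\pi_j^{\partial^{(r)}}$. Conversely, no localization has smaller index than the class itself.

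Third, taking the intersection over representatives turns the minimum into the maximum defining $\delta^{(r)}$. Lemma \ref{boundIncrSeq} then shows the result is stable in $s$.

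The main obstacle is this third step together with compatibility of specialization. Specifically, I must check that intersecting over $\mathfrak{K}_s$ for $\Lambda/\mathfrak{m}_\Lambda^s$ and then specializing along $s_j$ really matches $\max_{\kappa_j^{(s,t)}}\partial^{(r)}$, with an error bounded independently of $j$. The difficulty is that $s_j$ does not commute with intersections in general, and that $\mathfrak{m}_\Lambda^s$ maps into $\pi_j^{\,\cdot}$ with a $j$-dependent exponent. I would handle this as in the proof of \cite[Lemma 4.3]{DRLV}. The finite kernels and cokernels of Proposition \ref{CTSha1} and Lemma \ref{extScalars} bound the discrepancy uniformly in $j$, which is all that the relation $\sim$ requires. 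Combining the key comparison with the telescoping gives $b_j^\pm\sim t_j^\pm\sim a_j^\pm$, and hence $\Fitt_i(X^\pm)\sim(\mathfrak{D}_i^\pm)^2$.
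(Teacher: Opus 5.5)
Your architecture matches the paper's. You reduce to $\alpha_i^\pm=\beta_i^\pm$ along the primes $\mathfrak{P}_j$, use $a_j^\pm\sim t_j^\pm$ and Theorem \ref{thmDVR} for $t_j^\pm$, and expand $b_j^\pm$ via Lemma \ref{eqlengths}. Everything then reduces to comparing $\length_{\cO_j}\bigl(\cO_j/s_j(\mathfrak{C}_r)\cO_j\bigr)$ with $\delta^{(r)}(\kappa_j)$.

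The gap is in this comparison, exactly where you flag it. You need $s_j$ to commute with the intersection over representatives in \eqref{defideals}, and the tools you invoke do not provide this. Proposition \ref{CTSha1} and Lemma \ref{extScalars} control the map $X\otimes_\Lambda\cO_j\to X(T_j)$, i.e.\ the Selmer side. They say nothing about how the ideals $\mathfrak{C}_r$, which are built from localizations of Kolyvagin classes, behave under specialization. Lemma \ref{kj0} (the input from \cite[Lemma 4.3]{DRLV}) only lets you replace $\mathfrak{C}_r=\bigcap_k I_r(k)$ by a single finite level $k$. It does not touch the intersection over $\mathfrak{K}_k$ at that level. So, as written, $b_j^\pm\sim t_j^\pm$ is not established.

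The missing observation is that at a fixed level $k$ the intersection is attained by a single representative.
\begin{itemize}
\item The family $\bigl\{\mathfrak{C}_r(\kappa^{(k,t)})\bigr\}$ is directed downward. Two representatives $\kappa^{(k,t_1)},\kappa^{(k,t_2)}$ have a common image $\kappa^{(k,t'')}$ under $F^{(k)}_{t_1,t''}$ and $F^{(k)}_{t_2,t''}$. Moreover, $\mathfrak{C}_r(\kappa^{(k,t'')})$ lies in both ideals, because it is generated by localizations at fewer pairs $(n,\ell)$, as $\mathcal{P}_{t''}\subset\mathcal{P}_{t_i}$.
\item Since $\Lambda/\mathfrak{m}_\Lambda^k$ is finite, the family has a smallest member, and it equals $\mathfrak{C}_r(k)$.
\item The image of this smallest member under $s_j$ is contained in the image of every other member. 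Hence $s_j$ of the intersection equals the intersection of the images, exactly.
\end{itemize}
Your first two steps (freeness from Lemma \ref{freeKS}, Theorem \ref{Rigidity}, Lemma \ref{InfPrimes}) give $s_j\bigl(\mathfrak{C}_r(\kappa^{(k,t')})\bigr)\cO_j=\bigl(\pi_j^{a}\bigr)$, where $a$ is the $\partial^{(r)}$ of the specialized representative. So the intersection of the images is $\bigl(\pi_j^{\delta^{(r)}(\kappa_j)}\bigr)$. Together with Lemma \ref{kj0} for $k\gg0$, this yields the exact equality $\length_{\cO_j}\bigl(\cO_j/s_j(\mathfrak{C}_r)\cO_j\bigr)=\delta^{(r)}(\kappa_j)$, with no error term to bound.

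Note also that no factor $e_j$ appears in this equality. Both sides are $\pi_j$-adic exponents, and $e_j$ enters only the bookkeeping of levels, which is irrelevant by Lemma \ref{boundIncrSeq}. A genuine factor $e_j$ would not be harmless: it would give $b_j^\pm\sim e_j t_j^\pm$, and $e_j$ need not be $1$ (for $\mathfrak{P}=(\pi)$ it equals $j$).
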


\begin{proof} Keep the same notation as above. By \eqref{a_jb_jsim}, we have
\[
\beta_i^{\pm}e_\mathfrak{P}j \sim b_j^\pm=\length_{\cO_j}((\Lambda/(\mathfrak{D}^\pm_i)^2) \otimes_\Lambda\cO_j)= 2\cdot\length_{\cO_j}(\cO_j/s_j(\mathfrak{D}^\pm_i)\cO_j).
\]
Then, by definition of $\mathfrak{D}_i^\pm$ and Lemma \ref{eqlengths}, there are equivalences
\begin{equation} \label{bjepsilon}
b_j^\epsilon\sim 2\cdot\sum_{r\geq i/2}\Bigl(\length_{\cO_j}\bigl(\cO_j/s_j(\mathfrak{C}_{2r+1})\cO_j\bigr)-\length_{\cO_j}\bigl(\cO_j/s_j(\mathfrak{C}_{2r+2})\cO_j\bigr)\!\Bigr)
\end{equation}
and
\begin{equation} \label{bj-epsilon}
b_j^{-\epsilon}\sim 2\cdot\sum_{r\geq i/2}\Bigl(\length_{\cO_j}\bigl(\cO_j/s_j(\mathfrak{C}_{2r})\cO_j\bigr)-\length_{\cO_j}\bigl(\cO_j/s_j(\mathfrak{C}_{2r+1})\cO_j\bigr)\!\Bigr).
\end{equation}
We claim that
\[
\length_{\cO_j}\bigl(\cO_j/s_j(\mathfrak{C}_r)\cO_j\bigr)=\delta^{(r)}(\kappa_j)
\]
for all $r\in\N$ and all $j\gg0$. Fix an integer $k_j>0$ such that $k_j>k_{j,0}$, with $k_{j,0}$ as in Lemma \ref{kj0}. The set $\mathcal S$ of ideals involved in the intersection in \eqref{defideals} satisfies the following property: for all $I_1,I_2\in\mathcal S$, there exists $I_3\in\mathcal S$ such that $I_3\subset I_1\cap I_2$. Since $\Lambda/\mathfrak{m}_\Lambda^{k_j}$ is finite, there exists $\kappa^{(k_j,t)} \in \mathfrak{K}_{k_j}$ (with $t \geq 2k_j)$ such that
\[
\mathfrak{C}_r(k_j)=\mathfrak{C}_r(\kappa^{(k_j,t)})=\Bigl(\loc_\ell(\kappa^{(k_j,t)}_{n})\;\big|\;\text{$n\in\mathcal{N}_{t}$, $\ell\in\mathcal{P}_{t}$, $\ell\nmid n$, $\nu(n)\leq r$}\Bigr).
\]
Then there is an equality
\[
s_j\bigl(\mathfrak{C}_r(\kappa^{(k_j,t)})\bigr)=\bigcap_{\kappa^{(k_j,t')}\in\mathfrak{K}_{k_j},t'\geq 2k_j}s_j\bigl(\mathfrak{C}_r(\kappa^{(k_j,t')})\bigr).
\]
We now study the right-hand side. The specialization $s_j$ induces the equality $s_j(\loc_\ell(\kappa_n^{(k_j,t')}))=\loc_\ell \kappa_{j,n}^{(e_j k_j,e_j t')}$. Choose $\bar{n}\in\mathcal{N}_{t'}$ with $\nu(\bar{n})= r$ such that $\ind(\kappa_{j,\bar{n}}^{(e_j k_j,e_j t')})=\partial^{(r)}(\kappa_j^{(e_j k_j,e_j t')})$ ($=\min_{i\le r}\partial^{(i)}(\kappa_j^{(e_j k_j,e_j t')})$, since they are non-increasing in $i$). Since $t'\ge 2k_j$, by Lemma \ref{freeKS} we have that $\kappa_j^{(e_j k_j,e_jt')}$ is free, therefore by Theorem \ref{Rigidity} we have that $\kappa_j^{(e_j k_j,e_j t')}$ is contained in a free $\cO_j/\pi_j^{e_j k_j}$-module $C$ of rank 1. By Lemma \ref{InfPrimes}, we can choose $\ell\in\mathcal{P}_{t'}$ not dividing $\bar{n}$ such that $\loc_\ell$ is injective on $C$. Then 
\begin{equation*}
\begin{split}
    \partial^{(r)}(\kappa_j^{(e_j k_j,e_j t')})&=\ind(\kappa_{j,\bar{n}}^{(e_j k_j,e_j t')})=\ind(\loc_\ell (\kappa_{j,\bar{n}}^{(e_j k_j,e_j t')}), H^1_{\f}(K_\lambda, T_{j,e_j k_j}))=\\
    &=\min\{\ind(\loc_\ell(\kappa^{(e_j k_j,e_j t')}_{j,n}),H^1_{\f}(K_\lambda, T_{j,e_j k_j})):  n \in \mathcal{N}_{t'}, \,  \ell \in \mathcal{P}_{t'},\,  \ell \nmid n, \, \nu(n) \leq r \}.
\end{split}
\end{equation*}
This implies that 
\begin{equation*}
    s_j\bigl(\mathfrak{C}_r(\kappa^{(k_j,t')})\bigr)\cO_j=(\pi_j^{\partial^{(r)}(\kappa_j^{(e_j k_j,e_j t')})})\quad\text{and}\quad s_j\bigl(\mathfrak{C}_r(\kappa^{(k_j,t)})\bigr)\cO_j=(\pi_j^{\delta^{(r)}(\kappa_j^{(e_j k_j)})}).
\end{equation*}
In particular, for $t=t'$, we obtain that $\partial^{(r)}\bigl(\kappa_j^{(e_jk_j,e_jt)}\bigr)=\delta^{(r)}\bigl(\kappa_j^{(e_jk_j)}\bigr)$. There are equalities
\[
\cO_j\big/s_j(\mathfrak{C}_r)\cO_j=\cO_j\big/s_j\bigl(I_r(k_j)\bigr)\cO_j=\bigl(\cO_j/\pi_j^{e_jk_j}\cO_j)\big/\bigl(\loc_\ell(\kappa^{(e_jk_j,e_jt)}_{j,n})\bigr)
\]
for $n\in\mathcal{N}_{t}$, $\ell\in\mathcal{P}_t$, $\ell\nmid n$ and $\nu(n)\leq r$. We can assume that $\ell$ is one of the infinitely many primes in $\mathcal{P}_{t}$ such that the restriction of
\[
\loc_\ell:\Sel_{\mathcal{F}(n)}\bigl(K,T_j/(\pi_j^{e_jk_j})\bigr)\longrightarrow H^1_{\f}\bigl(K_\lambda,T_j/(\pi_j^{e_jk_j})\bigr)
\]
to $\bigl(\kappa_{j,n}^{(e_jk_j,e_jt)}\bigr)$ is injective. Now the equalities
\[
\begin{split}
\ind\Bigl(\loc_\ell\bigl(\kappa_{j,n}^{(e_jk_j,e_jt)}\bigr),\cO_j/\pi_j^{e_jk_j}\cO_j\Bigr)&=\ind\Bigl(\kappa_{j,n}^{(e_jk_j,e_jt)},\Sel_{\mathcal{F}(n)}\bigl(K,T_j/(\pi_j^{e_j k_j})\bigr)\!\Bigr)\\
&=\partial^{(r)}(\kappa_j^{(e_jk_j,e_jt)})=\delta^{(r)}\bigl(\kappa_j^{(e_jk_j)}\bigr)=\delta^{(r)}(\kappa_j)
\end{split}
\]
yield the desired claim. Therefore, using \eqref{aj=tj} in both lines below to justify the penultimate equivalences and Proposition \ref{fittingspec} for the first and the last equivalences, and further using \eqref{bjepsilon} and \eqref{bj-epsilon} for the middle one, we obtain 
\[
\beta_i^\epsilon e_\mathfrak{P} j \sim b_j^\epsilon \sim 2\cdot\sum_{r\geq i/2}\bigl(\delta^{(2r+1)}(\kappa_j)-\delta^{(2r+2)}(\kappa_j)\bigr)=t_j^\epsilon\sim a_j^\epsilon \sim \alpha_i^\epsilon e_\mathfrak{P}j
\]
and
\[
\beta_i^{-\epsilon} e_\mathfrak{P} j \sim b_j^{-\epsilon} \sim 2\cdot\sum_{r\geq i/2} \bigl(\delta^{(2r)}(\kappa_j)-\delta^{(2r+1)}(\kappa_j)\bigr)=t_j^{-\epsilon}\sim a_j^{-\epsilon}\sim\alpha_i^{- \epsilon}e_\mathfrak{P}j.
\]
The theorem follows. \end{proof}

\section{Hida families of modular forms} \label{sec4}

In this section, we prove our results on Hida families of modular forms, which are the main contribution of this paper.

\subsection{Hida families}

Let $p$ be an odd prime number. Let  $\cO$ be the valuation ring of  a finite extension of $\Q_p$, set $\Gamma \defeq 1 + p\Z_p$ and denote by $\Lambda\defeq\mathcal{O} \llbracket \Gamma \rrbracket $ the Iwasawa algebra of $\Gamma$ over $\mathcal{O}$, which is isomorphic to the power series ring $\cO\llbracket T\rrbracket$ in one variable. 

\subsubsection{Arithmetic morphisms}

Let $\mathcal{R}$ be a complete local noetherian domain that is finite and flat as a $\Lambda$-algebra. A homomorphism $\eta\in\mathcal{X}\defeq\Hom_{\cont}\bigl(\mathcal{R},\bar{\Q}_p\bigr)$ is \emph{arithmetic} if the composition $\Gamma \rightarrow \mathcal{R}^\times \xrightarrow{\eta} \bar{\Q}_p$ has the form $\gamma \mapsto \psi_\eta(\gamma) \gamma^{k_\eta-2}$ for some integer $k_\eta \geq 2$ and some finite order character $\psi_\eta$ of $\Gamma$, where $\Gamma\hookrightarrow\mathcal{R}^\times$ is the composition of the canonical injection $\Gamma\hookrightarrow \Lambda$ mapping an element $\gamma\in\Gamma$ to the corresponding group-like element in $\Lambda$ and the structural injection $\Lambda\hookrightarrow\mathcal{R}$. We denote by $\mathcal{X}^{\arith}$ the set of arithmetic morphisms of $\mathcal{X}$ and by $\omega$ the Teichm\"uller character modulo $p$.  

\subsubsection{$p$-adic Hida families}

Let $N\geq1$ be an integer such that $p\nmid6N$.

\begin{definition} \label{def-Hida}
A \emph{$p$-adic Hida family} over $\mathcal R$ of tame level $N$ is a formal  power series  
\[
\mathbf{f}=\sum_{n \geq 1} a_n(\mathbf{f})q^n \in \mathcal{R} \llbracket q \rrbracket
\]
such that for all $\eta \in \mathcal X^{\arith}$ the specialization $\mathbf{f}_\eta\defeq\sum_{n \geq 1}\eta\bigl(a_n(\mathbf{f})\bigr)q^n$ is the $q$-expansion of a $p$-ordinary and $p$-stabilized cusp form of weight $k_\eta$, character $\psi_\eta \omega^{2-k_\eta}$ and level $\Gamma_1(Np^{r_\eta})$, where $r_\eta\defeq\max\bigl\{1, \mathrm{cond}(\psi_\eta)\bigr\}$.
\end{definition}

See, \emph{e.g.}, \cite[\S 2.4]{vigni-SNS} for definitions and details on the $p$-stabilization of modular forms and on $p$-stabilized newforms.

\subsubsection{(Big) Galois representations}

For all $\eta\in\Hom_{\cont}(\mathcal{R},\bar{\Q}_p)$, set 
\[
\p_\eta\defeq\ker(\eta)\subset\mathcal R,\quad F_\eta \defeq \mathcal{R}_{\p_\eta}\big/\p_\eta\mathcal{R}_{\p_\eta},
\]
where $\mathcal{R}_{\p_\eta}$ is the localization of $\mathcal{R}$ at $\p_\eta$; 
then $F_\eta$ is a finite extension of $\Q_p$ with valuation ring equal to the integral closure $S_\eta$ of $\mathcal{R}/\p_\eta$ and we fix an embedding $i_\eta: F_\eta \hookrightarrow \bar{\Q}_p$. For $\eta\in\mathcal{X}^\mathrm{arith}$, also denote by $V_\eta$ the self-dual twist of the Galois representation attached to $\mathbf{f}_\eta$ by Deligne; up to base change to $F_\eta$, $V_\eta$ is an $F_\eta$-vector space of dimension $2$ equipped 
with an action of $G_\Q$, unramified outside the prime ideals dividing $Np$ and such that the arithmetic Frobenius at a prime $\ell\nmid Np$ has trace equal to $a_\ell(\mathbf{f}_\eta)/\ell^{k_\eta/2}$, where we write $a_\ell(\mathbf{f}_\eta)\defeq\eta\bigl(a_\ell(\mathbf{f})\bigr)$. 

By \cite[Theorem II]{Hida-Galois}, we can attach to $\mathbf{f}$ a free $\mathcal{R}$-module $\mathbf{T}$ of rank $2$ endowed with a continuous $G_\Q$-action such that the ``big'' representation
\[
\rho_\mathbf{f}:G_\Q\longrightarrow\Aut(\mathbf{T}) \simeq \GL_2(\mathcal{R})
\]
is unramified outside $Np$ and $\tr\bigl(\rho_\mathbf{f}(\Frob_\ell)\bigr)=a_\ell(\mathbf{f})$ for all primes $\ell\nmid Np$, where $\Frob_\ell$ is an arithmetic Frobenius. Let $\mathbf{T}^{\dag}=\mathbf{T}\otimes\Theta^{-1}$ be the \emph{critical twist} of $\mathbf{T}$ 
defined in \cite[Definition 2.1.3]{Howard-Variation}; this definition depends on the choice of a \emph{critical character} $\Theta:G_\Q\rightarrow\Lambda^\times$, which we suppose fixed throughout as in \cite[Remark 2.1.4]{Howard-Variation}. Let 
\[
\rho_{\mathbf{f}}^\dagger:G_\Q\longrightarrow\Aut_\mathcal{R}(\mathbf{T}^\dagger)
\]
be the associated Galois representation. Then, for all $\eta \in \mathcal{X}^\mathrm{arith}$, there is an isomorphism
\[
\mathbf{T}^\dag_{\p_\eta}\big/\p_\eta\mathbf{T}^\dag_{\p_\eta}\simeq V_{\eta}
\]
of representations of $G_\Q$, where $\mathbf{T}^\dag_{\p_\eta}\defeq\mathbf{T}^\dag\otimes_\mathcal{R}\mathcal{R}_{\p_\eta}$. We also set $\mathbf{A}^\dag\defeq\mathbf{T}^\dag \otimes_{\mathcal{R}}\mathcal{R}^\vee$, where $\mathcal{R}^\vee$ is the Pontryagin dual of $\mathcal{R}$. For each $\eta \in \mathcal{X}$, define the $G_\Q$-representation  
\[
T_\eta \defeq \mathbf{T}^\dag/\p_\eta \mathbf{T}^\dag\otimes_{\mathcal{R}/\p_\eta} S_\eta,
\]
which is a Galois-stable lattice inside $V_\eta\defeq T_\eta\otimes_{S_\eta}F_\eta$, and consider the $p$-divisible group $A_\eta\defeq V_\eta/T_\eta$. 

\subsubsection{Arithmetic assumptions and local conditions}

From here on, we work under 

\begin{assumption} \label{Hidaass}
\begin{enumerate}
\item $\mathcal{R}=\Lambda$
\item The semi-simple residual Galois representation attached to $\mathbf{f}$ is absolutely irreducible.
\item The image of $\rho_{\mathbf{f}}^\dagger$ contains $\Z_p^\times$ embedded diagonally (see, \emph{e.g.}, \cite[Assumption 4.13 and Remark 4.14]{zerman2025quaternionic}). 
\end{enumerate}
\end{assumption}

As explained in \cite[(4.3.9)]{NP}, point (1) of the previous assumption is satisfied for example when there exists an arithmetic $\eta_f \in \chi^{\arith}$ such that $f=\mathbf{f}_{\eta_f}$ is a $p$-isolated newform in $S_k\bigl(\Gamma_0(Np))$ for an even integer $k\geq2$.

Let $K$ be an imaginary quadratic field of discriminant $D_K$ such that $(D_K,pN)=1$. 
Assume the Heegner hypothesis holds: all primes diving $N$ are split in $K$.
Recall that the representation $\mathbf{T}$ of $G_{\Q_p}$ is equipped with an ordinary filtration 
\begin{equation} \label{ordinary-seq-eq}
0\longrightarrow \mathbf{T}^+\longrightarrow\mathbf{T}\longrightarrow\mathbf{T}^-\longrightarrow 0
\end{equation}
such that both $\mathbf{T}^+$ and $\mathbf{T}^-$ are free $\Lambda$-modules of rank $1$ and $\mathbf{T}^-$ is the maximal unramified quotient of $\mathbf{T}$ (see, \emph{e.g.}, \cite[Proposition 2.4.1]{Howard-Variation}). Twisting by $\Theta^{-1}$, exact sequence \eqref{ordinary-seq-eq} induces a similar ordinary filtration 
\begin{equation} \label{ordinary-twist-seq-eq}
0\longrightarrow \mathbf{T}^{\dagger,+}\longrightarrow \mathbf{T}^\dagger\longrightarrow \mathbf{T}^{\dagger,-}\longrightarrow 0
\end{equation} 
on $\mathbf{T}^\dagger$. For each prime $v$ of $K$ above $p$, define 
\[
H^1_{\ord}(K_v,\mathbf{T}^\dagger)\defeq\im\Bigl(H^1\bigl(K_v,\mathbf{T}^{\dagger,+}\bigr)\longrightarrow H^1\bigl(K_v,\mathbf{T}^\dagger\bigr)\!\Bigr),
\]
where the map on the right is canonically induced by the inclusion $\mathbf{T}^{\dagger,+}\hookrightarrow\mathbf{T}^\dagger$. For each $\eta\in\mathcal{X}$, exact sequence \eqref{ordinary-twist-seq-eq} gives rise to an ordinary filtration 
\[
0\longrightarrow V_\eta^+\longrightarrow V_\eta\longrightarrow V_\eta^-\longrightarrow 0
\]
for $V_\eta$, and for a prime $v$ of $K$ above $p$ we define $H^1_{\ord}(K_v,V_\eta)$ as above.
By propagation, we then define $H^1_{\ord}(K_v,T_\eta)$ and $H^1_{\ord}(K_v,A_\eta)$. Analogously, for primes $v\nmid p$ of $K$, let $H^1_{\mathrm{unr}}(K_v,V_\eta)$ be the 
unramified cohomology group of $V_\eta$ and define $H^1_{\unr}(K_v,T_\eta)$ and $H^1_{\unr}(K_v,A_\eta)$ by propagation.

\subsubsection{Greenberg Selmer structures}

The Greenberg Selmer structure $\mathcal{F}_{\Gr}$ for $\mathbf{T}^\dag$ is
\[
H^1_{\mathcal{F}_{\Gr}}\bigl(K_v,\mathbf{T}^\dag\bigr)\defeq \begin{cases}
    H^1_{\unr}\bigl(K_v,\mathbf{T}^\dag\bigr) & \text{if $v\nmid p$}\\[2mm]
    H^1_{\ord}\bigl(K_v,\mathbf{T}^\dag\bigr) & \text{if $v\,|\,p$}
\end{cases}
\]
(\emph{cf.} also \cite[Definition 2.4.2]{Howard-Variation}). Moreover, the Selmer structure $\mathcal{F}_\eta$ for $T_\eta$ is
\[
H^1_{\mathcal{F}_\eta}(K_v,T_\eta)\defeq \begin{cases}
    H^1_{\unr}(K_v,T_\eta) & \text{if $v\nmid p$},\\[2mm]
    H^1_{\ord}(K_v,T_\eta) & \text{if $v\,|\,p$}. 
\end{cases}
\]
The $\Lambda$-module $\mathbf{T}^\dagger$ is equipped with a perfect, alternating, $\Lambda$-bilinear and $G_\Q$-invariant pairing $$(\cdot,\cdot):\mathbf{T}^\dagger\times\mathbf{T}^\dagger\rightarrow \Lambda(1).$$
We notice that, equivalently, $\mathbf{T}^\dag$ is equipped with a perfect, symmetric, $\Lambda$-bilinear and $G_K$-invariant pairing 
\[ 
[\cdot,\cdot]:\mathbf{T}^\dagger\times\mathbf{T}^\dagger\longrightarrow\Lambda(1),
\]
obtained by setting $[x,y] \defeq (x,y^{\tau_c})$, that satisfies the formula $\bigl[x^\sigma,y^{\tau_c\sigma\tau_c^{-1}}\bigr]=[x,y]^\sigma$ for all $x,y\in\mathbf{T}^\dagger$ and $\sigma\in G_K$. 

The pairing $(\cdot,\cdot)$ induces for each $\eta\in \mathcal{X}$ a perfect, alternating, $S_\eta$-bilinear and $G_\Q$-invariant pairing
\[
(\cdot,\cdot)_\eta:T_\eta\times T_\eta\longrightarrow S_\eta(1).
\]
To check perfectness, observe that the pairing $(\cdot,\cdot)$ induces an isomorphism of $\Lambda$-modules $\mathbf{T}^\dagger\simeq \Hom_\Lambda(\mathbf{T}^\dagger,\Lambda)$; since $\mathbf{T}^\dagger$ is free of finite rank over $\Lambda$, we also obtain an isomorphism of $S_\eta$-modules ${T}_{\eta}\simeq \Hom_{S_\eta}({T}_{\eta},S_{\eta})$ that is induced by $(\cdot,\cdot)_\eta$, so $(\cdot,\cdot)_\eta$ is perfect. As before, $T_\eta$ is also equipped with a perfect, symmetric, $S_\eta$-bilinear and $G_K$-invariant pairing
\[
{[\cdot,\cdot]}_\eta:T_\eta\times T_\eta\longrightarrow S_\eta(1)
\]
satisfying ${[x^\sigma,y^{\tau_c\sigma\tau_c^{-1}}]}_\eta={[x,y]}_\eta^\sigma$ for all $x,y \in T_\eta$ and all $\sigma \in G_K$.

\subsubsection{Validity of assumptions}

For every integer $s\ge1$, let $\mathcal{P}_s$ be the maximal set of prime numbers such that the pair $\bigl(\mathbf{T}^\dag_s,\mathcal{P}_s\bigr)$ satisfies Assumption \ref{ass:kolyvagin-primes}, where $\mathbf{T}^\dag_s\defeq\mathbf{T}^\dag/\mathfrak{m}_\Lambda^s \mathbf{T}^\dag$. Set $\mathcal{P}\defeq\bigcup_{s\ge 1}\mathcal{P}_s$. We check the validity of the assumptions introduced in the previous section.

\begin{proposition} \label{tripleDVR}
Let $\eta\in\Hom_{\cont}(\mathcal{R},\bar{\Q}_p)$. The triple $(T_\eta,\mathcal{F}_{\eta},\mathcal{P})$ satisfies Assumption \ref{ass:Howard-assumption}.
\end{proposition}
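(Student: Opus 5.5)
We verify the conditions of Assumption \ref{ass:Howard-assumption} one at a time for $T=T_\eta$ over $R=S_\eta$. Where a condition is a statement about $\mathbf{T}^\dagger$ alone, we deduce it for $T_\eta$ by base change along $\Lambda\to\Lambda/\p_\eta\hookrightarrow S_\eta$. Since $\mathcal{R}=\Lambda$ is local with residue field $\F$ and $S_\eta$ is a finite local extension, the residual representation $T_\eta/\mathfrak{m}_{S_\eta}T_\eta$ is $\bar\rho\otimes_\F k_\eta$, where $\bar\rho\defeq\mathbf{T}^\dagger/\mathfrak{m}_\Lambda\mathbf{T}^\dagger$.

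\textbf{Condition (1).} Absolute irreducibility over $G_\Q$ is part (2) of Assumption \ref{Hidaass}. To pass to $G_K$, we argue by contradiction: if $\bar\rho|_{G_K}$ were reducible, then $\bar\rho$ would be induced from $K$. The image would then lie in the normalizer of a torus, which is incompatible with part (3) of Assumption \ref{Hidaass} together with the fact that $p\nmid D_K$. Concretely, one uses the residual image containing scalars and the ramification at $p$ coming from the ordinary filtration, or one simply cites the standard argument in \cite{Howard-Variation}.

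\textbf{Condition (2).} Take $F$ to be the compositum of $K$ and the field cut out by $\bar\rho$. The vanishing of $H^1\bigl(F(\Bmu_{p^\infty})/K,\bar\rho\bigr)$ follows from part (3) of Assumption \ref{Hidaass}. Indeed, a scalar $a\in\Z_p^\times\setminus\{1\}$ with $a\not\equiv 1\pmod p$ lies in the image and is central. Sah's lemma then gives the vanishing of $H^1$ of the image, and inflation--restriction handles the abelian part $\Gal(F(\Bmu_{p^\infty})/F)$. Here one uses $p\ge5$ to find such an $a$ in the relevant group.

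\textbf{Condition (3), the cartesian property at $Np$.} For $v\nmid p$ the local condition is unramified. For $v\mid p$ it is the ordinary condition, i.e.\ the image of $H^1(K_v,T_\eta^+)$. This is exactly the situation handled in \cite[Lemma 3.7.1]{MR} and \cite[\S2]{HoHeeg}. For the ordinary condition one needs $H^0(K_v,T_\eta^-\otimes k)=0$, or some substitute, so that the propagated conditions agree with the images. I expect this to be the main obstacle: at primes $v\mid N$ one needs the unramified condition to be cartesian, which generally requires $H^0(I_v,\bar\rho)$ to behave well, and at $p$ one needs a non-anomaly hypothesis. The plan is to deduce both from the ordinary filtration and from the Heegner hypothesis, since split primes make $K_v=\Q_\ell$. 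Where this fails, I would cite the corresponding verification in \cite{Kazim} or \cite{zerman2025quaternionic}.

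\textbf{Conditions (4) and (5).} For (4), use the perfect alternating pairing $(\cdot,\cdot)_\eta$ constructed above. Self-duality of $\mathcal{F}_\eta$ holds because $T_\eta^+$ is its own orthogonal complement under the local Tate pairing, and unramified conditions are mutual annihilators. For (5.a), note that $\tau_c$ has eigenvalues $\pm1$ (determinant $-1$) and $p$ is odd, so $T_\eta=T_\eta^+\oplus T_\eta^-$ with each eigenspace free of rank one. For (5.b), observe that $\tau_c$ permutes the primes above each rational prime, and the unramified and ordinary conditions are defined over $\Q$; hence $\tau_c$ preserves them.

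\textbf{Condition (6).} Given $c^\pm\neq0$, apply Chebotarev in $F(\Bmu_{p^\infty},\text{cocycle field})/\Q$ to find infinitely many $\ell$ whose Frobenius is conjugate to $\tau_c$ times a suitable element. Such an $\ell$ lies in $\mathcal{P}_s$ and satisfies $\loc_\ell(c^\pm)\neq0$. This is the standard argument, cf.\ \cite[Lemma 1.6.2]{HoHeeg}. Condition (2) ensures that the cocycle field is linearly disjoint, and the sign constraint is dealt with using the eigenspace decomposition.
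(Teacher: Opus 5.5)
\textbf{Main gap: condition (3).} Your checks of (4), (5.a), (5.b) and (6) follow the paper. Condition (3), however, is left unproved, and the route you sketch would not work. You take the condition at $v\mid p$ to be the image of $H^1(K_v,T_\eta^+)$ and the condition at $v\mid N$ to be literally $H^1_{\unr}(K_v,T_\eta)$. From this you conclude that cartesianness needs a non-anomaly statement $H^0(K_v,T_\eta^-\otimes k)=0$ and control of $H^0(I_v,\bar\rho)$.

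These are extra hypotheses, not consequences of the ones in force. Nothing prevents the residual unramified quotient at $p$ from having $G_{K_v}$-invariants. The Heegner hypothesis only gives $K_v=\Q_\ell$ at $v\mid N$ and says nothing about inertia invariants modulo $\mathfrak{m}$. So ``deduce both from the ordinary filtration and the Heegner hypothesis'' fails, and deferring to \cite{Kazim} or \cite{zerman2025quaternionic} leaves the point open.

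What you are missing is that $\mathcal{F}_\eta$ is \emph{defined} on $T_\eta$ by propagation from $V_\eta$ at every place: the ordinary condition at $v\mid p$ and the unramified one at $v\nmid p$. So no comparison with images of $H^1(K_v,T_\eta^+)$ is ever needed. By construction, $H^1(K_v,T_\eta)/H^1_{\mathcal{F}_\eta}(K_v,T_\eta)$ injects into the $F_\eta$-vector space $H^1(K_v,V_\eta)/H^1_{\mathcal{F}_\eta}(K_v,V_\eta)$, hence is torsion-free. That is exactly the input of \cite[Lemma 3.7.1]{MR}, which then gives cartesianness on $\mathrm{Quot}(T_\eta)$ with no local hypotheses. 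This single citation, which you already named in your first sentence, is the paper's entire proof of (3).

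\textbf{Condition (2).} The requirement is that $G_F$ act trivially on $T_\eta$ itself, and the field cut out by $\bar\rho$ does not achieve this. The paper takes $F$ to be the field cut out by $\rho_{\mathbf f}^\dagger|_{G_K}$. (No finite $F$ can trivialize $T_\eta$, whose image is infinite, so ``finite'' in the assumption cannot be read literally.) Your central-scalar argument survives unchanged. The group $\Gal(F(\Bmu_{p^\infty})/K)$ embeds in $\Gal(F/K)\times\Gal(K(\Bmu_{p^\infty})/K)$, so a lift of a non-trivial $\zeta^2$ with $\zeta\in\Bmu_{p-1}$ (which exists since $p\ge5$) is central. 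It acts on $T_\eta/\mathfrak{m}T_\eta$ by a non-trivial scalar of order prime to $p$, and this kills $H^1$.

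\textbf{Condition (1).} The paper simply invokes Assumption \ref{Hidaass}(2). If you want to pass from $G_\Q$ to $G_K$, the reasons you give do not suffice: the normalizer of a torus contains all scalars, and ordinarity at $p$ does not exclude $\bar\rho$ being induced from $K$. The correct argument runs as follows. If $\bar\rho|_{G_K}$ were not absolutely irreducible, then $\bar\rho\simeq\bar\rho\otimes\epsilon_K$, with $\epsilon_K$ the quadratic character of $K$. This is impossible on inertia at a prime $q\mid D_K$: there $\bar\rho$ is unramified because $(D_K,Np)=1$, while $\epsilon_K$ is ramified.
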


\begin{proof}
The residual Galois representation $\overline{T}_\eta$ of $T_\eta$ is the same as the $G_\Q$-representation 
$\mathbf{T}^\dagger/\mathfrak{m}_\Lambda\mathbf{T}^\dagger$ up to finite base change, where $\mathfrak{m}_\Lambda$ is the 
maximal ideal of $\Lambda$. Thus, condition (1) in Assumption \ref{ass:Howard-assumption} holds by Assumption \ref{Hidaass} (2). 

We show that (2) holds. Thanks to Assumption \ref{Hidaass}, the group $\Bmu_{p-1}$ of $(p-1)$-roots of unity, 
embedded into $\Z_p^\times$ via the Teichm\"uller representatives, 
is contained  via the diagonal embedding in the image of the representation  $\rho_{\mathbf{f}}^\dagger:G_\Q\rightarrow \Aut_\Lambda(\mathbf{T}^\dagger)$. Since it has order coprime with $p$, we also have that $\Bmu_{p-1}$ is also contained in the composition $\bar{\rho}_\eta^\dagger$ of $\rho_{\mathbf{f}}^\dagger$ with the natural maps $\Aut_\Lambda(\mathbf{T}^\dagger)\to\Aut_\cO(\mathbf{T}^\dagger/\mathfrak{m}_\Lambda \mathbf{T}^\dagger)\to\Aut_{S_\eta}(\bar{T}_\eta)$ (as the last one is injective). Therefore, the group $\mathcal{N}\defeq\bigl\{\zeta^2\mid\zeta\in\Bmu_{p-1}\bigr\}$ has order prime to $p$, is contained in $\bar{\rho}_\eta^\dagger(G_K)$ and is non-trivial because $p\geq5$.

Let $\mathcal{M}\defeq\rho_{\mathbf{f}}^\dagger(G_K)\subset\GL_2(\Lambda)$, denote by $H$ the kernel of $\rho_{\mathbf{f}}^\dagger|_{G_K}$ and set $F\defeq\overline{\Q}^H$, 
so that $\Gal(F/K)\simeq\mathcal{M}$. Consider the group $G\defeq\Gal\bigl(FK(\Bmu_{p^\infty})/K\bigr)$, which surjects onto $\mathcal{M}$ via $\rho_{\mathbf{f}}^\dagger$, and the inflation-restriction exact sequence 
\[
0\longrightarrow  H^1\Bigl(G/\mathcal{N},\overline{T}_\eta^\mathcal{N}\Bigr)\longrightarrow H^1\bigl(G,\overline{T}_\eta\bigr)\longrightarrow H^1\bigl(\mathcal{N},\overline{T}_\eta\bigr).
\]
Since $\mathcal{N}$ is non-trivial and has cardinality prime to $p$, $\overline{T}_\eta^\mathcal{N}=0$ and 
$H^1(\mathcal{N},\overline{T}_\eta)=0$ (see, \emph{e.g.}, \cite[Corollary 1]{AW}); as a consequence, 
$H^1(G,\overline{T}_\eta)=0$, which shows that condition (2) in Assumption \ref{ass:Howard-assumption} holds.

Since the local conditions defining $H^1_{\mathcal{F}_\eta}(K_v,T_\eta)$ are obtained by propagation from local conditions for $V_\eta$, condition (3) in Assumption \ref{ass:Howard-assumption} is satisfied by \cite[Lemma 3.7.1]{MR}. 
    
Now we check condition (4). The self-orthogonality of the local Selmer condition for $V_\eta$ is well known for $v\nmid p$ (see, \emph{e.g.}, \cite[Proposition 3.8]{BK}), while for $v\,|\,p$ it is a consequence of the definition of the ordinary condition. Let 
\[
0\longrightarrow V_\eta^+\longrightarrow V_\eta\longrightarrow V_\eta^-\longrightarrow0
\]
denote the ordinary filtration of $V_\eta$. First, $V_\eta^+$ is isotropic for the pairing: this is true because the action on $V_\eta^+$ is via $\delta\chi_\cyc^{k/2}$ for an unramified character $\delta$, so for $\sigma\in G_K$ and $x,y\in V_\eta^+$ there is an equality $\bigl[x^\sigma,y^{\sigma^{-1}}\bigr]_\eta={[x,y]}_\eta$; on the other hand, we have
\[
\bigl[x^\sigma,y^{\sigma^{-1}}\bigr]_\eta=\bigl[x^\sigma,y^{\tau\sigma\tau^{-1}}\bigr]_\eta={[x,y]}_\eta^\sigma
\]
by the fact that $\tau_c$ acts on $G_K$ as the inversion map $\tau_c\sigma\tau_c^{-1}=\sigma^{-1}$; these two equations are clearly incompatible unless ${[x,y]}_\eta=0$. Therefore, the perfect pairing 
\[
V_\eta\times V_\eta\longrightarrow F_\eta(1)
\]
identifies $V^-_\eta$ with $\Hom_{F_\eta}\bigl(V^+_\eta,F_\eta(1)\bigr)$. Therefore, by \cite[Proposition 3.8]{BK}, there is a perfect pairing 
\[
H^1(K_v, V^+_\eta)\times H^1(K_v,V^-_\eta)\longrightarrow F_\eta(1),
\]
so the image of $H^1(K_v,V^+_\eta)$ in $H^1(K_v,V_\eta)$ is maximal isotropic.
Since the local conditions for $T_\eta$ are induced by propagation from those of $V_\eta$, they are still self-dual, which shows that condition (4) in Assumption \ref{ass:Howard-assumption} is satisfied. 

As for condition (5.a), it is a consequence of the existence of the skew-symmetric, $G_\Q$-invariant pairing on $\mathbf{T}^\dag$, while condition (5.b) follows directly from definitions outside $p$ and for $p$ from the definitions and the fact that the ordinary filtration is defined over $\Q$. 
    
Finally, condition (6) follows from the arguments in \cite[Lemma 1.6.2]{HoHeeg}.
\end{proof}

\begin{proposition} \label{tripleLambdaHida}
The triple $\bigl(\mathbf{T}^\dag,\mathcal{F}_{\Gr},\mathcal{P}\bigr)$ satisfies Assumption $\ref{asstripleR}$.
\end{proposition}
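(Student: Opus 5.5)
We have to check the three parts of Assumption \ref{asstripleR} for every height-$1$ prime $\mathfrak{P}$ of $\Lambda$, with $T_\mathfrak{P}=\mathbf{T}^\dag\otimes_\Lambda\cO_\mathfrak{P}$ and $\mathcal{F}_\mathfrak{P}$ the Greenberg structure obtained from the ordinary filtration. The plan is to reduce part (1) to Proposition \ref{tripleDVR} and part (2) to a functoriality check, and then to prove part (3) with a Mazur-style control argument in the spirit of Howard and Nekov\'a\v{r}.

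\textbf{Part (1).} A height-$1$ prime $\mathfrak{P}$ gives a continuous homomorphism $\eta\colon\Lambda\to\cO_\mathfrak{P}\hookrightarrow\bar{\Q}_p$. Under this identification $T_\mathfrak{P}=T_\eta$ and $\mathcal{F}_\mathfrak{P}=\mathcal{F}_\eta$. Proposition \ref{tripleDVR} was stated for every $\eta\in\Hom_{\cont}(\mathcal{R},\bar{\Q}_p)$, not only arithmetic ones, so it applies directly. The symmetric pairing $[\cdot,\cdot]_\eta$ required before Assumption \ref{asstripleR} was already constructed, and its self-duality was verified in the proof of Proposition \ref{tripleDVR}.

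\textbf{Part (2).} This is functoriality. The specialization $\mathbf{T}^\dag\to T_\mathfrak{P}$ respects the ordinary filtrations, so it sends $H^1_{\ord}(K_v,\mathbf{T}^\dag)$ into $H^1_{\ord}(K_v,T_\mathfrak{P})$ for $v\mid p$. It also sends unramified classes to unramified classes for $v\nmid p$. Since $\mathfrak{P}$ kills the target, the map factors through $\Sel/\mathfrak{P}$. The dual map $A_\mathfrak{P}\to\mathbf{A}^\dag$ respects the orthogonal complements of these local conditions, which gives $i_\mathfrak{P}$ with image in the $\mathfrak{P}$-torsion.

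\textbf{Part (3), the main obstacle.} I would argue as in \cite[Proposition 2.4.3 / Lemma 2.2.7]{Howard-Variation} and \cite[Proposition 3.27]{DRLV}.
\begin{enumerate}
\item Compare $H^1(K,\mathbf{A}^\dag)[\mathfrak{P}]$ with $H^1(K,\mathbf{A}^\dag[\mathfrak{P}])$ using the sequence $0\to\mathbf{A}^\dag[\mathfrak{P}]\to\mathbf{A}^\dag\xrightarrow{f}\mathbf{A}^\dag$, where $\mathfrak{P}=(f)$. The kernel of the comparison map is a quotient of $H^0(K,\mathbf{A}^\dag)$. This group vanishes because the residual representation is irreducible (Assumption \ref{Hidaass}(2)), so the map is injective.
\item Compare $\mathbf{A}^\dag[\mathfrak{P}]$ with $A_\mathfrak{P}$. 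They differ only through the finite index $[\cO_\mathfrak{P}:\Lambda/\mathfrak{P}]$, which produces the finite error terms bounded in terms of that index.
\item Bound the local kernels at all places:
\begin{itemize}
\item For $v\nmid Np$ there is nothing to bound.
\item For $v\mid N$, use that $H^0(K_v^{\unr},\mathbf{A}^\dag)$ modulo its divisible part is finite and uniformly bounded, so only finitely many $\mathfrak{P}$ can contribute more.
\item For $v\mid p$, the comparison of $H^1(K_v,\mathbf{A}^{\dag,-})[\mathfrak{P}]$ with $H^1(K_v,A_\mathfrak{P}^-)$ is controlled by $H^0(K_v,\mathbf{A}^{\dag,-})$. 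This group is $\Lambda$-cotorsion, and its characteristic ideal has only finitely many height-one divisors; we put those into $\Sigma_{\mathrm{CT}}$.
\end{itemize}
\item Deduce the statement for $s_\mathfrak{P}$ either dually, via Poitou--Tate and the self-duality of the local conditions, or by the analogous argument for $\mathbf{T}^\dag$ using $\mathbf{T}^\dag/\mathfrak{P}\hookrightarrow T_\mathfrak{P}$ and $H^0(K,T_\mathfrak{P}/\pi)=0$.
\end{enumerate}
The delicate point is step 3 at $v\mid p$: the bound must depend only on $[\cO_\mathfrak{P}:\Lambda/\mathfrak{P}]$, uniformly in $\mathfrak{P}$ outside the finite set $\Sigma_{\mathrm{CT}}$.
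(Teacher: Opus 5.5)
Your proposal follows essentially the same route as the paper: part (1) via Proposition \ref{tripleDVR}, part (2) by functoriality of the local conditions, and part (3) by bounding the local comparison maps place by place and then running the control argument of \cite[Proposition 5.3.14]{MR}. The only difference in route is that the paper bounds the local maps on the $\mathbf{T}^\dag$ side (citing \cite[Lemma 5.3.13]{MR}, \cite[Lemma 2.2.7]{HoHeeg}, \cite[Lemma 4.1.13]{NP}) and gets the $\mathbf{A}^\dag$ side by local duality, whereas you work on $\mathbf{A}^\dag$ first and dualize; one caveat, shared with the paper's own proof, is that the height-one prime $\mathfrak{P}=(\pi)$ has $\cO_{\mathfrak{P}}=\F\llbracket T\rrbracket$ of characteristic $p$, so it is not $\ker(\eta)$ for any $\eta\colon\Lambda\to\bar{\Q}_p$, and your identification in part (1) (like the paper's appeal to Proposition \ref{tripleDVR}) does not cover it — that prime needs a separate verification of Assumption \ref{ass:Howard-assumption}, although for part (3) it can simply be placed in $\Sigma_{\mathrm{CT}}$.
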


\begin{proof} Condition (1) is satisfied thanks to Proposition \ref{tripleDVR}, while 
(2) is clear from the definitions. We check the control theorem condition. The maps $\mathbf{T}^\dag \rightarrow T_\eta$ and $A_\eta \rightarrow \mathbf{A}^\dag$ induce maps
\[
H^1_{\mathcal{F}_{\Gr}}\bigl(K_v,\mathbf{T}^\dag/\p_\eta \mathbf{T}^\dag\bigr) \longrightarrow H^1_{\mathcal{F}_\eta}(K_v,T_\eta),\quad H^1_{\mathcal{F}_\eta}(K_v,A_\eta) \longrightarrow H^1_{\mathcal{F}_{\Gr}}\bigl(K_v,\mathbf{A}^\dag[\p_\eta]\bigr)
\]
with finite kernel and cokernel of cardinality bounded by a constant that depends only on $[S_\eta:\Lambda/\p_\eta]$: for the first map this follows from the arguments in \cite[Lemma 5.3.13]{MR}, \cite[Lemma 2.2.7]{HoHeeg} and \cite[Lemma 4.1.13]{NP}, while for the second map this follows by local duality. Then the statement follows as in \cite[Proposition 5.3.14]{MR}. \end{proof}

\subsection{Universal Kolyvagin systems of big Heegner points} \label{UKSHida}
A universal Kolyvagin system for $(\mathbf{T}^\dagger,\mathcal{F}_\mathrm{Gr},\mathcal{P})$ can be obtained by specializing the anticyclotomic system constructed by B\"{u}y\"{u}kboduk in \cite{Kazim} (under a few more technical assumptions: see \cite[(H.stz) and (H.Tam)]{Kazim}) with respect to the augmentation ideal. 
The same construction can be directly obtained from the Euler system of big Heegner points $\mathfrak{z}_n\in H^1(K[n],\mathbf{T}^\dagger)$ of conductor $n$ built by Howard in \cite{Howard-Variation} by applying Kolyvagin derivatives as in \cite{mastella2025anticyclotomic}. 

From now on, denote by 
\[
\kappa\in\uks\bigl(\mathbf{T}^\dagger,\mathcal{F}_\mathrm{Gr},\mathcal{P}\bigr)
\]
this universal Kolyvagin system. Then $\kappa_1\in\Sel_{\Gr}\bigl(K,\mathbf{T}^\dag\bigr)$ and 
we propose

\begin{conjecture} \label{kappa1nottors}
The class $\kappa_1$ is not $\Lambda$-torsion in $\Sel_{\Gr}\bigl(K,\mathbf{T}^\dag\bigr)$.    
\end{conjecture}

In what follows, we assume the validity of Conjecture \ref{kappa1nottors}.

\begin{proposition}\label{assKappaHida}
The system $\kappa$ satisfies Assumption \ref{asskolyR}.    
\end{proposition}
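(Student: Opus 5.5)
The proof will check the two parts of Assumption \ref{asskolyR} separately: the non-vanishing of $\kappa_{\mathfrak{P},1}$ outside a finite set $\Sigma_{\EX}$ of height-$1$ primes, and the sign condition on the classes $\kappa_n^{(s,t)}$.

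For non-vanishing, Conjecture \ref{kappa1nottors} says that $\kappa_1$ is non-torsion in the finitely generated $\Lambda$-module $\Sel_{\Gr}(K,\mathbf{T}^\dagger)$. Hence its image in the torsion-free quotient is nonzero. By Proposition \ref{tripleDVR}, the $G_K$-invariants of $\mathbf{T}^\dagger/\mathfrak{m}_\Lambda\mathbf{T}^\dagger$ vanish, so $H^1(K,\mathbf{T}^\dagger)$ is $\Lambda$-torsion-free. This holds by the standard argument: $H^0(K,\mathbf{T}^\dagger/f)=0$ for every $f\neq0$, so each $f$ acts injectively on $H^1(K,\mathbf{T}^\dagger)$. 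In particular, $\Sel_{\Gr}(K,\mathbf{T}^\dagger)$ is torsion-free, and it is finitely generated over the regular local ring $\Lambda$ of dimension $2$. Consequently $\kappa_1\notin\mathfrak{P}\Sel_{\Gr}(K,\mathbf{T}^\dagger)$ for all but finitely many height-$1$ primes $\mathfrak{P}$. Indeed, $\kappa_1$ lies in $\mathfrak{P}\Sel$ only if $\mathfrak{P}$ contains the content ideal of $\kappa_1$ in the reflexive hull, and that ideal is nonzero. We then use the control theorem, Assumption \ref{asstripleR}(3) together with Proposition \ref{tripleLambdaHida}. Outside $\Sigma_{\mathrm{CT}}$, the map $s_\mathfrak{P}$ in \eqref{smap} has finite kernel. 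Moreover, $\Sel_{\Gr}(K,\mathbf{T}^\dagger)/\mathfrak{P}$ has no finite nonzero submodule containing the image of $\kappa_1$ once $\kappa_1$ is non-torsion modulo $\mathfrak{P}$, which again holds away from finitely many $\mathfrak{P}$ because the torsion of $\Sel/\mathfrak{P}$ is supported on finitely many $\mathfrak{P}$. Therefore $s_\mathfrak{P}(\kappa_1)=\kappa_{\mathfrak{P},1}\neq0$. We take $\Sigma_{\EX}$ to be the union of $\Sigma_{\mathrm{CT}}$ with these finitely many exceptional primes.

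For the sign condition, we will show that every $\kappa_n^{(s,t)}$ lies in the $\epsilon_n$-eigenspace for $\tau_c$, with $\epsilon$ the negative of the generic root number. The classes arise from Howard's big Heegner points $\mathfrak{z}_n$ by Kolyvagin derivative operators $D_n=\prod_{\ell\mid n}D_\ell$ followed by corestriction to $K$, as in \cite{mastella2025anticyclotomic}. The plan has three steps:
\begin{enumerate}
\item Use the action of complex conjugation on Heegner points, $\tau_c\,\mathfrak{z}_n=w\cdot\sigma\,\mathfrak{z}_n$ for some $\sigma\in\Gal(K[n]/K)$, where $w$ is the sign of the Atkin--Lehner involution on the family. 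This is Howard's \cite[Proposition 2.3.5]{Howard-Variation}-type relation, with $w=-\epsilon$ up to the normalization of the twist.
\item Use $\tau_c D_\ell=-D_\ell\tau_c$ modulo $(\ell+1)$, which holds because $\tau_c$ inverts the generator of the cyclic group $G_\ell$ and $\sum_i i\sigma^{-i}\equiv-\sum_i i\sigma^i$.
\item Combine the two to get $\tau_c\kappa_n=\epsilon(-1)^{\nu(n)}\kappa_n$ in $H^1(K,\mathbf{T}^\dagger_s)\otimes G_n$, where $\tau_c$ also acts on $G_n$ by inversion. This is exactly the usual argument of Gross and Kolyvagin.
\end{enumerate}
Since the claim is a statement about each representative, it passes to $\mathfrak{K}_s$ automatically. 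Finally, the modification by the $\chi_{n,\ell}$ does not affect the sign, because these are $R$-automorphisms that commute with $\tau_c$, or can be chosen to do so by Assumption \ref{ass:Howard-assumption}(5.a).

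The main obstacle is the bookkeeping in the sign step. We must verify that the critical twist $\Theta^{-1}$ and the identification of $\tau_c$ on $G_n\otimes H^1$ produce exactly $\epsilon_n$ rather than $-\epsilon_n$, and that the relation holds integrally in the artinian quotients and not just generically. The first thing we would try is to reduce to Howard's computation of the $\tau_c$-action on $\mathfrak{z}_n$ and to check the parity at $n=1$ against the generic root number $-\epsilon$.
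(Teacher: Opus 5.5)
Your proposal is correct and follows the paper's proof. Non-vanishing of $\kappa_{\mathfrak{P},1}$ comes from the fact that only finitely many height-one primes divide the free component of $\kappa_1$, together with the finiteness of $\ker(s_{\mathfrak{P}})$; you use torsion-freeness of $H^1(K,\mathbf{T}^\dagger)$ and the reflexive hull where the paper uses a pseudo-isomorphism onto $\Lambda^r\oplus\Sel_{\Gr}(K,\mathbf{T}^\dagger)_{\tors}$ and an integer $z$ with $f\nmid p$, but it is the same argument. The sign condition comes from \cite[Proposition 2.3.5]{Howard-Variation} combined with $\tau_cD_n=(-1)^{\nu(n)}D_n\tau_c$ modulo $p^s$.

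The bookkeeping you worry about is lighter than you fear. Assumption \ref{asskolyR} only asks for the existence of some $\epsilon$, so no comparison with the root number is needed. Also, $\epsilon_n$ is the eigenvalue of $\tau_c$ on the cohomology factor with the generators of the $G_\ell$ held fixed, so you should not also let $\tau_c$ act on $G_n$ by inversion as in your step 3: inversion acts by $-1$ on each cyclic $G_\ell$, which would cancel the factor $(-1)^{\nu(n)}$.
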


\begin{proof}
Given $\eta \in \Hom_{\cont}(\Lambda,\bar{\Q}_p)$, $\kappa_{\eta,1}$ is the image of $\kappa_1$ via the composition of maps
\begin{equation} \label{compSpec}
\Sel_{\Gr}(K,\mathbf{T}^{\dag}) \longrightarrow \Sel_{\Gr}(K,\mathbf{T}^{\dag})/\p_\eta \longrightarrow \Sel_{\mathcal{F}_\eta}(K,T_\eta).
\end{equation}
By \cite[Proposition 2.1.5]{MR}, there is a pseudoisomorphism
\[
g:\Sel_{\Gr}(K,\mathbf{T}^{\dag})\longrightarrow\Lambda^r\oplus\Sel_{\Gr}(K,\mathbf{T}^{\dag})_{\tors}
\]
that sends $\kappa_1$ to a pair $\bigl(\kappa_1^{(f)},\kappa_1^{(t)}\bigr)$ with $\kappa_1^{(f)} \neq 0$ since $\kappa_1$ is not torsion by assumption. Since $\Lambda$ is a UFD by \cite[Theorem 20.3]{matsumura}, $\kappa_1^{(f)}$ is divisible only by a finite number of primes of $\Lambda$, so the same fact holds true for $\kappa_1$. We fix a prime $\p_\eta=(f) \notin \Sigma_{\mathrm{CT}}$ of height $1$ of $\Lambda$ such that $f \nmid \kappa_1^{(f)}$ and $f \nmid p$ in $\Lambda$. Then, by Conjecture \ref{kappa1nottors}, the image of $\kappa_1$ via the first map in \eqref{compSpec} is non-trivial. In order to conclude, we assume by contradiction that this image belongs to the kernel of the second map in \eqref{compSpec}. Since this kernel is finite by Proposition \ref{tripleLambdaHida}, it would imply that there exists an integer $z \geq 1$ such that $f\,|\,z \cdot\kappa_1$. Then $f\,|\,z \cdot \kappa_1^{(f)}$ but this contradicts our assumption.

As for the second part of Assumption \ref{asskolyR}, recall that $\kappa$ is built in \cite{Kazim} by applying Kolyvagin derivatives to the big Heegner points of \cite{Howard-Variation}. Then the result follows from \cite[Proposition 2.3.5]{Howard-Variation} together with the formula
\[
\tau_c D_n = (-1)^{\nu(n)}D_n\tau_c \mod p^s,
\]
which holds in $\Z[\Gal(K^{\mathrm{ab}}/\Q)]$ for every $s\ge1$ and $n\in\mathcal{N}_s$, where $D_n$ is the $n$-th Kolyvagin derivative. \end{proof}

\begin{remark} \label{N^-}
More generally, suppose that there is a factorization $N=N^+N^-$ where all prime factors of $N^+$ (respectively, $N^-$) split (respectively, are inert) in $K$; in the setting of this article, where all primes dividing $N$ split in $K$, one has $N^-=1$. One may relax the Heegner hypothesis and replace it with the following \emph{generalized Heegner hypothesis}: $N^-$ is a square-free product of an \emph{even} number of primes. Assuming $N^->1$, an analogue of the classes $\kappa_n^{(s,t)}$ have been constructed by the last named author in \cite{zerman2025quaternionic} by making use of the Euler system of quaternionic big Heegner points of \cite{LV-MM} built on towers of Shimura curves attached to quaternion algebras of discriminant $N^-$. When $N^->1$, it should be possible to prove a quaternionic analogue of \cite[Proposition 2.3.5]{Howard-Variation} describing the action of complex conjugation on big Heegner points, from which one would deduce, as before, that the class $\kappa_n^{(s,t)}$ belongs to $\Sel_{\mathcal{F}(n)}(K,T_s)^{\epsilon_n}$ for all $s\geq 1$, all $\kappa^{(s,t)}\in\mathfrak{K}_s$ and all $n\in\mathcal{N}_t$. Granting that, the arguments in this section, including the counterparts of Theorems \ref{DVRtheorem} and \ref{HidaTheorem}, hold when $N^->1$ as well. However, to the best of our knowledge, no generalization of \cite[Proposition 2.3.5]{Howard-Variation} is currently available in the literature: in order to keep the size of the present paper under control, we decided to postpone a careful discussion of this issue to a future project.  
\end{remark}

In particular, there exists a finite set $\Sigma_{\EX}$ of height-$1$ primes of $\Lambda$ such that $\kappa_\eta$ satisfies Assumption \ref{asskappaDVR} whenever $\mathfrak{p}_\eta \notin \Sigma_{\EX}$.

\subsection{Main results}

First, we can prove our result on Selmer groups of the specializations of $\mathbf{f}$. Let us fix $\eta \in \Hom_{\cont}(\Lambda,\overline{\Q}_p)$.

\begin{theorem}\label{DVRtheorem}
Assume $\p_\eta\notin\Sigma_{\EX}$. There are isomorphisms
\[
\Sel_{\mathcal{F}_\eta}^\epsilon(K,A_\eta) \simeq (F_\eta/S_\eta) \oplus \bigoplus_{i \geq 0}(S_\eta/\pi_\eta^{d_i^\epsilon}S_\eta)^2, \quad \Sel_{\mathcal{F}_\eta}^{-\epsilon}(K,A_\eta) \simeq \bigoplus_{i \geq 0} (S_\eta/\pi_\eta^{d_i^{-\epsilon}}S_\eta)^2,
\]
where $d_i^\epsilon\defeq\delta^{(2i+1)}(\kappa_\eta)-\delta^{(2i+2)}(\kappa_\eta)$ and $d_i^{-\epsilon}\defeq\delta^{(2i)}(\kappa_\eta)-\delta^{(2i+1)}(\kappa_\eta)$.
\end{theorem}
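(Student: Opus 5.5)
The plan is to obtain Theorem \ref{DVRtheorem} as a direct specialization of the abstract structure theorem over discrete valuation rings, Theorem \ref{thmDVR}. We apply it with $R=S_\eta$, uniformizer $\pi_\eta$, $T=T_\eta$, Selmer structure $\mathcal{F}=\mathcal{F}_\eta$ and the universal Kolyvagin system $\kappa_\eta$ obtained as the image of $\kappa$ under $\uks(\mathbf{T}^\dagger,\mathcal{F}_{\Gr})\to\uks(T_\eta,\mathcal{F}_\eta)$. Once the hypotheses of Theorem \ref{thmDVR} are verified, its conclusion reads exactly as the displayed isomorphisms, with $F/R=F_\eta/S_\eta$, $A=A_\eta$ and $d_i^{\pm\epsilon}$ given by the differences of the invariants $\delta^{(j)}(\kappa_\eta)$.

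The hypotheses come in two groups.

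\textbf{Standing assumptions on the triple.} Theorem \ref{thmDVR} is stated under Assumption \ref{ass:Howard-assumption} for $(T_\eta,\mathcal{F}_\eta,\mathcal{P})$. This is Proposition \ref{tripleDVR}, which holds for every $\eta\in\Hom_{\cont}(\Lambda,\overline{\Q}_p)$. We also need $\mathcal{F}_\eta$ to come by propagation from a Selmer structure on $V_\eta$ that is unramified away from $Np$. This holds by construction: the unramified and ordinary conditions for $T_\eta$ were defined by propagation from those of $V_\eta$.

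\textbf{Assumption \ref{asskappaDVR} for $\kappa_\eta$.} This splits into two parts.

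\emph{Non-vanishing.} The condition $\kappa_{\eta,1}\neq0$ holds because $\p_\eta\notin\Sigma_{\EX}$ and, by Proposition \ref{assKappaHida}, $\kappa$ satisfies Assumption \ref{asskolyR}.

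\emph{Sign condition.} We need $\kappa^{(s,t)}_{\eta,n}\in\Sel_{\mathcal{F}_\eta(n)}(K,T_\eta/\pi_\eta^s)^{\epsilon_n}$ for all representatives. This should follow from the corresponding statement for $\kappa$ in Assumption \ref{asskolyR}, since the specialization maps $\mathbf{T}^\dagger_{s'}\to T_\eta/\pi_\eta^{s}$ (for suitable $s'$) are $G_\Q$-equivariant and therefore respect $\tau_c$-eigenspaces. Two points need checking. First, the local conditions $\mathcal{F}(n)$ are compatible under specialization; at primes dividing $n$ the transverse condition is functorial, and at $p$ one uses Assumption \ref{asstripleR}(2). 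Second, the systems of representatives $\mathfrak{K}_s$ for $\kappa_\eta$ are images of representatives for $\kappa$ after reindexing $s$ through $e_\eta$, as was done in the proof of Theorem \ref{mainThmR}.

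\textbf{Main obstacle.} The delicate step is the bookkeeping between the filtration by powers of $\mathfrak{m}_\Lambda$ and that by powers of $\pi_\eta$ when transporting representatives $\kappa^{(s,t)}$. One must check that every element of the set $\mathfrak{K}_s$ attached to $\kappa_\eta$ comes from some element of $\mathfrak{K}_{s'}$ attached to $\kappa$. I would handle this by noting two facts. First, $\mathfrak{m}_\Lambda^{s'}S_\eta\subset\pi_\eta^sS_\eta$ for $s'\gg s$. Second, the set of Kolyvagin primes for $\mathbf{T}^\dagger_{t'}$ is contained in the set for $T_\eta/\pi_\eta^t$ once $t'\gg t$. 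The direct and inverse limits defining $\uks$ then identify representatives up to passing to larger $t$, and the sign condition survives passage to larger $t$.

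With both groups of hypotheses in place, Theorem \ref{thmDVR} applies and yields the stated isomorphisms.
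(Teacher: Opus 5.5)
Your proposal is correct and follows the paper. The paper's proof is exactly the one-line specialization of Theorem \ref{thmDVR} to $(S_\eta,T_\eta,\mathcal{F}_\eta,\kappa_\eta)$; its hypotheses come from Proposition \ref{tripleDVR} and from the paper's assertion, deduced from Proposition \ref{assKappaHida}, that $\kappa_\eta$ satisfies Assumption \ref{asskappaDVR} when $\p_\eta\notin\Sigma_{\EX}$, just as in your plan.

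The one soft spot is your ``main obstacle'' paragraph, where the direction is wrong. Matching $F^{(s)}_{t,t''}$ of a representative of $\kappa_\eta^{(s)}$ with a specialized $\Lambda$-adic representative controls signs only on $\mathcal{N}_{t''}$, not on all of $\mathcal{N}_t$. The fact that signs survive passage to larger $t$ therefore does not give the sign condition for every representative. You can either cite the paper's assertion directly, or observe that $\partial^{(j)}$ can only increase under $F^{(s)}_{t,t''}$. Then the maxima in \eqref{delta-def-eq} are attained on push-forwards of specialized representatives, and the arguments of \S\ref{sectionDVR} can be run using only those.
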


\begin{proof} This is a consequence of Theorem \ref{thmDVR}. \end{proof}

Now we can prove our main result on the structure of Shafarevich--Tate groups of Hida families. Set $X^\pm \defeq \Sha^\pm_{\Gr}(K,\mathbf{A}^\dag)$. For all even $i\in\N$, choose ideals $\mathfrak{D}_i^\pm$ of $\Lambda$ as in \S\ref{constrideals}, using the universal Kolyvagin system $\kappa$ introduced in \S \ref{UKSHida}.

\begin{theorem}\label{HidaTheorem}
Let $i\in\N$. Then
\[
\Fitt_i(X^\pm) \sim \begin{cases} (\mathfrak{D}_i^\pm)^2 & \text{if $i$ is even}, \\[2mm]
\mathfrak{D}^\pm_{i-1} \cdot \mathfrak{D}^\pm_{i+1} & \text{if $i$ is odd}.
\end{cases}
\]
\end{theorem}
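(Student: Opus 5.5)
The plan is to reduce Theorem \ref{HidaTheorem} to the abstract Theorem \ref{mainThmR} for the even case, and then derive the odd case from Theorem \ref{evenoddFitt} together with Proposition \ref{propX=M+M}.

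\emph{Hypotheses of Theorem \ref{mainThmR}.} We must check that Assumptions \ref{asstripleR} and \ref{asskolyR} hold for the triple $(\mathbf{T}^\dagger,\mathcal{F}_{\Gr},\mathcal{P})$ and the universal Kolyvagin system $\kappa$ of big Heegner points. These are exactly Proposition \ref{tripleLambdaHida} and Proposition \ref{assKappaHida}. The latter uses Conjecture \ref{kappa1nottors}, which is assumed in force. Since $\mathcal{R}=\Lambda$ by Assumption \ref{Hidaass}(1), the module $X=\Sha_{\Gr}(K,\mathbf{A}^\dagger)^\vee$ and the ideals $\mathfrak{D}_i^\pm$ of \S\ref{constrideals} are precisely those in the abstract setting. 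Theorem \ref{mainThmR} then yields $\Fitt_i(X^\pm)\sim(\mathfrak{D}_i^\pm)^2$ for every even $i$.

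\emph{Odd case.} Let $i=2m+1$. By Proposition \ref{propX=M+M}, $X^\pm\sim M^\pm\oplus M^\pm$ for torsion $\Lambda$-modules $M^\pm$. Theorem \ref{evenoddFitt} therefore gives
\[
[\Fitt_{2m+1}(X^\pm)]=\sqrt{[\Fitt_{2m}(X^\pm)\cdot\Fitt_{2m+2}(X^\pm)]}.
\]
Substituting the even case, the class inside the square root is
\[
[(\mathfrak{D}_{2m}^\pm)^2(\mathfrak{D}_{2m+2}^\pm)^2]=[\mathfrak{D}_{2m}^\pm\mathfrak{D}_{2m+2}^\pm]^2.
\]
So $\Fitt_{2m}\cdot\Fitt_{2m+2}$ admits a pseudo-square root. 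By the uniqueness in Definition \ref{pseudo-def}, which relies on the first property in Proposition \ref{monoid}, that root is $[\mathfrak{D}_{i-1}^\pm\mathfrak{D}_{i+1}^\pm]$. This is the claimed formula.

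\emph{Remaining points.} Two details need care. First, we must check that the sign conventions match, namely that the $\epsilon$ in the definition of $\mathfrak{D}_i^{\pm}$ is the sign furnished by Proposition \ref{assKappaHida} via Howard's complex-conjugation formula. This is bookkeeping. Second, we need that $X^\pm$ is finitely generated and torsion, so that Theorems \ref{evenoddFitt} and \ref{fitt-pseudo} apply. Torsionness should follow from the control theorem in Assumption \ref{asstripleR}(3) together with Theorem \ref{thmDVR} at a non-exceptional height-one prime: there the corank of the Selmer group is one and is absorbed by the divisible part, so $X(T_\mathfrak{P})$ is finite.

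The genuine difficulty lies entirely inside Theorem \ref{mainThmR}, which is already established, so the proof here is essentially formal. The main obstacle I expect is verifying that the hypotheses of Assumption \ref{asskolyR} pass to the specialized systems with a uniform exceptional set $\Sigma_{\EX}$. This is handled by the UFD argument in Proposition \ref{assKappaHida}.
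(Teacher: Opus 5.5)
Your proposal is correct and follows the paper's own route. The paper proves this by combining Theorem~\ref{mainThmR} for even $i$, with its hypotheses supplied by Propositions~\ref{tripleLambdaHida} and~\ref{assKappaHida}, and Theorem~\ref{evenoddFitt} for odd $i$, using Proposition~\ref{propX=M+M}; your additional remarks on the torsionness of $X^\pm$ and the uniqueness of pseudo-square roots make explicit what the paper leaves implicit.
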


\begin{proof} Combine Theorems \ref{evenoddFitt} and \ref{mainThmR}. \end{proof}

\bibliographystyle{amsplain}
\bibliography{Bibliography}

\end{document}